\newcommand{\Ai}{\mathrm{Ai}}
\newcommand\N{{\mathbb N}} \newcommand\R{{\mathbb R}}
\newcommand\Z{{\mathbb Z}} \newtheorem{thm}{Theorem}
\newtheorem{rmq}{Remark} 
\newtheorem{lemma}{Lemma} %
\newtheorem{cor}{Corollary} \newtheorem{prop}{Proposition}
\begin{document}
 \title[Schr\"odinger equation in a strictly convex domain]{Dispersive estimates for the Schr\"odinger equation in a strictly convex domain and applications}

\author{Oana Ivanovici}
\address{Sorbonne Université, CNRS, Laboratoire Jacques-Louis Lions, LJLL, F-75005 Paris, France} \email{oana.ivanovici@math.cnrs.fr}
    \thanks{The author was
     supported by ERC grant ANADEL 757 996.
    } 
\begin{abstract}
We consider an anisotropic model case for a strictly convex domain
$\Omega\subset\mathbb{R}^d$ of dimension $d\geq 2$ with smooth
boundary $\partial\Omega\neq\emptyset$ and we describe dispersion for
the semi-classical Schrödinger equation with Dirichlet boundary condition. More specifically, we obtain the following fixed time decay rate for the linear semi-classical flow : a loss of $(\frac ht)^{1/4}$ occurs with respect to the boundary less case due to repeated swallowtail type singularities, and is proven optimal.  Corresponding Strichartz estimates allow to solve the cubic nonlinear Sch\"odinger equation on such a 3D model convex domain, hence matching known results on generic compact boundaryless manifolds.
\end{abstract}
\maketitle
\section{Introduction}
Let us consider the Schr\"odinger equation on a manifold $(\Omega,g)$, with a strictly convex boundary $\partial
\Omega$ (a precise definition of strict convexity will be provided later): 
\begin{equation}\label{schrod}
-i\partial_t v+\Delta_g v=\kappa |v|^{2} v, \quad v|_{t=0}=v_0,\quad  v|_{\mathbb{R}\times\partial\Omega}=0,
\end{equation}
where $\Delta_g$ denotes the Laplace operator with Dirichlet boundary condition, and $\kappa=0$ (linear equation) or $\kappa=\pm 1$ (defocusing or focusing nonlinear cubic equation).

For nonlinear partial differential equations on manifolds, understanding the linear flow is a pre-requisite to studying nonlinear problems: addressing the Cauchy problem for nonlinear wave equations starts with perturbative techniques and faces the difficulty of controlling solutions to the linear equation in term of the size of the initial data. Especially at low regularities, mixed norms of Strichartz type ($L^q _t L^r _x$) are particularly useful. For the linear Schr\"odinger flow $e^{-it\Delta_{g}}v_0$ (\eqref{schrod} with $\kappa=1$), local Strichartz estimates (in their most general form) read
\begin{equation}\label{schSE}
\|e^{-it\Delta_g}v_0\|_{L^{q}(0,T)L^r(\Omega)}\leq C_T\|v_0\|_{H^{\sigma}(\Omega)},
\end{equation}
where $2\leq q,r\leq \infty$ satisfy the Schr\"odinger admissibility condition, $\frac{2}{q}+\frac{d}{r}\leq \frac{d}{2}$, $(q,r,d)\neq(2,\infty,2)$ and
$\frac{2}{q}+\frac{d}{r}\geq \frac{d}{2}-\sigma$ (scale-invariant when equality; otherwise, {\it loss of derivatives} in the estimate \eqref{schSE} as it deviates from the optimal regularity predicted by scale invariance.) In Euclidean space $\mathbb{R}^d$ with $g=(\delta_{ij})$, \eqref{schSE} holds with $\sigma=0$ and extends globally in time, $T=+\infty$.

The canonical path leading to such Strichartz estimates is to obtain a
stronger, fixed time, dispersion estimate, which is then combined with
energy conservation, interpolation and a duality argument to obtain
\eqref{schSE}. Dispersion for the linear Schr\"odinger flow in $\mathbb{R}^{d}$ reads as:
\begin{equation}\label{dispschrodrd}
\|e^{\pm it\Delta_{\mathbb{R}^d}}\|_{L^1(\mathbb{R}^d)\rightarrow L^{\infty}(\mathbb{R}^d)}\leq C(d)t^{-d/2}, \text{ for all } t\neq 0.
\end{equation}
Indeed, \eqref{dispschrodrd} and the unitary property of the flow on $L^2(\mathbb{R}^d)$ are all that is required to obtain all known Strichartz estimates ; the endpoint cases are more delicate (see \cite{KeTa98}, \cite{Fo05}, \cite{Vi07}.)

On any boundaryless Riemann manifold $(\Omega,g)$ one may follow the same path, replacing the exact formula by a parametrix, constructed locally within a small ball, thanks to finite speed of propagation for waves or in semi-classical time for Schr\"odinger - short time, wavelength sized intervals (e.g. their size is the inverse of the frequency), allowing for  almost finite speed of propagation. by time rescaling, dispersion for the semi-classical Schr\"odinger equation in the Euclidean space reads, with $\psi \in C_{0}^{\infty}$ being a smooth cut-off to localize frequencies,
\begin{equation}\label{sclSchrod}
\sup\Big| \psi(hD_{t})e^{\pm i th \Delta_{\mathbb{R}^d}}\Big|
\leq \frac{C(d)}{h^{d}} \min (1, ({h\over t})^{\frac{d}{2}}) \text{ for all } 0<|t|\lesssim 1\,,
\end{equation}
While for $\Omega =\mathbb{R}^d$, dispersive properties of \eqref{schrod} are well understood, studying dispersive equations of Schr\"odinger type on manifolds (curved geometry, variable metric) started with Bourgain's work on KdV and Schr\"odinger on the torus, and then expanded in different directions, all of them with low regularity requirements (e.g. Staffilani-Tataru \cite{stta02}, Burq-G\'erard-Tzvetkov \cite{bgt04}, \cite{BuGeTz04} for Schr\"odinger, Smith \cite{sm98}, \cite{smit06}, Tataru \cite{tat02}, Bahouri-Chemin \cite{bach99}, \cite{bach99bis}, Klainerman-Rodnianski \cite{klrod03} and Smith-Tataru \cite{smtat05}, \cite{smtat02} for wave equations). In \cite{bgt04}, these linear estimates were used, together with a classical argument due to Yudovitch, to obtain global well-posedness for the defocusing cubic NLS on a generic 3D compact manifold without boundary. We aim at matching this result in our context, with a model convex boundary.

For compact manifolds (even without boundary) one cannot expect linear estimates to behave like in the Euclidean case: eventually a loss will occur, due to the volume being finite. No long time dispersion of wave packets may occur as they have nowhere to disperse. Long time estimates for the wave equation are unknown, while in the case of the Schr\"odinger equation, the infinite speed of propagation immediately produces unavoidable losses of derivatives in dispersive estimates. Informally, this may be related to the existence of eigenfunctions, but the complete understanding of the loss mechanism is still a delicate issue, even on the torus. On domains with boundaries, there are additional difficulties related to reflected waves. Partial progress was made in \cite{ant} and then in \cite{bss08}, \cite{bss12}, following the general strategy of the low regularity, boundary less case: reflect the metric across the boundary and deal with a boundaryless domain whose metric is only Lipschitz at the interface. Such results hold for any (smooth) boundary, regardless of its shape: however, they apply to 3D NLS only for nonlinearities that are weaker than cubic: \cite{bss12} obtains global well-posedness for smooth nonlinearities $F(v)$ with growth at most $|v|^{2/5} v$.

During the last decade, additional progress was made for the wave equation on domains with convex boundary. Our first result \cite{ilp12}, which deals with the model case of a strictly convex domain, highlights a loss in dispersion for the solution to the linear wave equation that we informally relate to caustics, generated in arbitrarily small time near the boundary. Such caustics appear when optical rays are no longer diverging from each other in the normal direction, where less dispersion occurs as compared to the $\mathbb{R}^d$ case. Our so-called Friedlander model domain is the half-space, for $d\geq 2$,
$\Omega_d=\{(x,y)|x>0, y\in\mathbb{R}^{d-1}\}$ with the metric $g_F$ inherited from the following Laplace operator
\begin{equation}
  \label{eq:LapM}
\Delta_F=\partial^2_x+\sum_{j}\partial_{y_j}^2+ x\sum_{j,k}q_{j,k}\partial_{y_j}\partial_{y_k}\,,
\end{equation}
where $q_{j,k} $ are constants and $q(\theta)=\sum_{j,k}q_{j,k}\theta_j \theta_k$ is a positive definite
quadratic form. Note that $q$ is not, in general, invariant by rotations and we cannot reduce to the radial case in $y$, unlike \cite{ilp12}, where $q(\theta)=|\theta|^{2}$. One may see $\Delta_{F}$ as the Laplace operator in geodesic normal coordinates near the boundary, but where one would freeze all coefficients $q_{j,k}(x,y)$ to their value on the boundary. Strict convexity of $\Omega_d$ with the metric inherited from $\Delta_F$ is equivalent to ellipticity of $\sum_{j,k}q_{j,k}\partial_{y_j}\partial_{y_k}$. When $q_{j,j}=1$ and $q_{j\neq k}=0$ (i.e. when $q(\theta)=|\theta|^2$)
the domain $(\Omega_d,g_F)$ is, indeed, a first order approximation of the unit disk in polar coordinates $(r,\theta)$: set $r=1-\frac x2$, $\theta=y$. Let $h,a\in (0,1)$: if $u_a(t,x,y)=\cos
(t\sqrt{|\Delta_F|})(\delta_{x=a,y=0})$ denotes the linear wave flow on $(\Omega,g)=(\Omega_d, g_F)$ with data $\delta_{x=a,y=0}$ and Dirichlet boundary condition, then, for $|t|\geq h$, \cite{ilp12} proves
\begin{equation}\label{dispcoc} 
\|\psi(hD_t)u_a(t,.)\|_{L^{\infty}}\leq C(d)
h^{-d}\min\Big\{1,(h/t)^{\frac{d-2}{2}}\Big( (\frac ht)^{1/2}+(\frac ht)^{1/3}+a^{1/4}(\frac ht)^{1/4}\Big)\Big\}.
\end{equation} 
Moreover, \eqref{dispcoc} is sharp, as there exists a sequence $(t_n)_n$ such that equality holds. This optimal $\frac 14$ loss in the $\frac ht$ exponent is unavoidable for small $a$ and is due to swallowtail type singularities in the wave front set of $u_a$. This first result opened several directions, from the generic convex case \cite{ILLP} to understanding more complicated boundary shapes \cite{LenMeas}.

In the present work, we address the same set of issues for the Schr\"odinger equation, where
parallel developments were expected, at least in the so called semiclassical setting (recall that ``semiclassical'' means, in our setting, dealing with time intervals whose size is comparable to the wavelength $h$, which reduces to almost finite speed of propagation.) In the non-trapping case, results for the classical Schr\"odinger equation may follow when combined with smoothing effects, but we will not address this situation (we model the interior of a convex.) In the case of a convex boundary, even the wavelength sized time behavior is complicated due to the existence of gliding rays. Let $h\in (0,1)$ and consider the semiclassical Schrödinger equation inside the Friedlander domain $(\Omega_d, g_F)$, with $\Delta_F$ given in \eqref{eq:LapM} and Dirichlet boundary condition
\begin{equation}\label{scl}
ih\partial_t v_{h}-h^2\Delta_F v_{h}=0, \quad v_{h|t=0}=v_0,\quad  v_{h|\partial\Omega_d}=0\,.
\end{equation}
With this rescaling, we are dealing with $O(1)$-bounded rather than $h-$sized intervals.
\begin{thm}\label{thmdispSchrodinger}
Let $\psi\in C^{\infty}_0([\frac 12, \frac 32])$, $0\leq \psi \leq 1$. There exists $C(d)>0$, $T_0>0$ and $a_0\leq 1$ such that, for all $a\in(0, a_0]$, $h\in (0,1)$, $|t|\in (h,T_0]$, $v_{h}(t,\cdot)$ solution to \eqref{scl} with data $v_{h,0}(x,y)=\psi(hD_y)\delta_{x=a,y=0}$,
  \begin{equation}
    \label{eq:1}
 \|\psi(hD_{t})v_{h}(t,x,y)\|_{L^{\infty}(\Omega_d)}
    \leq \frac{C(d)}{h^{d}}\Big(\frac{h}{|t|}\Big)^{\frac{(d-1)}{2}+\frac 1 4}\,.
  \end{equation}
Moreover, for every $|t|\in (\sqrt{a},T_0]$ and every $|t|h^{1/3}\ll a\leq a_0$, the bound is saturated:
\begin{equation}\label{optloss}
\|\psi(hD_{t})v_{h}(t,x,y)\|_{L^{\infty}(\Omega_d)}
 \sim  \frac{a^{\frac 1 4}}{h^{d}}(\frac{h}{|t|})^{\frac{(d-1)}{2}+\frac 1 4}\,.
\end{equation}
\end{thm}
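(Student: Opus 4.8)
The plan is to represent the solution through a spectral/Green's function expansion adapted to the Friedlander operator, exploiting the fact that in the $y$ variable the operator has constant coefficients after an affine change (diagonalizing $q$), so one reduces to a one-dimensional problem in $x$ depending on the dual parameter $\eta$ to $y$. Concretely, after Fourier transform in $y$ and the usual rescaling $x\mapsto h^{2/3}|\eta|^{-2/3}(\cdot)$ followed by a further normalization, the operator $-h^2\Delta_F$ acting at frequency $\eta$ becomes a Schrödinger operator with Airy-type eigenfunctions; the spectrum is discrete, $\omega_k h^{2/3}|\eta|^{4/3}q(\eta/|\eta|)^{?}$ (the precise powers coming from the scaling of \eqref{eq:LapM}), where $-\omega_k$ are the zeros of the Airy function $\Ai$. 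Writing $v_h(t,x,y)$ as a sum over $k$ of oscillatory integrals in $\eta$ with phase $\sim t|\eta|^{?}\omega_k$-terms plus the $y\cdot\eta$ term, and with the Airy eigenfunctions evaluated at $x=a$ and at the observation point $x$, gives the starting representation.

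The heart of the matter is then a two-stage stationary/nonstationary phase analysis. First I would handle the sum over $k$: for $|t|$ in the given range the relevant number of modes that can interfere is governed by $t/h$ versus the spacing of the $\omega_k$, and one uses the known asymptotics $\omega_k\sim (3\pi k/2)^{2/3}$ together with Airy function asymptotics ($\Ai$ decaying exponentially past its turning point, oscillatory before it). Converting the $k$-sum into an oscillatory integral via a Poisson-type summation (or directly comparing with the wave parametrix of \cite{ilp12}), one sees the phase in the combined $(\eta,k)$ — or better, in the combined (angular, radial) variables — that develops degenerate critical points. The loss $(h/t)^{1/4}$ beyond the expected $(h/t)^{(d-1)/2}$ comes precisely from a cusp (swallowtail-type) degeneracy of this phase: at the worst points the Hessian in one variable vanishes to appropriate order, and Van der Corput / stationary phase with a cubic (or higher) degeneracy yields the extra $h^{1/4}$-type gain instead of $h^{1/2}$. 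For the upper bound \eqref{eq:1} one needs a uniform count: sum the contributions over all $k$ and over dyadic pieces of the phase-degeneracy, checking that the worst single term dominates and the series converges, which gives $h^{-d}(h/|t|)^{(d-1)/2+1/4}$ uniformly in $a\le a_0$.

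For the saturation statement \eqref{optloss}, the approach is to exhibit, for each admissible $t$ with $|t|\in(\sqrt a, T_0]$ and $|t|h^{1/3}\ll a$, a specific choice of observation point $(x,y)$ and a cluster of $O((a/(|t|h^{1/3}))^{?})$ consecutive modes $k$ near which the phase is genuinely stationary to the degenerate order, so that their contributions add constructively. The constraint $|t|h^{1/3}\ll a$ is exactly what guarantees that enough such modes lie below the turning point (so the Airy factors are $O(1)$ oscillatory rather than exponentially small) — this is where the factor $a^{1/4}$ enters: it measures the size of the coherent cluster, i.e.\ $a^{1/4}$ is (a power of) the number of modes in the swallowtail neighborhood times their individual size. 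The lower bound then follows by isolating this main cluster, bounding the remainder by the already-established dispersive estimate (or by nonstationary phase), and observing the main term is $\gtrsim a^{1/4}h^{-d}(h/|t|)^{(d-1)/2+1/4}$.

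The main obstacle I expect is the uniform-in-$a$ and uniform-in-the-number-of-modes control of the oscillatory sum: one must track how the degenerate critical point of the phase moves as $a$, $t$, $\eta$ and $k$ vary, identify the precise normal form of the swallowtail singularity (a genuine $A_3$ or higher catastrophe in the relevant parameters), and get matching upper and lower bounds on the associated oscillatory integral — all while the Airy asymptotics transition from oscillatory to exponentially decaying near the turning points $k\sim a^{3/2}|\eta|$ (equivalently $k h^{2/3}\sim a h^{0}$ after scaling). Handling this transition region, where neither the oscillatory nor the Airy-decay asymptotics is cleanly valid, and splicing the estimates across it, is the delicate technical core; the condition $|t|\in(\sqrt a,T_0]$ is presumably dictated by ensuring the degenerate critical point has actually formed (the swallowtail has "opened") by time $\sqrt a$, paralleling the $a^{1/4}(h/t)^{1/4}$ regime in \eqref{dispcoc}.
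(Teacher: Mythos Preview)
Your high-level outline is broadly aligned with the paper --- spectral expansion in Airy modes, Poisson summation to a sum over reflections $N$, and degenerate stationary phase producing the $1/4$ loss --- but the proposal is vague exactly where the work lies, and your account of the saturation mechanism is not the right one. After Poisson summation and stationary phase in $(\eta,\alpha)$ (yielding $(h/t)^{(d-1)/2}(\lambda N)^{-1/2}$), each reflected piece $V_{N,h,a}$ is a two-dimensional oscillatory integral in $(\sigma,s)$ whose phase can have a critical point of order exactly three, precisely when $x=a$ and a certain function $K_a(Y/4N,T/2N)=1$. The $(ha/t)^{1/4}$ comes from Van der Corput with the \emph{fourth} derivative bounded below, applied to a \emph{single} such $V_{N,h,a}$; the $a^{1/4}$ enters through the scaling of the effective large parameter $\Lambda=\lambda/N^{3}$ with $N\sim t/\sqrt a$, $\lambda=a^{3/2}/h$, not through the size of a coherent cluster of eigenmodes.

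The second point your sketch underestimates is the summation in $N$. For the wave equation, finite speed leaves only $O(1)$ relevant $N$ at fixed $t$; for semiclassical Schr\"odinger, all $N$ in a window of length $\sim t/\sqrt\gamma$ contribute simultaneously, so one needs per-$N$ bounds sharp enough to sum. The paper proves three distinct estimates for $|V_{N,h,a}|$ depending on whether $N\gtrless\lambda^{1/3}$ and on $|K_a-1|$, then shows that at any $(t,x,y)$ at most one $N$ hits the swallowtail bound while the remaining ones sum to $O(h^{1/3})\ll(ha/t)^{1/4}$ in the regime $th^{1/3}\ll a$; this simultaneously yields the upper and the lower bound. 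Without these $K_a$-dependent bounds the $N$-sum is not controlled. Finally, the direct eigenmode sum you emphasize is used only in the complementary regime $a\lesssim\max(h^{2/3-\epsilon},(ht)^{1/2})$, via Cauchy--Schwarz and a pointwise bound on $\sum_k\omega_k^{-1/2}\Ai^2(\cdot-\omega_k)$; the optimality statement does not come from that side.
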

%\begin{rmq}
%One may collect better bounds for $\psi(hD_t)v(t,x,y)$ in different regimes:
%\[
%\|\psi(hD_{t})v(t,x,y)\|_{L^{\infty}(\Omega_d)}
%    \lesssim \frac{1}{h^d}\Big(\frac{h}{|t|}\Big)^{\frac{d-1}{2}}\gamma_{h,a}(t),
%    \]
%where, with (small) $0<\epsilon<1/6$, if $a\lesssim \sup\{h^{2/3-\epsilon},(|t|h)^{1/2}\}$,
%    \begin{equation}\label{gammaII}
%  \gamma_{h,a}(t)=
%\left\{ \begin{array}{l} 
%\Big(\frac{h}{|t|}\Big)^{1/2}, \text{ if } %
%h\leq |t|\lesssim h^{1/3+\epsilon},\\\\
%\sqrt{\sup\{h^{2/3-\epsilon},(th)^{1/2}\}},\text{ if } |t|\gtrsim h^{1/3+\epsilon},\\\\
%%
%%
% \end{array} \right.
%\end{equation}
%  and, if $ \sup\{h^{2/3-\epsilon},(|t|h)^{1/2}\}\lesssim a\leq a_0$,
%  \begin{equation}\label{gammaIIbis}
%\gamma_{h,a}(t)=
%\left\{ \begin{array}{l} 
%\frac{|t|}{\sqrt{a}}\Big(\frac{ha}{|t|}\Big)^{1/2}+h^{1/3}|\log a|, \text{ if } \Big(\frac{a^{3/2}}{h}\Big)^{1/3}\leq \frac{|t|}{\sqrt{a}} ,
%\\ \\
%(\frac{ha}{|t|})^{1/4} + h^{1/3}|\log a|, \text{ if } \sqrt{a}\lesssim \frac{|t|}{\sqrt{a}}< \Big(\frac{a^{3/2}}{h}\Big)^{1/3},\\
%\Big(\frac h {|t|}\Big)^{1/2}, \text{ for } |t|\lesssim a.
% \end{array} \right.
%\end{equation}
% When $|t|\in (\sqrt{a},T_0]$ and $|t|h^{1/3}\ll a\leq a_0$ we have $a\gg h^{2/3}$ and $|t|> \sqrt{a}\gg h^{1/3}$, $|t|h^{1/3}\gg (|t|h)^{1/2}$, which yield $\sup\{h^{2/3},(|t|h)^{1/2}\}\ll a$. As $|t|h^{1/3}\ll a$ we obtain $({ha}/{|t|})^{1/4}\gg h^{1/3}|\log a|$ and in the second line of \eqref{gammaIIbis} the main contribution becomes $ a^{1/4}({h}/{|t|})^{1/4}$.
%\end{rmq}
Important additional difficulties appear as compared to the wave equation: for not too small $a$, the Green function for the wave flow can be explicitly expressed as a sum of "time-almost-orthogonal" waves, which are essentially supported between a finite number of consecutive reflections; in \cite{ilp12}, we were therefore reduced to obtaining good dispersion bounds for a {\it finite} sum of waves well localized in both time and tangential variables. We will establish a suitable subordination formula that yields a similar representation of the Schr\"odinger flow as a sum of wave packets; nonetheless, 
at a given time $t$, {\it all} waves in this sum provide important contributions, because they travel with different speeds. To sum up all these contributions we need sharp bounds for each of them, similar to those obtained in \cite{ILP3} for waves. For very small $a$, writing a parametrix as a sum over reflections no longer helps. Using the spectral decomposition of the data in terms of eigenfunctions of the Laplace operator allows to obtain a parametrix as a sum over the zeros of the Airy function. With the wave equation, the usual dispersion estimate holds for each term, hence we can sum sufficiently many of them and still get good bounds. However, for the semi-classical Schrödinger flow, even the very first modes - localized at distance $h^{2/3}$ from $\partial\Omega$ (known as gallery modes) yield a {\it sharp} loss of $1/6$ in both dispersion and Strichartz estimates (see \cite{doi08}.)
\begin{thm}\label{thmStrichartz}
Let $d\geq 2$, $(q,r)$ such that $\frac  1q\leq \Big(\frac d 2 -\frac 14\Big)\Big(\frac 1 2 -\frac 1 r\Big)$ and $s=\frac d2-\frac 2q-\frac dr$. There exist $C(d)>0$, $T_{0}>0$ such that, for $v$ solution to \eqref{scl} with data $v_{h,0}\in L^{2}(\Omega_{d})$,
  \begin{equation}
    \label{eq:2}
    \|\psi(hD_{t})v_{h}\|_{L^q([-T_{0},T_0],L^r(\Omega_d))}\leq C(d) h^{-s}\|v_{h,0}\|_{L^2(\Omega_d)}\,.
  \end{equation}
\end{thm}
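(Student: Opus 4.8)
The plan is to run the classical argument that upgrades a fixed‑time dispersion bound to space‑time Strichartz estimates, in the abstract Keel--Tao form. Write $U(t):=\psi(hD_t)e^{ith\Delta_F}=e^{ith\Delta_F}\psi(-h^2\Delta_F)$, so that $\psi(hD_t)v_h=U(t)v_{h,0}$; since $e^{ith\Delta_F}$ is unitary on $L^2(\Omega_d)$ one has the energy bound $\|U(t)\|_{L^2\to L^2}\le1$. The first task is to promote Theorem~\ref{thmdispSchrodinger} to an operator‑norm dispersion estimate for $U(t)U(s)^*$. Using translation invariance in $y$ and decomposing an $L^1$ datum into Dirac masses $\delta_{x=a,\,y=b}$, Theorem~\ref{thmdispSchrodinger} controls the Schwartz kernel of $U(t)U(s)^*$ when both endpoints lie in the layer $\{x\le a_0\}$ (the tangential cutoff $\psi(hD_y)$ there is harmless, being absorbed by $\psi(-h^2\Delta_F)$); for endpoints at depth $\gtrsim a_0$ the flow does not feel the boundary on the time scale $|t-s|\lesssim\sqrt{a_0}$, so a standard interior semiclassical parametrix provides the Euclidean rate $h^{-d}(h/|t-s|)^{d/2}$, which is stronger. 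Choosing $T_0\lesssim\sqrt{a_0}$, patching, and using the trivial spectral‑function bound $h^{-d}$ for $|t-s|\le h$, one obtains, with $\gamma:=\tfrac{d-1}{2}+\tfrac14$,
\[
\|U(t)U(s)^*\|_{L^1(\Omega_d)\to L^\infty(\Omega_d)}\ \lesssim\ h^{-d}\,\min\!\bigl(1,(h/|t-s|)^{\gamma}\bigr),\qquad |t-s|\le 2T_0 .
\]

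With $Tf(t):=U(t)f$ one has $\|T\|_{L^2\to L^q_tL^r_x}^2=\|TT^*\|_{L^{q'}_tL^{r'}_x\to L^q_tL^r_x}$ and $TT^*F(t)=\int U(t)U(s)^*F(s)\,ds$. Interpolating the displayed bound against $\|U(t)U(s)^*\|_{L^2\to L^2}\le1$ gives $\|U(t)U(s)^*\|_{L^{r'}\to L^r}\lesssim h^{-d(1-2/r)}\min\bigl(1,(h/|t-s|)^{\gamma(1-2/r)}\bigr)$, and then $\|TT^*F(t)\|_{L^r_x}$ is controlled by the $t$‑convolution of $\|F(\cdot)\|_{L^{r'}_x}$ with this kernel. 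The hypothesis $\tfrac1q\le(\tfrac d2-\tfrac14)(\tfrac12-\tfrac1r)=\gamma(\tfrac12-\tfrac1r)$ is exactly $\gamma$‑admissibility, critical or subcritical, for the decay exponent $\gamma$; applying the Keel--Tao theorem together with its bilinear real‑interpolation and Christ--Kiselev refinements for the endpoint (the references in the introduction) on the interval $[-T_0,T_0]$ yields $\|U(t)v_{h,0}\|_{L^q([-T_0,T_0],L^r(\Omega_d))}\lesssim h^{-s}\|v_{h,0}\|_{L^2}$ with $s=\tfrac d2-\tfrac2q-\tfrac dr$, provided the $h$‑bookkeeping is carried out carefully (see below). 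Since $\|\psi(-h^2\Delta_F)v_{h,0}\|_{L^2}\le\|v_{h,0}\|_{L^2}$, this is the claim.

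I expect the crux to be obtaining the \emph{sharp} exponent $s$ and not merely a lossier one: bounding $\min\bigl(1,(h/|t-s|)^{\gamma(1-2/r)}\bigr)$ simply by $(h/|t-s|)^{\gamma(1-2/r)}$ in the Keel--Tao scheme overshoots $s$, the excess coming from the short‑time regime $|t-s|\lesssim h$ where the kernel is only $O(h^{-d})$. To reach $s$ one must retain the truncation at scale $|t-s|\sim h$ and, where this alone does not suffice, use the finer structure of the parametrix built for Theorem~\ref{thmdispSchrodinger}: decompose the data according to the tangential‑to‑normal frequency ratio — equivalently, the reflection count for moderate source depths and the Airy‑zero (gallery) index for small depths — carry out the $TT^*$ argument on each almost‑orthogonal piece, for which the pertinent time scale and decay rate are more favorable, and reassemble the pieces via $L^2$‑orthogonality in the summation index, converting an $\ell^q$ summation loss into an $\ell^2$ gain. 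The most technical points should be the treatment of the $\gamma$‑admissible endpoint pairs within this decomposition and the interface between the ``few reflections'' and ``many Airy modes'' regimes.
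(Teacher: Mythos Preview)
Your first two paragraphs reproduce the paper's proof, which is stated in a single sentence: Theorem~\ref{thmStrichartz} ``follows from Theorem~\ref{thmdispSchrodinger} using the classical $TT^*$ argument and the endpoint argument of Keel--Tao for $q=2$ when $d\geq 3$.'' Nothing further is done.

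Your third paragraph is not part of the paper's argument and rests on a misconception. Running the abstract Keel--Tao scheme with the unitary $L^2$ bound and the decay $\|U(t)U(s)^*\|_{L^1\to L^\infty}\lesssim h^{\gamma-d}|t-s|^{-\gamma}$ produces, on the sharp $\gamma$-admissible line $1/q=\gamma(1/2-1/r)$, the constant $h^{(\gamma-d)(1/2-1/r)}$, and one checks directly that $(d-\gamma)(1/2-1/r)=s+1/q$. This is exactly the loss the paper carries into its applications: Theorem~\ref{thmStriSob} is stated with $H^{s+1/q}$ on the right-hand side, and for $d=3$, $(q,r)=(2,10)$ the paper explicitly records $s+1/q=7/10$. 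Your diagnosis that the regime $|t-s|\lesssim h$ is responsible for an overshoot is also off: after interpolation, the convolution kernel in the $TT^*$ bound is $|t-s|^{-2/q}$, which for $q>2$ is locally integrable near the origin, so whether one uses the bare power law or the truncated $\min(1,\cdot)$ there is immaterial to the Hardy--Littlewood--Sobolev output (and the Keel--Tao bilinear argument at $q=2$ likewise yields the same $h$-power). There is therefore no overshoot to repair by an almost-orthogonal decomposition in reflection number or gallery-mode index, and the paper performs none.
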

The proof of Theorem \ref{thmStrichartz} follows from Theorem \ref{thmdispSchrodinger} using the classical $TT^*$ argument and the endpoint argument of Keel-Tao \cite{KeTa98} for $q=2$ when $d\geq 3$. The (scale-invariant) loss at the semi-classical level corresponds to $1/4$ derivative in space, as illustrated with $d=2$, for which the (forbidden) endpoint $(2,\infty)$ with $s=0$ is replaced by $(8/3,\infty)$ with $s=1/4$. This improves \cite{bss08} where for $d=2$, one has $(3,\infty)$. More generally, \cite{bss08} obtains $(2,\infty)$ as an endpoint for $d\geq 3$, e.g. $s=d/2-1$, whereas we have $(2,2(2d-1)/(2d-5))$ as our endpoint pair, with $s=1/(2d-1)$.  For $d=3$, our endpoint pair is $(2,10)$: that $10<+\infty$ allows us to adapt the argument from \cite{bgt04} and obtain well-posedness for the cubic equation, as alluded to earlier. We set $\Omega$ to be a compact manifold such that, in a local coordinate chart that intersects its boundary, the metric may be expressed as in our model. We will later provide examples.
\begin{thm}\label{gwp}
Let $d=3$ and $v_{0}\in H^{1}_{0}(\Omega)$. There exists a unique global in time solution $v\in C_{t}(H^{1}_{0}(\Omega))$ to \eqref{schrod} with $\kappa=1$ (defocusing equation), and its energy is conserved along the flow. For $\kappa=-1$ (focusing equation), the result holds locally in time, and globally provided the mass of $v_{0}$ is sufficiently small.
\end{thm}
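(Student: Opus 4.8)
The plan is to adapt the scheme of Burq--G\'erard--Tzvetkov \cite{bgt04}: feed the Strichartz estimates of Theorem \ref{thmStrichartz} into a contraction for the Duhamel formulation of \eqref{schrod} at the $H^1$ level, and then globalize using the conserved mass and energy. The preliminary step is to turn Theorem \ref{thmStrichartz}, which is semi-classical and microlocalized, into a genuine Strichartz estimate on the compact manifold $\Omega$ for the unrescaled Schr\"odinger group $e^{it\Delta_g}$. Since that estimate is essentially local in space near $\partial\Omega$, a partition of unity together with a finite-propagation-speed argument at the semi-classical scale reduces matters, on each semi-classical interval (over which a frequency-$h^{-1}$ solution travels only a distance $O(T_0)$), either to Theorem \ref{thmStrichartz} near $\partial\Omega$ or to the boundaryless case \cite{bgt04} away from it. Undoing the semi-classical rescaling, summing $O(h^{-1})$ such intervals --- the per-interval gain absorbing the sum --- and summing over dyadic frequencies with a Littlewood--Paley square function then yields, for $d=3$,
\begin{equation}\label{eq:gwp-strich}
\|e^{it\Delta_g}v_0\|_{L^q([0,T_0];W^{1-s,r}(\Omega))}\leq C\|v_0\|_{H^1_0(\Omega)},\qquad s=\tfrac32-\tfrac2q-\tfrac3r,
\end{equation}
for every pair $(q,r)$ as in Theorem \ref{thmStrichartz} with $q<4$. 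Here the \emph{finiteness} of $r$ --- the improvement $r=10$ over \cite{bss12}, whose $(2,\infty)$ estimate degrades under summation --- is exactly what makes the square-function summation lossless, while $1-s>3/r$ on this range, so $W^{1-s,r}(\Omega)\hookrightarrow L^\infty(\Omega)$, just as $W^{1,6}\hookrightarrow L^\infty$ in the Euclidean case; in particular $e^{it\Delta_g}$ maps $H^1_0(\Omega)$ into $L^q([0,T_0];L^\infty(\Omega))$.

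With \eqref{eq:gwp-strich} in hand, fix $q$ slightly larger than $2$ (say $q=3$, $r=30/7$, $s=2/15$) and set $X_T=C([0,T];H^1_0(\Omega))\cap L^q([0,T];W^{1-s,r}(\Omega))$. I would solve $v=\Phi(v)$, with $\Phi(v)(t)=e^{it\Delta_g}v_0-i\kappa\int_0^t e^{i(t-\tau)\Delta_g}\big(|v|^2v\big)(\tau)\,d\tau$, by a fixed point. The linear term is bounded in $X_T$ by $\|v_0\|_{H^1_0}$ (unitarity on $L^2$ and \eqref{eq:gwp-strich}); for the Duhamel term, Minkowski's integral inequality combined with \eqref{eq:gwp-strich} --- uniformly over subintervals of $[0,T_0]$ --- and the energy estimate give $\|\int_0^t e^{i(t-\tau)\Delta_g}F\,d\tau\|_{X_T}\lesssim\|F\|_{L^1([0,T];H^1_0)}$. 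For the cubic term, the Leibniz rule, the pointwise bound $\| |v|^2 v \|_{H^1}\lesssim\|v\|_{L^\infty}^2\|v\|_{H^1}$, the embedding $W^{1-s,r}\hookrightarrow L^\infty$, and H\"older in time (where $q>2$ is used) yield
\begin{equation}\label{eq:gwp-nl}
\| |v|^2 v \|_{L^1([0,T];H^1_0)}\lesssim\|v\|_{L^2_tL^\infty_x}^2\,\|v\|_{L^\infty_tH^1_x}\lesssim T^{1-2/q}\,\|v\|_{L^q_tW^{1-s,r}_x}^2\,\|v\|_{L^\infty_tH^1_x}\leq T^{1-2/q}\|v\|_{X_T}^3,
\end{equation}
and similarly for differences. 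Hence $\Phi$ is a contraction on a ball of radius $\sim\|v_0\|_{H^1_0}$ in $X_T$ as soon as $T\leq T(\|v_0\|_{H^1_0})$, which gives local well-posedness in $H^1_0(\Omega)$ with existence time depending only on the norm --- in particular, the genuine gain $T^{1-2/q}$ dispenses with the Yudovitch-type device needed in \cite{bgt04}. The solution is unique in $X_T$, and unconditionally in $C_tH^1_0$ as in \cite{bgt04}.

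To globalize I would invoke conservation of the mass $M(v)=\|v\|_{L^2(\Omega)}^2$ and of the energy $E(v)=\tfrac12\|\nabla_g v\|_{L^2(\Omega)}^2+\tfrac\kappa4\|v\|_{L^4(\Omega)}^4$, first for smooth data and then for $H^1_0$ data by the preceding stability of the flow. For $\kappa=1$ one gets $\|v(t)\|_{H^1_0}^2\leq 2E(v_0)+M(v_0)$ throughout the maximal interval, so the local existence time cannot shrink to zero and the solution is global. For $\kappa=-1$, the Gagliardo--Nirenberg inequality $\|v\|_{L^4(\Omega)}\lesssim\|v\|_{L^2(\Omega)}^{1/4}\|\nabla_g v\|_{L^2(\Omega)}^{3/4}$ on $H^1_0(\Omega)$ gives $E(v)\geq\tfrac12 y^2-c\,M(v)^{1/2}y^3$ with $y=\|\nabla_g v\|_{L^2}$; since the right-hand side, as a function of $y$, has a maximum tending to $+\infty$ as $M\to0$, a continuity argument shows that for $M(v_0)$ small $\|\nabla_g v(t)\|_{L^2}$ --- hence $\|v(t)\|_{H^1_0}$ --- stays bounded and the solution is again global, while for general mass only the local statement survives.

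Granting Theorem \ref{thmStrichartz}, the genuinely delicate point is that its loss --- a $1/4$-power semi-classical loss, i.e.\ $s=1/5$ at the endpoint $(q,r)=(2,10)$ for $d=3$ --- be mild enough to close the cubic estimate at the $H^1$ level. This works precisely because $W^{4/5,10}$ lies on the same Sobolev line as $W^{1,6}$ and hence embeds into $L^\infty(\R^3)$ with the same margin $\tfrac12$, so $|v|^2v$ remains energy-subcritical and \eqref{eq:gwp-nl} retains a $T^{1-2/q}$ margin for $q>2$; and it is the finiteness $r<\infty$ that lets the frequency-localized estimate of Theorem \ref{thmStrichartz} be summed into \eqref{eq:gwp-strich} without loss --- exactly the step at which \cite{bss12}, limited to $(2,\infty)$, had to stop at subcritical nonlinearities. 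A secondary, more technical, point is the passage from the half-space model \eqref{eq:LapM} to the compact $\Omega$: one must check that the interior is covered by the boundaryless theory and that the finite-speed/partition-of-unity patching does not degrade \eqref{eq:gwp-strich}.
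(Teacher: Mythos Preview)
Your argument has a genuine gap at the very first step. The estimate you label \eqref{eq:gwp-strich} is too strong by $1/q$ derivatives: when one passes from the semiclassical Strichartz estimate of Theorem~\ref{thmStrichartz} (time interval $[-T_0,T_0]$ for the equation $ih\partial_t-h^2\Delta_F$) to a unit-time estimate for the standard flow $e^{it\Delta_g}$, the correct loss is $s+1/q$, not $s$. This is exactly the content of the paper's Theorem~\ref{thmStriSob}, and it matches the well-known situation on boundaryless compact manifolds \cite{bgt04}, where $s=0$ but one still incurs an unavoidable $1/q$ loss. Concretely: on each subinterval of length $\sim h$, the Strichartz constant for frequency-$h^{-1}$ data solving the \emph{unrescaled} equation is $\sim h^{-s}$ (from the dispersive bound $h^{-d}(h^2/|\tau|)^{d/2-1/4}$ via $TT^*$), not $h^{1/q-s}$; summing the $q$-th powers over $\sim h^{-1}$ intervals then costs an additional $h^{-1/q}$. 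Your sentence ``the per-interval gain absorbing the sum'' is therefore incorrect---there is no such gain---and your later remark that the $T^{1-2/q}$ margin ``dispenses with the Yudovitch-type device'' is precisely backwards.

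This matters because with the correct loss the contraction does not close. A direct check shows that for any admissible $(q,r)$ with $q\ge 2$ in $d=3$ one has
\[
1-\bigl(s+\tfrac1q\bigr)=1-\tfrac d2+\tfrac1q+\tfrac dr\le \tfrac dr,
\]
with equality only at $q=2$; hence $W^{1-s-1/q,r}(\Omega)$ \emph{never} embeds into $L^\infty(\Omega)$, so the bound $\||v|^2v\|_{H^1}\lesssim\|v\|_{L^\infty}^2\|v\|_{H^1}$ that drives your fixed point is unavailable. This is why the paper---exactly as in \cite{bgt04}---does not attempt a direct contraction: it first produces a weak solution by compactness (with energy inequality), then proves \emph{uniqueness} via the Yudovitch argument, using the semiclassical inhomogeneous endpoint estimate (Proposition~\ref{inStri}) to obtain $\|v\|_{L^2_TL^p}\lesssim \sqrt{pT}+1$ uniformly in large $p$, and only then upgrades to energy conservation and $C_t(H^1_0)$. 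The crucial r\^ole of $r=10<\infty$, which you correctly single out, enters in that argument through the square-function summation leading to Theorem~\ref{thmStriSob} and to the inhomogeneous estimate, not by providing an $L^\infty$ embedding.
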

Moreover, as in the boundaryless case, preservation of regularity holds and one may adapt the argument of \cite{PTV} to obtain exponential growth for the $H^{m}$ norm of the solution, where $m\in \mathbb{N}$, $m>1$.
\begin{thm}\label{growthHs}
  Let $d=3$ and $v_{0}\in H^{1}_{0}(\Omega)\cap H^{s}(\Omega)$ with $s>1$. Then the solution $v$ from Theorem \ref{gwp} is $C_{t}(H^{s}(\Omega))$, and for $s=m\in \mathbb{N}$, its norm grows at most exponentially: there exists $C=C(m,\|v_{0}\|_{H^{m}(\Omega)})$ such that,
  \begin{equation}
    \label{eq:3}
      \| v(t)\|_{H^{m}(\Omega)}\leq C \exp( C t)\,.
    \end{equation}
  \end{thm}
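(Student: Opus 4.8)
The plan is to follow \cite{PTV} (see also \cite{bgt04}): one couples the local-in-time propagation of $H^{s}$-regularity for the Cauchy problem of Theorem \ref{gwp} with the conservation laws, arranged so that the $H^{s}$-bound is transported from one time window of fixed length to the next with a multiplicative factor that is \emph{uniform} over all windows; iterating over $\sim|t|$ windows then produces \eqref{eq:3}. By conservation of mass $M(v)=\|v\|_{L^{2}(\Omega)}^{2}$ and of energy $E(v)=\tfrac12\|\nabla v\|_{L^{2}}^{2}+\tfrac14\|v\|_{L^{4}}^{4}$ (Theorem \ref{gwp}), one has $\|v(t)\|_{H^{1}(\Omega)}\le\Lambda_{0}$ for all $t$, where $\Lambda_{0}=(M(v_{0})+2E(v_{0}))^{1/2}$ in the defocusing case, and $\Lambda_{0}=\Lambda_{0}(\|v_{0}\|_{H^{1}})$ in the small-mass focusing case by Gagliardo--Nirenberg; this $\Lambda_{0}$ is the only quantity through which the data will enter the constants below.

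\emph{Step 1 (local propagation of regularity).} The contraction scheme behind Theorem \ref{gwp} provides, for any datum of $H^{1}_{0}(\Omega)$-norm $\le\Lambda_{0}$, a solution on every interval $I$ of length $\tau_{0}=\tau_{0}(\Lambda_{0})>0$, in a Strichartz space $X(I)$ adapted to the admissible pairs of Theorem \ref{thmStrichartz} (for $d=3$ one uses $(q,r)=(2,10)$, the point being $r<\infty$), with $\|v\|_{X(I)}\lesssim\Lambda_{0}$ and the cubic term absorbed through a positive power $|I|^{\theta}$. I would run the \emph{same} Picard iteration in the companion space $X_{s}(I)$ built from the $H^{s}$-valued Strichartz norms, estimating $(1-\Delta_{g})^{s/2}(|v|^{2}v)$ by the fractional Leibniz rule and the product inequalities on $\Omega$; the fixed point then obeys
\[
\|v\|_{X_{s}(I)}\ \le\ \|v(t_{0})\|_{H^{s}(\Omega)}+C\,|I|^{\theta}\,\|v\|_{X(I)}^{2}\,\|v\|_{X_{s}(I)}\ \le\ 2\,\|v(t_{0})\|_{H^{s}(\Omega)}
\]
on the same interval, once $C\,\tau_{0}^{\theta}\,\Lambda_{0}^{2}\le\tfrac12$; in particular $v\in C(I;H^{s}(\Omega))$ with a bound depending on the datum only through $\|v(t_{0})\|_{H^{1}}$ and linearly through $\|v(t_{0})\|_{H^{s}}$. (For $s=m\in\N$ one may alternatively use a higher-order energy identity and Gr\"onwall on $I$.)

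\emph{Step 2 (iteration).} Split $[0,t]$ into $N=\lceil t/\tau_{0}\rceil$ consecutive intervals of length $\tau_{0}=\tau_{0}(\Lambda_{0})$ and apply Step 1 on each, with $v(t_{j})$ as datum --- legitimate since $\|v(t_{j})\|_{H^{1}}\le\Lambda_{0}$ for every $j$, and since inductively $v(t_{j})\in H^{s}(\Omega)$ starting from $v(t_{0})=v_{0}$. This gives $\|v(t_{j+1})\|_{H^{s}}\le 2\,\|v(t_{j})\|_{H^{s}}$ with the \emph{same} factor at every step, hence $v\in C([0,t];H^{s}(\Omega))$ for each real $s>1$ and, for $s=m\in\N$,
\[
\|v(t)\|_{H^{m}(\Omega)}\ \le\ 2^{\lceil t/\tau_{0}\rceil}\,\|v_{0}\|_{H^{m}(\Omega)}\ \le\ C\exp(C\,t)\,\|v_{0}\|_{H^{m}(\Omega)},
\]
with $C=C(m,\|v_{0}\|_{H^{1}})$; this is \eqref{eq:3} (in fact linearly in $\|v_{0}\|_{H^{m}}$), and negative times are identical. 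If the solution of Theorem \ref{gwp} is merely an $H^{1}$-limit of smooth solutions, one runs the above for smooth data, with constants controlled by $\Lambda_{0}$ alone, and passes to the limit.

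\emph{Main obstacle.} The genuinely delicate point is the nonlinear estimate of Step 1 near $\partial\Omega$: the fractional Leibniz and Gagliardo--Nirenberg inequalities must be applied compatibly with the Dirichlet condition, and for even $m$ the domain of $\Delta_{g}^{m/2}$ carries the compatibility conditions $v=\Delta_{g}v=\dots=\Delta_{g}^{m/2-1}v=0$ on $\partial\Omega$, which must be seen to be propagated by the flow --- they are, since every term arising from applying powers of $\Delta_{g}$ to $|v|^{2}v$ keeps a factor $v$ or $\bar v$ and hence vanishes on $\partial\Omega$. Concretely I would localize by a partition of unity: away from $\partial\Omega$ one is on a boundaryless manifold and the estimates are classical, while in a boundary chart one reduces to the half-space with the \emph{smooth} operator $\Delta_{F}$ and extends $v$ by odd reflection across $\{x=0\}$ --- consistent with the Dirichlet condition, with the parity in $x$ of the coefficients of $\Delta_{F}$, and with the oddness of $z\mapsto|z|^{2}z$ --- which reduces the product and commutator inequalities to their Euclidean counterparts. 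The remaining ingredients (dyadic summation of \eqref{eq:2} to build the $H^{s}$-valued Strichartz norms, and the bookkeeping of the iteration) are routine.
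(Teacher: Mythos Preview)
Your Step~1 has a genuine gap: there is no ``contraction scheme behind Theorem~\ref{gwp}'' to invoke. As Section~\ref{sec:glob-wellp-ness} makes explicit, existence in $H^1_0$ is obtained by weak compactness and uniqueness by a Yudovitch-type argument (as in \cite{bgt04}), precisely because the cubic problem on $\Omega$ is effectively $H^1$-\emph{critical} with the available lossy Strichartz estimates and no positive power $|I|^\theta$ can be gained. Concretely: from Theorem~\ref{thmStriSob} with any pair $(q,r)$, $q\ge 2$, one controls from $H^1$ data only $L^q_T W^{\sigma,r}$ with $\sigma=1-(s+\tfrac1q)=\tfrac1q-\tfrac12+\tfrac3r\le \tfrac3r$, so the embedding $W^{\sigma,r}\hookrightarrow L^\infty$ always fails (it is exactly borderline at $q=2$, for every admissible $r$). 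Without $L^\infty_x$ control of $v$, the tame trilinear estimate you need cannot close with $\theta>0$; the best a direct approach yields is a Brezis--Gallouet logarithm, and iterating that over $\sim t$ windows produces a \emph{double} exponential --- exactly what the paper remarks right after the proof of Theorem~\ref{gwp}. Your ``main obstacle'' paragraph misidentifies the difficulty: the boundary is not the issue here (the same criticality is already present in the boundaryless setting of \cite{bgt04}).

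What the paper actually does is implement the modified-energy method of \cite{PTV}: one differentiates a corrected $H^{2k}$-energy $\mathcal{E}_{2k}(v)$ in which the top-order contribution has been cancelled algebraically, and bounds the surviving term $\int_\Omega |\nabla^2 v|\,|\nabla v|^2\,|v|$ by a careful redistribution of norms --- placing each $\nabla v$ in an $L^r_T L^q$ norm (with $(r,q)$ near $(4,4)$) controlled at the $H^{3/2}$ level through a shifted version of the endpoint estimate~\eqref{tameH1}, and $v$ in $L^{2/(1-6/p)}_T L^p$ controlled by the conserved energy. Interpolation between $H^1$ and $H^2$ then gives an inequality of the form $\|v(T)\|_{H^2}^2-\|v(0)\|_{H^2}^2 \lesssim_{E(v_0)} \sup_t\|v\|_{H^2}^{2-\theta}+T^{1/2}\sup_t\|v\|_{H^2}^2$, from which the single exponential follows. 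The content of \cite{PTV} is precisely that the algebraic cancellation in $\mathcal{E}_{2k}$ bypasses the logarithm; a plain iteration scheme cannot.
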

Therefore, well-posedness for the defocusing cubic equation on such model convex domains is similar to that of generic boundaryless manifolds, and we expect it will hold on any generic 3D compact manifold with strictly convex boundary once Theorem \ref{thmStrichartz} is generalized to such manifolds.
  
We conclude this introduction with a discussion on linear Strichartz estimates and their optimality. In \cite{doi}, we proved that there must be a loss of at least $\frac 16$ derivatives in Strichartz estimates for \eqref{scl}, which is obtained when the data is a gallery mode. Whether or not this result is sharp is unknown at present, nor even if a loss in the semi-classical setting {\it should} provide losses in classical time in the case of a generic non-trapping domain where concave portions of the boundary could act like mirrors and refocus wave packets (yielding unavoidable losses in dispersion).
In fact, understanding Strichartz estimates in exterior domains seems to be a very delicate task: obstructions from the compact case no longer apply, at least in the case of non-trapping obstacles.
Thus, one may ask if {\it all Strichartz estimates hold}. The conflict between this questioning and the failure of semi-classical Strichartz (and dispersion) near the boundary is only apparent: for non trapping domains, a wave packet would spend too short a time in a too narrow region near the boundary to be a contradiction by itself. 

For the wave equation, Strichartz estimates with losses were obtained in \cite{bss08} using short time parametrices constructions from \cite{smso06}. As already noticed, the main advantage of \cite{bss08} is also its main weakness: by considering only time intervals that allow for no more than one reflection of a given wave packet, one may handle any boundary but one does not see the full effect of dispersion in the tangential variables. New results in both positive and negative directions were obtained recently, for strictly convex domains: \cite{ILP3} proves Strichartz estimates for the wave equation to hold true on the domain $(\Omega_{d=2},g_F)$ with at most $1/9$ loss. For $d=2$, \cite{bss08} obtained $\frac 16$ instead of $\frac 19$ (but for any boundary), while \cite{ilp12} provides $\frac 14$. Arguments from \cite{ILP3} rely on improving the parametrix construction of \cite{ilp12} and the resulting bounds on the Green function : degenerate stationary phase estimates in \cite{ilp12} may be refined to pinpoint the space-time location of swallowtail singularities (worst case scenario). It turns out that, for the wave equation, such singularities only happen at an exceptional, discrete set of times. The proof of Theorem \ref{thmdispSchrodinger} will rely on similar refinements of degenerate stationary phase estimates together with refined estimates on gallery modes from \cite{doi}, all of which are of independent interest.

\begin{rmq}\label{rmqcasgeneral}
Adapting the parametrix construction for the wave flow from \cite{ILLP}, one may extend Theorem \ref{thmdispSchrodinger} to a domain $\Omega$ whose boundary is everywhere strictly (geodesically) convex: for every point $(0,y_0)\in \partial \Omega$ there exists $(0,y_0,\xi_0,\eta_0)\in T^*\Omega$ where the boundary is micro-locally strictly convex, i.e. such that  there exists a bicharacteristic passing through $(0,y_0,\xi_0,\eta_0)$ that intersects $\partial\Omega$ tangentially having exactly second order contact  with the boundary and remaining in the complement of $\partial\overline{\Omega}$. This will be addressed elsewhere.%
\end{rmq}

\begin{rmq}
One expects the interior of a strictly convex domain to be a worst case scenario. At the opposite end, we now have a much better understanding outside a strictly convex obstacle, where the full set of Strichartz estimates are known to hold (\cite{doi08}) and where dispersion was recently addressed in \cite{ildispext}, where diffraction effects related to the Arago-Poisson spot turn out to be significant for $d\geq 4$. 
\end{rmq}

In the remaining of the paper, $A\lesssim B$ means that there exists a constant $C$ such that $A\leq CB$; this constant may change from line to line and is independent of all parameters but the dimension $d$. It will be explicit when (very occasionally) needed. Similarly, $A\sim B$ means both $A\lesssim B$ and $B\lesssim A$.

\subsection*{Acknowledgments} The author would like to thank Daniel Tataru for helpful and stimulating discussions on this problem and Nikolay Tzvetkov for suggesting that the result of \cite{PTV} may be relevant for our nonlinear application.%
\section{The semi-classical Schrödinger propagator: spectral analysis and parametrix construction}
We recall a few notations, where $Ai$ denotes the standard Airy function (see e.g. \cite{AFbook} for well-known properties of the Airy function), $Ai(x)=\frac 1 {2\pi } \int_{\R} e^{ i  (\frac{\sigma^{3}}{3}+\sigma x)} \,d\sigma$ : define
\begin{equation}
  \label{eq:Apm}
  A_\pm(z)=e^{\mp i\pi/3} Ai(e^{\mp i\pi/3} z)=-e^{\pm 2i\pi/3} Ai(e^{\pm 2i\pi/3} (-z))\,,\,\,\text{ for } \,
  z\in \mathbb{C}\,,
\end{equation}
then one checks that $Ai(-z)=A_+(z)+A_-(z)$ (see \cite[(2.3)]{AFbook}). The next lemma is proved in \cite[Lemma 1]{ilpCE} and requires the classical notion of asymptotic expansion: a function $f(w)$ admits an asymptotic expansion for $w\rightarrow 0$ when there exists a (unique) sequence $(c_{n})_{n}$ such that, for any $n$, $\lim_{w\rightarrow 0} w^{-(n+1)}(f(w)-\sum_{0}^{n} c_{n} w^{n})=c_{n+1}$. We denote $ f(w)\sim_{w} \sum_{n} c_{n} w^{n}$.
\begin{lemma}\label{lemL} (see  \cite[Lemma 1]{ilpCE})
Define, for $\omega \in \R$, $  L(\omega)=\pi+i\log \frac{A_-(\omega)}{A_+(\omega)}$,
then $L$ is real analytic and strictly increasing. We also
have
\begin{equation}
  \label{eq:propL}
  L(0)=\pi/3\,,\,\,\lim_{\omega\rightarrow -\infty} L(\omega)=0\,,\,\,
  L(\omega)=\frac 4 3 \omega^{\frac 3 2}+\frac{\pi}{2}-B(\omega^{\frac 3
    2})\,,\,\,\text{ for } \,\omega\geq 1\,,
\end{equation}
with $ B(u)\sim_{1/u} \sum_{k=1}^\infty b_k u^{-k}$, $b_k\in\R$, $b_1> 0$.
Finally,
  $Ai(-\omega_k)=0 \iff L(\omega_k)=2\pi k$ and
 $ L'(\omega_k)=2\pi \int_0^\infty Ai^2(x-\omega_k) \,dx\,$
where here and thereafter, $\{-\omega_k\}_{k\geq 1}$ denote the zeros of the Airy function in decreasing order.
\end{lemma}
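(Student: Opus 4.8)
The plan is to derive every assertion from elementary properties of the entire functions $A_\pm$ of \eqref{eq:Apm} together with the classical asymptotic expansion of $Ai(z)$ in the sector $\{|\arg z|<\pi\}$. First I would record three facts: $A_+$ and $A_-$ are entire solutions of the rotated Airy equation $f''+zf=0$ (differentiate $z\mapsto Ai(e^{\mp i\pi/3}z)$ and use $Ai''(w)=wAi(w)$); for real $\omega$ one has $A_-(\omega)=\overline{A_+(\omega)}$, because $Ai$ has real Taylor coefficients; and $A_+(\omega)\neq0$ for every $\omega\in\R$, since the zeros of $Ai$ all lie on the negative real axis while $e^{-i\pi/3}\omega$ lies on a ray of argument $-\pi/3$ or $2\pi/3$ and $Ai(0)>0$. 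Hence $A_-(\omega)/A_+(\omega)=e^{-2i\varphi(\omega)}$ for a real-analytic branch $\varphi$ of $\arg A_+$, so $L(\omega)=\pi+2\varphi(\omega)$ is real-analytic and real-valued; taking $\varphi(0)=-\pi/3$ (as $A_+(0)=e^{-i\pi/3}Ai(0)$ with $Ai(0)>0$) gives $L(0)=\pi/3$. I would also note that $Ai(-\omega)=A_+(\omega)+A_-(\omega)=2|A_+(\omega)|\cos\varphi(\omega)$, so that $Ai(-\omega)=0$ if and only if $\cos\varphi(\omega)=0$.

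For monotonicity, differentiating $L=\pi+i\log(A_-/A_+)$ gives $L'=iW/(A_-A_+)$ with $W:=A_+A_-'-A_-A_+'$; as $W$ is the Wronskian of two solutions of $f''+zf=0$ it is constant, and evaluating it from $A_+(\omega)\sim\frac1{2\sqrt\pi\,\omega^{1/4}}e^{i(\frac23\omega^{3/2}-\pi/4)}$ as $\omega\to+\infty$ (insert the Airy asymptotics into $A_+=e^{-i\pi/3}Ai(e^{-i\pi/3}\cdot)$) yields $W=-i/(2\pi)$. Since $A_-A_+=|A_+|^2$, this gives $L'(\omega)=\frac1{2\pi|A_+(\omega)|^2}>0$, so $L$ is strictly increasing. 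Carrying the \emph{full} Airy expansion along $\arg z=-\pi/3$ produces $\varphi(\omega)=\frac23\omega^{3/2}-\frac\pi4-\frac12B(\omega^{3/2})$ with $B(u)\sim_{1/u}\sum_k b_ku^{-k}$, the $b_k$ being explicit rational multiples of the Airy asymptotic coefficients, the sign $b_1>0$ being inherited from the first (imaginary) correction coefficient $5/72$ of that expansion; hence $L(\omega)=\frac43\omega^{3/2}+\frac\pi2-B(\omega^{3/2})$ for $\omega\ge1$ and $L(\omega)\to+\infty$. For $\omega\to-\infty$, writing $\omega=-s$ and using the Airy asymptotics along $\arg z=2\pi/3$ gives $A_+(-s)\sim\frac{-i}{2\sqrt\pi\,s^{1/4}}e^{\frac23s^{3/2}}$, so $\varphi(-s)\to-\pi/2$ (consistent with $\varphi$ increasing and $\varphi(0)=-\pi/3$), whence $L(\omega)\to\pi+2(-\pi/2)=0$.

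For the last two assertions, recall $Ai(-\omega)=0\iff\cos\varphi(\omega)=0$. Since $\varphi$ increases monotonically from $-\pi/2$ (at $-\infty$) to $+\infty$, it meets each value $\pi/2+(k-1)\pi$, $k\ge1$, exactly once, at some $\omega_k>0$; these are all the (negative) zeros $-\omega_k$ of $Ai$, in the stated order, and $L(\omega_k)=\pi+2(\pi/2+(k-1)\pi)=2\pi k$. At $\omega=\omega_k$ one has $A_-(\omega_k)=-A_+(\omega_k)$, so evaluating the constant Wronskian there gives $-\frac i{2\pi}=W=A_+(\omega_k)(A_+'+A_-')(\omega_k)=-A_+(\omega_k)Ai'(-\omega_k)$ (using $(A_++A_-)'=\frac{d}{d\omega}Ai(-\omega)=-Ai'(-\omega)$), which forces $|A_+(\omega_k)|^2=\frac1{4\pi^2Ai'(-\omega_k)^2}$ and therefore $L'(\omega_k)=2\pi Ai'(-\omega_k)^2$. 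Finally $\int_0^\infty Ai^2(x-\omega_k)\,dx=\int_{-\omega_k}^\infty Ai^2(t)\,dt=Ai'(-\omega_k)^2$, via the primitive $\int_z^\infty Ai^2(t)\,dt=Ai'(z)^2-zAi(z)^2$ (check by differentiating and using $Ai''=zAi$; both terms decay at $+\infty$) evaluated at $z=-\omega_k$ where $Ai(-\omega_k)=0$.

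The only genuinely delicate points are the bookkeeping of the continuous branch $\varphi$ of $\arg A_+$ — so that the normalizing constant $\pi$ yields exactly $L(0)=\pi/3$, $\lim_{\omega\to-\infty}L=0$ and the labelling $L(\omega_k)=2\pi k$ — and extracting the full series $B$ with the correct sign $b_1>0$ from the Airy asymptotics; everything else is routine.
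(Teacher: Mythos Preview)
Your argument is correct and essentially self-contained. Note, however, that the paper does not give its own proof of this lemma: it merely cites \cite[Lemma~1]{ilpCE}, so there is nothing in the present paper to compare your approach against. Your derivation via the Wronskian identity $L'(\omega)=1/(2\pi|A_+(\omega)|^2)$, the branch bookkeeping for $\varphi=\arg A_+$, and the primitive $\int_z^\infty Ai^2=Ai'(z)^2-zAi(z)^2$ is the standard route and matches what one finds in the cited reference.
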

\subsection{Spectral analysis of the Friedlander model}\label{sec:spect}

Our domain is $\Omega_d=\{(x,y)\in\mathbb{R}^d|, x>0,y\in\mathbb{R}^{d-1}\}$ and Laplacian $\Delta_F$ given by \eqref{eq:LapM}.
As $\Delta_F$ has constant coefficients in $y$, taking the Fourier transform in the $y$ variable, it transforms into
$-\partial^2_x+|\theta|^2+xq(\theta)$. For $\theta\neq 0$, this operator is a positive self-adjoint operator 
on $L^2(\mathbb{R}_+)$, with compact resolvent. 

\begin{lemma}\label{lemorthog} (see \cite[Lemma 2]{ilpCE})
There exist eigenfunctions $\{e_k(x,\theta)\}_{k\geq 0}$ of $-\partial^2_x+|\theta|^2+xq(\theta)$ with corresponding eigenvalues $\lambda_k(\theta)=|\theta|^2+\omega_k q(\theta)^{2/3}$, that are an Hilbert basis for $L^{2}(\mathbb{R}_{+})$. These eigenfunctions are explicit in terms of Airy functions:
\begin{equation}\label{eig_k}
 e_k(x,\theta)=\frac{\sqrt{2\pi} q(\theta)^{1/6}}{\sqrt{L'(\omega_k)}}
Ai\Big(xq(\theta)^{1/3}-\omega_k\Big)\,,
\end{equation}
and $L'(\omega_k)$ (with $L$ from Lemma \ref{lemL}) is such that
$\|e_k(.,\theta)\|_{L^2(\mathbb{R}_+)}=1$.
\end{lemma}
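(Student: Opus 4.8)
The plan is to diagonalize, for fixed $\theta\neq 0$, the one-dimensional Schr\"odinger operator $P_\theta:=-\partial_x^2+|\theta|^2+xq(\theta)$ on $L^2(\mathbb{R}_+)$ with Dirichlet condition at $x=0$, by reducing it to the model Airy operator $A:=-\partial_s^2+s$ and then using the explicit solution of the associated boundary value problem in terms of $Ai$.

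\emph{Step 1 (rescaling).} Since $q$ is positive definite and $\theta\neq0$ we have $q(\theta)>0$; put $c=q(\theta)^{1/3}$ and $s=cx$. If $u(x)=v(cx)$ then $u''(x)=c^2v''(cx)$, hence
\[
-u''(x)+xq(\theta)u(x)=c^2\big(-v''(s)+sv(s)\big)\big|_{s=cx},
\]
so that $P_\theta u=|\theta|^2u+c^2\,(Av)(cx)$. The map $v\mapsto v(c\,\cdot)$ sends $L^2(\mathbb{R}_+)$ to $L^2(\mathbb{R}_+)$, multiplies the norm by $c^{-1/2}$, and preserves the Dirichlet condition at the origin. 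Thus $P_\theta$ is, modulo the additive shift $|\theta|^2$ and the spectral dilation by $c^2=q(\theta)^{2/3}$, unitarily equivalent to $A$ on $L^2(\mathbb{R}_+)$ with Dirichlet condition; it therefore suffices to diagonalize $A$.

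\emph{Step 2 (spectral theory of $A$).} The operator $A=-\partial_s^2+s$ on $L^2(\mathbb{R}_+)$ with Dirichlet boundary condition is a Schr\"odinger operator with a confining potential: it is essentially self-adjoint on $C_c^\infty(\mathbb{R}_+)$, bounded below, and has compact resolvent (because $s\to+\infty$, the form domain embeds compactly in $L^2(\mathbb{R}_+)$). Hence its spectrum consists of a sequence of eigenvalues tending to $+\infty$, the eigenfunctions form a Hilbert basis of $L^2(\mathbb{R}_+)$, and each eigenvalue is simple, since a second-order ODE together with the $L^2$ (equivalently, decay) condition at $+\infty$ has a one-dimensional solution space.

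\emph{Step 3 (explicit eigenfunctions and normalization).} For $\omega\in\mathbb{R}$ the function $s\mapsto Ai(s-\omega)$ solves $-w''+(s-\omega)w=0$, i.e. $Aw=\omega w$, and it is the only solution (up to scalar) that decays as $s\to+\infty$, the companion Airy solution $Bi$ being exponentially growing; it lies in $L^2(\mathbb{R}_+)$ since $Ai$ decays faster than any exponential at $+\infty$. The Dirichlet condition $w(0)=0$ holds precisely when $Ai(-\omega)=0$, i.e. $\omega=\omega_k$ with $\{-\omega_k\}$ the zeros of $Ai$ (Lemma \ref{lemL}); by Step 2 these exhaust the spectrum, so $A$ has eigenvalues $\omega_k$ with eigenfunctions proportional to $Ai(\cdot-\omega_k)$. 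By the last identity of Lemma \ref{lemL}, $\int_0^\infty Ai^2(x-\omega_k)\,dx=L'(\omega_k)/(2\pi)$, so the $L^2(\mathbb{R}_+)$-normalized eigenfunction of $A$ is $\sqrt{2\pi/L'(\omega_k)}\,Ai(\cdot-\omega_k)$. Undoing the rescaling of Step 1, which contributes the Jacobian factor $c^{1/2}=q(\theta)^{1/6}$ needed to restore normalization, yields \eqref{eig_k} with eigenvalue $|\theta|^2+\omega_kq(\theta)^{2/3}=\lambda_k(\theta)$; and since the unitary image of a Hilbert basis is a Hilbert basis, $\{e_k(\cdot,\theta)\}_k$ is a Hilbert basis of $L^2(\mathbb{R}_+)$.

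\emph{Main difficulty.} Most of the argument is routine once the reduction to the Airy operator is made; the one point that genuinely requires care is the completeness/self-adjointness claim in Step 2, namely ruling out eigenvalues other than the Airy zeros and ensuring the eigenfunctions span $L^2(\mathbb{R}_+)$ — this is where one must invoke the confining nature of the potential and compactness of the resolvent rather than mere ODE analysis. A secondary source of error is the bookkeeping for the dilation, which enters twice (the spectral rescaling by $q(\theta)^{2/3}$ and the normalization factor $q(\theta)^{1/6}$).
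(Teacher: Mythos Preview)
Your proof is correct and follows the standard route: reduce by scaling to the Dirichlet Airy operator $-\partial_s^2+s$ on $L^2(\mathbb{R}_+)$, invoke compact resolvent to get a complete eigenbasis, identify the eigenfunctions as shifted Airy functions via the boundary condition $Ai(-\omega)=0$, and read off the normalization from the identity $L'(\omega_k)=2\pi\int_0^\infty Ai^2(x-\omega_k)\,dx$ in Lemma~\ref{lemL}. The paper does not give its own proof here but cites \cite[Lemma~2]{ilpCE}; your argument is exactly the expected one and the bookkeeping (the $q(\theta)^{2/3}$ spectral factor and the $q(\theta)^{1/6}$ Jacobian in the normalization) is handled correctly.
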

For $x_0>0$, $\delta_{x=x_0}$ on $\mathbb{R}_+$ may be decomposed as
 $\delta_{x=x_0}=\sum_{k\geq 1} e_k(x,\theta)e_k(x_0,\theta)$.
At fixed $t_0$, consider $u(t_0,x,y)=\psi(hD_y)\delta_{x=x_0,y=y_0}$, where $h\in (0,1)$ is a small parameter and
$\psi\in C^{\infty}_0([\frac 12,\frac 32])$, then the (localized in $\theta$) 
  Green function for \eqref{scl} on $\Omega_{d}$ is
\begin{equation}\label{greenfct} 
G_{h}((t,x,y),(t_0,x_0,y_0))=\sum_{k\geq 1}
\int_{\mathbb{R}^{d-1}}e^{ih(t-t_0)\lambda_k(\theta)}
e^{i<y-y_0,\theta>} 
 \psi(h|\theta|)e_k(x,\theta)e_k(x_0,\theta)d\theta\,.
\end{equation}
In addition to the cut-off $\psi(h|\theta|)$, we may add a spectral cut-off $\psi_{1}(h\sqrt{\lambda_{k}(\theta)})$ under the $\theta$ integral, where $\psi_{1}$ is also such that $\psi_{1}\in  C^{\infty}_0([\frac 12,\frac 32])$. Indeed,
$$
-\Delta_{F} \Big(\psi(h|\theta|) e^{i <y,\theta> } e_{k}(x,\theta)\Big) =\lambda_{k}(\theta)\psi(h|\theta|) e^{i <y,\theta>} e_{k}(x,\theta)\,.
$$
On the flow, this is nothing but $\psi_1(hD_t)$ and this smoothes out the Green function.
\begin{rmq}\label{rmqcharset}
As remarked in \cite{ilp12} (see also \cite{ILP3}) for the wave propagator, after adding $\psi_1(h\sqrt{\lambda_k(\theta)})$, the significant part of the sum over $k$ in \eqref{greenfct} becomes a finite sum over $k\lesssim 1/h $. Indeed, with $\tau=\frac hi \partial_t=hD_t$, $\xi=\frac hi \partial_x=hD_x$, $\eta=\frac hi \nabla_y=hD_y$, the characteristic set of $ih\partial_t-h^2\Delta_F$ is $\tau=\xi^2+|\eta|^2+xq(\eta)$. %
Using $\tau=hD_t=h\lambda_k(D_y)$, one obtains (at the symbolic level) that on the micro-support of any gallery mode associated to $\omega_k$ we have
\begin{equation}\label{eq:charset}
h^{2/3}\omega_k q^{2/3}(\eta)=|\xi|^2+xq(\eta).
\end{equation}
We may assume that on the support of $\psi(\eta)\psi_1(h\sqrt{\lambda_k(\eta/h)})$ one has $h^{2/3}\omega_k\leq \varepsilon_0$ with $\varepsilon_0$ small. This is compatible with \eqref{eq:charset} since it is equivalent to $|\xi|^2\lesssim \varepsilon_0$. Considering the asymptotic expansion of $\omega_{k}\sim k^{2/3}$ the condition $h^{2/3}\omega_k\leq \varepsilon_0$ yields $k\lesssim \varepsilon_0/h$.
\end{rmq}
\begin{rmq}
As in \cite{ilp12}, the remaining part of the Green function (corresponding to larger values of $k$) will essentially be transverse: at most one reflection for $t\in [0,T_0]$ with $T_0$ small (depending on the above choice of $\varepsilon_0$). Hence, this regime can be dealt with as in \cite{bss08} to get the free space decay and we will ignore it in the upcoming analysis.
\end{rmq}
Reducing the sum to $k\leq \varepsilon_0/h$ is equivalent to adding a spectral cut-off $\phi_{\varepsilon_0}(x+h^2D_{x}^{2}/q(\theta))$ in the Green function, where $\phi_{\varepsilon_0}=\phi(\cdot/\varepsilon_0)$ for some smooth cut-off function $\phi\in C^{\infty}_0([-1,1])$: using that the eigenfunctions of the operator
$-\partial^2_{x}+xq(\theta)$ are also $e_k(x,\theta)$ but associated to the eigenvalues $\lambda_k(\theta)-|\theta|^2=\omega_k q^{2/3}(\theta)$, we can localize with respect to $x+h^2D_{x}^{2}/q(\theta)$ : notice
$(x+h^2D_{x}^{2}/q(\theta))e_{k}(x,\theta)=(\omega_k q^{2/3}(\theta)/q(\theta)) e_{k}(x,\theta)$ and this new localization operator is exactly associated by symbolic calculus to the cut-off $\phi_{\varepsilon_0}(\omega_{k}/ q(\theta)^{1/3})$. We therefore set, for $(t_0,x_0,y_0)=(0,a,0)$,
\begin{multline}\label{greenfctbiseps0} 
G_{h,\varepsilon_0}(t,x,y,0,a,0)  =  \sum_{k\geq 1}
 \int_{\mathbb{R}^{d-1}}e^{iht\lambda_k(\theta)}
e^{i<y,\theta>}  \psi(h|\theta|)\psi_{1}(h\sqrt{\lambda_{k}(\theta)})\\
\times \phi_{\varepsilon_0}(\omega_{k}/q(\theta)^{1/3})
 e_k(x,\theta)e_k(a,\theta)d\theta\,.
\end{multline}
In the following we introduce a new, small parameter $\gamma$ satisfying $\sup{(a,h^{2/3})}\lesssim \gamma\leq \varepsilon_0$ and then split the (tangential part of the) Green function into a dyadic sum $G_{h,\gamma}$ corresponding to a dyadic partition of unity supported for $\omega_k /q(\theta)^{1/3}\sim \gamma \sim 2^j\sup{(a,h^{2/3})}\leq \varepsilon_0$.
Let $\psi_2(\cdot/\gamma):=\phi_{\gamma}(\cdot)-\phi_{\gamma/2}(\cdot)$ and decompose $\phi_{\varepsilon_0}$ as follows
\begin{equation}\label{partunitpsi2}
\phi_{\varepsilon_0}(\cdot) %
=\phi_{\sup{(a,h^{2/3})}}(\cdot)+\sum_{\gamma=2^j \sup{(a,h^{2/3})}, 1\leq j<\log_2(\varepsilon_0/\sup{(a,h^{2/3})})}\psi_2(\cdot/\gamma),
\end{equation}
which allows to write $G_{h,\varepsilon_0}=\sum_{\sup{(a,h^{2/3})}\leq \gamma<1}G_{h,\gamma}$ where (rescaling the $\theta$ variable for later convenience) $G_{h,\gamma}$ takes the form
\begin{multline}
\label{greenfctbis} G_{h,\gamma}(t,x,a,y)  =  \sum_{k\geq 1}
\frac{1}{h^{d-1}} \int_{\mathbb{R}^{d-1}}e^{iht\lambda_k(\eta/h)}
e^{\frac ih <y,\eta>} \psi(|\eta|)\psi_{1}(h\sqrt{\lambda_{k}(\eta/h)})\\
\times \psi_{2}(h^{2/3}\omega_{k}/(q(\eta)^{1/3}\gamma))
e_k(x,\eta/h)e_k(a,\eta/h)d\eta\,.
\end{multline}
\begin{rmq}
When $\gamma=\sup{(a,h^{2/3})}$, according to \eqref{partunitpsi2}, we should, in \eqref{greenfctbis}, write $\phi_{\sup{(a,h^{2/3})}}$ instead of $\psi_2(\cdot/\sup{(a,h^{2/3})})$. However, for values $h^{2/3}\omega_k\lesssim \frac 12 \sup{(a,h^{2/3})}$, the corresponding Airy factors are exponentially decreasing and provide an irrelevant contribution:  writing $\phi_{\sup{(a,h^{2/3})}}$ or $\psi_2(\cdot/\sup{(a,h^{2/3})})$ yields the same contribution in $G_{h,\sup{(a,h^{2/3})}}$ modulo $O(h^{\infty})$. In fact, when $a<h^{2/3}$ is sufficiently small, there are no $\omega_k$ satisfying $h^{2/3}\omega_k/q^{1/3}(\eta)<h^{2/3}/2$ as $\omega_k\geq \omega_1> 2.33$ and $|\eta|\in [\frac 12, \frac 32]$; on the other hand, when $a\gtrsim h^{2/3}$ and $h^{2/3}\omega_k/q^{1/3}(\eta)\leq a/2$ then the Airy factor of $e_k(a,\eta/h)$ is exponentially decreasing (see \cite[Section 2.1.4.3]{AFbook} for details). In order to streamline notations, we use the same formula \eqref{greenfctbis} for each $G_{h,\gamma}$.  
\end{rmq}
From an operator point of view, with $G_h(\cdot)$ the semi-classical Schr\"odinger propagator, we are considering (with $i D=\partial$) $G_{h,\gamma}=\psi(hD_{y})\psi_{1}(h\sqrt{-\Delta_{F}})\psi_{2}((x+h^2D_{x}^{2}/ q(hD_y))/\gamma) G_h$.
 \begin{rmq}
 For $a\lesssim h^{2/3}$, \cite{doi} proved $ \|G_{h,h^{2/3}}(t,\cdot,a\lesssim h^{2/3},\cdot)\|_{L^{\infty}}\lesssim \frac{1}{h^d}\bigl(\frac{h}{t}\bigr)^{(d-1)/2}h^{1/3}$.
 The proof in \cite{doi} has $q(\eta)=|\eta|^2$ but easily extends to a positive definite quadratic form $q$. The subsequent $1/6$ loss in homogeneous Strichartz estimates is optimal for $a\lesssim h^{2/3}$: in \cite[Theorem 1.8]{doi} we suitably chose Gaussian data whose associated semi-classical Schr\"odinger flow saturates the above bound. Those are the so-called whispering gallery modes.
 \end{rmq}
We briefly recall a variant of the Poisson summation formula that will be crucial to analyze the spectral sum defining $G_{h,\gamma}$ (see \cite[Lemma 3]{ilpCE} for the proof.)
\begin{lemma}
  In $\mathcal{D}'(\R_\omega)$, one has
$ \sum_{N\in \Z} e^{-i NL(\omega)}= 2\pi \sum_{k\in \N^*} \frac 1
    {L'(\omega_k)} \delta(\omega-\omega_k)\,$, e.g.  $\forall \phi\in C_{0}^{\infty}$, 
  \begin{equation}
    \label{eq:AiryPoissonBis}
        \sum_{N\in \Z} \int e^{-i NL(\omega)} \phi(\omega)\,d\omega = 2\pi \sum_{k\in \N^*} \frac 1
    {L'(\omega_k)} \phi(\omega_k)\,.
  \end{equation}
\end{lemma}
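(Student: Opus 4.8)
\emph{Proof strategy.} The plan is to deduce the identity from the classical Poisson summation formula $\sum_{N\in\Z}e^{-iNs}=2\pi\sum_{m\in\Z}\delta(s-2\pi m)$ in $\mathcal D'(\R_s)$ by pulling it back under the substitution $s=L(\omega)$. By Lemma~\ref{lemL}, $L:\R\to(0,+\infty)$ is a real-analytic diffeomorphism: it is strictly increasing with $L'>0$ everywhere, $L(\omega)\to0$ as $\omega\to-\infty$, and, by the expansion \eqref{eq:propL}, $L(\omega)=\tfrac{4}{3}\omega^{3/2}+O(1)\to+\infty$ as $\omega\to+\infty$. Its inverse $L^{-1}:(0,+\infty)\to\R$ therefore sends $2\pi k$ to $\omega_k$ for each $k\ge1$ (this is exactly $L(\omega_k)=2\pi k$), while the points $2\pi m$ with $m\le 0$ lie outside the range of $L$.

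First I would fix $\phi\in C_0^\infty(\R)$ with $\mathrm{supp}\,\phi\subset[-M,M]$ and, for each $N\in\Z$, change variables $s=L(\omega)$ in $\int_\R e^{-iNL(\omega)}\phi(\omega)\,d\omega$ to rewrite it as $\int_0^{+\infty}e^{-iNs}\,\widetilde\phi(s)\,ds$, where $\widetilde\phi(s):=\phi(L^{-1}(s))/L'(L^{-1}(s))$. The crucial observation is that $\widetilde\phi$, extended by zero to $\R$, belongs to $C_0^\infty(\R)$: it is smooth on $(0,+\infty)$ because $L$ is analytic there with non-vanishing derivative, and $\mathrm{supp}\,\widetilde\phi\subset L([-M,M])=[L(-M),L(M)]$, a compact subset of $(0,+\infty)$; in particular $\widetilde\phi$ vanishes both near $s=0$ and for large $s$, the vanishing near $0$ being exactly where compactness of $\mathrm{supp}\,\phi$ (equivalently $\phi\equiv0$ for $\omega$ near $-\infty$) enters.

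Then I would pair the Dirac comb identity with the test function $\widetilde\phi$ and sum over $N$, obtaining $\sum_{N\in\Z}\int_\R e^{-iNs}\widetilde\phi(s)\,ds=2\pi\sum_{m\in\Z}\widetilde\phi(2\pi m)=2\pi\sum_{k\ge1}\widetilde\phi(2\pi k)$, the last step because $\mathrm{supp}\,\widetilde\phi\subset(0,+\infty)$ kills the terms $m\le0$. Since $\widetilde\phi(2\pi k)=\phi(\omega_k)/L'(\omega_k)$ and only finitely many $\omega_k$ meet $\mathrm{supp}\,\phi$ (so every sum above is finite), chaining these equalities gives precisely \eqref{eq:AiryPoissonBis}; equivalently, it says that the pullback under $L$ of the Dirac comb $2\pi\sum_m\delta(s-2\pi m)$ is $2\pi\sum_{k\ge1}L'(\omega_k)^{-1}\delta(\omega-\omega_k)$ in $\mathcal D'(\R_\omega)$.

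I do not expect a real obstacle: the argument is a change of variables plus the standard Poisson formula. The only points deserving attention are (i) that $L$ is a genuine diffeomorphism onto $(0,+\infty)$ with $L'$ nowhere zero — so that the substitution is licit and $\widetilde\phi\in C_0^\infty$ — which is supplied by Lemma~\ref{lemL}, and (ii) the exchange of $\sum_{N\in\Z}$ with integration, which is legitimate once everything is read in $\mathcal D'$ (or $\mathcal S'$), $\widetilde\phi$ being Schwartz.
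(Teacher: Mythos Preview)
Your argument is correct and is exactly the expected one: the paper does not prove this lemma here but defers to \cite[Lemma~3]{ilpCE}, and the proof there proceeds precisely by pulling back the classical Dirac comb identity $\sum_{N}e^{-iNs}=2\pi\sum_{m}\delta(s-2\pi m)$ under the diffeomorphism $s=L(\omega)$, just as you do. One small point: Lemma~\ref{lemL} only states that $L$ is strictly increasing, which for a real-analytic function does not by itself rule out isolated zeros of $L'$; the positivity $L'>0$ you invoke follows from the explicit computation $L'(\omega)=i\,W(A_-,A_+)/\big(A_+(\omega)A_-(\omega)\big)$, where the Wronskian is a nonzero constant and $A_+A_-=|A_+|^2>0$ on $\R$ since $\overline{A_+}=A_-$ there.
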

Using \eqref{eq:AiryPoissonBis} on $G_{h,\gamma}$, we transform the sum over $k$ into a sum over $N\in \Z$, as follows
\begin{multline}\label{vhNgamN}
  \hat{G}_{h,\gamma}(t,x,a,\eta/h)= \frac 1 {2\pi} \sum_{N\in \Z} \int_{\R}e^{-i NL(\omega)}(|\eta|/h)^{2/3}q^{1/3}(\eta/|\eta|) e^{\frac ih t|\eta|^2(1+h^{2/3}\omega q^{1/3}(\eta/|\eta|)/|\eta|^{2/3})} \\
{}\times \psi_{1}\Big(|\eta|\sqrt{1+h^{2/3}\omega q^{2/3}(\eta/|\eta|)/|\eta|^{2/3}}\Big)\psi_{2}(h^{2/3}\omega/(q^{1/3}(\eta)\gamma))\\
{}\times Ai (xq^{1/3}(\eta)/h^{2/3}-\omega) Ai(aq^{1/3}(\eta)/h^{2/3}-\omega)d\omega,
\end{multline}
where $ \hat{G}_{h,\gamma}$ is the Fourier transform in $y$.
For $\sup{(a,h^{2/3})}\leq \gamma <1$, we let $\lambda_{\gamma} =\frac{\gamma^{3/2}}{h}$; when $h^{2/3}\lesssim a$ and $\gamma\sim a$ we write $\lambda:=\frac{a^{3/2}}{h}$. Airy factors are (after rescaling)
\begin{equation}
  \label{eq:bis47}
  Ai(xq^{1/3}(\eta)/h^{2/3}-\omega) %
  =\frac{q^{1/6}(\eta)\lambda_{\gamma}^{1/3}}{2\pi } \int e^{i q^{1/2}(\eta)\lambda_{\gamma}(\frac{\sigma^{3}}{3}+\sigma (\frac{x}{\gamma} -\omega/(q^{1/3}(\eta)\lambda_{\gamma}^{2/3}))} \,d\sigma.
\end{equation}
Rescaling $\omega=q^{1/3}(\eta)\lambda_{\gamma}^{2/3} \alpha=q^{1/3}(\eta)\gamma \alpha /h^{2/3}$ in \eqref{vhNgamN} yields
\begin{multline}
  \label{eq:bis48}
    \hat{G}_{h,\gamma}(t,x,a,\eta/h)= \frac{\lambda_{\gamma}^{4/3}}{(2\pi)^{3}h^{2/3}} \sum_{N\in \Z} \int_{\R}\int_{\R^{2}} e^{\frac i h  \tilde\Phi_{N,a,\gamma}(\eta,\alpha,s,\sigma,t,x)} q(\eta)\\
{}\times  \psi_{1}\Big(|\eta|\sqrt{1+\gamma\alpha q(\eta/|\eta|)}\Big) \psi_{2}(\alpha)\, ds d\sigma d\alpha\,,
\end{multline}
\begin{multline}
  \label{eq:bis49}
     \tilde\Phi_{N,a,\gamma}(\eta,\alpha,s,\sigma,t,x)=t|\eta|^2(1+\gamma\alpha q(\eta/|\eta|))-Nh L(q^{1/3}(\eta)\lambda_{\gamma}^{2/3} \alpha)\\
     +\gamma^{3/2} q^{1/2}(\eta)\Big(\frac{\sigma^{3}} 3+\sigma(\frac{x}{\gamma}-\alpha)
     +\frac {s^{3}} 3+s(\frac{a}{\gamma}-\alpha)\Big)\,.
\end{multline}
Here $Nh L(q^{1/3}(\eta)\lambda_{\gamma}^{2/3} \alpha)=\frac 43 Nq^{1/2}(\eta)(\gamma\alpha)^{3/2}-NhB(q^{1/2}(\eta)\lambda_{\gamma} \alpha^{3/2})$ and we recall that, asymptotically, $B(q^{1/2}(\eta)\lambda_{\gamma} \alpha^{3/2})\sim_{1/(\lambda_{\lambda}\alpha^{3/2}} \sum_{k\geq 1}\frac{b_k}{(q^{1/2}(\eta)\lambda_{\gamma}\alpha^{3/2})^k}$, where
on the support of $\psi_2(\alpha)$ we have $\alpha\sim 1$. 
At this point, notice that, as $|\eta|\in [1/2,3/2]$, we may drop the $\psi_{1}$ localization in \eqref{eq:bis48} by support considerations (slightly changing any cut-off support if necessary). Therefore,
\begin{equation}
  \label{eq:bis48bis}
    G_{h,\gamma}(t,x,a,y)= \frac 1 {(2\pi)^{3}} \frac{\gamma^2}{h^{d+1}}\sum_{N\in \Z} \int_{\R^{d}}\int_{\R^{2}} e^{\frac i h  (<y,\eta>+\tilde\Phi_{N,a,\gamma})}  q(\eta) \psi(|\eta|)\\
{}\times   \psi_{2}(\alpha)  \, ds d\sigma  d\alpha d\eta\,.
\end{equation}
\begin{rmq} 
Both formulas \eqref{eq:bis48bis} and \eqref{greenfctbis} define exactly the same object and both will be necessary to prove the dispersive estimates. The sum over the eigenmodes $e_k$ will be particularly useful for small values of $a\lesssim (ht)^{1/2}$, while for large values of the initial distance to the boundary the sum over $N$ will take over. While both formulas coincide, there is a duality between the two: when $a$ is small, there are less terms in the sum over $k$ in \eqref{greenfctbis}, while when $a>(ht)^{1/2}$ there are less terms in the sum over the reflections $N$. 
\end{rmq}

\begin{rmq}
In order to generalize Theorem \ref{thmdispSchrodinger} to a convex domain as in Remark \ref{rmqcasgeneral}, our construction of gallery modes from \cite{ILLP} will turn out to be crucial. 
Notice that in the general situation even the regime $a\leq h$ has its own difficulties: even deciding how the initial data should be chosen in order the Dirichlet condition to be satisfied on the boundary becomes a non trivial issue. %
In \cite{ILLP}, we bypass our lack of understanding of the eigenfunctions for the Laplace operator and use spectral theory for the model Laplace operator \eqref{eq:LapM} in order to construct a suitable initial data for very small $a$. Thus, constructing a parametrix in the model case (in terms of both eigenmodes and sum over reflections) and obtaining its best possible decay properties is %
important in order to further generalize Theorem \ref{thmdispSchrodinger}.
\end{rmq}

\begin{rmq}\label{rmqxgama}
As noticed in \cite{ilp12}, the symmetry of the Green function (or its suitable spectral truncations) with respect to $x$ and $a$ allows to restrict the computations of the $L^{\infty}$ norm to the region $0\leq x\leq a$. In other words, instead of evaluating $\|G_{h,\varepsilon_0}\|_{L^{\infty}(0\leq x,y)}(t,\cdot)$ it would be enough to bound $\|G_{h,\varepsilon_0}\|_{L^{\infty}(0\leq x\leq a,y)}(t,\cdot)$.
\end{rmq}
\section{Dispersive estimates for the semi-classical Schrödinger flow}
We now prove dispersive bounds for $G_{h,\varepsilon_0}(t,x,a,y)$ on $\Omega_d$ for fixed $|t|\in [h,T_0]$, with small $T_0>0$. We will estimate separately $\|G_{h,\gamma}(t,\cdot)\|_{L^{\infty}(\Omega_d)}$ for every $\gamma$ such that $\sup{(a,h^{2/3})}\lesssim \gamma\leq \varepsilon_0$. 
Henceforth we assume $t>0$. We sort out several situations, with a fixed (small) $\epsilon>0$. Firstly, $\sup{(h^{2/3-\epsilon},(ht)^{1/2})}\leq a\leq \varepsilon_0$: in this case, for  all $\gamma$ such that $\sup{(a,h^{2/3})}\lesssim \gamma\leq \varepsilon_0$ we have  $\sup{(h^{2/3-\epsilon},(ht)^{1/2})}\leq a\lesssim\gamma\leq \varepsilon_0$. This is our main case, where only formula \eqref{eq:bis48bis} is useful; integrals with respect to $\sigma,s$ have up to third order degenerate critical points and we need to perform a very detailed analysis of these integrals. In particular, the "tangential" case $\gamma\sim a$ provides the worst decay estimates. When $8a\leq \gamma$, integrals in \eqref{eq:bis48bis} have degenerate critical points of order at most two. We call this regime "transverse": summing up $\sum_{8a\leq \gamma}\|G_{h,\gamma}(t,\cdot)\|_{L^{\infty}}$ still provides a better contribution than $\|G_{h,a}(t,\cdot)\|_{L^{\infty}}$.
Secondly, for $a\lesssim \sup{(h^{2/3-\epsilon},(ht)^{1/2})}$, we further subdivide:  $\sup{(h^{2/3-\epsilon},(ht)^{1/2})}\leq \gamma \leq \varepsilon_0$, which is similar to the previous "transverse" regime, and estimates follow using \eqref{eq:bis48bis} ; and $\sup{(a,h^{2/3})}\lesssim \gamma\lesssim \sup{(h^{2/3-\epsilon},(ht)^{1/2})}$,  where we use \eqref{greenfctbis} to evaluate its $L^{\infty}$ norm.
\subsection{Case $\sup{(h^{2/3-\epsilon},(ht)^{1/2})}\leq a \leq \varepsilon_0$, with  (small) $\epsilon>0$}
Here we use \eqref{eq:bis48bis}. As $\sup{(a,h^{2/3})}=a$, we consider $\gamma$ such that $a\lesssim \gamma\leq \varepsilon_0$. Let $\lambda_{\gamma}:=\gamma^{3/2}/h$, then $\lambda_{\gamma}\geq h^{-3\epsilon/2}$. 
\begin{rmq}
The approach below applies for all $h^{2/3-\epsilon}\lesssim a\leq \varepsilon_0$, providing sharp estimates for each $G_{h,\gamma}$ for all $h^{2/3-\epsilon}\lesssim a\lesssim \gamma\leq \varepsilon_0$; however, when summing up over $a\l\lesssim \gamma\leq (ht)^{1/2}$, bounds for $G_{h,\varepsilon_0}$  get worse than those from Theorem \ref{thmdispSchrodinger}. Hence we restrict to values $\sup{(h^{2/3-\epsilon},(ht)^{1/2})}\leq a \leq \varepsilon_0$, while lesser values will be dealt with differently later.
\end{rmq}
First, we prove that the sum defining $G_{h,\gamma}$ in \eqref{eq:bis48bis} over $N$ is essentially finite and we estimate of the maximum number of terms in this sum.
\begin{prop}\label{propcardN}
For a fixed $t\in (h,T_0]$ the sum \eqref{eq:bis48bis} over $N$ is essentially finite and $|N|\lesssim \frac{1}{\sqrt{\gamma}}$. In other words, if $M$ is a sufficiently large constant (depending only on $q$), then
\[
 \frac 1 {(2\pi)^{3}} \frac{\gamma^2}{h^{d+1}}\sum_{N\in \Z, |N|\geq \frac{M |t|}{\sqrt{\gamma}}} \int_{\R\times\R^{d-1}}\int_{\R^{2}} e^{\frac i h  (<y,\eta>+\tilde\Phi_{N,a,\gamma})}  q(\eta) \psi(|\eta|)\psi_{2}(\alpha)  \, ds d\sigma  d\alpha d\eta=O(h^{\infty}).
\]
\end{prop}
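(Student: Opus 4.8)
The plan is to show that, for a suitable large constant $M=M(q)$, every term of the sum in \eqref{eq:bis48bis} with $|N|\geq M|t|/\sqrt\gamma$ is $O_{K}\big((|N|\lambda_{\gamma})^{-K}\big)$ for each $K\in\N$, by repeated non‑stationary phase (integration by parts) in the variable $\alpha$. Because in the present range $\lambda_{\gamma}=\gamma^{3/2}/h\geq h^{-3\epsilon/2}\gg1$, summing such bounds over $|N|\geq M|t|/\sqrt\gamma$ (the series $\sum_{N}|N|^{-K}$ converges for $K\geq2$) and multiplying by the prefactor $\gamma^{2}/h^{d+1}\leq h^{-d-1}$ gives a bound $C_{K}\,h^{3\epsilon K/2-d-1}$, which is $O(h^\infty)$ upon letting $K\to\infty$.

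First I would trim the integral to the region $|\sigma|,|s|\lesssim1$. On $\{|\sigma|\geq\sigma_{0}\}$, $\sigma_0$ a fixed large constant, since $\partial_{\sigma}\tilde\Phi_{N,a,\gamma}=\gamma^{3/2}q^{1/2}(\eta)(\sigma^{2}+x/\gamma-\alpha)$ with $\alpha$ in the compact support of $\psi_{2}$ and $x/\gamma\in[0,1]$, one has $|\partial_{\sigma}\tilde\Phi_{N,a,\gamma}|\gtrsim\gamma^{3/2}\sigma^{2}$, while $\partial_{\sigma}^{2}\tilde\Phi_{N,a,\gamma}=2\gamma^{3/2}q^{1/2}(\eta)\sigma$ and $\partial_{\sigma}^{j}\tilde\Phi_{N,a,\gamma}=0$ for $j\geq3$; non‑stationary phase in $\sigma$ (and in $s$), which is $N$‑independent, combined with the $\alpha$‑integration by parts below, shows this part contributes $O(h^\infty)$. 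On the remaining region the key computation is, using $L'(\omega)=2\omega^{1/2}(1+O(\omega^{-3/2}))$ and $L^{(j)}(\omega)=c_{j}\,\omega^{3/2-j}(1+O(\omega^{-3/2}))$ from Lemma \ref{lemL} (valid since there $\omega=q^{1/3}(\eta)\lambda_{\gamma}^{2/3}\alpha\gtrsim\lambda_{\gamma}^{2/3}\gg1$),
\[
\partial_{\alpha}\tilde\Phi_{N,a,\gamma}=\gamma^{3/2}\Big(\frac{|t|}{\sqrt\gamma}\,|\eta|^{2}q(\eta/|\eta|)-2N\,q^{1/2}(\eta)\,\alpha^{1/2}\big(1+O(\lambda_{\gamma}^{-1})\big)-q^{1/2}(\eta)(\sigma+s)\Big),
\]
together with $|\partial_{\alpha}^{j}\tilde\Phi_{N,a,\gamma}|\lesssim_{j}|N|\gamma^{3/2}$ for all $j\geq1$. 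As $q$ is positive definite, $|\eta|^{2}q(\eta/|\eta|)$ and $q^{1/2}(\eta)\alpha^{1/2}$ are $\sim1$ on the support of $\psi\,\psi_{2}$, so the first bracketed term is $O(|t|/\sqrt\gamma)$ and the third is $O(1)$; choosing $M=M(q)$ large enough, the ``reflection'' term $\sim|N|$ dominates both and one obtains the uniform lower bound $|\partial_{\alpha}\tilde\Phi_{N,a,\gamma}|\gtrsim|N|\gamma^{3/2}$ on the whole (reduced) domain of integration as soon as $|N|\geq M|t|/\sqrt\gamma$.

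Next, for such $N$, I would integrate by parts $K$ times in $\alpha$ — all the ratios $\partial_{\alpha}^{j}\tilde\Phi_{N,a,\gamma}/\partial_{\alpha}\tilde\Phi_{N,a,\gamma}$ and the $\alpha$‑derivatives of the amplitude $\psi_{2}(\alpha)$ being $O(1)$ — which yields $\big|\int_{\R}e^{\frac{i}{h}\tilde\Phi_{N,a,\gamma}}\psi_{2}(\alpha)\,d\alpha\big|\lesssim_{K}\big(h/(|N|\gamma^{3/2})\big)^{K}=(|N|\lambda_{\gamma})^{-K}$; the leftover $\eta$‑integral (over the support of $\psi$) and $\sigma,s$‑integrals (over $|\sigma|,|s|\lesssim1$) are over bounded sets, so the $N$‑th term of \eqref{eq:bis48bis} is $\lesssim_{K}\frac{\gamma^{2}}{h^{d+1}}(|N|\lambda_{\gamma})^{-K}$, and summing over $|N|\geq M|t|/\sqrt\gamma$ finishes the proof as in the first paragraph.

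The hard part is the uniform lower bound $|\partial_{\alpha}\tilde\Phi_{N,a,\gamma}|\gtrsim|N|\gamma^{3/2}$: it is what forces the preliminary trimming of $\sigma,s$ (otherwise the innocuous‑looking term $q^{1/2}(\eta)(\sigma+s)$ is not under control) and what requires feeding in the asymptotic expansion of $L$ to handle the genuinely non‑polynomial $N$‑dependence — legitimate only because $\lambda_{\gamma}\geq h^{-3\epsilon/2}$ is large in this regime. A secondary point is the calibration of $M=M(q)$: the competition in $\partial_{\alpha}\tilde\Phi_{N,a,\gamma}$ is between the time term $\sim|t|/\sqrt\gamma$ and the reflection term $\sim|N|$, and $M$ must be taken large relative to $q$ and to the support of $\psi_{2}$ so that $\partial_{\alpha}\tilde\Phi_{N,a,\gamma}$ has no zero on the support once $|N|\geq M|t|/\sqrt\gamma$.
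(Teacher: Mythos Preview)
Your overall strategy---non-stationary phase in $\alpha$ once $|N|$ is large compared to $|t|/\sqrt\gamma$---is the same as the paper's, and your computation of $\partial_\alpha\tilde\Phi_{N,a,\gamma}$ together with the ensuing $(|N|\lambda_\gamma)^{-K}$ bound and summation are correct.

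The gap is in the trimming step. You cut $\sigma,s$ at a \emph{fixed} scale $\sigma_0$ and dispose of the complement by $\sigma$-IBP ``combined with the $\alpha$-integration by parts below''. But on $\{|\sigma|\geq\sigma_0\}$ the $\sigma$-IBP yields $(\lambda_\gamma\sigma^2)^{-K}$, which is $N$-independent; summing that bound over $N\in\Z$ diverges. You cannot repair this by invoking $\alpha$-IBP on that region, because there $|\sigma+s|$ is no longer controlled and $\partial_\alpha\tilde\Phi_{N,a,\gamma}$ can vanish (take $\sigma+s\approx -2N\sqrt\alpha+\tfrac{t}{\sqrt\gamma}q^{1/2}(\eta)$). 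The paper resolves this by making the cutoff $N$-dependent: it restricts to $|(\sigma,s)|\leq(1+|N|^\epsilon)\sqrt\alpha$, so that $\sigma$-IBP on the complement produces a factor $((1+|N|^\epsilon)\lambda_\gamma)^{-n}$, which \emph{is} summable in $N$. On the support of this cutoff one has $|\sigma+s|\lesssim 1+|N|^\epsilon\ll|N|$ for large $|N|$, and your $\alpha$-IBP argument then goes through verbatim.

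A secondary issue: your lower bound $|\partial_\alpha\tilde\Phi_{N,a,\gamma}|\gtrsim|N|\gamma^{3/2}$ on $|\sigma|,|s|\leq\sigma_0$ requires $|N|$ to dominate the $O(1)$ contribution $|\sigma+s|\leq2\sigma_0$; this is not forced by $|N|\geq M|t|/\sqrt\gamma$ when $|t|/\sqrt\gamma$ is small. For those finitely many bounded $|N|$ a separate (joint non-stationarity in $(\sigma,s,\alpha)$) argument is needed; the paper is also rather terse on this point.
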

\begin{proof}
The proof follows easily using non-stationary phase arguments for $N\geq M  \frac{t}{\sqrt{\gamma}}$ for some $M$ sufficiently large. Critical points with respect to $\sigma,s$ are such that
\begin{equation}\label{statssigma}
\sigma^2=\alpha-{x}/{\gamma},\quad s^2=\alpha-{a}/{\gamma},
\end{equation}
and as $x\geq 0$, $\tilde\Phi_{N,a,\gamma}$ may be stationary in $\sigma$, $s$ only if $|(\sigma,s)|\leq \sqrt{\alpha}$. As $\psi_2(\alpha)$ is supported near $1$, it follows that we must also gave $x\leq 2\gamma$, otherwise $\tilde\Phi_{N,a,\gamma}$ is non-stationary with respect to $\sigma$.
If $|(\sigma,s)|\geq (1+|N|^{\epsilon})\sqrt{\alpha}$ for some $\epsilon>0$ we can perform repeated integrations by parts in $\sigma,s$ to obtain $O(((1+N^{\epsilon})\lambda_{\gamma})^{-n})$ for all $n\geq 1$. Let $\chi$ a smooth cutoff supported in $[-1,1]$ and write $1=\chi(\sigma/(N^{\epsilon}\sqrt{\alpha}))+(1-\chi)(\sigma/(N^{\epsilon}\sqrt{\alpha}))$, then
\begin{multline}
\psi(|\eta|)\sum_{N\in \mathbb{Z}} \int_{\R}\int_{\R^{2}} e^{\frac i h  \tilde\Phi_{N,a,\gamma}}  
\psi_{2}(\alpha)\chi(s/(N^{\epsilon}\sqrt{\alpha})) (1-\chi)(\sigma/(N^{\epsilon}\sqrt{\alpha})) \, ds d\sigma  d\alpha \\
\lesssim \lambda_{\gamma}^{-1/3} \sup_{\alpha,|\eta|\in[1/2,3/2]} \Big|Ai\Big((a-\gamma \alpha)q^{1/3}(\eta)/h^{2/3}\Big)\Big|\sum_{N\in \mathbb{Z}} \Big((1+N^{\epsilon})\lambda_{\gamma})^{-n}\Big) %
=O(h^{\infty})\,,
\end{multline}
where in the last line we used $\lambda_{\gamma}\geq h^{-3\epsilon/2}$, $\epsilon>0$. In the same way, we can sum on the support of $(1-\chi)(s/(N^{\epsilon}\sqrt{\alpha}))$ and obtain a $O(h^{\infty})$ contribution. Therefore, we may add cut-offs $\chi(\sigma/(N^{\epsilon}\sqrt{\alpha}))$ and $\chi(s/N^{\epsilon}\sqrt{\alpha}))$ in $G_{h,\gamma}$ without changing its contribution modulo $O(h^{\infty})$. Using again \eqref{eq:bis49}, we have, at the critical point of $\tilde\Phi_{N,a,\gamma}$ with respect to $\alpha$
\begin{equation}\label{statalpha}
\frac{t}{{\gamma}^{1/2}}q(\eta)-q^{1/2}(\eta)(s+\sigma)=2Nq^{1/2}(\eta)\sqrt{\alpha}\Big(1-\frac 34 B'(\eta\lambda \alpha^{3/2})\Big),
\end{equation}
and as $|(\sigma,s)|\lesssim (1+|N|^{\epsilon})\sqrt{\alpha}$ on the support of $\chi(\sigma/(N^{\epsilon}\sqrt{\alpha}))\chi(s/(N^{\epsilon}\sqrt{\alpha}))$, $\tilde\Phi_{N,a,\gamma}$ may be stationary with respect to $\alpha$ only when $\frac{t}{\sqrt{\gamma}}\sim 2N$. As  $B'(\eta\lambda \alpha^{3/2})=O(\lambda_{\gamma}^{-3})=O(h^{9\epsilon/2})$, its contribution is irrelevant. From \eqref{statssigma} and \eqref{statalpha}, if
\begin{equation}\label{condstatalpha}
\frac{t}{{\gamma}^{1/2}}\frac{|\eta|}{\sqrt{\alpha}}q^{1/2}(\eta/|\eta|)\notin [2(N-1),2(N+1)],
\end{equation}
then the phase is non-stationary in $\alpha$. Recall that $q$ is positive definite and let
\begin{equation}\label{limitshalfq}
m_0:=\inf_{\Theta\in \mathbb{S}^{d-2}}q^{1/2}(\Theta), \quad M_0=\sup_{\Theta\in \mathbb{S}^{d-2}}q^{1/2}(\Theta).
\end{equation}
As $|\eta|,\alpha\in[\frac 12,\frac 32]$ on the support of the symbol, if $2(N-1)>\frac{t}{\sqrt{\gamma}}\times M_0\frac{3/2}{\sqrt{1/2}}$ or if $2(N+1)<\frac{t}{\sqrt{\gamma}}\times m_0\frac{1/2}{\sqrt{3/2}}$, then the phase is non-stationary in $\alpha$ as its first order derivative behaves like $N$. Repeated integrations by parts allow to sum up in $N$ as above, and conclude.
\end{proof}

\begin{rmq}\label{rmqA}
We can now add an even better localization with respect to $\sigma$ and $s$: on the support of $(1-\chi)(\sigma/(2\sqrt{\alpha}))$ and $(1-\chi)(s/(2\sqrt{\alpha}))$ the phase is non-stationary in $\sigma$ or $s$, and integrations by parts yield an $O(\lambda_{\gamma}^{-\infty})$ contribution. According to Proposition \ref{propcardN}, the sum over $N$ has finitely many terms, and therefore summing yields an $O(h^{\infty})$ contribution.
\end{rmq}

\begin{rmq}\label{rmqB}
We can (and will) also move the factor $e^{iNB(q^{1/2}(\eta) \lambda_{\gamma}\alpha^{3/2})}$ into the symbol as it does not oscillate: indeed, $\alpha,q(\eta)\in [\frac 12,\frac 32]$ on the support of $\psi_2$, $\psi$ and $N\sim \frac{t}{\sqrt{\gamma}}$, we obtain, 
\[
NB(q^{1/2}(\eta)\lambda_{\gamma} \alpha^{3/2})\sim N\sum_{k\geq 1}\frac{b_k}{(q^{1/2}(\eta)\lambda_{\gamma} \alpha^{3/2})^k} \sim \frac{N b_1}{q^{1/2}(\eta)\lambda_{\gamma}}\sim \frac{ht}{\gamma^2},
\]
using Lemma \ref{lemL}, and as we consider here $(ht)^{1/2}\lesssim \gamma$, this term remains bounded.
\end{rmq}
We set $\Phi_{N,a,\gamma}=<y,\eta>+\tilde\Phi_{N,a,\gamma}-NhB(q^{1/2}(\eta)\lambda_{\gamma}\alpha^{3/2})$: from Remark \ref{rmqB}, in this regime, $\Phi_{N,a,\gamma}$ are the phase functions in the sum of $G_{h,\gamma}$ defined by \eqref{eq:bis48bis}. We have
\begin{multline}
  \label{eq:bis49PhiN}
     \Phi_{N,a,\gamma}(\eta,\alpha,s,\sigma,t,x,y)=<y,\eta>+t|\eta|^2(1+\gamma\alpha q(\eta/|\eta|))\\
     +\gamma^{3/2} q^{1/2}(\eta)\Big(\frac{\sigma^{3}} 3+\sigma(\frac{x}{\gamma}-\alpha)
     +\frac {s^{3}} 3+s(\frac{a}{\gamma}-\alpha)-\frac 43 N\alpha^{3/2}\Big).
\end{multline}
In the following we study, for each $|N|\lesssim \frac{1}{\sqrt{\gamma}}$, the integrals in the sum \eqref{eq:bis48bis}. Notice that when $N=0$ we deal with the free semi-classical Schr\"odinger flow.
\begin{prop}\label{propfreeN=0}
For all $a\in (0,a_0]$, $h\in (0,1)$ and $t\in (h,T_0]$, 
\[
\Big|\sum_{\gamma =2^j a, 0\leq j\leq \log(\frac{\varepsilon_0}{a})}V_{0,h,\gamma}(t,x,y)\Big|\lesssim \frac{1}{h^{d}}\Big(\frac ht\Big)^{d/2}.
\]
\end{prop}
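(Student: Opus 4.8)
The plan is to recognize $\sum_\gamma V_{0,h,\gamma}$ as a frequency--localized piece of the \emph{free} semi-classical Schr\"odinger flow, as announced in the remark preceding the statement, and then to invoke the Euclidean-type dispersive bound for $\Delta_F$ on the whole space. Here $V_{0,h,\gamma}$ is the $N=0$ summand of \eqref{eq:bis48bis}, with phase $\Phi_{0,a,\gamma}$ given by \eqref{eq:bis49PhiN}; it is obtained from \eqref{greenfctbis} by keeping only the $N=0$ term of the Airy--Poisson formula \eqref{eq:AiryPoissonBis}, i.e. by replacing $2\pi\sum_k L'(\omega_k)^{-1}\phi(\omega_k)$ with $\int_{\mathbb R}\phi(\omega)\,d\omega$ — in other words, by letting the Dirichlet quantization parameter $\omega_k$ run continuously over $\mathbb R$. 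Summing the dyadic cut-offs $\psi_2\bigl(h^{2/3}\omega/(q^{1/3}(\eta)\gamma)\bigr)$ over $\gamma=2^ja$, $0\le j\le\log(\varepsilon_0/a)$, telescopes to $\phi_{\varepsilon_0}\bigl(h^{2/3}\omega/q^{1/3}(\eta)\bigr)$ up to the boundary term $\phi_{a/2}(\cdot)$, which is $O(h^\infty)$ since the Airy factor $Ai\bigl(aq^{1/3}(\eta)/h^{2/3}-\omega\bigr)$ is exponentially small on its support (resp. there are no admissible $\omega_k$ there when $a<h^{2/3}$), exactly as in the remark following \eqref{greenfctbis}. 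Using then the completeness relation $\int_{\mathbb R}Ai(X-\omega)Ai(A-\omega)\,d\omega=\delta(X-A)$ in place of $\delta_{x=a}=\sum_k e_k(x,\theta)e_k(a,\theta)$, one gets that $\sum_\gamma V_{0,h,\gamma}$ equals, modulo $O(h^\infty)$, the operator $\psi(hD_y)\psi_1(h\sqrt{-\Delta_F})\phi_{\varepsilon_0}\bigl(x+h^2D_x^2/q(hD_y)\bigr)$ applied to the propagator of \eqref{scl} with $\Delta_F$ taken on the whole space $\mathbb R^d$ (no Dirichlet condition), with data $\delta_{x=a,y=0}$.

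The second step is the dispersive bound for this whole-space flow: its semi-classical propagator obeys $\|\psi_1(hD_t)e^{\mp iht\Delta_F}\|_{L^1(\mathbb R^d)\to L^\infty(\mathbb R^d)}\lesssim h^{-d}(h/t)^{d/2}$ for $0<t\lesssim 1$, i.e. the same estimate as in \eqref{sclSchrod}. Since $\Delta_F$ has affine coefficients this follows by conjugation: taking the Fourier transform in $y$ reduces matters, fiberwise in $\eta$, to the one-dimensional propagator $e^{it(\partial_x^2-xq(\eta))}$, and the Avron--Herbst identity (a gauge transform $e^{-itxq(\eta)}$, a translation by a multiple of $t^2q(\eta)$, and a scalar phase) conjugates it to $e^{it\partial_x^2}$, so its Schwartz kernel has modulus identically $(4\pi t)^{-1/2}$, uniformly in $\eta$. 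Tensoring with the free $y$-evolution gives a kernel of modulus $\lesssim t^{-d/2}$ on $\mathbb R^d$, and the time-frequency cut-off restores the powers of $h$. Equivalently one may compute the kernel of $e^{\mp iht\Delta_F}$ by stationary phase directly: its phase is quadratic in $(\xi,\eta)$ plus a term linear in $x$, with nondegenerate Hessian of determinant $\sim(t/h)^d$.

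It then remains to check that the three localizers $\psi(hD_y)$, $\psi_1(h\sqrt{-\Delta_F})$ and $\phi_{\varepsilon_0}\bigl(x+h^2D_x^2/q(hD_y)\bigr)$ — which are $h$-pseudodifferential operators of order $0$ microlocally supported where $|\eta|\sim 1$ and $\xi^2+xq(\eta)\lesssim\varepsilon_0$ — can be composed with the dispersive flow above without destroying the bound $h^{-d}(h/t)^{d/2}$, at the price of a harmless constant and an $O(h^\infty)$ remainder; this is the same elementary semi-classical fact that underlies the "free space decay" for the transverse part of the Green function mentioned after Remark~\ref{rmqcharset}. Alternatively, keeping everything as explicit oscillatory integrals, one performs the $\omega$-integral first: after the rescaling of \eqref{eq:bis47} it is the $\phi_{\varepsilon_0}$-truncated Airy propagator kernel $K(t,\eta,x,a)$, whose modulus is $\lesssim h^{-1}(h/t)^{1/2}$ uniformly in $(\eta,x,a)$ (again by the Avron--Herbst reduction, the truncation being harmless as it does not interact badly with the single nondegenerate stationary point in $\omega$); one then runs (joint, with $\omega$) stationary phase in $\eta$, whose phase is $\frac1h(\langle y,\eta\rangle+t|\eta|^2)$ plus the slowly varying phase of $K$, gaining a factor $(h/t)^{(d-1)/2}$, and multiplies $h^{-(d-1)}\cdot(h/t)^{(d-1)/2}\cdot h^{-1}(h/t)^{1/2}=h^{-d}(h/t)^{d/2}$.

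The main obstacle is Step~1: rigorously identifying $\sum_\gamma V_{0,h,\gamma}$ with the frequency-localized whole-space flow, which amounts to carefully bookkeeping the inversion of the Airy--Poisson formula, the telescoping of the dyadic decomposition of $\phi_{\varepsilon_0}$, and the $O(h^\infty)$ control of the boundary term $\phi_{a/2}$; in the hands-on variant, the delicate point is instead to confirm that the spectral truncation $\phi_{\varepsilon_0}$ is harmless for the $\omega$-oscillatory integral, i.e. that cutting off the large eigenvalues of the one-dimensional Airy operator preserves its $t^{-1/2}$ dispersion. Everything else is the standard stationary-phase computation for a constant/affine-coefficient Schr\"odinger operator.
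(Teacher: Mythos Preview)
Your proposal is correct, and your primary route---recognizing the $N=0$ sum as the spectrally localized whole-space $\Delta_F$ propagator via the continuous Airy resolution $\int Ai(X-\omega)Ai(A-\omega)\,d\omega=\delta(X-A)$, then invoking the Avron--Herbst conjugation fiberwise in $\eta$ to reduce the Stark-type 1D kernel to the free one---is genuinely different from the paper's argument and conceptually illuminating: it explains \emph{why} the $N=0$ term carries free Euclidean dispersion.

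The paper instead stays with the explicit oscillatory integral in $(\sigma,s,\alpha,\eta)$: after telescoping the dyadic sum to recover the cutoff $\phi_{\varepsilon_0}(\alpha)$ (your Step~1), it introduces the change of variables $\xi_1=(s+\sigma)/2$, $\xi_2=(\sigma-s)/2$ and runs stationary phase sequentially---first in the pair $(\xi_1,\alpha)$ (off-diagonal Hessian, $\partial^2_{\alpha}\Phi=0$ and $\partial^2_{\xi_1\alpha}\Phi=-2q^{1/2}(\eta)$, yielding a factor $h$), then in $\xi_2$ (second derivative $2tq(\eta)$ at the critical point, yielding $(h/t)^{1/2}$), and finally in $\eta$ (Hessian $2t\,\mathbb I_{d-1}(1+O(\varepsilon_0))$, yielding $(h/t)^{(d-1)/2}$). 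Your ``alternatively'' paragraph is close in spirit but organizes the integrations differently ($\omega$ first, then $\eta$). The paper's route has two practical advantages: it sidesteps the issue you correctly flag as delicate---controlling the composition of the three $h$-pseudodifferential localizers with the whole-space kernel in $L^\infty$---and it uses the same $(\xi_1,\xi_2)$ coordinates that reappear in the degenerate analysis of the $N\ge1$ terms, keeping the treatment uniform. Your route, in exchange, makes the free-flow structure of the $N=0$ contribution transparent without computation.
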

\begin{proof}
In this case ($N=0$) we use \eqref{greenfctbiseps0}, \eqref{partunitpsi2} and \eqref{eq:bis48bis} to write the sum over $\gamma$ as follows
\begin{multline}
\sum_{\gamma =2^j a, 0\leq j\leq \log(\frac{\varepsilon_0}{a})}V_{0,h,\gamma}(t,x,y)=\frac{1}{(2\pi)^3}\frac{1}{h^{d+1}}\int \psi(|\eta|)q(\eta)\phi_{\varepsilon_0}(\alpha)\\
\times e^{\frac ih(<y,\eta>+t|\eta|^2(1+\alpha q(\eta/|\eta|))+q^{1/2}(\eta)(\frac{\sigma^3}{3}+\sigma(x-\alpha)+\frac{s^3}{3}+s(a-\alpha)))} \,d\sigma ds d\alpha d\eta.
\end{multline}
Set $\xi_1=\frac{s+\sigma}{2}$ and $\xi_2=\frac{\sigma-s}{2}$, then $\sigma=\xi_1+\xi_2$ and $s=\xi_1-\xi_2$; the phase in the above integral becomes $<y,\eta>+t|\eta|^2(1+\alpha q(\eta/|\eta|))+q^{1/2}(\eta)(\frac 23 \xi_1^3+2\xi_1\xi_2^2+\xi_1(x+a-2\alpha)+\xi_2(x-a))=\Phi_{0,a,1}$. As $\partial^2_{\alpha}\Phi_{0,a,1}=0$ and $\partial^2_{\xi_1,\alpha}\Phi_{0,a,1}=-2q^{1/2}(\eta)$, the usual stationary phase applies in both $\xi_1,\alpha$ and yields a factor $h$. The critical points are $\xi_{1,c}=\frac{tq^{1/2}(\eta)}{2}$, $\alpha_c=\xi_{1,c}^2+\xi_2^2+\frac{x+a}{2}$. The critical point with respect to $\xi_2$ satisfies $\partial_{\xi_2}\Phi_{0,a,1}|_{\xi_{1,c},\alpha_c}=q^{1/2}(\eta)(4\xi_{1,c}\xi_2+x-a)$ and the second derivative equals $\partial^2_{\xi_2}\Phi_{0,a,1}|_{\xi_{1,c},\alpha_c}=q^{1/2}(\eta)\times 4\xi_{1,c}=2tq(\eta)$. For $t/h\gg 1$, the stationary phase applies and yields a factor $(h/t)^{1/2}$. We are left with the integration with respect to $\eta$. Using $\alpha\leq \varepsilon_0$ on the support of $\phi_{\varepsilon_0}(\alpha)$ and $x\geq 0$, it follows that $\xi_{1,c}^2+\xi_{2,c}^2\leq \varepsilon_0$. Writing $t|\eta|^2q(\eta/|\eta|)=tq(\eta)=2q^{1/2}(\eta)\xi_{1,c}$, the critical value equals 
\[
t|\eta|^2(1+\alpha_c q(\eta/|\eta|))-q^{1/2}(\eta)(\frac 43 \xi_{1,c}^3+4\xi_{1,c}\xi^2_{2,c})=t|\eta|^2+2q^{1/2}(\eta)\xi_{1,c}(\alpha_c-\frac 23 \xi_{1,c}^2-2\xi_{2,c}^2),
\]
and a derivative with respect to $\eta_j$ equals $y_j+2t\eta_j+\partial_{\eta_j}(q^{1/2}(\eta))\xi_{1,c}(\frac 43 \xi_{1,c}^2+x+a)$. We conclude by stationary phase as this yields $\nabla^2_{\eta}\Phi_{0,a,1}|_{\xi_{1,c},\xi_{2,c},\alpha_c}=2t\mathbb{I}_{d-1}(1+O(\varepsilon_0))$. The proof above applies also separately yielding dispersive bounds without loss for each $V_{0,h,\gamma}$.
\end{proof}

As we set $t>0$, from now on we only consider $N\geq 1$.
\begin{prop}\label{propcritptsalphaeta}
Let $N\geq 1$. The phase function $\Phi_{N,a,\gamma}$ can have at most one critical point $(\alpha_c,\eta_c)$ on the support $[\frac 12, \frac 32]$ of the symbol. At critical points in $(\alpha,\eta)$,  the determinant of the Hessian  is comparable to $ t^{d-1}\times \gamma^{3/2}N$. Stationary phase applies in both $\alpha\in [1/2,3/2]$ and $\eta\in\mathbb{R}^{d-1}$ and yields a decay factor $(h/t)^{(d-1)/2}\times{(\lambda_{\gamma}N)}^{-1/2}$.
\end{prop}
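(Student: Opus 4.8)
The plan is to regard $(\sigma,s)$ as parameters — by Remark~\ref{rmqA} we may assume $|\sigma|,|s|\le 2\sqrt\alpha\lesssim 1$ on the support of the amplitude — and to perform a non-degenerate stationary phase in the $d$ variables $(\alpha,\eta)$ with large parameter $1/h$; note that $\Phi_{N,a,\gamma}$ in \eqref{eq:bis49PhiN} is $h$-independent and that the amplitude $q(\eta)\psi(|\eta|)\psi_2(\alpha)\chi(\tfrac{\sigma}{2\sqrt\alpha})\chi(\tfrac{s}{2\sqrt\alpha})e^{iNB(q^{1/2}(\eta)\lambda_\gamma\alpha^{3/2})}$ has $C^k$ norms in $(\alpha,\eta)$ bounded uniformly in all parameters (for the exponential factor, $NB(q^{1/2}(\eta)\lambda_\gamma\alpha^{3/2})\sim ht/\gamma^2\lesssim 1$ with bounded $\eta,\alpha$-derivatives, by Lemma~\ref{lemL}, Remark~\ref{rmqB} and $\alpha\sim1$). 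First I compute, using $|\eta|^2q(\eta/|\eta|)=q(\eta)$,
\[
\partial_\alpha\Phi_{N,a,\gamma}=\gamma q^{1/2}(\eta)\bigl(t\,q^{1/2}(\eta)-\gamma^{1/2}(\sigma+s+2N\sqrt\alpha)\bigr),\qquad \partial_\alpha^2\Phi_{N,a,\gamma}=-\gamma^{3/2}q^{1/2}(\eta)\,N/\sqrt\alpha .
\]
As $\partial_\alpha^2\Phi_{N,a,\gamma}$ has constant sign and never vanishes on $\alpha\in[\tfrac12,\tfrac32]$, for each $\eta$ the equation $\partial_\alpha\Phi_{N,a,\gamma}=0$ has at most one root $\alpha_c=\alpha_c(\eta)$, given by $2N\sqrt{\alpha_c}=\tfrac{t}{\sqrt\gamma}q^{1/2}(\eta)-(\sigma+s)$; since $\alpha_c,q^{1/2}(\eta)\sim1$ and $|\sigma+s|\lesssim1$ this forces $N\sim t/\sqrt\gamma$, so in the range $t\gtrsim\sqrt\gamma$ where \eqref{optloss} is saturated one has $N\ge1$ and $N\sim t/\sqrt\gamma$. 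If $\Phi_{N,a,\gamma}$ is non-stationary in $\alpha$, integrating by parts in $\alpha$ and summing over the $O(1/\sqrt\gamma)$ relevant $N$ (Proposition~\ref{propcardN}) gives an $O(h^\infty)$ contribution.

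Next I substitute $\alpha=\alpha_c(\eta)$ into $\nabla_\eta\Phi_{N,a,\gamma}=0$, i.e.\
\[
y_j+2t\eta_j+t\gamma\alpha\,\partial_{\eta_j}q(\eta)+\gamma^{3/2}\partial_{\eta_j}(q^{1/2})(\eta)\Bigl(\tfrac{\sigma^3}{3}+\sigma(\tfrac x\gamma-\alpha)+\tfrac{s^3}{3}+s(\tfrac a\gamma-\alpha)-\tfrac43N\alpha^{3/2}\Bigr)=0 .
\]
Since $N\sim t/\sqrt\gamma$ one gets $\partial_{\eta_j}\alpha_c=\tfrac{t\sqrt{\alpha_c}}{N\sqrt\gamma}\partial_{\eta_j}(q^{1/2})(\eta)=O(1)$, while the parenthesis is $O(N)$ (recall $N\ge1$), so the last term is $O(\gamma^{3/2}N)=O(\gamma t)$; hence the left-hand side equals $y+2t\eta\,(1+O(\gamma))$ and its total $\eta$-differential along $\alpha=\alpha_c(\eta)$ equals $2t\,\mathbb{I}_{d-1}(1+O(\gamma))$, invertible for $\gamma\le\varepsilon_0$ small. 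By the near-identity structure of this Jacobian the map $\eta\mapsto\nabla_\eta\Phi_{N,a,\gamma}(\eta,\alpha_c(\eta))$ is injective on a neighbourhood of the support of $\psi(|\eta|)$, whence at most one $\eta_c$, and therefore at most one critical point $(\alpha_c,\eta_c)$ of $\Phi_{N,a,\gamma}$ in $(\alpha,\eta)$.

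It remains to compute the $d\times d$ Hessian of $\Phi_{N,a,\gamma}$ in $(\alpha,\eta)$ and apply stationary phase. The $\eta$-block equals $2t\,\mathbb{I}_{d-1}+O(\gamma t)$ (both the $t\gamma\alpha$-term, whose $\eta$-Hessian is $O(\gamma t)$, and the $\gamma^{3/2}(\cdots)$-term, whose $\eta$-Hessian is $O(\gamma^{3/2}N)=O(\gamma t)$, contribute $O(\gamma t)$), the pure $\alpha$-entry is $-\gamma^{3/2}q^{1/2}(\eta)N/\sqrt\alpha\sim-\gamma^{3/2}N$, and the mixed entries $\partial^2_{\alpha\eta_j}\Phi_{N,a,\gamma}=t\gamma\,\partial_{\eta_j}q(\eta)-\gamma^{3/2}(\sigma+s+2N\sqrt\alpha)\partial_{\eta_j}(q^{1/2})(\eta)$ are $O(\gamma t)$. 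Taking the Schur complement of the positive-definite $\eta$-block corrects the $\alpha$-entry by $O((\gamma t)^2/t)=O(\gamma^2 t)$, negligible against $\gamma^{3/2}N\sim\gamma t$ since $\gamma^2 t/(\gamma^{3/2}N)\sim\gamma\ll1$; thus
\[
\det\mathrm{Hess}_{(\alpha,\eta)}\Phi_{N,a,\gamma}=(2t)^{d-1}(1+O(\gamma))\cdot\bigl(-\gamma^{3/2}q^{1/2}(\eta)N/\sqrt\alpha\bigr)(1+O(\gamma)),
\]
comparable in absolute value to $t^{d-1}\gamma^{3/2}N$. Since $t/h\ge1$ and $\lambda_\gamma N\ge\lambda_\gamma\ge h^{-3\epsilon/2}\gg1$, the phase genuinely oscillates in every one of the $d$ directions, stationary phase applies, and the $(\alpha,\eta)$-integral in \eqref{eq:bis48bis} is bounded by a constant times $h^{d/2}|\det\mathrm{Hess}_{(\alpha,\eta)}\Phi_{N,a,\gamma}|^{-1/2}\sim h^{d/2}(t^{d-1}\gamma^{3/2}N)^{-1/2}=(h/t)^{(d-1)/2}(\lambda_\gamma N)^{-1/2}$, using $h/\gamma^{3/2}=1/\lambda_\gamma$; absent a critical point the same integrations by parts give $O(h^\infty)$.

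The main obstacle is twofold. First, every $O(\gamma)$ and $O(\gamma^{3/2}N/t)=O(\gamma)$ error above must be made \emph{uniform} in $(\sigma,s)$ over their compact range, in $N\sim t/\sqrt\gamma$, in $\gamma\in[\sup(a,h^{2/3}),\varepsilon_0]$ and down to $t\sim h$ — i.e.\ one must certify that neither the $\gamma$-dependent terms of $\Phi_{N,a,\gamma}$ nor the remainder $B$ absorbed into the amplitude ever destroys the non-degeneracy of the $(\alpha,\eta)$ critical point, and one must control the amplitude derivatives entering the stationary-phase remainder. Second, in the borderline range $t\lesssim\sqrt\gamma$ only small $N$ is possible and the $\gamma^{3/2}$-terms in the $\eta$-block are no longer dominated by $2t\,\mathbb{I}_{d-1}$, so the Hessian must be re-examined there (its tangential $\eta$-directions then see the degree $-1$ form $\partial^2(q^{1/2})$); precisely because $\gamma^{3/2}\gtrsim t$ in that regime, however, the resulting stationary-phase decay is \emph{at least} as good as $(h/t)^{(d-1)/2}(\lambda_\gamma N)^{-1/2}$, so the estimate still holds.
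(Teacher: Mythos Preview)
Your proof is correct and follows essentially the same approach as the paper: compute $\partial_\alpha\Phi$, $\nabla_\eta\Phi$ and the full $(\alpha,\eta)$-Hessian, check that the $\eta$-block is $2t\,\mathbb{I}_{d-1}(1+O(\gamma))$, the $\alpha$-entry is $\sim -\gamma^{3/2}N$, and the mixed entries are $O(\gamma t)$, then apply stationary phase. Your use of the Schur complement to control the off-diagonal contribution is a clean way to justify what the paper does more implicitly (treating the $\alpha$ and $\eta$ stationary phases as effectively decoupled); both arrive at the same determinant $\sim t^{d-1}\gamma^{3/2}N$. The final paragraph of your write-up, flagging uniformity in $(\sigma,s,N,\gamma,t)$ and the $N=1$ borderline, is caution rather than a gap: the paper handles the $N=1$ case by the explicit lower bound \eqref{boundfortstN}, and all your $O(\gamma)$ errors are indeed uniform on the compact support of the amplitude, so you may simply drop that discussion.
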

\begin{proof}
The derivatives of the phase $\Phi_{N,a,\gamma}$ with respect to $\alpha,\eta$ are
\begin{gather*}
\partial_{\alpha}\Phi_{N,a,\gamma}=\gamma^{3/2}q^{1/2}(\eta)\Big(\frac{t}{\sqrt{\gamma}}q^{1/2}(\eta)-(\sigma+s)-2N\sqrt{\alpha}\Big),\\
\nabla_{\eta}\Phi_{N,a,\gamma}=y+2\eta t+\frac{\gamma^{3/2}\nabla q(\eta)}{2q^{1/2}(\eta)}\Big(\frac{\sigma^{3}} 3+\sigma(\frac{x}{\gamma}-\alpha)
     +\frac {s^{3}} 3+s(\frac{a}{\gamma}-\alpha)-\frac 43 N \alpha^{3/2}+\frac{2\alpha t}{\sqrt{\gamma}}q^{1/2}(\eta) \Big).
\end{gather*}
At $\partial_{\alpha}\Phi_{N,a,\gamma}=0$ and $\nabla_{\eta}\Phi_{N,a,\gamma}=0$, critical points are such that
\begin{equation}\label{eq:alphac}
\sqrt{\alpha}=\frac{tq^{1/2}(\eta)}{2N\sqrt{\gamma}}-\frac{s+\sigma}{2N}
\end{equation}
and also (replacing $2N\sqrt{\alpha}$ by $\frac{t}{\sqrt{\gamma}}q^{1/2}(\eta)-(\sigma+s)$ in the expression of $\nabla_{\eta}\Phi_{N,a,\gamma}$)
\begin{equation}\label{eq:etac}
2t\Big(\eta+\frac 12 \gamma \alpha\nabla q(\eta)\Big)=-y-\gamma^{3/2}\frac{\nabla q(\eta)}{2q^{1/2}(\eta)}\Big[\frac{\sigma^3}{3}+\sigma\frac{x}{\gamma}+\frac{s^3}{s}+s\frac{a}{\gamma}-\frac{(s+\sigma)\alpha}{3}\Big].
\end{equation}
From \eqref{condstatalpha} (and support condition on $\eta,\alpha$), a critical point $\alpha_c\in [\frac 12,\frac 32]$ does exist only if 
\begin{equation}\label{suppcondt}
(1-1/N)\frac{\sqrt{1/2}}{3M_0/2}\leq \frac{t}{2N\sqrt{\gamma}}\leq (1+1/N)\frac{\sqrt{3/2}}{m_0/2}\,.
\end{equation}
For $N\geq 2$, fix $M$ sufficiently large such that $[(1-1/2)\frac{\sqrt{1/2}}{3M_0/2}, (1+1/2)\frac{\sqrt{3/2}}{m_0/2}]\subset [1/M,M]$, then \eqref{eq:alphac} may have a solution on the support of $\psi_2$ only when $\frac{t}{2N\sqrt{\gamma}}\in [1/M,M]$. %
 For $N=1$, we obtain the upper bound $\frac{t}{2\sqrt{\gamma}}\leq \frac{4}{m_0}\sqrt{3/2}$ but also, using \eqref{statssigma}, the following lower bounds : either $s+\sigma\geq -\frac 32\sqrt{\alpha}$, in which case $\frac{t}{2\sqrt{\gamma}}\geq\frac{\sqrt{\alpha}}{4|\eta|M_0}$, or $(s+\sigma)\leq -\frac 32\sqrt{\alpha}$ in which case both $s$ and $\sigma$ must take non positive values and in this case
\begin{equation}\label{boundfortstN}
q^{1/3}(\eta)\frac{t}{2\sqrt{\gamma}}\geq \sqrt{\alpha}+\frac{s+\sigma}{2}\geq \frac{a/\gamma}{2(\sqrt{\alpha}-s)}+ \frac{x/\gamma}{2(\sqrt{\alpha}-\sigma)}\geq \frac{a/\gamma}{4\sqrt{\alpha}}.
\end{equation}
Hence, for $t\leq \frac{a/\sqrt{\gamma}}{2\sqrt{3/2}M_0^{2/3}}$ the flow does not reach the boundary (no reflections). 

Let $N\geq 1$ and $t\geq \frac{a/\sqrt{\gamma}}{2\sqrt{3/2}M_0^{2/3}}$ (otherwise the phase is non-stationary). As $\alpha\in [\frac 12,\frac 32]$ and $\gamma\leq \varepsilon_0$, \eqref{eq:etac} may have a critical point $\eta_c$ only when $|y|/2t\in[\frac 12+O(\varepsilon_0),\frac 32+O(\varepsilon_0)]$.  Using $\partial_{\eta_j}q(\eta)=2q_{j,j}\eta_j+\sum_{k\neq j}q_{j,k}\eta_k$, $q_{j,k}=q_{k,j}$ the second order derivatives become
\begin{gather}
\partial^2_{\alpha,\alpha}\Phi_{N,a,\gamma}=-\gamma^{3/2}q^{1/2}(\eta)\frac{N}{\sqrt{\alpha}}\,,\quad\quad%
\partial_{\eta_j}\partial_{\alpha}\Phi_{N,a,\gamma}=\frac{\partial_{\eta_j} q(\eta)}{2q(\eta)}\partial_{\alpha}\Phi_{N,a,\gamma}+\gamma^{3/2}\frac{t}{2\sqrt{\gamma}}\partial_{\eta_j} q(\eta),\\
\partial^2_{\eta_j,\eta_j}\Phi_{N,a,\gamma}=
\begin{multlined}[t]
  2t\Big(1+\gamma \alpha \frac{(\partial_{\eta_j}q(\eta))^2}{4q(\eta)}\Big)+\frac{\gamma^{3/2}}{q^{1/2}(\eta)}\Big(q_{j,j}-\frac{(\partial_{\eta_j}q(\eta))^2}{4q(\eta)}\Big)\\
{}  \times \Big(\frac{\sigma^{3}} 3+\sigma(\frac{x}{\gamma}-\alpha)
     +\frac {s^{3}} 3+s(\frac{a}{\gamma}-\alpha)-\frac 43 N \alpha^{3/2}+2\alpha\frac{t}{\sqrt{\gamma}}q^{1/2}(\eta) \Big)\,, \end{multlined}\\
   \partial^2_{\eta_j,\eta_k}\Phi_{N,a,\gamma}=\begin{multlined}[t]
     2t\gamma\alpha \frac{\partial_{\eta_j} q(\eta)}{2q^{1/2}(\eta)}\frac{\partial_{\eta_k} q(\eta)}{2q^{1/2}(\eta)}+\frac{\gamma^{3/2}}{q^{1/2}(\eta)}\Big(q_{j,k}-\frac{\partial_{\eta_j}q(\eta)\partial_{\eta_k}q(\eta)}{4q(\eta)}\Big)\\
     {}\times \Big(\frac{\sigma^{3}} 3+\sigma(\frac{x}{\gamma}-\alpha)
     +\frac {s^{3}} 3+s(\frac{a}{\gamma}-\alpha)-\frac 43 N \alpha^{3/2}+2\alpha\frac{t}{\sqrt{\gamma}}q^{1/2}(\eta) \Big)\,.\end{multlined}
\end{gather}
At the stationary points, $\nabla^2_{\eta,\eta}\Phi_{N,a,\gamma}\sim 2t(1+O(\gamma))\mathbb{I}_{d-1}+O(\gamma^{3/2})$ where $\mathbb{I}_{d-1}$ denotes the identity matrix in dimension $d-1$ ; as $\varepsilon_0<1$ is small we deduce $\nabla^2_{\eta,\eta}\Phi_{N,a,\gamma}\sim 2t \mathbb{I}_{d-1}$. Hence, stationary phase with respect to $\eta$ yields a factor $( h/t)^{\frac{d-1}{2}}$, while  stationary phase in $\alpha$ yields a factor 
$({\lambda_{\gamma}N})^{-1/2}$ for $N\geq 1$.
\end{proof}

\begin{lemma}\label{lemalphacetac}
Let $N\geq 1$ and $a\lesssim \gamma\leq \varepsilon_0$. The critical point $\eta_c$ of $\Phi_{N,a,\gamma}$ is a function of $s+\sigma$, $(\sigma-s)^2$, $(\sigma-s)\frac{(x-a)}{\gamma}$, $\frac{y}{2t}$ and $\frac{t}{2N\sqrt{\gamma}}$. There exists smooth, uniformly bounded (vector valued) functions $\Theta, \tilde\Theta$ depending on the small parameter $\gamma$, such that
\begin{gather}
\eta^0_c:=\eta_c|_{\sigma=s=0}=-\frac{y}{2t}+\gamma \Theta\Big(\frac{y}{2t},\frac{t}{2N\sqrt{\gamma}},\gamma\Big)\,,\\
\Theta\Big(\frac{y}{2t},\frac{t}{2N\sqrt{\gamma}},\gamma\Big)=-\frac 12 (\frac{t}{2N\sqrt{\gamma}})^2 (q\nabla q)(-\frac{y}{2t})+\gamma\tilde \Theta_{}(\frac{y}{2t},\frac{t}{2N\sqrt{\gamma}},\gamma)\,.
\end{gather}
Moreover, ${\Theta}_1:=\frac{t}{\gamma^{3/2}}\partial_{\sigma}\eta_c$ and ${\Theta}_2:=\frac{t}{\gamma^{3/2}}\partial_{s}\eta_c$ are smooth, uniformly bounded functions.
\end{lemma}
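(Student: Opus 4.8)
The plan is to refine Proposition \ref{propcritptsalphaeta}: the critical point equations \eqref{eq:alphac}--\eqref{eq:etac} determine $(\alpha_c,\eta_c)$, and I would solve them for $\eta_c$ by the implicit function theorem, treating $\gamma\le\varepsilon_0$ as a small parameter. Setting $\mu:=\frac{t}{2N\sqrt\gamma}$ and using $\frac1{2N}=\frac{\sqrt\gamma}{t}\mu$, equation \eqref{eq:alphac} reads $\sqrt{\alpha_c}=\mu\,q^{1/2}(\eta)-\frac{\sigma+s}{2N}$, so $\alpha_c$ is an explicit smooth bounded function of $(\eta,\sigma+s,\mu,N)$, equal to $\mu^2q(\eta)$ when $\sigma=s=0$. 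Substituting into \eqref{eq:etac} and dividing by $2t$ puts the $\eta$-equation in the fixed point form
\[
\eta=-\frac{y}{2t}-\frac12\gamma\,\alpha_c\,\nabla q(\eta)-\frac{\gamma^{3/2}}{4t}\,\frac{\nabla q(\eta)}{q^{1/2}(\eta)}\,\mathcal R=:\mathcal G(\eta,\sigma,s,t,x,a,y,N,\gamma),
\]
with $\mathcal R=\tfrac{\sigma^3+s^3}{3}+\sigma\tfrac{x}{\gamma}+s\tfrac a\gamma-\tfrac{\alpha_c(\sigma+s)}{3}$. On the symbol support one has $|\eta|,\alpha_c\in[\tfrac12,\tfrac32]$, $|\sigma|,|s|\lesssim1$ (Remark \ref{rmqA}), $0\le x\le a\lesssim\gamma$ (Remark \ref{rmqxgama}), and --- crucially --- by \eqref{condstatalpha}--\eqref{suppcondt} a critical point with $N\ge1$ forces $\mu\sim1$, hence $t\gtrsim\sqrt\gamma$ and $\tfrac{t}{N}\sim\sqrt\gamma$; in particular $\tfrac{\gamma^{3/2}}{t}\lesssim\gamma$. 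Thus $\mathcal G=-\tfrac{y}{2t}+O(\gamma)$ with $\partial_\eta\mathcal G=O(\gamma)$, so $\mathcal G$ is a contraction in $\eta$ for $\varepsilon_0$ small and the implicit function theorem yields a unique smooth, uniformly bounded critical point $\eta_c$ satisfying $\eta_c=-\tfrac{y}{2t}+O(\gamma)$.

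Next I would read off the dependence. Writing $p=\sigma+s$, $d=\sigma-s$, one has $\sigma^3+s^3=\tfrac{p(p^2+3d^2)}{4}$ and, since $x=a+(x-a)$, $\sigma\tfrac x\gamma+s\tfrac a\gamma=p\tfrac a\gamma+\tfrac p2\tfrac{x-a}{\gamma}+\tfrac d2\tfrac{x-a}{\gamma}$; as $\alpha_c$ depends on $(\sigma,s)$ only through $p$, the pair $(\sigma,s)$ enters $\mathcal G$ only through $p$, $d^2$ and $d\,\tfrac{x-a}{\gamma}$ (with $\tfrac a\gamma$ a fixed parameter), while $t$ and $N$ enter only through $\tfrac{y}{2t}$, $\mu$ and $\tfrac{\gamma^{3/2}}{t}=\tfrac{\gamma}{2N\mu}$; by the smooth dependence in the implicit function theorem $\eta_c$ is therefore a smooth function of $\bigl(\sigma+s,(\sigma-s)^2,(\sigma-s)\tfrac{x-a}{\gamma},\tfrac{y}{2t},\tfrac{t}{2N\sqrt\gamma}\bigr)$ (and of the auxiliary parameters $\gamma,N$). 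Setting $\sigma=s=0$ makes $\mathcal R\equiv0$ and $\alpha_c=\mu^2q(\eta)$, so $\eta^0_c$ solves $\eta=-\tfrac{y}{2t}-\tfrac12\gamma\mu^2(q\nabla q)(\eta)$; one Newton step from $\eta=-\tfrac{y}{2t}$ and the contraction property give $\eta^0_c=-\tfrac{y}{2t}-\tfrac12\gamma\mu^2(q\nabla q)(-\tfrac{y}{2t})+O(\gamma^2)$, which is exactly the claimed formula, with $\Theta=-\tfrac12\mu^2(q\nabla q)(-\tfrac{y}{2t})+\gamma\tilde\Theta$; smoothness and uniform boundedness of $\Theta$ and $\tilde\Theta$ follow from the implicit function theorem together with Hadamard's lemma (the dependence being over compact ranges of the parameters).

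Finally, for the derivative bounds I would differentiate the identity $\eta_c=\mathcal G(\eta_c,\sigma,s,\dots)$ in $\sigma$ (and symmetrically in $s$): since $\partial_\eta\mathcal G=O(\gamma)$ one gets $\partial_\sigma\eta_c=(1+O(\gamma))\,\partial_\sigma\mathcal G$, so it suffices to estimate the explicit $\sigma$-derivative of $\mathcal G$ by a short bootstrap. The only place where $\sigma$ enters with a genuinely small prefactor is through $\alpha_c$: from $\sqrt{\alpha_c}=\mu q^{1/2}(\eta_c)-\tfrac{\sigma+s}{2N}$ one finds $\partial_\sigma\alpha_c=-\tfrac{\sqrt{\alpha_c}}{N}+O(\partial_\sigma\eta_c)=O(1/N)=O(\sqrt\gamma/t)$, whence $\partial_\sigma\bigl(-\tfrac12\gamma\alpha_c\nabla q(\eta_c)\bigr)=O(\gamma\cdot\sqrt\gamma/t)=O(\gamma^{3/2}/t)$; the remaining term $-\tfrac{\gamma^{3/2}}{4t}\tfrac{\nabla q}{q^{1/2}}\mathcal R$ has $\partial_\sigma\mathcal R=\sigma^2+\tfrac x\gamma-\tfrac13\bigl(\alpha_c+(\partial_\sigma\alpha_c)(\sigma+s)\bigr)=O(1)$, so it is $O(\gamma^{3/2}/t)$ as well, and the bootstrap closes with $\partial_\sigma\eta_c,\partial_s\eta_c=O(\gamma^{3/2}/t)$. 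Hence $\Theta_1=\tfrac t{\gamma^{3/2}}\partial_\sigma\eta_c$ and $\Theta_2=\tfrac t{\gamma^{3/2}}\partial_s\eta_c$ are uniformly bounded, and smooth by the smooth dependence in the implicit function theorem. The delicate point throughout is the bookkeeping of powers of $\gamma$ --- in particular realizing that on the stationary-phase support $N\sim t/\sqrt\gamma$, which simultaneously makes the $\gamma^{3/2}/t$ corrections small (so that the implicit function theorem applies and $\eta_c=-y/2t+O(\gamma)$) and produces exactly the gain $1/N\sim\sqrt\gamma/t$ needed in the $\partial_\sigma,\partial_s$ estimates.
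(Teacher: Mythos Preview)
Your approach is essentially the same as the paper's: both solve the coupled system \eqref{eq:alphac}--\eqref{eq:etac} for $\eta_c$ via the implicit function theorem with $\gamma$ as small parameter, then read off the claimed dependence by rewriting $\sigma^3+s^3$ and $\sigma\tfrac x\gamma+s\tfrac a\gamma$ in terms of $\sigma+s$, $(\sigma-s)^2$, $(\sigma-s)\tfrac{x-a}{\gamma}$, and finally differentiate the fixed-point identity to bound $\partial_\sigma\eta_c,\partial_s\eta_c$.

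One point where the paper is slightly more careful: your claim that \eqref{condstatalpha}--\eqref{suppcondt} force $\mu=\tfrac{t}{2N\sqrt\gamma}\sim 1$ for \emph{all} $N\ge1$ is not quite right when $N=1$, since the lower bound in \eqref{suppcondt} is trivial there. The paper separates $N\ge 2$ (where indeed $\mu\in[1/M,M]$, so $\gamma^{3/2}/t\sim\gamma/N\le\gamma$) from $N=1$ (where $t$ may be small and $\gamma^{3/2}/t\gtrsim\gamma$). At $\sigma=s=0$ this causes no trouble because your $\mathcal R$ vanishes and only boundedness of $\mu$ is needed to solve \eqref{eqThetaetac} for $\Theta$; for the derivative step, when $N=1$ one has $\gamma/N=\gamma\lesssim\gamma^{3/2}/t$, so both contributions on the right of your differentiated identity are $O(\gamma^{3/2}/t)$ and the inversion (of $I+O(\gamma)+O(\gamma^{3/2}/t)$) still yields $\partial_\sigma\eta_c=O(\gamma^{3/2}/t)$. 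This is a bookkeeping refinement rather than a different idea.
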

\begin{proof}
We start with the second statement.
Let first $N\geq 2$ and define $M$ as follows
\begin{equation}\label{defM}
M:=4\sup\Big\{\frac{\sqrt{3/2}}{m_0-\varepsilon_0},\frac{M_0+\varepsilon_0}{\sqrt{1/2}}\Big\}, \text{ with } m_0,M_0 \text{ introduced in \eqref{limitshalfq}},
\end{equation}
and assume, without loss of generality, $0<\varepsilon_0<m_0/2$. 
Then $M$ is large enough so that $\Big[(1-1/2)\frac{\sqrt{1/2}}{3M_0/2}, (1+1/2)\frac{\sqrt{3/2}}{m_0/2}\Big]\subset [1/M,M]$ and for $\frac{t}{2N\sqrt{\gamma}}\in [1/M,M]$ and $\frac{|y|}{2t}\in[\frac 14,2]$, 
the critical points $\alpha_c$ and $\eta_c$ of $\Phi_{N,a,\gamma}$ solve \eqref{eq:alphac} and \eqref{eq:etac}. Let $\eta^0_c:=\eta_{c}|_{\sigma=s=0}$ denote the value of $\eta_c$ at $\sigma=s=0$, then, using \eqref{eq:etac}, $\eta^0_c$ solves the following equation,
\[
\eta^0_c+\frac 12\gamma \Big(\frac{t}{2N\sqrt{\gamma}}\Big)^2q(\eta^0_c)\nabla q(\eta^0_c)=-\frac{y}{2t}\,.
\]
For $\frac{t}{2N\sqrt{\gamma}}\in [1/M,M]$, writing $\eta^0_c=-\frac{y}{2t}+\gamma \Theta(\frac{y}{2t},\frac{t}{2N\sqrt{\gamma}},\gamma)$, yields, for $\Theta(\frac{y}{2t},\frac{t}{2N\sqrt{\gamma}},\gamma)$ 
\begin{equation}\label{eqThetaetac}
\Theta+\frac 12(\frac{t}{2N\sqrt{\gamma}})^2 (q\nabla q)(-\frac{y}{2t}+\gamma \Theta)=0\,,
\end{equation}
which further reads as follows, with $\Theta=(\Theta^{(1)},...,\Theta^{{(d-1)}})$ and for all $1\leq l\leq d-1$
\[
\Theta^{(l)}+(\frac{t}{2N\sqrt{\gamma}})^2\sum_{j,k,p}q_{j,k}q_{p,l}(-\frac{y_j}{2t}+\gamma \Theta^{(j)})(-\frac{y_k}{2t_k}+\gamma \Theta^{(k)})(-\frac{y_p}{2t}+\gamma \Theta^{(p)})=0\,.
\]
As $\gamma\leq \varepsilon_0$, this equation has an unique solution, which is a smooth function of $(\frac{y}{2t},\frac{t}{2N\sqrt{\gamma}},\gamma)$ and $\Theta^{(l)}=(\frac{t}{2N\sqrt{\gamma}})^2\Big(\sum_{j,k,p}q_{j,k}q_{p,l}(\frac{y_j}{2t})(\frac{y_k}{2t})(\frac{y_p}{2t})\Big)+\gamma \tilde \Theta^{(l)}$, where $\tilde \Theta=(\tilde \Theta^{(1)},..,\tilde\Theta^{{(d-1)}})$ is a smooth function of $(\frac{y}{2t},\frac{t}{2N\sqrt{\gamma}},\gamma)$. For $N=1$, $t$ may take (very) small values but does not vanish where $\Phi_{N=1,a,\gamma}$ may be stationary and therefore \eqref{eqThetaetac} still holds and $\frac{|y|}{2t}\in[\frac 14,2]$, hence we obtain $\Theta$ in the same way. We now prove that for all $N\geq 1$, $\eta_c$ is a function of $s+\sigma$, $(\sigma-s)^2$, $(\sigma-s)\frac{(x-a)}{\gamma}$, $\frac{y}{2t}$ and $\frac{t}{2N\sqrt{\gamma}}$. This will be useful later on, especially in the proof of upcoming Proposition \ref{dispNpetitpres}. Inserting \eqref{eq:alphac} in \eqref{eq:etac} yields
\begin{multline}\label{eqetacc}
  \eta_c+\frac{\gamma}{2}\Big(\frac{t}{2N\sqrt{\gamma}}q^{1/2}(\eta_c)-\frac{\sigma+s}{2N}\Big)^2\nabla q(\eta_c)=-\frac{y}{2t}-\frac{\gamma^{3/2}}{2t}\frac{\nabla q(\eta_c)}{2q^{1/2}(\eta_c)}\\
  {}\times \Big[\frac{\sigma^3}{3}+\sigma\frac{x}{\gamma}+\frac{s^3}{3}+s\frac{a}{\gamma}-\frac{(s+\sigma)}{3}\Big(\frac{t}{2N\sqrt{\gamma}}q^{1/2}(\eta_c)-\frac{\sigma+s}{2N}\Big)^2\Big].
\end{multline}
It follows that $\eta_c$ is a function of $(s+\sigma)$ and $\frac{\sigma^3}{3}+\sigma\frac{x}{\gamma}+\frac{s^3}{3}+s\frac{a}{\gamma}$ and writing the last term under the form $\frac{(s+\sigma)^3}{3}-4(s+\sigma)\Big((s+\sigma)^2-(s-\sigma)^2\Big)
+(s+\sigma)\frac{(x+a)}{2\gamma}+(\sigma-s)\frac{(x-a)}{2\gamma}$
allows to conclude. Taking now the derivative with respect to $\sigma$ in \eqref{eqetacc} yields
\begin{equation}\label{etacderivsmall}
\partial_{\sigma}\eta_c\Big(\mathbb{I}_{d-1}+O(\gamma)+O\Big(\frac {\gamma^{\frac 32}}{t}\Bigr)\Big) %
=\frac{\gamma\nabla q(\eta_c)}{2N}+\frac{\gamma^{\frac 32}\nabla q(\eta_c)}{4 t q^{\frac 12}(\eta_c)}\Big[\sigma^2+\frac{x}{\gamma}+\frac{{\alpha_c^{\frac 1 2 }}}{3}\Big(\frac{s+\sigma}{N}-{\alpha_c^{\frac 1 2}}\Big)\Big],
\end{equation}
where the second and third terms in brackets in the first line of \eqref{etacderivsmall} are smooth, bounded functions of $\eta_c, \frac{t}{2N\sqrt{\gamma}},(s+\sigma)$ and $\frac{\sigma^3}{3}+\sigma\frac{x}{\gamma}+\frac{s^3}{3}+s\frac{a}{\gamma}$ with coefficients $\gamma$ and $\gamma^{3/2}/t$, respectively. Let first $N\geq 2$, then using $\frac{t}{2N\sqrt{\gamma}}\in [1/M,M]$ we find $\gamma^{3/2}/t\sim \gamma/N$ and therefore $\partial_{\sigma}\eta_c=O(\gamma^{3/2}/t)$. In the same way we obtain $\partial_{s}\eta_c=O(\gamma^{3/2}/t)$. 
Let now $N=1$, then $\gamma^{3/2}/t\gtrsim \gamma$ whenever the phase may be stationary, and therefore we still find $\partial_{\sigma}\eta_c=O(\gamma^{3/2}/t)$ and $\partial_{s}\eta_c=O(\gamma^{3/2}/t)$. Therefore, ${\Theta}_1:=\frac{t}{\gamma^{3/2}}\partial_{\sigma}\eta_c$ (and ${\Theta}_2:=\frac{t}{\gamma^{3/2}}\partial_{s}\eta_c$), respectively)  is a smooth and uniformly bounded vector valued function depending on 
 $\sigma+s,\sigma^2+\frac{x}{\gamma},\sigma^3/3+\sigma\frac{x}{\gamma}+s^3/3+s\frac{a}{\gamma}$ and $(\frac{t}{2N\sqrt{\gamma}},\frac{y}{2t},\gamma)$ (and, respectively, on $\sigma+s,s^2+\frac{a}{\gamma},\sigma^3/3+\sigma\frac{x}{\gamma}+s^3/3+s\frac{a}{\gamma}$ and $(\frac{t}{2N\sqrt{\gamma}},\frac{y}{2t},\gamma)$). In the following  we write $\Theta_j=\Theta_j\Big(\sigma,s,\frac{t}{2N\sqrt{\gamma}},\frac{x}{\gamma},\frac{a}{\gamma},\frac{y}{2t},\gamma\Big)$ for $j\in\{1,2\}$.
\end{proof}
\begin{lemma}\label{lemalphacetacalphac}
For all $N\geq 1$, the critical point $\alpha_c$ is such that
\begin{equation}\label{eq:alpha_c21}
\sqrt{\alpha_c}=
\frac{t}{2N\sqrt{\gamma}}q^{1/2}\Big(\eta^0_c\Big)-\frac{\sigma}{2N}(1-\gamma\mathcal{E}_1)-\frac{s}{2N}(1-\gamma\mathcal{E}_2),
\end{equation}
where $\mathcal{E}_j$ are smooth, uniformly bounded functions:
\begin{gather}\label{f-1}
\mathcal{E}_1:=<\int_0^1{\Theta}_1\Big(o\sigma,os,\frac{t}{2N\sqrt{\gamma}},\frac{x}{\gamma},\frac{a}{\gamma},\frac{y}{2t},\gamma\Big)do,\int_0^1\frac{\nabla q}{2q^{1/2}}(o\eta^0_c+(1-o)\eta_c)do>,\\
\label{f-2}
\mathcal{E}_2:<=\int_0^1{\Theta}_2\Big(o\sigma,os,\frac{t}{2N\sqrt{\gamma}},\frac{x}{\gamma},\frac{a}{\gamma},\frac{y}{2t},\gamma\Big)do,\int_0^1\frac{\nabla q}{2q^{1/2}}(o\eta^0_c+(1-o)\eta_c)do>.
\end{gather}
\end{lemma}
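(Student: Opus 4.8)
The plan is to extract $\alpha_c$ directly from the stationarity relation \eqref{eq:alphac}, which at the critical point reads $\sqrt{\alpha_c}=\frac{t}{2N\sqrt{\gamma}}q^{1/2}(\eta_c)-\frac{\sigma+s}{2N}$, and then to trade $q^{1/2}(\eta_c)$ for $q^{1/2}(\eta^0_c)$ up to an explicit error term. All the input needed is already contained in Lemma \ref{lemalphacetac}: $\eta_c$ depends smoothly on $(\sigma,s)$ (the remaining parameters being frozen), $\eta^0_c=\eta_c|_{\sigma=s=0}$, and $\partial_\sigma\eta_c=\frac{\gamma^{3/2}}{t}\Theta_1$, $\partial_s\eta_c=\frac{\gamma^{3/2}}{t}\Theta_2$ with $\Theta_1,\Theta_2$ smooth and uniformly bounded. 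On the support under consideration one has $|\sigma|,|s|\lesssim 1$ and $\eta_c\in[\frac12,\frac32]^{d-1}$, so $q^{1/2}(\eta_c)$ stays bounded away from $0$, and both the segment $[\eta^0_c,\eta_c]$ and the curve $o\mapsto\eta_c(o\sigma,os)$, $o\in[0,1]$, remain inside the region where Lemma \ref{lemalphacetac} applies.

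First I would invoke the fundamental theorem of calculus along $o\mapsto(o\sigma,os)$,
\[
\eta_c-\eta^0_c=\int_0^1\frac{d}{do}\eta_c(o\sigma,os)\,do=\frac{\gamma^{3/2}}{t}\Big(\sigma\int_0^1\Theta_1(o\sigma,os,\dots)\,do+s\int_0^1\Theta_2(o\sigma,os,\dots)\,do\Big),
\]
and then the fundamental theorem of calculus along the segment from $\eta^0_c$ to $\eta_c$, using $\nabla(q^{1/2})=\frac{\nabla q}{2q^{1/2}}$,
\[
q^{1/2}(\eta_c)-q^{1/2}(\eta^0_c)=\Big\langle\int_0^1\frac{\nabla q}{2q^{1/2}}\big(o\eta^0_c+(1-o)\eta_c\big)\,do\,,\,\eta_c-\eta^0_c\Big\rangle.
\]
Inserting the first display into the second and using symmetry of the inner product gives $q^{1/2}(\eta_c)-q^{1/2}(\eta^0_c)=\frac{\gamma^{3/2}}{t}\big(\sigma\mathcal{E}_1+s\mathcal{E}_2\big)$ with exactly the $\mathcal{E}_1,\mathcal{E}_2$ of \eqref{f-1}--\eqref{f-2}. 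Substituting back into \eqref{eq:alphac} and using the algebraic simplification $\frac{t}{2N\sqrt{\gamma}}\cdot\frac{\gamma^{3/2}}{t}=\frac{\gamma}{2N}$ then produces
\[
\sqrt{\alpha_c}=\frac{t}{2N\sqrt{\gamma}}q^{1/2}(\eta^0_c)+\frac{\gamma}{2N}\big(\sigma\mathcal{E}_1+s\mathcal{E}_2\big)-\frac{\sigma+s}{2N},
\]
which is \eqref{eq:alpha_c21}.

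The smoothness and uniform boundedness of $\mathcal{E}_1,\mathcal{E}_2$ then follow at once from the corresponding properties of $\Theta_1,\Theta_2$ (Lemma \ref{lemalphacetac}) and of $\frac{\nabla q}{2q^{1/2}}$ on $[\frac12,\frac32]^{d-1}$, together with smooth dependence of $\eta_c$, hence $\eta^0_c$, on all parameters. I do not expect a genuine obstacle here; the one step deserving attention is checking that the curve $o\mapsto\eta_c(o\sigma,os)$ is globally well defined on $[0,1]$ with uniform implicit-function-theorem bounds, which holds because the defining equation \eqref{eqetacc} has linearization $\mathbb{I}_{d-1}+O(\gamma)+O(\gamma^{3/2}/t)$, invertible for $\gamma\le\varepsilon_0$ small, uniformly over the admissible ranges $\frac{t}{2N\sqrt{\gamma}}\in[1/M,M]$ and $\frac{|y|}{2t}\in[\frac14,2]$ isolated in Proposition \ref{propcritptsalphaeta} and Lemma \ref{lemalphacetac} — ranges that are stable under replacing $(\sigma,s)$ by $(o\sigma,os)$.
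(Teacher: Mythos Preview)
Your proof is correct and follows essentially the same route as the paper: rewrite \eqref{eq:alphac} by adding and subtracting $q^{1/2}(\eta^0_c)$, express $\eta_c-\eta^0_c$ via the fundamental theorem of calculus along $o\mapsto(o\sigma,os)$ using $\partial_\sigma\eta_c=\frac{\gamma^{3/2}}{t}\Theta_1$ and $\partial_s\eta_c=\frac{\gamma^{3/2}}{t}\Theta_2$, then express $q^{1/2}(\eta_c)-q^{1/2}(\eta^0_c)$ via the mean-value integral along the segment $[\eta^0_c,\eta_c]$, and combine. Your extra paragraph verifying that the curve $o\mapsto\eta_c(o\sigma,os)$ stays in the admissible region is a welcome point of rigor that the paper leaves implicit.
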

\begin{proof}
Rewrite \eqref{eq:alphac} as %
$\sqrt{\alpha_c}=\frac{t}{2N\sqrt{\gamma}}q^{1/2}\Big(\eta^0_c\Big)-\frac{(\sigma+s)}{2N}+\frac{t}{2N\sqrt{\gamma}}(q^{1/2}(\eta_c)-q^{1/2}(\eta^0_c))$.
As we have $\eta_c-\eta^0_c=\frac{\gamma^{3/2}}{t}<(\sigma,s),\int_0^1 (\Theta_1,\Theta_2)\Big(o\sigma,os,\frac{t}{2N\sqrt{\gamma}},\frac{x}{\gamma},\frac{a}{\gamma},\frac{y}{2t},\gamma\Big)do>$ and 
\begin{equation}\label{etacvsseta0c}
q^{1/2}(\eta_c)-q^{1/2}(\eta^0_c)=(\eta_c-\eta^0_c)\int_0^1\Big(\frac{\nabla q}{2q^{1/2}}\Big)(o\eta^0_c+(1-o)\eta_c)do,
\end{equation}
defining $\mathcal{E}_j$ as in \eqref{f-1} and \eqref{f-2} yields \eqref{eq:alpha_c21}.
\end{proof}
\begin{cor}\label{coruhgam}
There exist $C\neq 0$ (independent of $h,a,\gamma$), $\tilde \psi\in C^{\infty}_0([\frac 14,2])$ with $\tilde \psi=1$ on the support of $\psi$ such that
\begin{gather}
G_{h,\gamma}(t,x,y)=\frac{C}{h^{d}}\Big(\frac ht\Big)^{(d-1)/2}\tilde\psi\Big(\frac{|y|}{2t}\Big)\sum_{\frac{t}{\sqrt{\gamma}}\sim N\lesssim \frac{1}{\sqrt{\gamma}}} V_{N,h,\gamma}(t,x,y) +O(h^{\infty})\,,\\
V_{N,h,\gamma}(t,x,y)=\frac{\gamma^2}{h}\frac{1}{\sqrt{\lambda_{\gamma}N}}\int e^{\frac ih \phi_{N,a,\gamma}(\sigma,s,t,x,y)}\varkappa(\sigma,s,t,x,y;h,\gamma,1/N)d\sigma ds\,,
\end{gather}
with phase $\phi_{N,a,\gamma}(\sigma,s,t,x,y)=\Phi_{N,a,\gamma}(\eta_c,\alpha_c,\sigma,s,t,x,y)$ and symbol $\varkappa(\cdot;h,\gamma,1/N)$.
\end{cor}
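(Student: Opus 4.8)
The plan is to assemble the reductions and stationary‑phase computations of Propositions \ref{propcardN}--\ref{propcritptsalphaeta} and Lemmas \ref{lemalphacetac}--\ref{lemalphacetacalphac} into one representation, in the order: truncate the sum over $N$, insert the admissible cut‑offs, perform stationary phase in the $(\alpha,\eta)$ variables, and then read off the constant, the power of $h/t$, the localization in $y$, and the residual phase and symbol. Concretely, I would start from \eqref{eq:bis48bis}: by Proposition \ref{propcardN} the sum over $N$ reduces to $|N|\lesssim 1/\sqrt{\gamma}$ modulo $O(h^{\infty})$; by Remark \ref{rmqA} one may freely insert the cut‑offs $\chi(\sigma/(2\sqrt{\alpha}))\chi(s/(2\sqrt{\alpha}))$, confining $(\sigma,s)$ to the region \eqref{statssigma} where the phase can be stationary in $\sigma,s$; and by Remark \ref{rmqB} one may move $e^{iNB(q^{1/2}(\eta)\lambda_{\gamma}\alpha^{3/2})}$ into the symbol, since for $N\sim t/\sqrt{\gamma}$ its exponent is $\sim ht/\gamma^{2}=O(1)$ in the range $(ht)^{1/2}\lesssim\gamma$ considered in this subsection. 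After these steps the oscillating phase is exactly $\Phi_{N,a,\gamma}$ of \eqref{eq:bis49PhiN}. The term $N=0$ is the free semi‑classical flow: it is treated separately by Proposition \ref{propfreeN=0}, which gives the strictly better bound $O(h^{-d}(h/t)^{d/2})$; since $t>0$, the remaining contributions have $N\geq1$.

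Next, for each $N\geq1$ I would apply stationary phase in the $d$ variables $(\eta,\alpha)\in\R^{d-1}\times[\frac12,\frac32]$ with large parameter $1/h$. By Proposition \ref{propcritptsalphaeta} there is at most one critical point $(\eta_{c},\alpha_{c})$ on the support of the symbol, and by \eqref{condstatalpha}--\eqref{suppcondt} it exists only when $t/(2N\sqrt{\gamma})\in[1/M,M]$, i.e.\ $N\sim t/\sqrt{\gamma}$ — which fixes the summation range; for the other $N$ the phase is non‑stationary in $(\eta,\alpha)$ and repeated integration by parts, uniformly over the $\lesssim 1/\sqrt{\gamma}$ values of $N$ as in the proof of Proposition \ref{propcardN}, contributes $O(h^{\infty})$. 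At the critical point $|\det\nabla^{2}_{(\eta,\alpha)}\Phi_{N,a,\gamma}|\sim t^{d-1}\gamma^{3/2}N=t^{d-1}h\lambda_{\gamma}N$, so the stationary‑phase expansion produces a factor $\sim h^{d/2}/\sqrt{t^{d-1}h\lambda_{\gamma}N}=(h/t)^{(d-1)/2}(\lambda_{\gamma}N)^{-1/2}$ together with a full asymptotic symbol. Combined with the prefactor $\gamma^{2}/((2\pi)^{3}h^{d+1})=\frac{1}{h^{d}}\cdot\frac{\gamma^{2}}{h}\cdot\frac{1}{(2\pi)^{3}}$ of \eqref{eq:bis48bis}, this yields precisely $\frac{C}{h^{d}}(h/t)^{(d-1)/2}\sum_{N}\frac{\gamma^{2}}{h}\frac{1}{\sqrt{\lambda_{\gamma}N}}\int e^{i\phi_{N,a,\gamma}/h}\varkappa\,d\sigma\,ds$.

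It remains to identify the $y$‑cut‑off, the residual phase and the symbol. By Lemma \ref{lemalphacetac} one has $\eta_{c}=-y/(2t)+O(\gamma)$, so pinning $\eta$ to $\eta_{c}$ together with the cut‑off $\psi(|\eta|)$ in \eqref{eq:bis48bis} forces $|y|/(2t)\in[\frac12,\frac32]+O(\varepsilon_{0})\subset[\frac14,2]$; on this set any fixed $\tilde\psi\in C^{\infty}_{0}([\frac14,2])$ with $\tilde\psi\equiv1$ on $\mathrm{supp}\,\psi$ equals $1$, while outside it integration by parts in $\eta$ yields $O(h^{\infty})$, so inserting $\tilde\psi(|y|/(2t))$ is free. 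The residual phase is $\phi_{N,a,\gamma}(\sigma,s,t,x,y)=\Phi_{N,a,\gamma}(\eta_{c},\alpha_{c},\sigma,s,t,x,y)$, which is well defined because Lemmas \ref{lemalphacetac}--\ref{lemalphacetacalphac} express $\eta_{c}$ and $\alpha_{c}$ as smooth functions of $(\sigma,s,x,y,t,\gamma,1/N)$; the symbol $\varkappa(\cdot;h,\gamma,1/N)$ gathers the amplitude $q(\eta)\psi(|\eta|)\psi_{2}(\alpha)$ evaluated at $(\eta_{c},\alpha_{c})$, the inserted cut‑offs $\chi(\cdot)$, the factor $e^{iNB(\cdots)}$, the Hessian‑determinant normalization, and all lower‑order terms of the stationary‑phase expansion, and is smooth and uniformly bounded — together with all its $(\sigma,s,t,x,y)$‑derivatives — in the three parameters $h$, $\gamma$, $1/N$ on the relevant range.

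The main obstacle is the stationary phase in $(\eta,\alpha)$: one must show that $\nabla^{2}_{(\eta,\alpha)}\Phi_{N,a,\gamma}$ is uniformly non‑degenerate with determinant of exactly the claimed size $\sim t^{d-1}\gamma^{3/2}N$, which requires controlling the off‑diagonal entries $\partial^{2}_{\eta_{j}\eta_{k}}\Phi_{N,a,\gamma}$ and the subprincipal $O(\gamma^{3/2})$ contributions computed in Proposition \ref{propcritptsalphaeta} (using $\alpha,q(\eta)\in[\frac12,\frac32]$ and $\varepsilon_{0}$ small), and simultaneously that away from the unique critical point there is a quantitative lower bound on $|\nabla_{(\eta,\alpha)}\Phi_{N,a,\gamma}|$ allowing non‑stationary phase with an $O(h^{\infty})$ remainder that is summable in $N$. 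Equally delicate is the bookkeeping: one must verify that the output $\varkappa$ is a genuine symbol in $(\sigma,s)$ with uniform bounds in $h,\gamma,1/N$, so that the remaining two‑dimensional oscillatory integral defining $V_{N,h,\gamma}$ is ready for the later degenerate stationary‑phase analysis; the case $N=1$, where $t$ may be small, is covered by the separate treatment already carried out in Lemmas \ref{lemalphacetac}--\ref{lemalphacetacalphac}.
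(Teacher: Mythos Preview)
Your proposal is correct and follows exactly the same approach as the paper, which simply states that the corollary ``immediately follows from stationary phase in $\alpha$ and $\eta$'' with leading symbol $q(\eta_{c})\psi(|\eta_{c}|)\psi_2(\alpha_{c})e^{iNB(q^{1/2}(\eta_{c})\lambda_{\gamma}\alpha_{c}^{3/2})}$. Your write-up is considerably more detailed---spelling out the truncation of the $N$-sum, the insertion of cut-offs, the origin of $\tilde\psi$, and the bookkeeping for $\varkappa$---but the underlying argument is identical.
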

This immediately follows from stationary phase in $\alpha$ and $\eta$,  with a leading order term for $\varkappa$ being $q(\eta_{c})\psi(|\eta_{c}|)\psi_2(\alpha_{c})e^{iNB(q^{1/2}(\eta_{c})\lambda_{\gamma}\alpha_{c}^{3/2})}$.
\begin{rmq}
This main contribution for the symbol $\varkappa(\cdot;h,\gamma,1/N)$ has an harmless dependence on the parameters $h,a,\gamma, 1/N$, as $\varkappa(\cdot,h,\gamma,1/N)$ reads as an asymptotic expansion with small parameters $(\lambda_{\gamma}N)^{-1}=h/(N\gamma^{3/2})$ in $\alpha$ and $(h/t)$ in $\eta$, and all terms in the expansions are smooth functions of $\alpha_c, \eta_c$.
\end{rmq}
\begin{rmq}\label{rmkchi}
From Remark \ref{rmqA}, we may introduce cut-offs $\chi(\sigma/(2\sqrt{\alpha_c}))$ and $\chi(s/(2\sqrt{\alpha_c}))$, supported for $|(\sigma,s)|\leq 2\sqrt{\alpha_c}$ in $V_{N,h,\gamma}$ without changing its contribution modulo $O(h^{\infty})$. 
\end{rmq}
We are left with integrals with respect to the variables $s,\sigma$ to estimate $\|V_{N,h,\gamma}(t,\cdot)\|_{L^{\infty}}$.  We first compute higher order derivatives of the critical value $\Phi_{N,a,\gamma}(\eta_c,\alpha_c,s,\sigma,t,y,x)$, with
\begin{gather}\label{derphisig1}
\partial_{\sigma}\Big(\Phi_{N,a,\gamma}(\eta_c,\alpha_c,s,\sigma,\cdot)\Big)=\gamma^{3/2}q^{1/2}(\eta_c)(\sigma^2+\frac{x}{\gamma}-\alpha_c), \\ 
\label{derphis1}
\partial_{s}\Big(\Phi_{N,a,\gamma}(\eta_c,\alpha_c,s,\sigma,\cdot)\Big)=\gamma^{3/2}q^{1/2}(\eta_c)(s^2+\frac{a}{\gamma}-\alpha_c).
\end{gather}
Higher order derivatives of $\phi_{N,a,\gamma}(\sigma,s,\cdot):=\Phi_{N,a,\gamma}(\eta_c,\alpha_c,\sigma,s,\cdot)$ involve derivatives of critical points $\alpha_c,\eta_c$ with respect to $\sigma,s$ :
\begin{align}\label{secdersig}
\partial^2_{\sigma,\sigma}\Big(\Phi_{N,a,\gamma}(\eta_c,\alpha_c,\cdot)\Big)&=\partial_{\sigma}\eta_c\frac{\nabla q(\eta)}{2q(\eta)}|_{\eta=\eta_c}\partial_{\sigma}\phi_{N,a,\gamma}
+\gamma^{3/2}q^{1/2}(\eta_c)(2\sigma-2\sqrt{\alpha_c}\partial_{\sigma}\sqrt{\alpha_c}),\\
\label{secders}
\partial^2_{s,s}\Big(\Phi_{N,a,\gamma}(\eta_c,\alpha_c,\cdot)\Big)&=\partial_{s}\eta_c\frac{\nabla q(\eta)}{2q(\eta)}|_{\eta=\eta_c}\partial_{s}\phi_{N,a,\gamma}
+\gamma^{3/2}q^{1/2}(\eta_c)(2s-2\sqrt{\alpha_c}\partial_{s}\sqrt{\alpha_c}),\\
\label{secderssig}
\partial^2_{\sigma,s}\Big(\Phi_{N,a,\gamma}(\eta_c,\alpha_c,\cdot)\Big)&=\partial_{\sigma}\eta_c\frac{\nabla q(\eta)}{2q(\eta)}|_{\eta=\eta_c}\partial_{s}\phi_{N,a,\gamma}-\gamma^{3/2}q^{1/2}(\eta_c)(2\sqrt{\alpha_c}\partial_{\sigma}\sqrt{\alpha_c}),
\end{align}
and therefore, when $\partial_{s}\phi_{N,a,\gamma}=\partial_{\sigma}\phi_{N,a,\gamma}=0$, we have
\begin{align*}
\partial^2_{\sigma,\sigma}\phi_{N,a,\gamma}(\eta_c,\alpha_c,s,\sigma,\cdot)|_{\partial_{s}\phi_{N,a,\gamma}=\partial_{\sigma}\phi_{N,a,\gamma}=0}&=
2\gamma^{3/2}q^{1/2}(\eta_c)(\sigma-\sqrt{\alpha_c}\partial_{\sigma}\sqrt{\alpha_c}),\\
\partial^2_{s,s}\phi_{N,a,\gamma}(\eta_c,\alpha_c,s,\sigma,\cdot)|_{\partial_{s}\phi_{N,a,\gamma}=\partial_{\sigma}\phi_{N,a,\gamma}=0}&=
2\gamma^{3/2}q^{1/2}(\eta_c)(s-\sqrt{\alpha_c}\partial_{s}\sqrt{\alpha_c}),\\
\partial^2_{\sigma,s}\phi_{N,a,\gamma}(\eta_c,\alpha_c,s,\sigma,\cdot)|_{\partial_{s}\phi_{N,a,\gamma}=\partial_{\sigma}\phi_{N,a,\gamma}=0}&=-2\gamma^{3/2}q^{1/2}(\eta_c)\sqrt{\alpha_c}\partial_{\sigma}\sqrt{\alpha_c}.
\end{align*}
\begin{rmq}\label{rmqcritssigderalphac}
At critical points we have $\partial_{\sigma}\sqrt{\alpha_c}=\partial_{s}\sqrt{\alpha_c}$ : derivatives of $\alpha_c$ depend on $\eta_c$ that solves \eqref{eq:etac} ; from \eqref{eq:etac}, $\partial_{\sigma}\eta_c$ (and $\partial_{s}\eta_{c}$, respectively) depend upon $(s+\sigma)$, $\sigma^2+\frac{x}{\gamma}$ and $\sigma^3/3+\sigma\frac{x}{\gamma}+s^3/3+s\frac{a}{\gamma}$ (and upon $(s+\sigma)$, $s^2+\frac{a}{\gamma}$ and $\sigma^3/3+\sigma\frac{x}{\gamma}+s^3/3+s\frac{a}{\gamma}$); at the critical points $\sigma,s$ we have $\sigma^2+\frac{x}{\gamma}=s^2+\frac{a}{\gamma}=\alpha_c$ and we find $\partial_{\sigma}\eta_c=\partial_{s}\eta_c$.
\end{rmq}

\subsubsection{"Tangential" waves $a\in [\frac18 \gamma, 8\gamma]$}\label{secttangcas}
We abuse notations  and write $G_{h,a}=G_{h,\gamma\sim a}$, $\lambda=a^{3/2}/h=\lambda_{\gamma\sim a}$ and from Corollary \ref{coruhgam}, with $\phi_{N,a}(\sigma,s,t,x,y)=\Phi_{N,a,a}(\eta_c,\alpha_c,\sigma,s,t,x,y)$,
\begin{gather}\label{uhsumNa}
G_{h,a}(t,x,y)=\frac{C}{h^{d}}\Big(\frac ht\Big)^{(d-1)/2}\tilde\psi\Big(\frac{|y|}{2t}\Big)\sum_{\frac{t}{\sqrt{a}}\sim N\lesssim \frac{1}{\sqrt{a}}} V_{N,h,a}(t,x,y) +O(h^{\infty})\,,\\
\label{defVNha}
V_{N,h,a}(t,x,y)=\frac{a^2}{h}\frac{1}{\sqrt{\lambda N}}\int e^{\frac ih \phi_{N,a}(\sigma,s,t,x,y)}\varkappa(\sigma,s,t,x,y,h,a,1/N)d\sigma ds\,.
\end{gather}
\begin{rmq}
From Remark \ref{rmqB}, only values $N\lesssim \lambda$ are of interest. It turns out that one needs to separate the cases $N<\lambda^{1/3}$ and $\lambda^{1/3}\lesssim N$. Fix $t$ and set $T=\frac{t}{\sqrt{a}}$ : if $\lambda^{1/3}\lesssim T\sim N$, then $\phi_{N,a}$ behaves like the phase of a product of two Airy functions and can be bounded using mainly their respective asymptotic behavior. 
When $N\sim T\lesssim \lambda^{1/3}$, $\phi_{N,a}$ may have degenerate critical points up to order $3$. We claim that {for any} $t$ such that $T:=\frac{t}{\sqrt{a}}\ll \lambda^{1/3}$ and {for any} $N\sim T$ there exists a locus of points $\mathcal{Y}_{N}(T):=\{Y\in\mathbb{R}^{d-1}| K_a(\frac{Y}{4N},\frac{T}{4N})=1\}$, where $K_a$ is the smooth function to be defined in \eqref{defKa} such that, for all $Y\in\mathcal{Y}_N(T)$ we have $\|G_{h,a}(t,\cdot)\|_{L^{\infty}(\Omega)}= |G_{h,a}(t,a,a,y)||_{y\in\sqrt{a}\mathcal{Y}_{N}(t/\sqrt{a})}\sim \frac{1}{h^d}(\frac ht)^{(d-1)/2}a^{1/4}(\frac ht)^{1/4}$, for all $(ht)^{1/2}\lesssim a\lesssim \varepsilon_0$. Optimality follows.
\end{rmq}
\begin{rmq}
When dealing with the wave flow in \cite{ILP3}, a parametrix is also obtained as a sum of reflected waves: due to finite speed of propagation, the main contribution at fixed $t$ is provided by waves located between the $(N-1)th$ and $(N+1)th$ reflections, where $N=[\frac{t}{\sqrt{a}}]$. For each $N\ll \lambda^{1/3}$, the worst bound occurs at a unique time $t_N$, at $x=a$ and for a unique $y_N$. For the Schr\"odinger flow, for any $t/\sqrt{a}\ll \lambda^{1/3}$ and any $N\sim t/\sqrt{a}$, 
$|V_{N,h,a}(t,a,y)||_{y\in \sqrt{a}\mathcal{Y}_N(t/\sqrt{a})}\sim \|G_{h,a}(t,\cdot)\|_{L^{\infty}}$, where $\mathcal{Y}_N(t/\sqrt{a})\cap \mathcal{Y}_{N'}(t/\sqrt{a})=\emptyset$ for $N\neq N'$.
\end{rmq}
We denote $\alpha^0_c=\alpha_c|_{s=\sigma=0}$ obtained in \eqref{eq:alpha_c21}. Recall from Lemma \ref{lemalphacetac} (with $\gamma$ replaced by $a$), that
$\eta^0_c=-\frac{y}{2t}+a\Theta(\frac{y}{2t},\frac{t}{2N\sqrt{a}},a)$ is a smooth function of $(\frac{y}{2t},\frac{t}{2N\sqrt{a}},a)$, hence so is $\sqrt{\alpha^0_c}=\frac{t}{2N\sqrt{a}}q^{1/2}(\eta^0_c)$.
Let $T=t/\sqrt{a}$, $Y=y/\sqrt{a}$ and define $K_a(\frac{Y}{4N},\frac{T}{2N})=\sqrt{\alpha^0_c(\frac{Y}{4N}\frac{2N}{T},\frac{T}{2N},a)}$. Then $K_a$ is smooth in all variables and
\begin{equation}\label{defKa}
K_a(\frac{Y}{4N},\frac{T}{2N})=\frac{|Y|}{4N}q^{1/2}\Big(-\frac{Y}{|Y|}+a\frac{T}{2N}\frac{4N}{|Y|} \Theta(\frac{Y}{4N}\frac{2N}{T},\frac{T}{2N},a)\Big)\,.
\end{equation}
\begin{prop}
\label{dispNgrand}
For $\lambda^{1/3}\lesssim T\sim N$, $\frac xa\leq 1$, we have
  \begin{equation}
    \label{eq:1ff}
       \left| V_{N,h,a}(t,x,y)\right|\lesssim  \frac {h^{1/3}} {(N/\lambda^{1/3})^{1/2} +\lambda^{1/6}\sqrt{4N}|K_a(\frac{Y}{4N},\frac{T}{2N})-1|^{1/2}}\,.
  \end{equation}
\end{prop}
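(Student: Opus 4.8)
The plan is to estimate $V_{N,h,a}(t,x,y)$ as a two–dimensional oscillatory integral in $(\sigma,s)$ with large parameter $1/h$ and to reduce it, in the regime $\lambda^{1/3}\lesssim T\sim N$, to a product of two one–dimensional Airy–type integrals. From Corollary \ref{coruhgam} and Remark \ref{rmkchi}, $V_{N,h,a}=\frac{a^{2}}{h\sqrt{\lambda N}}\int e^{\frac ih\phi_{N,a}}\,\varkappa\,\chi(\tfrac{\sigma}{2\sqrt{\alpha_c}})\chi(\tfrac{s}{2\sqrt{\alpha_c}})\,d\sigma\,ds$ with $\lambda=a^{3/2}/h$ and $\gamma=a$, so the integration runs over $|(\sigma,s)|\lesssim1$. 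The structural facts are already in place: by \eqref{derphisig1}--\eqref{derphis1}, $\partial_\sigma\phi_{N,a}=\gamma^{3/2}q^{1/2}(\eta_c)(\sigma^{2}+x/\gamma-\alpha_c)$ and $\partial_s\phi_{N,a}=\gamma^{3/2}q^{1/2}(\eta_c)(s^{2}+a/\gamma-\alpha_c)$; by Lemmas \ref{lemalphacetac}--\ref{lemalphacetacalphac} the functions $\alpha_c,\eta_c$ are smooth in $(\sigma,s)$ with $\partial_{\sigma,s}\alpha_c=O(1/N)$, $\partial_{\sigma,s}\eta_c=O(\gamma^{3/2}/t)$ and $\sqrt{\alpha_c}=K_a(\tfrac{Y}{4N},\tfrac{T}{2N})-\tfrac{\sigma}{2N}(1-\gamma\mathcal{E}_1)-\tfrac{s}{2N}(1-\gamma\mathcal{E}_2)$; and since $N\sim T\gtrsim\lambda^{1/3}\ge h^{-\epsilon/2}$ (so $N$ is large, in particular $N\ge2$), the critical points of $\phi_{N,a}$ in $(\sigma,s)$ are degenerate of order at most two, the order–three swallowtail degeneracy occurring only for $N\ll\lambda^{1/3}$. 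Thus $\phi_{N,a}$ is cubic in $\sigma$ plus cubic in $s$, perturbed by a coupling of size $O(\gamma^{3/2}/N)$ carried by $\alpha_c$ and $\eta_c$.

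The first step is to decouple. For $s$ fixed one has $\partial^{3}_\sigma\phi_{N,a}\sim\gamma^{3/2}$ and, modulo a multiple of $\partial_\sigma\phi_{N,a}$, $\partial^{2}_\sigma\phi_{N,a}\sim\gamma^{3/2}q^{1/2}(\eta_c)(\sigma+O(1/N))$, so a near–identity change of variable $\sigma\mapsto\tilde\sigma$ (smooth in $s$ and in the small parameters, with correction $O(1/N)$) brings the $\sigma$–phase into the Airy normal form $\gamma^{3/2}q^{1/2}(\eta_c^{0})(\tfrac{\tilde\sigma^{3}}{3}+\tilde\sigma\,\zeta_\sigma)$ with $\zeta_\sigma=q^{-1/3}(\eta_c^{0})(x/\gamma-\alpha_c^{0})+O(|s|/N)$; similarly in $s$ with $\zeta_s=q^{-1/3}(\eta_c^{0})(a/\gamma-\alpha_c^{0})+O(|\sigma|/N)$. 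Rescaling $\tilde\sigma=\lambda^{-1/3}\hat\sigma$, $\tilde s=\lambda^{-1/3}\hat s$ splits the two integrations, producing the factors $\lambda^{-1/3}\Ai(c\lambda^{2/3}(x/a-\alpha_c^{0}))$ and $\lambda^{-1/3}\Ai(c\lambda^{2/3}(a/\gamma-\alpha_c^{0}))$ with $c\sim q^{1/3}(\eta_c^{0})>0$, the residual amplitude still depending mildly on $(\hat\sigma,\hat s)$ but no longer oscillating at scale $h$, hence harmless. This gives
\[
|V_{N,h,a}(t,x,y)|\lesssim\frac{a^{2}}{h\sqrt{\lambda N}}\,\lambda^{-2/3}\,\big|\Ai\big(c\lambda^{2/3}(x/a-\alpha_c^{0})\big)\big|\,\big|\Ai\big(c\lambda^{2/3}(a/\gamma-\alpha_c^{0})\big)\big|.
\]

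To finish, recall $\alpha_c^{0}=K_a(\tfrac{Y}{4N},\tfrac{T}{2N})^{2}$ and $a/\gamma=1$, and use $|\Ai(z)|\lesssim1$ for all $z$ and $|\Ai(-z)|\lesssim z^{-1/4}$ for $z\ge1$. For $0\le x\le a$ the product of the two Airy factors is largest at $x=a$ (for smaller $x$ the first argument becomes more negative, hence that factor smaller), where both arguments are comparable to $\lambda^{2/3}|K_a-1|$; so the product is $\lesssim\min\{1,(\lambda^{2/3}|K_a-1|)^{-1/2}\}\sim(1+\lambda^{1/3}|K_a-1|^{1/2})^{-1}$, whence
\[
|V_{N,h,a}(t,x,y)|\lesssim\frac{a^{2}}{h\sqrt{\lambda N}}\,\frac{\lambda^{-2/3}}{1+\lambda^{1/3}|K_a(\tfrac{Y}{4N},\tfrac{T}{2N})-1|^{1/2}}.
\]
Since $\lambda=a^{3/2}/h$, the prefactor $\tfrac{a^{2}}{h\sqrt{\lambda N}}\lambda^{-2/3}$ equals $h^{1/3}\lambda^{1/6}N^{-1/2}=h^{1/3}(N/\lambda^{1/3})^{-1/2}$ and $\lambda^{1/3}|K_a-1|^{1/2}=\lambda^{1/6}\sqrt{N}\,|K_a-1|^{1/2}\big/(N/\lambda^{1/3})^{1/2}$, so the bound becomes exactly \eqref{eq:1ff} (the numerical factor $\sqrt4$ being absorbed in $\lesssim$).

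The main obstacle is the decoupling/normal–form step. Although the phase derivative identities \eqref{derphisig1}--\eqref{derphis1} are remarkably clean, $\phi_{N,a}$ genuinely depends on both $\sigma$ and $s$ through $\alpha_c,\eta_c$, and by \eqref{secderssig} and \eqref{eq:alpha_c21} the mixed term satisfies $\partial^{2}_{\sigma s}\phi_{N,a}/h=O(\gamma^{3/2}/(Nh))=O(\lambda/N)$, which is not $O(1)$ when $N\ll\lambda$; the coupling therefore cannot simply be moved into the symbol. One must perform the change of variables $\sigma\mapsto\tilde\sigma$ (and $s\mapsto\tilde s$) with uniform control of all derivatives in the presence of this mild dependence on the other variable, separately where the $\sigma$– (resp. $s$–) critical points collide ($x/a\approx\alpha_c^{0}$, resp. $a/\gamma\approx\alpha_c^{0}$) and where they are well separated, and check that the transformed amplitude stays in the symbol class with small parameters $(\lambda_{\gamma}N)^{-1}$ and $h/t$ of Corollary \ref{coruhgam}. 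Once that is done, the Airy asymptotics and the bookkeeping with $K_a$ are routine; an essentially equivalent alternative is to integrate first in $s$ by a stationary phase / van der Corput estimate uniform in $\sigma$ and the other parameters, producing the $s$–Airy factor, and then in $\sigma$.
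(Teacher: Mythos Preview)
Your target bound and final bookkeeping are exactly right: the paper also reduces to proving
\[
\left|\int e^{\frac ih\phi_{N,a}}\varkappa\,d\sigma\,ds\right|\lesssim \frac{\lambda^{-2/3}}{1+\lambda^{1/3}|K_a^2-1|^{1/2}}
\]
and then performs the same algebra with the prefactor. The gap is precisely where you locate it, in the decoupling step, and I do not believe it can be closed in the way you outline. After your normal form in $\sigma$ the residual $\zeta_\sigma$ still carries the $s$-dependence $\zeta_\sigma(s)=X-K_a^2+\tfrac{K_a}{N}s+O(1/N^2)$; integrating in $\sigma$ produces $\lambda^{-1/3}\Ai(\lambda^{2/3}\zeta_\sigma(s))$, and this factor is \emph{not} a symbol for the $s$-integration. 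Indeed $\partial_s[\Ai(\lambda^{2/3}\zeta_\sigma(s))]=\tfrac{\lambda^{2/3}K_a}{N}\Ai'(\cdot)$, and with $|\Ai'(-A)|\sim A^{1/4}$ and $A$ ranging up to $\lambda^{2/3}$, a single $s$-derivative costs $\lambda^{2/3}/N\cdot\lambda^{1/6}\sim\lambda^{1/2}$ at the threshold $N\sim\lambda^{1/3}$, which is the same scale as the second derivative of the $s$-phase at its (simple) critical point. So the Airy factor genuinely oscillates at the relevant scale and shifts critical points; the claim that the residual amplitude ``no longer oscillat[es] at scale $h$'' is incorrect in the regime $N\sim\lambda^{1/3}$. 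A two-variable change $(\sigma,s)\mapsto(\tilde\sigma,\tilde s)$ cannot remove a coupling of size $\lambda/N$ in the phase (only conjugate it), so the product-of-Airy representation is not available without further work.

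The paper avoids this by not attempting to decouple. It rescales $(\sigma,s)=\lambda^{-1/3}(p,q)$, sets $A=\lambda^{2/3}(K_a^2-X)$, $B=\lambda^{2/3}(K_a^2-1)$, and observes that the mixed term in $\partial_pG$, $\partial_qG$ carries the factor $\lambda^{1/3}/N\le 1$ in this regime. For bounded $(A,B)$ the bound is trivial. For $r=|(A,B)|\ge r_0$ large one rescales again, $(p,q)=r^{1/2}(\tilde p,\tilde q)$, and now the coupling appears with the extra factor $r^{-1/2}\le r_0^{-1/2}$, hence is a genuinely small perturbation. One then does case analysis on $\theta=\arg(A,B)$: if $\sin\theta<-C r^{-1/2}$ the phase is nonstationary in $\tilde q$; if $|\sin\theta|<Cr^{-1/2}$ one applies nondegenerate stationary phase in $\tilde p$ (two separated critical points since $\cos\theta\ge\sqrt2/2$) and then Van der Corput of order $3$ in $\tilde q$; if $\sin\theta>Cr^{-1/2}$ nondegenerate stationary phase applies in both variables with Hessian $\sim\sqrt{\cos\theta\sin\theta}$. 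Each case gives exactly the bound $(1+|B|^{1/2})^{-1}$, and the crucial point is that $\lambda^{1/3}\lesssim N$ is precisely what makes the second rescaling kill the coupling. Your observation that the degeneracy is at most order two when $N\gtrsim\lambda^{1/3}$ is the right heuristic, but it is implemented through this rescaling rather than through a product formula.
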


\begin{prop}
\label{dispNpetitloin}
For $1\leq N<\lambda^{1/3}$ and $|K_a(\frac{Y}{4N},\frac{T}{2N})-1|\gtrsim 1/N^2$, $\frac xa\leq 1$ we have
  \begin{equation}
    \label{eq:2ff}
       \left| V_{N,h,a}(t,x,y)\right| \lesssim\frac{h^{1/3}}{(1+2N|K_a(\frac{Y}{4N},\frac{T}{2N})-1|^{1/2})}\,.
  \end{equation}
\end{prop}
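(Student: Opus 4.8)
The plan is to estimate the two‑dimensional integral in \eqref{defVNha} directly. After inserting the cut‑offs $\chi(\sigma/(2\sqrt{\alpha_c}))\chi(s/(2\sqrt{\alpha_c}))$ of Remark~\ref{rmkchi}, the integration runs over $|(\sigma,s)|\lesssim2$ and, by \eqref{eq:alphac} and the support discussion following it, $N\sim t/\sqrt a$. I recall that $\partial_\sigma\phi_{N,a}=a^{3/2}q^{1/2}(\eta_c)(\sigma^2+x/a-\alpha_c)$ and $\partial_s\phi_{N,a}=a^{3/2}q^{1/2}(\eta_c)(s^2+1-\alpha_c)$, while Lemmas~\ref{lemalphacetac}--\ref{lemalphacetacalphac} give $\sqrt{\alpha_c}=K_a-\tfrac{\sigma}{2N}(1-a\mathcal E_1)-\tfrac s{2N}(1-a\mathcal E_2)$ with $\alpha^0_c:=\alpha_c|_{\sigma=s=0}=K_a^2$ and $\partial_\sigma\eta_c,\partial_s\eta_c=O(a^{3/2}/t)=O(a/N)$. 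Hence a critical point of $\phi_{N,a}$ in $(\sigma,s)$ satisfies $\sigma_c^2=\alpha_c-x/a$, $s_c^2=\alpha_c-1$, so $\sigma_c^2-s_c^2=1-x/a\ge0$ (this is where the hypothesis $x/a\le1$ enters), and, by Remark~\ref{rmqcritssigderalphac}, its $(\sigma,s)$‑Hessian is, modulo lower order terms, $a^{3/2}q^{1/2}(\eta_c)$ times a symmetric matrix with diagonal entries of size $\sigma_c$, $s_c$ and off‑diagonal entries of size $1/N$.

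I would then split according to $K_a$. First, if $K_a-1$ is positive and $\gtrsim N^{-2}$ (with the implied constant taken large, as we are free to do in the hypothesis), then at each of the finitely many critical points $s_c^2=\alpha_c-1\sim K_a-1\gg N^{-2}$, so $\sigma_c s_c\ge s_c^2\gg N^{-2}$ dominates the off‑diagonal term and the Hessian determinant is $\sim a^3\sigma_c s_c$; two‑dimensional stationary phase gives $\bigl|\int e^{\frac ih\phi_{N,a}}\varkappa\,d\sigma\,ds\bigr|\lesssim h a^{-3/2}(\sigma_c s_c)^{-1/2}\lesssim h a^{-3/2}|K_a-1|^{-1/2}$, whence, using $\tfrac{a^2}h=a^{1/2}\lambda$ and $N<\lambda^{1/3}=a^{1/2}h^{-1/3}$,
\[
|V_{N,h,a}|\lesssim\frac{a^2}h\cdot\frac1{\sqrt{\lambda N}}\cdot\frac{h}{a^{3/2}|K_a-1|^{1/2}}=\frac{h^{1/2}}{a^{1/4}\sqrt N\,|K_a-1|^{1/2}}\lesssim\frac{h^{1/3}}{N|K_a-1|^{1/2}}\sim\frac{h^{1/3}}{1+2N|K_a-1|^{1/2}}\,,
\]
since $N|K_a-1|^{1/2}\gtrsim1$ here. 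Second, if $1-K_a\gtrsim N^{-1}$, then $s^2+1-\alpha_c\gtrsim1-K_a^2\gtrsim N^{-1}$ on the whole support, so $\partial_s\phi_{N,a}$ is non‑stationary; repeated integration by parts in $s$ produces gains $(\lambda N^{-1})^{-n}$, and since $\lambda N^{-1}>\lambda^{2/3}\gg1$ this is $O(h^\infty)$, much stronger than needed.

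The remaining and hardest case is $K_a$ within $O(1/N)$ of $1$: by the hypothesis this is $1-CN^{-1}<K_a\le 1-cN^{-2}$ for suitable constants. Here the off‑diagonal Hessian entry $\sim a^{3/2}/N$ is comparable to the diagonal scale; equivalently, the $s$‑critical point $s_c^2=\alpha_c-1$ exists only on a sub‑interval of the $\sigma$‑support and is shallow, $|s_c|\lesssim N^{-1/2}$, so one cannot factor or iterate the one‑dimensional estimates. Instead one brings $\phi_{N,a}$ to a fold (at worst, cusp) normal form in $(\sigma,s)$ and uses a uniform two‑dimensional stationary phase estimate controlling the coalescence of the stationary point with the Airy turning point, exactly as in the refined degenerate stationary phase analysis for the wave equation in \cite{ILP3}. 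This is where the large implied constant in $|K_a-1|\gtrsim N^{-2}$ (depending on $q$ and $\sup|\mathcal E_j|$) is used, keeping the stationary point separated from the turning point on the Airy scale $\lambda^{2/3}|K_a-1|\gtrsim\lambda^{2/3}N^{-2}\gg1$; the output interpolates between the size $\lambda^{-1/3}$ at the cusp scale $|K_a-1|\sim N^{-2}$ and the non‑degenerate bound of the first case, which is precisely $h^{1/3}/(1+2N|K_a-1|^{1/2})$. I expect this uniform two‑dimensional estimate to be the main obstacle.
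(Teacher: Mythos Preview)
Your cases 1 and 2 are essentially right. Two minor repairs: in case~1 the inequality $\sigma_c s_c\ge s_c^2$ is false when the signs differ, but what you actually need is $|\sigma_c s_c|\ge s_c^2$ (from $|\sigma_c|\ge|s_c|$), and the cross term in the Hessian determinant is $(\sigma_c+s_c)/(2N)$, not $1/(4N^2)$; it is still dominated by $|\sigma_c s_c|$ once the implied constant in the hypothesis is taken large, since $|s_c|\gg 1/N$. In case~2 each integration by parts gains $N^2/\lambda$ rather than $N/\lambda$ (the second $s$-derivative is $\sim\lambda$, not $\sim\lambda/N$), but $N^2/\lambda<\lambda^{-1/3}$ for $N<\lambda^{1/3}$, so the conclusion stands.

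The substantive difference with the paper is your case 3. You propose bringing the phase to a fold/cusp normal form and invoking a uniform two-dimensional degenerate stationary phase estimate from \cite{ILP3}, and you correctly flag this as the main obstacle. The paper bypasses this entirely. After rescaling $(\sigma,s)=N^{-1}(p',q')$ with large parameter $\Lambda=\lambda/N^3$ and setting $A'=N^2(K_a^2-X)$, $B'=N^2(K_a^2-1)$, the hypothesis $|K_a-1|\gtrsim N^{-2}$ forces $r:=|(A',B')|$ to be large. One rescales once more, $(p',q')=r^{1/2}(\tilde p',\tilde q')$, and writes $(A',B')=r(\cos\theta,\sin\theta)$. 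Then: if $\sin\theta<-C/\sqrt r$ the phase is non-stationary in $\tilde q'$; if $\sin\theta>C/\sqrt r$ the 2D Hessian is non-degenerate with determinant $\sim(\cos\theta\sin\theta)^{1/2}$; and in the remaining window $|\sin\theta|\le C/\sqrt r$ one does non-degenerate stationary phase in $\tilde p'$ (critical points near $\pm\sqrt{\cos\theta}$, bounded away from zero since $\cos\theta\ge\sqrt2/2$) and then applies Van der Corput in $\tilde q'$ using only that the \emph{third} derivative of the reduced phase is $\ge q^{1/2}(\eta_c)(2-O(r_0^{-1/2}))$, uniformly. This yields a contribution $(r^{3/2}\Lambda)^{-1/2}(r^{3/2}\Lambda)^{-1/3}$ which, after unwinding the rescalings and using $r\ge |B'|^2/C^2$, gives exactly $h^{1/3}/(1+|B'|^{1/2})$. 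So no normal form or uniform Airy-type estimate is needed for Proposition~\ref{dispNpetitloin}; a single third-derivative Van der Corput, made available by the $r^{1/2}$-rescaling, suffices. The fold/cusp analysis you gesture at is precisely what the paper reserves for Proposition~\ref{dispNpetitpres}, where $|B'|$ is bounded.
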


\begin{prop}
\label{dispNpetitpres}
For $1\leq N<\lambda^{1/3}$ and $|K_a(\frac{Y}{4N},\frac{T}{2N})-1|\leq\frac{1}{4N^2}$, $\frac xa\leq 1$ we have
  \begin{equation}
    \label{eq:2hh}
       \left| V_{N,h,a}(t,x,y)\right| \lesssim \frac{h^{1/3}}{(N/\lambda^{1/3})^{1/4}+N^{1/3}|(K_a(\frac{Y}{4N},\frac{T}{2N})-1)|^{1/6}}\,.
  \end{equation}
Moreover, at $x=a$ and $K_a(\frac{Y}{4N},\frac{T}{2N})=1$ we have $ \left| V_{N,h,a}(t,a,y)\right| \sim \frac{h^{1/3}}{((N/\lambda^{1/3})^{1/4}}$.
\end{prop}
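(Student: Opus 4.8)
The plan is to estimate, by a careful (non-)stationary phase analysis, the two-dimensional oscillatory integral defining $V_{N,h,a}$ in \eqref{defVNha}, using the explicit derivatives \eqref{derphis1}--\eqref{secderssig} of the critical phase $\phi_{N,a}$ and the description of $\alpha_{c},\eta_{c}$ from Lemmas \ref{lemalphacetac}--\ref{lemalphacetacalphac}. Write $c=\gamma^{3/2}q^{1/2}(\eta_{c})$, $\gamma\sim a$, recall $\lambda=\gamma^{3/2}/h$ and $a^{1/2}\sim(\lambda h)^{1/3}$, so the prefactor in \eqref{defVNha} is $\tfrac{a^{2}}{h\sqrt{\lambda N}}\sim\lambda^{5/6}h^{1/3}N^{-1/2}$; thus \eqref{eq:2hh} reduces to the bounds on $I_{N}:=\int e^{\frac{i}{h}\phi_{N,a}}\,\varkappa\,\chi(\tfrac{\sigma}{2\sqrt{\alpha_c}})\chi(\tfrac{s}{2\sqrt{\alpha_c}})\,d\sigma\,ds$ (Remark \ref{rmkchi}): $|I_{N}|\lesssim\lambda^{-3/4}N^{1/4}$ when $|K_a-1|\lesssim(\lambda N)^{-1/2}$, $|I_{N}|\lesssim\lambda^{-5/6}N^{1/6}|K_a-1|^{-1/6}$ otherwise, together with $|I_{N}|\gtrsim\lambda^{-3/4}N^{1/4}$ at $x=a$, $K_a=1$.

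The key device is the substitution $u=\sigma+s$, $v=\sigma-s$. By \eqref{derphis1}--\eqref{derphisig1}, $\partial_{\sigma}\phi_{N,a}-\partial_{s}\phi_{N,a}=c\big(\sigma^{2}-s^{2}+\tfrac{x-a}{\gamma}\big)=c\big(uv+\tfrac{x-a}{\gamma}\big)$ carries \emph{no} $\alpha_{c}$, so $\phi_{N,a}$ is a perturbed quadratic in $v$: $\phi_{N,a}(u,v)=\tfrac{cu}{4}v^{2}+\tfrac{c(x-a)}{2\gamma}v+\Psi_{0}(u)$, up to a relative $O(\gamma^{3/2}/t)=O(\gamma/N)$ perturbation of the coefficients produced by the slow $v$-dependence of $c(\eta_{c})$ (Lemma \ref{lemalphacetac}), which affects only constants. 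Here $\partial_{u}\Psi_{0}=\tfrac{c}{2}\big(\tfrac{u^{2}}{2}+\tfrac{x+a}{\gamma}-2\alpha_{c}(u)\big)$, and inserting $\sqrt{\alpha_c(u)}=K-\tfrac{u}{2N}(1+O(\gamma))$ with $K:=K_a(\tfrac{Y}{4N},\tfrac{T}{2N})$ (Lemma \ref{lemalphacetacalphac}, see \eqref{defKa}) and $\mathcal D:=K^{2}-\tfrac{a}{\gamma}$ (which is $\sim K_a-1$ on the support) gives, near the worst configuration $x=a$, the model
\[
\phi_{N,a}-\phi_{N,a}|_{\sigma=s=0}\ \approx\ \tfrac{cu}{4}v^{2}+\tfrac{cK}{2N}u^{2}+\tfrac{c}{12}u^{3}-c\,\mathcal D\,u,
\]
a perturbed cusp ($A_{3}$): completing the square via $w=u+\tfrac{N}{4K}v^{2}$ turns the first two terms into $\tfrac{cK}{2N}w^{2}-\tfrac{cN}{32K}v^{4}$. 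That the $v$-direction carries the degeneracy is confirmed by \eqref{secdersig}--\eqref{secderssig}: at $\sigma=s=0$ in the configuration $x=a$, $K_a=1$, the Hessian of $\phi_{N,a}$ has rank one with kernel the $v$-line, using $\partial_{\sigma}\sqrt{\alpha_c}=\partial_{s}\sqrt{\alpha_c}=-\tfrac{1}{2N}(1+O(\gamma))$ (Remark \ref{rmqcritssigderalphac}). For $x<a$ the residual $v$-critical point $v_c=-\tfrac{x-a}{\gamma u}$ leaves the support as $u\to0$, so small $|u|$ is non-stationary and $x=a$ is genuinely the worst case (cf. Remark \ref{rmqxgama}).

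The bulk of the work is the multi-scale analysis of this model, with $1\le N<\lambda^{1/3}$ in force. The $w$-scale $|u|\sim(N/\lambda)^{1/2}$ (from $\tfrac{1}{h}\tfrac{cK}{2N}u^{2}\sim1$) is \emph{below} the cubic scale $\lambda^{-1/3}$, so on the relevant region $\tfrac{1}{h}\tfrac{c}{12}u^{3}\sim(N^{3}/\lambda)^{1/2}\lesssim1$ is harmless; likewise the $v$-scale is $|v|\sim(\lambda N)^{-1/4}$, on which the cubic corrections to $w$ stay $O((N^{3}/\lambda)^{1/2})$. If $|\mathcal D|\lesssim(\lambda N)^{-1/2}$, the linear term $c\,\mathcal D\,u$ contributes $O(1)$ to the phase over this region, so integrating out the $w$-Gaussian (factor $(N/\lambda)^{1/2}$) and then the quartic in $v$ (factor $(\lambda N)^{-1/4}$), with van der Corput to discard the complementary ranges of $u,v$, gives $|I_{N}|\lesssim\lambda^{-3/4}N^{1/4}$. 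If $(\lambda N)^{-1/2}\lesssim|\mathcal D|\le\tfrac{1}{4N^{2}}$, the perturbation $c\,\mathcal D\,u$ separates the critical configuration: in $(w,v)$ it splits the quartic, producing well-separated nondegenerate $v$-critical points with second derivative $\sim cN|\mathcal D|$, so the $v$-factor is $\lesssim(\lambda N|\mathcal D|)^{-1/2}$ and $|I_{N}|\lesssim\lambda^{-1}|\mathcal D|^{-1/2}\le\lambda^{-5/6}N^{1/6}|K_a-1|^{-1/6}$ throughout this sub-range (with a uniform van der Corput estimate at the threshold $|\mathcal D|\sim(\lambda N)^{-1/2}$). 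Multiplying by $\lambda^{5/6}h^{1/3}N^{-1/2}$ and using $\big((N/\lambda^{1/3})^{1/4}+N^{1/3}|K_a-1|^{1/6}\big)^{-1}\sim\min\big((N/\lambda^{1/3})^{-1/4},(N^{1/3}|K_a-1|^{1/6})^{-1}\big)$ yields \eqref{eq:2hh}. For the lower bound at $x=a$, $K_a=1$ ($\mathcal D=0$): the $w$-Gaussian gives a factor $C(\lambda|u|)^{-1/2}e^{\pm i\pi/4}$ whose phase is constant on each of $\{\pm u>0\}$, and on $0<u\lesssim(N/\lambda)^{1/2}$ this and the remaining $e^{\frac{i}{h}\frac{cK}{2N}u^{2}}$ have essentially fixed modulus and argument, excluding cancellation and forcing $|I_{N}|\gtrsim\lambda^{-3/4}N^{1/4}$; with the matching upper bound, $|V_{N,h,a}(t,a,y)|\sim h^{1/3}(N/\lambda^{1/3})^{-1/4}$.

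The main obstacle I expect is the neighbourhood $|u|\lesssim(N/\lambda)^{1/2}$ of the cusp, where the $v$-phase is degenerate, the two variables do not decouple, and one must control the full two-dimensional integral with a rank-one Hessian: carrying out the reduction to the normal form above \emph{uniformly} in the four small parameters $\gamma$, $1/N$, $|\mathcal D|$, $1/\lambda$, and verifying that the $\eta_{c}$- and $\alpha_{c}$-dependences (Lemmas \ref{lemalphacetac}--\ref{lemalphacetacalphac}) together with the $O(\gamma^{3/2}/t)$ corrections genuinely affect only constants on the relevant scales; and, for the sharp lower bound, ruling out destructive interference between the $u>0$ and $u<0$ contributions, which carry different phases coming from the $v$-Gaussian.
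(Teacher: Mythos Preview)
Your approach is essentially the same as the paper's. Both change to sum/difference variables ($u=\sigma+s$, $v=\sigma-s$, which after the paper's preliminary rescaling $\sigma=p'/N$, $s=q'/N$ become $\xi_1=(p'+q')/2$, $\xi_2=(p'-q')/2$), both identify the $v$-direction as carrying the quartic degeneracy near the cusp, and both obtain the bound by a non-degenerate integration in the $u$-direction followed by a degenerate estimate in $v$.

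The differences are presentational rather than structural. Where you write down a model phase and complete the square via $w=u+\tfrac{N}{4K}v^{2}$, the paper performs genuine stationary phase in $\xi_1$ (large parameter $\Lambda=\lambda/N^{3}$), computes the critical point $\xi_{1,c}(\xi_2)$ explicitly (your $w$-substitution is its linearization), and then applies van der Corput of order four (and of order three when $\mu\neq 0$) to the reduced phase $\tilde g_{N,a}(\xi_2)$. Your sharper intermediate bound $\lambda^{-1}|\mathcal D|^{-1/2}$ in the range $(\lambda N)^{-1/2}\lesssim|\mathcal D|$ comes from resolving the split quartic into two non-degenerate critical points rather than using third-order van der Corput as the paper does; both routes reach the same target. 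The one place where the paper is more careful than your sketch is the region away from the cusp (your ``complementary ranges of $u,v$''): the paper treats $|\xi_1|\gtrsim c$ separately via Lemma~\ref{lemxi1grand}, doing stationary phase in $\xi_2$ first and then showing the remaining $\xi_1$-phase has at most order-two degeneracy, a point your proposal acknowledges but does not work out. Neither you nor the paper gives a fully detailed proof of the lower bound at $x=a$, $K_a=1$.
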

We postpone the proofs of Propositions \ref{dispNgrand}, \ref{dispNpetitloin} and \ref{dispNpetitpres} to Section \ref{sectproofsprops} and we complete the proof of Theorem \ref{thmdispSchrodinger} in the case $(ht)^{1/2}\lesssim a\sim \gamma \leq \varepsilon_0<1$. 
Let therefore $\sqrt{a} \lesssim t\lesssim 1$ be fixed and let $N_t\geq 1$ be the unique positive integer such that $T=\frac{t}{\sqrt{a}}\leq N_t<\frac{t}{\sqrt{a}}+1=T+1$, hence $N_t=[T]$, where $[T]$ denotes the integer part of $T$. If $N_t$ is bounded then the number of $V_{N,h,a}$ with $N\sim N_t$ in the sum \eqref{uhsumNa} is also bounded and we can easily conclude adding the (worst) bound \eqref{dispNpetitpres} a finite number of times.
Assume $N_t\geq 2$ is large enough. We introduce the following notation: for $k\in \mathbb{Z}$ let
$I_{N_t,k}:=[4(N_t+k)-2,4(N_t+k)+2)$.
As $\alpha_c,\eta_c\in [\frac 12,\frac 32]$ and $\sqrt{\alpha_c}=\frac{T}{2N}q^{1/2}(\eta_c)-\frac{(\sigma+s)}{2N}$ with $|(\sigma,s)|\leq 2\sqrt{\alpha_c}$ on the support of $\chi$ (see Remark \ref{rmkchi}), we deduce (using \eqref{suppcondt}) that, for $M$ defined in \eqref{defM}, we have $2N\in [\frac{T}{M},MT]\subset [\frac{N_t}{M},M(N_t+1)]$. Using \eqref{uhsumNa}, we then bound $G_{h,a}(t,\cdot)$ as follows
\[
\|G_{h,a}(t,.)\|_{L^{\infty}(0\leq x\leq a, y)}\lesssim \frac{1}{h^d}\Big(\frac ht\Big)^{(d-1)/2} \sup_{x\leq a,y}\sum_{N_t/M\leq 2N\leq M(N_t+1)} |V_{N,h,a}(t,x,y)|.
\]
It will follow from the proof of Propositions \ref{dispNpetitpres} that the worst dispersive bounds for $V_{N,h,a}$ occurs at $x=a$ (when $\phi_{N,a}$ may have a critical point of order $3$). Therefore, we will seek for bounds for $G_{h,a}$ especially at $x=a$. 

For a fixed $y$ on the support of $\tilde\psi\Big(\frac{|y|}{2t}\Big)$ we let $Y=\frac{y}{\sqrt{a}}$, then $\frac 14\leq \frac{|Y|}{2T}\leq 2$, and therefore $|Y|\in [T/2,4T]\subset [N_t/2,4(N_t+1)]$. Using \eqref{defKa} and the fact that $q^{1/2}$ is homogeneous of order $1$, it follows that $K_a(\frac{Y}{4N},\frac{T}{2N})$ is close to $1$ when $q^{1/2}(-Y+2aT\Theta(\frac{Y}{2T},\frac{T}{2N},a))$ is sufficiently close to $4N$. As $2<N_t\leq T\leq 1/\sqrt{a}$, $|Y|/T\in [1/2,4]$, $\Theta$ is bounded and $0<a\leq \varepsilon_0$ is small, then, for $m_0$ and $M_0$ defined in \eqref{limitshalfq},
\[
q^{1/2}\Big(-Y+2aT\Theta(\frac{Y}{2T},\frac{T}{2N},a)\Big)% \in |Y|[m_0+O(a),M_0+O(a)]\\
 \subset [N_t(m_0-\varepsilon_0)/2,4(N_t+1)(M_0+\varepsilon_0)].
\]
Setting $k_1=-N_t(1-(m_0-\varepsilon_0)/8),\quad  k_2=(N_t+1)(M_0+\varepsilon_0-1)-1$,
we have $N_t+k\sim N_t$ and $[N_t(m_0-\varepsilon_0)/2,4(N_t+1)(M_0+\varepsilon_0)]\subset \cup_{k_1\leq k\leq k_2} I_{N_t,k}$. Let 
\[
\tilde I_{N_t,k}:=(4(N_t+k)-1,4(N_t+k)+1)\subset I_{N_t,k}.
\] 
Write
\begin{multline}\label{estimsupxyNtk}
\sup_{x,y}\sum_{N_t/M\leq 2N\leq M(N_t+1)} |V_{N,h,a}(t,x,y)|\\=\sup_{k_1\leq k\leq k_2} \sup_{q^{1/2}\Big(-Y+2aT\Theta(\frac{Y}{2T},\frac{T}{2N},a)\Big)\in I_{N_t,k}}\sum_{N_t/M\leq 2N\leq M(N_t+1)} |V_{N,h,a}(t,a,y)|\\
\geq \sup_{k_1\leq k\leq k_2} \sup_{q^{1/2}\Big(-Y+2aT\Theta(\frac{Y}{2T},\frac{T}{2N},a)\Big)\in \tilde I_{N_t,k}}\sum_{N_t/M\leq 2N\leq M(N_t+1)} |V_{N,h,a}(t,a,y)|.
\end{multline}
\begin{prop}\label{propsumNgrand}
There exists $C>0$ (independent of $h,a$) such that, if $N_t:=[\frac{t}{\sqrt{a}}]\gg \lambda^{1/3}$, 
\[
\|G_{h,a}(t,\cdot)\|_{L^{\infty}(\Omega_d)}\leq \frac{C}{h^{d}}\Big(\frac ht\Big)^{(d-1)/2}\Big(\frac{ht}{a}\Big)^{1/2}\,.
\]
\end{prop}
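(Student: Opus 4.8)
The plan is to combine the representation of $G_{h,a}$ from Corollary~\ref{coruhgam} with the single-term estimate of Proposition~\ref{dispNgrand}, exploiting the hypothesis $N_t:=[t/\sqrt a]\gg\lambda^{1/3}$ in order to sum over the reflections. By Remark~\ref{rmqxgama} it suffices to work in the range $0\le x\le a$, and by Corollary~\ref{coruhgam} (together with the bound $N_t/M\le 2N\le M(N_t+1)$ on the indices established above) it is enough to prove that, uniformly in $x\le a$ and in $y$ on the support of $\tilde\psi(|y|/2t)$,
\[
\sum_{N_t/M\le 2N\le M(N_t+1)}|V_{N,h,a}(t,x,y)|\lesssim \Big(\frac{ht}{a}\Big)^{1/2},
\]
since the prefactor $h^{-d}(h/t)^{(d-1)/2}$ then yields the claim. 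Because every index here satisfies $N\sim N_t\gg\lambda^{1/3}$, Proposition~\ref{dispNgrand} applies to each term.

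The first step is to understand, for fixed $y$, how $|K_a(\tfrac{Y}{4N},\tfrac{T}{2N})-1|$ varies with the integer $N$, where $Y=y/\sqrt a$ and $T=t/\sqrt a$. Using that $q^{1/2}$ is homogeneous of degree one, \eqref{defKa} rewrites as
\[
K_a\Big(\frac{Y}{4N},\frac{T}{2N}\Big)=\frac{1}{4N}\,q^{1/2}\Big(-Y+2aT\,\Theta\big(\tfrac{Y}{2T},\tfrac{T}{2N},a\big)\Big)=:\frac{P(N)}{4N},
\]
so that $\big|K_a(\tfrac{Y}{4N},\tfrac{T}{2N})-1\big|=|4N-P(N)|/(4N)\sim|4N-P(N)|/N_t$. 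Since $\Theta$ is uniformly bounded (Lemma~\ref{lemalphacetac}) and $2aT=2\sqrt a\,t\sim aN_t$, differentiating in $N$ (which affects only the $T/(2N)$-slot of $\Theta$) gives $\partial_NP(N)=O(a)$, hence $\partial_N\big(4N-P(N)\big)=4+O(a)$ is bounded away from $0$. Therefore, as $N$ runs over the $O(N_t)$ integers in the sum, the quantities $\ell:=|4N-P(N)|$ form a $\gtrsim 1$-separated set contained in an interval $[0,CN_t]$. Feeding $\sqrt{4N}\sim N_t^{1/2}$ and $|K_a-1|\sim\ell/N_t$ into Proposition~\ref{dispNgrand} converts its bound into $|V_{N,h,a}(t,x,y)|\lesssim h^{1/3}\big/\big((N_t/\lambda^{1/3})^{1/2}+\lambda^{1/6}\ell^{1/2}\big)$.

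It then remains to sum over this separated family. Splitting the sum at the balance point $\ell\sim N_t/\lambda^{2/3}$ (below which the first term in the denominator dominates, above which the second does) and using $\sum_{1\le\ell\lesssim N_t}\ell^{-1/2}\lesssim N_t^{1/2}$, one obtains
\[
\sum_N|V_{N,h,a}(t,x,y)|\lesssim\Big(1+\frac{N_t}{\lambda^{2/3}}\Big)\frac{h^{1/3}\lambda^{1/6}}{N_t^{1/2}}+\frac{h^{1/3}N_t^{1/2}}{\lambda^{1/6}}\lesssim\frac{h^{1/3}N_t^{1/2}}{\lambda^{1/6}},
\]
the last step using $\lambda=a^{3/2}/h\gg1$ (so $\lambda^{1/2}\ge\lambda^{1/6}$, absorbing the $N_t/\lambda^{2/3}$ contribution) together with $N_t\gg\lambda^{1/3}$ (so the $\ell=0$ contribution $h^{1/3}\lambda^{1/6}/N_t^{1/2}$ is dominated). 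Substituting $N_t\sim t/\sqrt a$ and $\lambda^{1/6}=a^{1/4}h^{-1/6}$ gives $h^{1/3}N_t^{1/2}\lambda^{-1/6}\sim(ht/a)^{1/2}$, uniformly in $x\le a$ and $y$, which is the desired bound.

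I expect the main obstacle to be the reparametrization step: one must verify both that $K_a(\tfrac{Y}{4N},\tfrac{T}{2N})-1$ is comparable to $(4N-P(N))/N_t$ — immediate from homogeneity of $q^{1/2}$ — and, more delicately, that $P(N)$ is nearly constant in $N$ so that $\partial_N\big(4N-P(N)\big)\gtrsim1$. It is precisely this $\gtrsim1$-separation of the relevant quantities across consecutive reflections that upgrades the per-term estimate of Proposition~\ref{dispNgrand} into a convergent series of the right size; everything downstream — the dyadic-type summation in $\ell$ and the bookkeeping of the powers of $h$, $a$ and $\lambda$ — is routine.
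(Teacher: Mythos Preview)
Your proposal is correct and follows essentially the same route as the paper: apply Proposition~\ref{dispNgrand} to each $V_{N,h,a}$, observe that $4NK_a(\tfrac{Y}{4N},\tfrac{T}{2N})=q^{1/2}(-Y+2aT\Theta)$ is nearly constant in $N$ so that the quantities $|4N-P(N)|$ are $\sim 4$-separated, and then sum by integral comparison. The paper phrases the separation step slightly differently (it localizes $P(N)$ to a fixed interval $\cup_{|k'-k_y|\le 1}I_{N_t,k'}$ of width $O(1)$ rather than computing $\partial_N P(N)=O(a)$), but the content and the resulting summation are the same.
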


\begin{proof}
If $\lambda^{1/3}\ll N_t$, then $N_t+k\gg\lambda^{1/3}$ for all $k\in [k_1,k_2]$ and we estimate the $L^{\infty}$ norms of $G_{h,a}(t,\cdot)$ using the first equality in \eqref{estimsupxyNtk} and Proposition \ref{dispNgrand}:
if $k_y\in [k_1,k_2]$ is such that $q^{1/2}(-Y)\in I_{N_t,k_y}$, then,  $4NK_a(\frac{Y}{4N},\frac{T}{2N})=q^{1/2}\Big(-Y+2aT\Theta(\frac{Y}{2T},\frac{T}{2N},a)\Big)\in \cup_{|k'-k_y|\leq 1} I_{N_t,k'}$ (using $a$ small) and therefore the second line in \eqref{estimsupxyNtk} can be (uniformly) bounded as follows
\begin{multline}\label{boundwhenNislarge}
 \sup_{k_1\leq k\leq k_2}\sup_{4NK_a(\frac{Y}{4N},\frac{T}{2N})\in I_{N_t,k}} \sum_{2N\in[N_t/M, M(N_t+1)]} |V_{N,h,a}(t,a,y)| \\
 \leq
 \sup_{|k'-k_y|\leq 1}\sup_{4NK_a(\frac{Y}{4N},\frac{T}{2N})\in I_{N_t,k'}} \sum_{2N\in[N_t/M, M(N_t+1)]} |V_{N,h,a}(t,a,y)| \\ 
 \leq   \sup_{4NK_a(\frac{Y}{4N},\frac{T}{2N})\in \cup_{|k'-k_y|\leq 1}I_{N_t,k'}} \sum_{2N\in[N_t/M, M(N_t+1)}\frac {h^{1/3}} {(N/\lambda^{1/3})^{1/2} +\lambda^{1/6}|4NK_a(\frac{Y}{4N},\frac{T}{2N})-4N|^{1/2}}.
 \end{multline}
As $4NK_a(\frac{Y}{4N},\frac{T}{2N})\in \cup_{|k'-k_y|\leq 1} I_{N_t,k'}$, we find, for $N=N_t+k_y+j$ and $|j|\geq 2$, that $\Big|4NK_a(\frac{Y}{4N},\frac{T}{2N})-4N\Big|\geq |j|-1$,
 and therefore the last line in \eqref{boundwhenNislarge} can be bounded by
 \begin{equation}\label{estimsumNgrandj}
 \frac{h^{\frac 13}}{(N_t+k_y)^{\frac 1 2 }} \Big(3\lambda^{\frac 16}+ \sum_{|N-(N_t+k_y)|=|j|\geq 2}\frac {\lambda^{\frac 16}} {(1+j/(N_t+k_y))^{1/2}+\lambda^{\frac 13}|(|j|-1)/(N_t+k_y)|^{\frac 1 2}}\Big).
\end{equation}
The sums over $N=N_t+k_y\pm(j+1)$, $j\geq 1$, read as
\begin{multline}
\frac{h^{1/3}(N_t+k_y)^{1/2}}{\lambda^{1/6}(N_t+k_y)}\sum_{N= N_t+k_y\pm(j+1), j\geq 1}\frac {1} {(1\pm (j+1)/(N_t+k_y))^{1/2}\lambda^{-1/3} +|j/(N_t+k_y)|^{1/2}}\\
\leq h^{1/3}\frac{(N_t+k_y)^{1/2}}{\lambda^{1/6}}\sum_{\pm}\int_0^{1-\frac{1+N_t/(2M)}{N_t+k_y}}\frac{dx}{\sqrt{x}+\lambda^{-1/3}(1\pm(N_t+k_y)^{-1}\pm  x)^{1/2}},
\end{multline}
where the last integral is taken on $[0,1-\frac{1+N_t/(2M)}{N_t+k_y}]$ as $N=N_t+k_y\pm (j+1)\geq \frac{N_y}{2M}$. %in the previous sum the following restriction $1-(N_t+k_y)^{-1}(1+N_t/(2M))\geq j/(N_t+k_y)$ holds. 
As $k_y\geq k_1$, we have $N_t+k_y\geq N_t(1+(m_0-\varepsilon_0)/8)$ and using \eqref{defM},
$\frac{N_t}{2M(N_t+k_y)}\leq \frac{4}{M(m_0-\varepsilon_0)}\leq \frac{1}{\sqrt{3/2}}$.
Both integrals (with $\pm$ signs) are bounded by $\frac 12$, so the contribution coming from the sum over $|N-(N_t+k_y)|\geq 2$ in \eqref{estimsumNgrandj} is $h^{1/3}(N_t+k_y)^{1/2}/\lambda^{1/6}$. As $N_t+k_y\leq (N_t+1)(M_0+\varepsilon_0-1)$ where $M_0$ is fixed, depending only on $q$, and $N_t\in[\frac{t}{\sqrt{a}}-1,\frac{t}{\sqrt{a}}]$, we obtain 
\begin{equation}\label{kbounds}
 \sup_{4NK_a(\frac{Y}{4N},\frac{T}{2N})\in \cup_{|k'-k_y|\leq 1}I_{N_t,k'}}\sum_{2N\in [N_t/M,M(N_t+1)]} |V_{N,h,a}(t,a,y)|\leq \sqrt{M_0} h^{1/3}\Big(\frac{t/\sqrt{a}}{\lambda^{1/3}}\Big)^{1/2}\lesssim \Big(\frac{ht}{a}\Big)^{1/2}\,,
\end{equation}
%At fixed $y$ there are at most three values of $k$ such that $q^{1/2}(-Y+2aT\Theta(\frac{Y}{2T},\frac{T}{2N},a))\in I_{N_y,k}$, and therefore the bounds in \eqref{kbounds} are independent of $k$ and  the $I_{N_t,k}$ are disjoint.  
which concludes the proof.\end{proof}

%Before dealing with $N_t\lesssim \lambda^{1/3}$, 
We introduce one more notation. 
%As $a$ is small, for a fixed $Y=\frac{y}{\sqrt{a}}$ there exists at most one $k$ such that $q^{1/2}\Big(-Y+2aT\Theta(\frac{Y}{2T},\frac{T}{2N},a)\Big)\in \tilde I_{N_t,k}$. 
If $y$ is such that $q^{1/2}\Big(-Y+2aT\Theta(\frac{Y}{2T},\frac{T}{2N},a)\Big)\in \tilde I_{N_t,k}$ for some $k_1\leq k\leq k_2$, then $k$ is unique and we denote it $k^{\#}_y$. If $2(N_t+k^{\#}_y)\in [N_t/M, M(N_t+1)]$, we can either have $\lambda^{1/3}\lesssim N_t+k^{\#}_y$, or $N_t+k^{\#}_y< \lambda^{1/3}$.%(notice that this last situation always occurs if $N_t\ll \lambda^{1/3}$ as $k^{\#}_y\leq k_2< 2M_0N_t$ and $M_0$ is fixed, depending only on $q$). 
\begin{rmq}\label{rmqprop5}
When $N_t+k^{\#}_y< \lambda^{1/3}$, Proposition \ref{dispNpetitpres} may apply only for $N=N_t+k^{\#}_y$, as for $k^{\#}_y\neq k\in [k_1,k_2]$ and $n=N_t+k$ we must have 
\begin{multline}
\Big|q^{1/2}\Big(-Y+2aT\Theta(\frac{Y}{2T},\frac{T}{2N},a)\Big)-4n\Big|\geq 4|n-(N_t+k^{\#}_y)|\\
-\Big|q^{1/2}\Big(-Y+2aT\Theta(\frac{Y}{2T},\frac{T}{2N},a)\Big)-4(N_t+k^{\#}_y)\Big|\geq 3 \gg \frac{1}{n}.
\end{multline}
\end{rmq}

\begin{prop}\label{proptangmain}
There exists $C>0$ (independent of $h,a$) such that, if $N_t:=[\frac{t}{\sqrt{a}}]\ll \lambda^{1/3}$, 
\begin{equation}\label{ptmaxGha}
\|G_{h,a}(t,\cdot)\|_{L^{\infty}(\Omega_d)}\sim \frac{C}{h^{d}}\Big(\frac ht\Big)^{(d-1)/2}\Big(\frac{ha}{t}\Big)^{1/4}.
\end{equation}
\end{prop}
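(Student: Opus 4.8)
The plan is to work entirely from the representation of $G_{h,a}$ in Corollary~\ref{coruhgam}, i.e. from $G_{h,a}(t,x,y)=\frac{C}{h^{d}}(\frac ht)^{(d-1)/2}\tilde\psi(\frac{|y|}{2t})\sum_{N} V_{N,h,a}(t,x,y)+O(h^\infty)$, where, by Remark~\ref{rmqxgama}, we restrict to $0\le x\le a$ and, for the lower bound, to $x=a$ (where $\phi_{N,a}$ can have an order-$3$ critical point). Fix $t$ with $\sqrt a\le t\le T_0$, set $T=t/\sqrt a$, $N_t=[T]$. Since $N_t\ll\lambda^{1/3}$ by hypothesis, every index $N$ in the finite sum \eqref{uhsumNa} satisfies $2N\in[N_t/M,M(N_t+1)]$, hence $N<\lambda^{1/3}$; so only Propositions~\ref{dispNpetitloin} and \ref{dispNpetitpres} are needed. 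The key algebraic facts, to be recorded once and used repeatedly, are that $N_t\ll\lambda^{1/3}$ is equivalent to $th^{1/3}\ll a$, which gives $h^{1/3}\lesssim(ha/t)^{1/4}$, and that $h^{1/3}/(N_t/\lambda^{1/3})^{1/4}\sim(ha/t)^{1/4}$ after substituting $\lambda=a^{3/2}/h$ and $N_t\sim t/\sqrt a$.

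\textbf{Upper bound.} Fix $y$ with $\tilde\psi(\frac{|y|}{2t})\neq0$ and put $Y=y/\sqrt a$. By the discussion around \eqref{estimsupxyNtk} and Remark~\ref{rmqprop5}, there is at most one index $N^{\#}=N_t+k^{\#}_y$ for which $|K_a(\tfrac{Y}{4N^{\#}},\tfrac{T}{2N^{\#}})-1|\le 1/(4(N^{\#})^2)$; for every other $N=N_t+k$ in the sum one has $|4NK_a-4N|\gtrsim|k-k^{\#}_y|$ (because $4NK_a=q^{1/2}(-Y+2aT\Theta)$ varies by only $O(\sqrt a\,t)$ as $N$ runs over the admissible range, while the $\tilde I_{N_t,k}$ are separated), hence $|K_a-1|\gtrsim|k-k^{\#}_y|/N_t\gg 1/N^2$ and $2N|K_a-1|^{1/2}\gtrsim\sqrt{N_t|k-k^{\#}_y|}$. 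Applying Proposition~\ref{dispNpetitpres} to $V_{N^{\#},h,a}$ and Proposition~\ref{dispNpetitloin} to the remaining terms,
\[
\sum_{N}\left| V_{N,h,a}(t,x,y)\right|\ \lesssim\ \frac{h^{1/3}}{(N_t/\lambda^{1/3})^{1/4}}\ +\ h^{1/3}\sum_{j\ge 1}\frac{1}{1+\sqrt{N_t\,j}}\ \lesssim\ \frac{h^{1/3}}{(N_t/\lambda^{1/3})^{1/4}}\ +\ h^{1/3}\ \lesssim\ \Big(\frac{ha}{t}\Big)^{1/4},
\]
using that the geometric-type sum over $j$ is $O(1)$ and that $h^{1/3}\lesssim(ha/t)^{1/4}$. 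Multiplying by the prefactor of Corollary~\ref{coruhgam} yields the $\lesssim$ half of \eqref{ptmaxGha}.

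\textbf{Lower bound (saturation).} One checks that the locus $\mathcal Y_{N_t}(T)=\{Y\,:\,K_a(\tfrac{Y}{4N_t},\tfrac{T}{2N_t})=1\}$ is non-empty and meets the support of the symbol: since $K_a(\tfrac{Y}{4N_t},\tfrac{T}{2N_t})=\tfrac{|Y|}{4N_t}\,q^{1/2}\big(-\tfrac{Y}{|Y|}+O(a)\big)$ is smooth in $Y$ and crosses the value $1$ as $|Y|$ varies over the admissible range, the intermediate value theorem applies. Pick such a $y$ and set $x=a$. By the second assertion of Proposition~\ref{dispNpetitpres}, $|V_{N_t,h,a}(t,a,y)|\sim h^{1/3}/(N_t/\lambda^{1/3})^{1/4}$. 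For every other $N$ in the sum, $4NK_a$ differs from $4N_t$ by $O(\sqrt a\,t)\ll1$, so $|K_a-1|\gtrsim|N-N_t|/N_t$ (this is the quantitative form of $\mathcal Y_N(T)\cap\mathcal Y_{N_t}(T)=\emptyset$), and the estimate of the previous paragraph gives $\sum_{N\neq N_t}|V_{N,h,a}(t,a,y)|\lesssim h^{1/3}$. Since $N_t\ll\lambda^{1/3}$ makes $(N_t/\lambda^{1/3})^{1/4}$ as small as we like, the term $V_{N_t,h,a}$ dominates the sum of all the others, so $\big|\sum_N V_{N,h,a}(t,a,y)\big|\gtrsim h^{1/3}/(N_t/\lambda^{1/3})^{1/4}\sim(ha/t)^{1/4}$, and \eqref{ptmaxGha} follows together with the upper bound (the $O(h^\infty)$ term being negligible against this polynomial-in-$h$ quantity).

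\textbf{Main obstacle.} The delicate point is the lower bound, not the upper one. The upper bound reduces to a convergent ``geometric'' sum dominated by its single worst term; but for saturation one must ensure both that the worst configuration ($x=a$ together with $K_a=1$ for $N=N_t$) is attained at an admissible $y$, and that there the order-$3$ degenerate contribution $V_{N_t,h,a}$ is genuinely of size $(ha/t)^{1/4}$ — this is exactly the content of the sharp lower bound in Proposition~\ref{dispNpetitpres} — and survives the addition of all the smaller terms. The disjointness $\mathcal Y_N(T)\cap\mathcal Y_{N'}(T)=\emptyset$ for $N\neq N'$ is precisely what rules out a second comparably large term and the attendant risk of destructive interference.
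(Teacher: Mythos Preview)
Your proposal is correct and follows essentially the same route as the paper: isolate the single index $N^{\#}$ where Proposition~\ref{dispNpetitpres} applies (yielding the dominant $(ha/t)^{1/4}$ contribution), bound the remaining terms via Proposition~\ref{dispNpetitloin} as a sum $\sum_{1\le j\lesssim N_t}(1+\sqrt{N_t j})^{-1}=O(1)$ (not ``geometric'', but $O(1)$ by comparison with $\int_0^1 dx/\sqrt{x}$ after rescaling, exactly as the paper does), and use $h^{1/3}\ll (ha/t)^{1/4}$ for both directions of $\sim$. The only cosmetic difference is that the paper indexes the special term by $k_y$ defined through $q^{1/2}(-Y)\in I_{N_t,k_y}$ rather than fixing $N_t$ itself, which makes the existence of an admissible $y$ with $K_a=1$ slightly more transparent; your intermediate-value argument for non-emptiness of $\mathcal Y_{N_t}(T)$ covers this point.
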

\begin{proof}
If $y$ is such that $q^{1/2}(-Y)\in I_{N_t,k_y}$ for $k_y\in [k_1,k_2]$, then, using $a\leq \varepsilon_0$,
\begin{multline}
  \Big|q^{1/2}\Big(-Y+2aT\Theta(\frac{Y}{2T},\frac{T}{2N},a)\Big)-4n\Big|\geq 4|n-(N_t+k_y)|\\
  {}-\Big|q^{1/2}\Big(-Y+2aT\Theta(\frac{Y}{2T},\frac{T}{2N},a)\Big)-4(N_t+k_y)\Big|
\end{multline}
for all $n\neq N_t+k_y$; the second term in the right hand side is smaller than $2$, while the first one is at least $4$; therefore the assumption of Proposition \ref{dispNpetitpres} cannot hold for $n\neq N_t+k_y$. For all such $n$ we then use Proposition \ref{dispNpetitloin},
\begin{multline}\label{smalltermsinthesum}
 \sup_{q^{1/2}(-Y))\in I_{N_t,k_y}}\sum_{2n\in [N_t/M,M(N_t+1)], n \neq N_t+k_y} |V_{n,h,a}(t,a,y)|\\
 \lesssim  h^{1/3} \sum_{2n\in [N_t/M,M(N_t+1)], n\neq N_t+k_y}\frac {1} {(1 +|n(q^{1/2}(-Y+2aT\Theta(\frac{Y}{2T},\frac{T}{2n},a))-4n)|^{1/2})}\\
 \lesssim h^{1/3}\sum_{n=N_t+k_y+j, 1\leq |j|\lesssim N_t}\frac {1} {(1 +(N_t+k_y+j)^{1/2}|j|^{1/2})}\\
 \leq h^{1/3}\sum_{\pm}\int_0^{1-\frac{1+N_t/(2M)}{N_t+k_y}}\frac{dx}{x^{1/2}(1\pm x)^{1/2}+(N_t+k_y)^{-1}},
\end{multline}
where the last two integrals are uniform bounds for the sum over $N< N_t+k_y$ and $N>N_t+k_y$, respectively; when $N>N_t+k_y$, the integral over $[0,1]$ is bounded by a uniform constant ; when $N< N_t+k_y$, write $x=\sin^2 \theta$, $\theta\in [0,\pi/2)$, therefore $1-x=\cos^2\theta$, $dx=2\sin\theta\cos\theta$ : the corresponding integral is also bounded by at most $\pi$.

We are left with $N=N_t+k_y$.
If $q^{1/2}\Big(-Y+2aT\Theta(\frac{Y}{2T},\frac{T}{2N},a)\Big)\notin \tilde I_{N_t,k_y}$, then we use again Proposition \ref{dispNpetitloin}. If, on the contrary, $q^{1/2}\Big(-Y+2aT\Theta(\frac{Y}{2T},\frac{T}{2N},a)\Big)\in \tilde I_{N_t,k_y}$, then
$k_y^{\#}=k_y\in [k_1,k_2]$ and we may be able to apply Proposition \ref{dispNpetitpres} with $N=N_t+k^{\#}_y$ if moreover the following holds: $\Big|q^{1/2}\Big(-Y+2aT\Theta(\frac{Y}{2T},\frac{T}{2N},a)\Big)-4N\Big|\lesssim \frac{1}{N}$.
%if this is not the case we apply again Proposition \ref{dispNpetitloin} for $N=N_t+k^{\#}_y$. 
We then have
\begin{align*}
 \sup_{q^{\frac 12}(-Y))\in I_{N_t,k_y}}|V_{N_t+k_y,h,a}(t,a,y)| & \lesssim \frac{h^{\frac 13}}{(N/\lambda^{\frac 13})^{\frac 14}}+\frac {h^{\frac 1 3}} {(1 +|N(q^{\frac 12}(2aT\Theta(\frac{Y}{2T},\frac{T}{2N},a)-Y)-4N)|^{\frac 12})}\\
& \lesssim\Big(\frac{ha}{t}\Big)^{1/4}+ h^{1/3}\,.
\end{align*}
As for $N_t\sim \frac{t}{\sqrt{a}}\ll \frac{\sqrt{a}}{h^{1/3}}=\lambda^{1/3}$ we have $h^{1/3}\ll \Big(\frac{ha}{t}\Big)^{1/4}$, it follows that at fixed $t$, the supremum of the sum over $V_{N,h,a}(t,x,y)$ is reached for $y$ such that $q^{1/2}(-Y+2aT\Theta(\frac{Y}{2T},\frac{T}{2N},a))=4N$ with $N=N_t+k^{\#}_y$ and at $x=a$. As the contribution from \eqref{smalltermsinthesum} in the sum over $n\neq N_t+k_y$ is $\sim h^{1/3}$, we obtain an upper bound for $G_{h,a}(t,\cdot)$. The last line of \eqref{estimsupxyNtk} and $h^{1/3}\ll \Big(\frac{ha}{t}\Big)^{1/4}$ provide a similar lower bound for $G_{h,a}$ and therefore \eqref{ptmaxGha} holds true.
\end{proof}

\begin{prop}\label{propsumNlimit}
There exists $C>0$ (independent of $h,a$) such that, if $N_t:=[\frac{t}{\sqrt{a}}]\sim \lambda^{1/3}$, 
\begin{equation}\label{ptmaxGhabis}
\|G_{h,a}(t,\cdot)\|_{L^{\infty}(\Omega_d)}\leq \frac{C}{h^{d}}\Big(\frac ht\Big)^{(d-1)/2}\Big(\Big(\frac{ha}{t}\Big)^{1/4}+ \Big(\frac{ht}{a}\Big)^{1/2}+h^{1/3}\Big).
\end{equation}
\end{prop}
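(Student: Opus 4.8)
The plan is to glue together the two main estimates already proved, namely Proposition \ref{propsumNgrand} (valid for $N\geq\lambda^{1/3}$) and Proposition \ref{proptangmain} (valid for $N<\lambda^{1/3}$), using that in the transitional regime $N_t\sim\lambda^{1/3}$ \emph{every} index $N$ entering the sum \eqref{uhsumNa} satisfies $N\sim N_t\sim\lambda^{1/3}$. So if $N_t=[t/\sqrt a]$ is bounded the statement is immediate (finitely many terms, each controlled by Proposition \ref{dispNpetitpres}); assuming $N_t$ large, I would start from $\|G_{h,a}(t,\cdot)\|_{L^\infty(\Omega_d)}\lesssim h^{-d}(h/t)^{(d-1)/2}\sup_{x\leq a,y}\sum_{N_t/M\leq 2N\leq M(N_t+1)}|V_{N,h,a}(t,x,y)|$ and note that on this range $(N/\lambda^{1/3})^{1/2}\sim(N/\lambda^{1/3})^{1/4}\sim 1$ and $\sqrt N\sim\lambda^{1/6}$.

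Fix $x\leq a$ and $y$ on the support of $\tilde\psi(|y|/2t)$, write $Y=y/\sqrt a$, $T=t/\sqrt a$, and let $k_y\in[k_1,k_2]$ be the unique index with $q^{1/2}(-Y)\in I_{N_t,k_y}$; since $aT\lesssim\sqrt a\lesssim\sqrt{\varepsilon_0}$ and $q^{1/2}$ is homogeneous of degree one, $4NK_a(\tfrac{Y}{4N},\tfrac{T}{2N})=q^{1/2}(-Y+2aT\Theta)\in\bigcup_{|k'-k_y|\leq 1}I_{N_t,k'}$, exactly as in \eqref{boundwhenNislarge} and in the proof of Proposition \ref{proptangmain}. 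Then I would split the sum at $N=\lambda^{1/3}$. For $N\geq\lambda^{1/3}$ Proposition \ref{dispNgrand} applies (as $\lambda^{1/3}\lesssim T\sim N$) and, using $|K_a-1|=|4NK_a-4N|/(4N)$, gives $|V_{N,h,a}(t,x,y)|\lesssim h^{1/3}/(1+\lambda^{1/6}|4NK_a-4N|^{1/2})$; for $N<\lambda^{1/3}$ with $|K_a-1|\gtrsim 1/N^2$ the identical bound follows from Proposition \ref{dispNpetitloin} (there $2N|K_a-1|^{1/2}=\sqrt N|4NK_a-4N|^{1/2}\sim\lambda^{1/6}|4NK_a-4N|^{1/2}$), while the single index $N=N_t+k_y^{\#}$ for which $|K_a-1|\leq 1/(4N^2)$ may hold (unique, by Remark \ref{rmqprop5}) is handled by Proposition \ref{dispNpetitpres}, which yields $|V_{N,h,a}(t,x,y)|\lesssim h^{1/3}$ since $(N/\lambda^{1/3})^{1/4}\sim 1$.

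The summation then proceeds exactly as in \eqref{estimsumNgrandj}--\eqref{kbounds} and in \eqref{smalltermsinthesum}: writing $N=N_t+k_y+j$ one has $|4NK_a-4N|\gtrsim|j|$ for $|j|\geq 2$, the $O(1)$ many terms with $|j|\leq 1$ contribute $\lesssim h^{1/3}$ each, and $\sum_{1\leq|j|\lesssim N_t}h^{1/3}/(1+\lambda^{1/6}|j|^{1/2})\lesssim h^{1/3}\sqrt{N_t}/\lambda^{1/6}\lesssim h^{1/3}$ because $N_t\sim\lambda^{1/3}$. Hence $\sup_{x\leq a,y}\sum_N|V_{N,h,a}(t,x,y)|\lesssim h^{1/3}$. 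Finally, $N_t\sim\lambda^{1/3}$ means $t\sim a/h^{1/3}$, so $h^{1/3}\sim(ha/t)^{1/4}\sim(ht/a)^{1/2}$, and \eqref{ptmaxGhabis} follows in the stated interpolated form.

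The only genuinely delicate point — the main obstacle — is that replacing each $|V_{N,h,a}|$ by its crude bound $\lesssim h^{1/3}$ would cost a factor equal to the number of terms, $\sim N_t\sim\lambda^{1/3}$, which is fatal; one must retain the $|K_a-1|$-dependent decay in the index $N$ from Propositions \ref{dispNgrand}--\ref{dispNpetitpres} so that, off the single resonant index $N_t+k_y$, the $j$-sum is a convergent series of value $\lesssim\sqrt{N_t}/\lambda^{1/6}\sim 1$. With that observation in place, the proof is essentially the bookkeeping already performed for Propositions \ref{propsumNgrand} and \ref{proptangmain}.
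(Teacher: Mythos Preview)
Your proposal is correct and follows essentially the same approach as the paper: split the sum at $N=\lambda^{1/3}$, apply Propositions \ref{dispNgrand}, \ref{dispNpetitloin}, \ref{dispNpetitpres} as in the proofs of Propositions \ref{propsumNgrand} and \ref{proptangmain}, and observe that in this transitional regime $N_t\sim\lambda^{1/3}$ the bounds from the two sides coincide and all three quantities $(ha/t)^{1/4}$, $(ht/a)^{1/2}$, $h^{1/3}$ are comparable. Your write-up is in fact more detailed than the paper's own proof, which only sketches the splitting and refers back to the earlier arguments; in particular, you make explicit the key observation that the $|K_a-1|$-dependent decay must be retained to avoid a fatal loss of $N_t\sim\lambda^{1/3}$ when summing.
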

\begin{rmq}
When $N_t\sim\lambda^{1/3}$ we find $a\sim h^{1/3}t$ and all the terms in brackets in the right hand side of \eqref{ptmaxGhabis} behave like $h^{1/3}$, hence $\|G_{h,a}(t,\cdot)\|_{L^{\infty}(\Omega_d)}\leq \frac{C}{h^{d}}\Big(\frac ht\Big)^{(d-1)/2}h^{1/3}$.
\end{rmq}
\begin{proof}
If $N_t\sim \lambda^{1/3}$ and $k\sim N_t$, we split according to whether $y$ is such that $N_t+k_y<\lambda^{1/3}$ or $N_t+k_y\geq \lambda^{1/3}$ and proceed as in the previous cases using Propositions \ref{dispNgrand}, \ref{dispNpetitloin} and \ref{dispNpetitpres}. As, for such $N_t$, we have $(\frac{ha}{t})^{1/4}\sim h^{1/3}\sim (\frac{th}{a})^{1/2}$, we cannot deduce the supremum to be $(\frac{ha}{t})^{1/4}$ but obtain an uniform bound $h^{1/3}$ for $\gamma_{h,a}(t)$ in the statement of Theorem \ref{thmdispSchrodinger}. 
\end{proof}

\subsubsection{Transverse waves}\label{sectransv}
Let  $\gamma>8a$ and recall $\lambda_{\gamma}:=\frac{\gamma^{3/2}}{h}$.
\begin{prop}\label{proptransvmain}
Let $t>h$ %
and $\varepsilon_0>\gamma>8a$. %
\begin{equation}\label{transwave}
\|G_{h,\gamma}(t,\cdot)\|_{L^{\infty}(x\leq a, y)}\lesssim 
\left\{ \begin{array}{l} 
\frac{1}{h^d}\Big(\frac ht \Big)^{\frac{d-1}2}\Big({\frac{th}{\gamma}}\Big)^{1/2}, \text{ if }\frac{t}{\sqrt{\gamma}}\gtrsim \lambda_{\gamma}^{1/3}, \,\\
\frac{1}{h^d}\Big(\frac ht \Big)^{\frac{d-1}2}h^{1/3}, \text{ if }\frac{a}{\gamma}\lesssim \frac{t}{\sqrt{\gamma}}\lesssim\lambda_{\gamma}^{1/3},\\
\frac{1}{h^d}\Big(\frac ht \Big)^{\frac d2},\text{ if } h<t \text{ and } \frac{t}{\sqrt{\gamma}}\leq \frac{1}{2\sqrt{3/2}M_0^{2/3}}\frac{a}{\gamma}.
 \end{array} \right.
\end{equation}
Moreover, for $h<t<a$ we have $\|G_{h,\varepsilon_0}(t,\cdot)\|_{L^{\infty}(x\leq a, y)}\lesssim \frac{1}{h^d}\Big(\frac ht \Big)^{d/2}$, while for $a\lesssim  t\leq T_0$ 
\begin{equation}\label{estimtransvN>1}
\sum_{3\leq j\leq \log\Big(\frac{\varepsilon_0}{a}\Big),\gamma_j =2^j a} \|G_{h,\gamma_j}(t,\cdot)\|_{L^{\infty}(x\leq a,y)}\lesssim 
\left\{ \begin{array}{l} 
\frac{1}{h^d}\Big(\frac ht \Big)^{\frac{d-1}2}h^{1/3}\log \frac{\varepsilon_0}{a}, \text{ if } a\lesssim  t\leq \frac{a}{h^{1/3}}(<\frac{\gamma}{h^{1/3}}),\\
 \frac{1}{h^d}\Big(\frac ht \Big)^{\frac{d-1}2}\Big[\Big(\frac{ht}{a}\Big)^{\frac 12}+h^{\frac 13}\log\frac{\varepsilon_0}{a}\Big],\text{ if }  t\geq \frac{a}{h^{\frac 13}}.
  \end{array} \right.
\end{equation}
\end{prop}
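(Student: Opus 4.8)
The plan is to run the same scheme as for the tangential waves, exploiting the fact that in the transverse regime $\gamma>8a$, after restricting to $0\le x\le a$ (which is allowed by Remark~\ref{rmqxgama}), the reduced phase $\phi_{N,a,\gamma}(\sigma,s,\cdot)=\Phi_{N,a,\gamma}(\eta_c,\alpha_c,\sigma,s,\cdot)$ stays away from its turning point. Indeed, by \eqref{derphisig1}--\eqref{derphis1} the critical points in $\sigma,s$ satisfy $\sigma_c^2=\alpha_c-x/\gamma$ and $s_c^2=\alpha_c-a/\gamma$, both lying in $[3/8,3/2]$ on the support of $\psi_2(\alpha_c)$ since $x\le a<\gamma/8$. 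Hence no Airy-type (glancing) behaviour occurs in $\sigma$ or $s$, and a computation with \eqref{secdersig}--\eqref{secderssig} and Remark~\ref{rmqcritssigderalphac} shows that the only degeneracy of the $(\sigma,s)$-Hessian of $\phi_{N,a,\gamma}$ is a fold, occurring moreover only for a bounded set of indices $N$ — precisely those for which $\sigma_c s_c-\tfrac{\sqrt{\alpha_c}}{2N}(\sigma_c+s_c)$ is small, which forces $\sigma_c,s_c<0$ and $N\lesssim 1$, i.e. the grazing first reflections with $t\sim a/\sqrt\gamma$.

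I would first dispose of the two elementary cases of \eqref{transwave}. When $t/\sqrt\gamma\le\frac{1}{2\sqrt{3/2}M_0^{2/3}}\,a/\gamma$, the bound \eqref{boundfortstN} shows that every $N\ge1$ term is non-stationary, so $G_{h,\gamma}$ reduces modulo $O(h^\infty)$ to its free ($N=0$) part, and the third line follows from (the proof of) Proposition~\ref{propfreeN=0}; the same argument gives $\|G_{h,\varepsilon_0}(t,\cdot)\|_{L^\infty(x\le a,y)}\lesssim\frac1{h^d}(h/t)^{d/2}$ for $h<t<a$, since there $t<a\le\gamma\le\varepsilon_0$ forces the no-reflection condition for every dyadic $\gamma$ once $\varepsilon_0$ is small, and recombining the $N=0$ pieces as in Proposition~\ref{propfreeN=0} avoids any logarithm. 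For $N\ge1$ I would use Corollary~\ref{coruhgam} and estimate each $V_{N,h,\gamma}$ by stationary phase in $(\sigma,s)$: at the non-degenerate critical point $|\det\mathrm{Hess}_{\sigma,s}\phi_{N,a,\gamma}|\sim\gamma^3$, giving $|V_{N,h,\gamma}|\lesssim h^{1/2}/(\gamma^{1/4}\sqrt N)$; at the finitely many fold indices, a normal form $u^3+v^2$ (distinguishing whether the small-curvature direction is still effectively quadratic, i.e. $a\gg\gamma^{1/2}h^{1/3}$, or genuinely cubic) replaces this by $|V_{N,h,\gamma}|\lesssim h^{1/3}$, which is consistent because $h^{1/2}/\gamma^{1/4}\le h^{1/3}$ whenever $\gamma\ge h^{2/3}$.

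Summing over $N\in[N_\gamma/M,M(N_\gamma+1)]$ with $N_\gamma=[t/\sqrt\gamma]$, exactly as in the proofs of Propositions~\ref{propsumNgrand}--\ref{propsumNlimit}, then yields $\sum_{N\ge1}|V_{N,h,\gamma}|\lesssim h^{1/2}\sqrt{N_\gamma}/\gamma^{1/4}\lesssim(th/\gamma)^{1/2}$ (the $O(1)$ fold terms add only $O(h^{1/3})$, which is absorbed). Multiplying by the prefactor $\frac{C}{h^d}(h/t)^{(d-1)/2}$ of Corollary~\ref{coruhgam} gives the first line of \eqref{transwave}; and when $a/\gamma\lesssim t/\sqrt\gamma\lesssim\lambda_\gamma^{1/3}$ one has $(th/\gamma)^{1/2}\lesssim h^{1/3}$ by a direct computation with $\lambda_\gamma=\gamma^{3/2}/h$, which is the second line, the $N=0$ contribution $(h/t)^{d/2}$ being absorbed into $(h/t)^{(d-1)/2}h^{1/3}$ since $t\ge a>h^{1/3}$ (as $\epsilon<1/3$). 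Finally, for $a\lesssim t\le T_0$ I would sum \eqref{transwave} over the dyadic $\gamma_j=2^ja$, $3\le j\le\log(\varepsilon_0/a)$: if $t\le a/h^{1/3}$ no $\gamma_j\ge 8a$ satisfies the first-line condition $\gamma_j\lesssim th^{1/3}$, so each of the $\lesssim\log(\varepsilon_0/a)$ pieces is bounded by $(h/t)^{(d-1)/2}h^{1/3}$; if $t\ge a/h^{1/3}$, the first-line pieces ($8a\le\gamma_j\lesssim th^{1/3}$) contribute a geometric series $\lesssim(th/a)^{1/2}$ dominated by $\gamma_j\sim a$, while the remaining $\lesssim\log(\varepsilon_0/a)$ pieces are each $\lesssim h^{1/3}$. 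This gives \eqref{estimtransvN>1}.

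The main obstacle is the per-$N$ stationary-phase analysis in $(\sigma,s)$: checking that the only possible degeneracy of the Hessian of $\phi_{N,a,\gamma}$ is a fold, that it affects only a bounded set of indices $N$ (the grazing first reflections), and handling that fold uniformly so as to get $|V_{N,h,\gamma}|\lesssim h^{1/3}$ there; everything else reduces to bookkeeping of the dyadic $\gamma$-sum in the three $t$-ranges, where the smallness of $\epsilon$ (hence $a>h^{1/3}$) is used to absorb the free-flow contribution $(h/t)^{d/2}$.
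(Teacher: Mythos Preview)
Your approach is essentially the paper's: eliminate $N\ge1$ in the pre-reflection regime via \eqref{boundfortstN} and reduce to Proposition~\ref{propfreeN=0}; for $N\ge1$ apply stationary phase in $(\sigma,s)$ to each $V_{N,h,\gamma}$ from Corollary~\ref{coruhgam}, sum over $N\sim t/\sqrt\gamma$, and then sum over dyadic $\gamma$. The paper is much terser --- it simply asserts that for $x\le a$ and each $N$ the phase has non-degenerate critical points in $(\sigma,s)$, quotes the uniform bound \eqref{eq:1ffgam}, and refers to the summation machinery of Propositions~\ref{propsumNgrand}--\ref{proptangmain} for the two $t$-ranges.

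You are in fact more careful than the paper on one point. The joint $(\sigma,s)$-Hessian determinant at a critical point is, up to the $4\gamma^3 q(\eta_c)$ prefactor, $\sigma_c s_c+\tfrac{\sqrt{\alpha_c}}{2N}(\sigma_c+s_c)$ (note the sign: you wrote a minus). At the $(-,-)$ branch with $N=1$ this equals $-\tfrac{x+a}{4\gamma}+O(a)$, hence only $O(a/\gamma)$, so the paper's blanket non-degeneracy claim glosses over a genuine near-fold. Your remedy --- a fold bound $|V_{N,h,\gamma}|\lesssim h^{1/3}$ for these $O(1)$ indices, absorbed into the second line of \eqref{transwave} and dominated by $(th/\gamma)^{1/2}$ in the first-line regime --- is correct and does not alter the final estimates. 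Two small slips: the near-fold persists throughout the $N=1$ range, not only at the grazing threshold $t\sim a/\sqrt\gamma$; and your absorption of the $N=0$ contribution into the second line invokes ``$a>h^{1/3}$ (as $\epsilon<1/3$)'', but with $a\ge h^{2/3-\epsilon}$ and small $\epsilon$ the inequality goes the other way. This is harmless downstream, since $(h/t)^{1/2}\le(h/t)^{1/4}$ for $t\ge h$ is all that Theorem~\ref{thmdispSchrodinger} requires.
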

\begin{proof}
According to Proposition \ref{propcritptsalphaeta}, if $\frac{t}{\sqrt{\gamma}}\leq \frac{1}{2\sqrt{3/2}M_0^{2/3}}\frac{a}{\gamma}$ then $V_{N,h,\gamma}(t,\cdot)=O(h^{\infty})$ for all $a\leq \gamma\leq \varepsilon_0$ and all $N\geq 1$, hence $G_{h,\gamma}(t,\cdot)=V_{0,h,\gamma}(t,\cdot)$. The last line in \eqref{transwave} follows using the proof of Proposition \ref{propfreeN=0} applied to $V_{0,h,\gamma}(t,\cdot)$. If $h<t\lesssim a$, then $\frac{t}{\sqrt{\gamma}}\ll \frac{a}{\gamma}$ for all $a\leq \gamma\leq \varepsilon_0$, so $G_{h,\varepsilon_0}(t,\cdot)=\sum_{\gamma} G_{h,\gamma}(t,\cdot)=\sum_{\gamma} V_{0,h,\gamma}(t,\cdot)$ and we use Proposition \ref{propfreeN=0}. 

Let $\frac{t}{\sqrt{\gamma}}\gtrsim \frac{a}{\gamma}$. Let $T=\frac{t}{\sqrt{\gamma}}$, $Y=\frac{y}{\sqrt{\gamma}}$ and let $K_{\gamma}$ be given by \eqref{defKa} (with $a$ replaced by $\gamma$). Let $V_{N,h,\gamma}$ as in Corollary \ref{coruhgam}, then $G_{h,\gamma}(t,x,y)=\sum_{N\sim \frac{t}{\sqrt{\gamma}}}V_{N,h,\gamma}(t,x,y)$. %
%\begin{prop}
%\label{dispNtransvgam}
For $x\leq a$, $8a<\gamma$ and $1\leq N\sim T$ the following holds
%\begin{itemize}
%For $\lambda_{\gamma}^{1/3}\lesssim T\sim N$ we have
  \begin{equation}
    \label{eq:1ffgam}
      \left| V_{N,h,\gamma}(t,x,y)\right|\lesssim 
    %   \left\{ \begin{array}{l} 
       \frac{\gamma^2}{h}\times\frac{1}{\sqrt{N\lambda_{\gamma}}}\times \frac{1}{\lambda_{\gamma}}.%\quad 0\leq x\leq 2a,%\\
%        \frac {h^{1/3}} {((N/\lambda_{\gamma}^{1/3})^{1/2} +\lambda_{\gamma}^{1/6}\sqrt{4N}|K_{\gamma}(\frac{Y}{4N},\frac{T}{2N})-\frac{x}{\gamma}|^{1/2})},\quad x\geq 2a\,.
%         \end{array} \right.
  \end{equation}
%  \end{prop}
%  \begin{prop}
%\label{dispNtransvgampetit}
%\item For $1\leq N\sim T<\lambda^{1/3}_{\gamma}$, %
%we have
%%  \begin{equation}
%%    \label{eq:2ffgam}
%$     \left| V_{N,h,\gamma}(t,x,y)\right| \lesssim
%%          \left\{ \begin{array}{l} 
%       \frac{\gamma^2}{h}\times\frac{1}{\sqrt{N\lambda_{\gamma}}}\times \frac{1}{\lambda_{\gamma}}$.%\quad 0\leq x\leq 2a,\\
%       \frac{h^{1/3}}{1+2N|K_{\gamma}(\frac{Y}{4N},\frac{T}{2N})-\frac{x}{\gamma})|^{1/2}}, \quad x\geq 2a \text{ and $N$ sufficiently large}\,,\\
%       \frac{h^{1/3}}{1+N^{1/3}|(K_{\gamma}(\frac{Y}{4N},\frac{T}{2N})-\frac{x}{\gamma})|^{1/6})},  \quad x\geq 2a \text{ and $N$ small}.
%            \end{array} \right.
%\end{equation}
 % \end{itemize}
%\end{prop} 
Indeed, as long as $x\leq a$, we easily see that, for each $N$, the phase function of $V_{N,h,\gamma}$ has non-degenerate critical points with respect to both $\sigma, s$ and the estimates \eqref{eq:1ffgam} follow.
%For $x\geq 2a$, the proof of \eqref{eq:1ffgam} is similar to the one of \eqref{eq:1ff} ; for \eqref{eq:2ffgam} we proceed as with \eqref{eq:2ff} for sufficiently large $N$ and as in \eqref{eq:2hh} for $N$ bounded. Both proofs will be sketched in Section \ref{sectproofsprops}, after the proofs of Propositions \ref{dispNgrand} and \ref{dispNpetitloin}, \ref{dispNpetitpres}, respectively.
%Notice that for $\gamma>8a$ we have at most one critical point of order two when $K_{\gamma}(\frac{Y}{4N},\frac{T}{2N})=\frac{x}{\gamma}=1$, while the worst bounds \eqref{eq:2hh} correspond to a critical point degenerate of order three. Therefore, we obtain only the contribution from \eqref{eq:2hh} corresponding to critical points degenerate of order at most two. 
Summing up over $N\gtrsim \lambda_{\gamma}^{1/3}$ as in the proof of Proposition \ref{propsumNgrand} yields the first line of \eqref{transwave}. Summing over $N\lesssim \lambda_{\gamma}^{1/3}$ as in the proof of Proposition \ref{proptangmain} yields the second line of \eqref{transwave}. %

Let $a\lesssim t\lesssim a/h^{1/3}$, then $t\leq \gamma/h^{1/3}$ for all $\sup(h^{2/3-\epsilon},a)\leq \gamma\leq \varepsilon_0$. %and the worst bound for $G_{h,\gamma}(t,\cdot)$ is given in the second line of \eqref{transwave}. 
Summing up for $\gamma_j=2^j a$, yields the first line in \eqref{estimtransvN>1}, as $j\leq \log\frac{\varepsilon_0}{a}$. Let now $a/h^{1/3}\lesssim t\leq T_0$, then for $a\leq\gamma \lesssim th^{1/3}$, $|G_{h,\gamma}(t,\cdot)|$ is bounded by the term in the first line of \eqref{transwave}, while for $th^{1/3}\leq \gamma \leq \varepsilon_0$, $|G_{h,\gamma}(t,\cdot)|$ is bounded by the term in the second line of \eqref{transwave}. The sum for $\gamma_j=2^j a$ over $0\leq j\leq \log \frac{\sup(\varepsilon_0,th^{1/3})}{a}$ yields the first contribution in the second line of \eqref{estimtransvN>1} and the sum over $\frac{\sup(\varepsilon_0,th^{1/3})}{a}< j\leq \log{\frac{\varepsilon_0}{a}}$ yields the second one.
\end{proof}

\subsubsection{Optimality for $\sqrt{a}\leq t\ll \frac{a}{h^{1/3}}(\leq \frac{\gamma}{h^{1/3}})$} 
Write, for $1\leq \frac{t}{\sqrt{a}}\ll\lambda^{1/3}=\frac{\sqrt{a}}{h^{1/3}}$, 
\[
\|G_{h,\varepsilon_0}(t,\cdot)\|_{L^{\infty}(\Omega_d)}\geq \|G_{h,a}(t,\cdot)\|_{L^{\infty}(\Omega_d)}-\sum_{0\leq j<\frac 12 \log({\varepsilon_0}/{a}),\gamma_j =2^{2j} a} \|G_{h,\gamma_j}(t,\cdot)\|_{L^{\infty}(\Omega_d)}.
\]
From \eqref{ptmaxGha} %
we have $\|G_{h,a}(t,\cdot)\|_{L^{\infty}(\Omega_d)}\sim \frac{1}{h^d}\Big(\frac ht \Big)^{(d-1)/2} \Big(\frac{ah}{t}\Big)^{1/4}$ and from the first line of \eqref{estimtransvN>1} %
we have $\sum_{0\leq j<\frac 12 \log({\varepsilon_0}/{a}),\gamma_j =2^{2j} a} \|G_{h,\gamma_j}(t,\cdot)\|_{L^{\infty}(\Omega_d)}\leq \frac{1}{h^d}\Big(\frac ht \Big)^{(d-1)/2}h^{1/3}\log \frac{\varepsilon_0}{a}$.
We will show that $\Big(\frac{ah}{t}\Big)^{1/4}\gg h^{1/3}(\log \frac{\varepsilon_0}{a})$ for all $t$ such that $1\leq \frac{t}{\sqrt{a}}\leq \lambda^{1/3-\epsilon}=\frac{\sqrt{a}}{h^{1/3}}\lambda^{-\epsilon}$, $\epsilon>0$. As in the regime we consider here we have $a\geq h^{2/3-\epsilon}$, then $\lambda=\frac{a^{3/2}}{h}>h^{-3\epsilon/2}$, hence $\lambda^{-\epsilon}\leq h^{3\epsilon^2/2}$ and we obtain $t\leq \frac{a}{h^{1/3}}h^{3\epsilon^2/2}$, which further yields $(\frac{ah}{t})^{1/4}\geq h^{1/3-3\epsilon^2/8}\gg h^{1/3}\log\frac 1h\gtrsim h^{1/3}\log \frac{\varepsilon_0}{a}$ (using
again $a\geq h^{2/3-\epsilon}$). We eventually find $\|G_{h,\varepsilon_0}(t,\cdot)\|_{L^{\infty}(\Omega_d)}\sim  \frac{1}{h^d}\Big(\frac ht \Big)^{(d-1)/2}\Big(\frac{ah}{t}\Big)^{1/4}$. 

\subsection{Case $a\lesssim \sup{(h^{2/3-\epsilon},(ht)^{1/2})}$ for (small) $\epsilon>0$}
\subsubsection{The sum over $8\sup{(h^{2/3-\epsilon},(ht)^{1/2})}\leq \gamma \leq \varepsilon_0$}
This part is easy to deal with as we can apply the estimates obtained in the previous section (with $a$ replaced by $(ht)^{1/2}$). As we have $8a\leq \gamma $ and as in this regime we can use the parametrix \eqref{eq:bis48bis}, we obtain 

\begin{equation}\label{sumgambig}
\|\sum_{8a\lesssim 8\sup{(h^{2/3-\epsilon},(ht)^{1/2})\leq \gamma \leq \varepsilon_0}}G_{h,\gamma}(t,\cdot)\|_{L^{\infty}(\Omega_d)}\lesssim  \frac{1}{h^d}\Big(\frac ht \Big)^{(d-1)/2}\frac{(ht)^{1/2}}{\sup{(h^{2/3-\epsilon},(ht)^{1/2}})^{1/2}}.
\end{equation}
When $t\geq h^{1/3-2\epsilon}$ then $\sup{(h^{2/3-\epsilon},(ht)^{1/2}})=(ht)^{1/2}$ and the last factor in \eqref{sumgambig} equals $(ht)^{1/4}$. When $t\leq h^{1/3-2\epsilon}$ the last factor in \eqref{sumgambig} is bounded by $(ht)^{1/2}/h^{(2/3-\epsilon)/2}\leq h^{1/3-\epsilon/2}$.

\subsubsection{The sum over $\sup{(a,h^{2/3})}\lesssim \gamma\lesssim \sup{(h^{2/3-\epsilon},(ht)^{1/2})}$ }
This part will be entirely dealt with using formula \eqref{greenfctbis} and next Lemma.

\begin{lemma}\label{lemsob}(see \cite{ilp12})
There exists $C_0$ such that for $L\geq 1$ the following holds true
\begin{equation}\label{estairy2}
\sup_{b\in \R}\Big (\sum_{1\leq k\leq L}\omega_k^{-1/2}Ai^2(b-\omega_{k})\Big) \leq C_{0}L^{1/3}\,,
%\label{estairyp2+}
\sup_{b\in\R_+}\Big (\sum_{1\leq k\leq L}\omega_k^{-1/2}Ai'^2(b-\omega_{k})\Big)  \leq C_{0}L\,.
\end{equation}
\end{lemma}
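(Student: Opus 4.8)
\textbf{Proof strategy for Lemma \ref{lemsob}.} The plan is to reduce everything to two standard facts and then split the sum according to the sign of $b-\omega_k$. The facts are: (i) the classical uniform bounds $|Ai(z)|\lesssim (1+|z|)^{-1/4}e^{-\frac23 z_+^{3/2}}$ and $|Ai'(z)|\lesssim (1+|z|)^{1/4}e^{-\frac23 z_+^{3/2}}$, where $z_+=\max(z,0)$ (see \cite{AFbook}); and (ii) the asymptotics of the zeros, $\omega_k\sim k^{2/3}$, equivalently $\#\{k:\omega_k\le v\}\sim v^{3/2}$ as $v\to\infty$ together with the square-root spacing $\omega_{k+1}-\omega_k\sim\omega_k^{-1/2}$ (these also follow from Lemma \ref{lemL}, since $L(\omega_k)=2\pi k$ and $L(\omega)\sim\frac43\omega^{3/2}$, $L'(\omega)\sim 2\omega^{1/2}$). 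In particular, for $v\ge\omega_1$ and $\ell\le v$ one has $\#\{k:|\omega_k-v|\le\ell\}\lesssim 1+\ell v^{1/2}$, and for a non-negative weight $f$ that varies slowly over consecutive spacings one has the comparison $\sum_{\omega_k\in[v_1,v_2]}f(\omega_k)\lesssim 1+\int_{v_1}^{v_2}f(v)\,v^{1/2}\,dv$; the $+1$ absorbs, once and for all, the $O(1)$ contribution of indices with $|\omega_k-v|\le 1$ (where $Ai^2$ and $Ai'^2$ are $O(1)$ and the number of such indices is $\lesssim 1+v^{1/2}$ while $\omega_k^{-1/2}\lesssim\min(1,v^{-1/2})$).

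First I would bound the contribution of the indices with $\omega_k\le b$, showing it is $O(1)$ \emph{uniformly in $b$}. If $b<\omega_1$ this range is empty; otherwise I decompose dyadically in $R=b-\omega_k$. The block $R\lesssim 1$ is $O(1)$ as just noted. For a dyadic block $b-\omega_k\in[R,2R]$ with $1\le R\le b/4$ one has $\omega_k\sim b$, hence $\lesssim Rb^{1/2}$ indices, each with $\omega_k^{-1/2}\sim b^{-1/2}$ and, by (i), $Ai^2(b-\omega_k)\lesssim R^{-1/2}e^{-\frac43 R^{3/2}}$ (respectively $Ai'^2\lesssim R^{1/2}e^{-\frac43 R^{3/2}}$); the block sum is then $\lesssim R^{1/2}e^{-\frac43 R^{3/2}}$ (respectively $\lesssim R^{3/2}e^{-\frac43 R^{3/2}}$), which is summable over dyadic $R\ge 1$. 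The far block $R\gtrsim b$ is crudely bounded using $\#\{\omega_k\le b\}\lesssim b^{3/2}$, $\omega_k^{-1/2}\le\omega_1^{-1/2}$ and the super-exponential factor $e^{-cb^{3/2}}$, giving $\lesssim b^{O(1)}e^{-cb^{3/2}}=O(1)$. Thus $\sum_{\omega_k\le b}\omega_k^{-1/2}Ai^2(b-\omega_k)\lesssim 1$ and likewise with $Ai'^2$.

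Next I would treat the indices with $b<\omega_k$; here I also use $k\le L$, so $\omega_k\lesssim L^{2/3}$. I drop the exponential factor, using $Ai^2(b-\omega_k)\lesssim (1+\omega_k-b)^{-1/2}$ and $Ai'^2(b-\omega_k)\lesssim (1+\omega_k-b)^{1/2}$, and apply the comparison in (ii). If $b$ exceeds $\omega_L\sim L^{2/3}$ the range is empty; otherwise, for the first estimate,
\[
\sum_{b<\omega_k\le CL^{2/3}}\omega_k^{-1/2}(1+\omega_k-b)^{-1/2}\lesssim 1+\int_{\max(b,\omega_1)}^{CL^{2/3}}(1+v-b)^{-1/2}\,dv\lesssim 1+(CL^{2/3})^{1/2}\lesssim L^{1/3},
\]
and for the second estimate the analogous integral is $\int_{\max(b,\omega_1)}^{CL^{2/3}}(1+v-b)^{1/2}\,dv\lesssim (1+CL^{2/3}-b)^{3/2}\lesssim (CL^{2/3})^{3/2}=CL$, where the restriction $b\ge 0$ is exactly what keeps $1+CL^{2/3}-b\lesssim L^{2/3}$ (for very negative $b$ the weight would force an extra $|b|^{1/2}$). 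Adding the $O(1)$ from the previous step and using $L\ge 1$ (so $1\le L^{1/3}\le L$) yields the two claimed bounds.

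The only mildly delicate point is the uniformity in $b$ of the $\omega_k\le b$ piece: one must check that the exponential decay in $R$ genuinely beats the $O(Rb^{1/2})$ growth in the number of zeros of each dyadic block, and that at the far end $R\sim b$ the factor $e^{-cb^{3/2}}$ dominates the crude count $O(b^{3/2})$ despite the loss of smallness of $\omega_k^{-1/2}$ there. Everything else is a routine sum-to-integral comparison built on the square-root spacing of the Airy zeros.
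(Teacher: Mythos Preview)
The paper does not give its own proof of this lemma; it only records the statement with a pointer to \cite{ilp12}. Your argument is a correct, self-contained proof along the expected lines: splitting on the sign of $b-\omega_k$, using the exponential decay of $Ai$ and $Ai'$ on $\R_+$ together with the square-root density of zeros to make the $\omega_k\le b$ contribution $O(1)$ uniformly in $b$, and then comparing the remaining sum to an integral via the spacing $\omega_{k+1}-\omega_k\sim\omega_k^{-1/2}$. This is the natural approach and presumably matches what is done in \cite{ilp12}. One cosmetic remark: in the first estimate your intermediate bound $\int_{\max(b,\omega_1)}^{CL^{2/3}}(1+v-b)^{-1/2}\,dv\lesssim (CL^{2/3})^{1/2}$ tacitly uses $b\ge 0$, whereas the statement allows $b\in\R$; the conclusion is unaffected (for $b<0$ with $|b|\lesssim L^{2/3}$ one still has $1+CL^{2/3}-b\lesssim L^{2/3}$, and for $|b|\gtrsim L^{2/3}$ the crude bound $\sum_{k\le L}\omega_k^{-1/2}(1+\omega_k-b)^{-1/2}\lesssim |b|^{-1/2}L^{2/3}\lesssim L^{1/3}$ is immediate), but you may want to say a word there.
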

%Although the proof of \eqref{estairy2} has been given in \cite{ilp12}, we recall it here since it is useful to understand how the same arguments can be used in order to obtain \eqref{gchiapetittgrand} below. 
%\begin{proof}
%From $\vert Ai(z)\vert \leq C(1+\vert z \vert)^{-1/4}$, we get
%$$
%J(b):=\sum_{1\leq k\leq L}\omega_k^{-1/2}Ai^2(b-\omega_{k})\lesssim \sum_{1\leq k\leq L}
%\omega_k^{-1/2} {1\over 1+ \vert b-\omega_{k}\vert^{1/2}}\,.
% $$
%Using $\omega_{k}\sim k^{2/3}$, we get easily with $C$ independent of $L$ and $D$ large enough, $ \sup_{b\leq 0}J(b) \leq CL^{1/3}$ and $\sup_{b\geq DL^{2/3}}J(b) \leq CL^{1/3}$.
%Thus we may assume $b=L^{2/3}b'$ with $b'\in [0,D]$. As $\omega_{k}=k^{2/3}g(k)$
%with $g$ being an elliptic symbol of degree $0$, we are left with proving that
%$$I(b')=L^{-1/3} \sum_{1\leq k\leq L}
%{(k/L)}^{-1/3} {1\over 1+ L^{1/3}\vert b'-({k/L})^{2/3}\vert^{1/2}} $$
%is such that $\sup_{b'\in \R}I(b')\leq C_{0}L^{1/3}$. We split $[0,1]$ into a finite
%union of intervals on which the function ${t^{-1/3} \over 1+ L^{1/3}\vert b'-t^{2/3}\vert^{1/2}}$
%is monotone : as each term in the sum is bounded by $1$, we get
%$$ 
%I(b') \lesssim C+L^{2/3}\int_{0}^1 {t^{-1/3} \over 1+ L^{1/3}\vert b'-t^{2/3}\vert^{1/2}}dt
%\leq C + L^{1/3}\int_{0}^1 {3\over 2\vert b'-s\vert^{1/2}}ds\,,
%$$
%which proves \eqref{estairy2}. Similar arguments yield \eqref{estairyp2+}, using $\vert Ai'(z)\vert \leq C(1+\vert z \vert)^{1/4}$.
%\end{proof}
Write, for $\gamma_{\sup}:=\sup{(h^{2/3-\epsilon},(ht)^{1/2})}$, $\gamma_{\min}:=\sup{(a,h^{2/3})}$,
\begin{multline}\label{sumagam}
\sum_{\gamma_{\min}\leq\gamma\leq \gamma_{\sup}}G_{h,\gamma}(t,x,a,y)%=\sum_{\gamma_{\min}\leq \gamma\leq\gamma_{\sup}}\frac{h^{1/3}}{h^d}\int e^{\frac ih<y,\eta>}\psi(|\eta|)\\\times \sum_{\omega_k\leq \varepsilon_0/h^{2/3}}e^{\frac ih t (|\eta|^2+\omega_{k}h^{2/3}q^{2/3}(\eta))}\frac{q^{1/3}(\eta)}{L'(\omega_{k})}   \psi_{2}(h^{2/3}\omega_{k}/(q^{1/3}(\eta)\gamma))\\
=\sum_{k\sim \lambda_{\gamma},\gamma_{\min}\leq \gamma\leq \gamma_{max}}\frac{h^{1/3}}{h^d}\int e^{\frac ih<y,\eta>}\psi(|\eta|) e^{\frac ih t (|\eta|^2+\omega_{k}h^{2/3}q^{2/3}(\eta))}\\
\times \frac{q^{1/3}(\eta)}{L'(\omega_{k})} \psi_{2}(h^{2/3}\omega_{k}/(q^{1/3}(\eta)\gamma))
Ai(xq^{1/3}(\eta)/h^{2/3}-\omega_{k}) Ai(aq^{1/3}(\eta)/h^{2/3}-\omega_{k})d\eta +O(h^{\infty}),
\end{multline}
where we used that $\psi_2$ and $\psi$ are supported on $[\frac 12, \frac 32]$ to deduce $k\sim\omega_k^{3/2}\sim \lambda_{\gamma}q^{1/2}(\eta)\sim \lambda_{\gamma}$ on the support of $\psi_2(h^{2/3}\omega_{k}/(q^{1/3}(\eta)\gamma))\psi(|\eta|)$; the term $O(h^{\infty})$ comes from the (finite) sum over $1\leq k\ll \lambda_{\gamma}$ and $\lambda_{\gamma}\ll k\lesssim 1/h$.
%We are left with the sum over $k$ in the last two lines of \eqref{sumagam}. 
Notice that if $t\leq h^{1/3-2\epsilon}$ then $(ht)^{1/2}\leq h^{2/3-\epsilon}$, which yields $\gamma_{\sup}=h^{2/3-\epsilon}$, hence for such $t$ we have to consider only values $a\leq h^{2/3-\epsilon}$. %
For $t\leq h^{1/3-2\epsilon}$ and $\gamma\leq \gamma_{\sup}=h^{2/3-\epsilon}$, $\lambda_{\gamma}\lesssim h^{-3/2\epsilon}$ for small $\epsilon>0$ and we cannot perform stationary phase arguments with parameter $\lambda_{\gamma}$ ; formula \eqref{eq:bis48bis} becomes useless and we have to resort to \eqref{greenfctbis}.
We consider separately the situations $t\geq h^{1/3-2\epsilon}$ and $t\leq h^{1/3-2\epsilon}$, although the arguments in the corresponding proofs are similar and relying on \eqref{greenfctbis}.
 
\subsubsection{Let $t\geq h^{1/3-2\epsilon}$, in which case $(ht)^{1/2}\geq h^{2/3-\epsilon}$}
We will bring the Airy functions into the symbol and apply stationary phase in $\eta\in\mathbb{R}^{d-1}$. The sum over $k$ is taken over $1\leq k\lesssim (ht)^{3/4}/h$ and on the support of $\psi_2$ we have $k^{2/3}\sim \omega_k\sim \lambda_{\gamma}^{2/3}$ with $\gamma\leq\gamma_{max}:=(ht)^{1/2}$.
\begin{prop}
For  $t\geq h^{1/3-2\epsilon}$, the following dispersive estimates hold
\begin{equation}
\|\sum_{\sup{(a,h^{2/3})}\leq\gamma\leq (ht)^{1/2}}G_{h,\gamma}(t,\cdot)\|_{L^{\infty}(\Omega_d)}\lesssim %
 \frac{1}{h^d}\Big(\frac{h}{t}\Big)^{(d-1)/2}(ht)^{1/4}\,.
\end{equation}
\end{prop}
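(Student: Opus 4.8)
The plan is to work from \eqref{sumagam} and to handle the whole range $\sup(a,h^{2/3})\le\gamma\le(ht)^{1/2}$ at once. Modulo $O(h^{\infty})$, the cut-offs $\psi_{2}(h^{2/3}\omega_{k}/(q^{1/3}(\eta)\gamma))$ attached to the dyadic values $\gamma=2^{j}\sup(a,h^{2/3})$ form a partition of unity in $k$ with bounded overlap, so the double sum in \eqref{sumagam} over $\gamma$ and over $k\sim\lambda_{\gamma}$ collapses to a single sum over $1\lesssim k\lesssim L$, with $L:=\lambda_{(ht)^{1/2}}=(ht)^{3/4}/h$; the hypothesis $t\ge h^{1/3-2\epsilon}$ is precisely the condition $(ht)^{1/2}\ge h^{2/3-\epsilon}$ under which $\gamma$ indeed ranges up to $(ht)^{1/2}$. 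For each such $k$ I would apply stationary phase in the $d-1$ tangential variables $\eta$, keeping the two Airy factors $Ai(xq^{1/3}(\eta)/h^{2/3}-\omega_{k})$ and $Ai(aq^{1/3}(\eta)/h^{2/3}-\omega_{k})$ inside the amplitude, and only afterwards sum over $k$ via the Airy--Sobolev bounds of Lemma \ref{lemsob}.

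In the stationary phase step the $\eta$-phase of the $k$-th summand of \eqref{sumagam} is $h^{-1}\Phi_{k}(\eta)$ with $\Phi_{k}(\eta)=\langle y,\eta\rangle+t|\eta|^{2}+th^{2/3}\omega_{k}q^{2/3}(\eta)$. On the support of $\psi(|\eta|)\psi_{2}(\cdot)$ one has $|\eta|\sim1$ and $h^{2/3}\omega_{k}\sim\gamma q^{1/3}(\eta)\le(ht)^{1/2}q^{1/3}(\eta)$, so the $\omega_{k}$-term is an $O((ht)^{1/2})$ relative perturbation of $t|\eta|^{2}$; hence $\nabla_{\eta}^{2}\Phi_{k}=2t\,\mathbb{I}_{d-1}(1+O((ht)^{1/2}))$ is nondegenerate and $\nabla_{\eta}\Phi_{k}=0$ has at most one solution $\eta_{c}=-\tfrac{y}{2t}+O((ht)^{1/2})$ on the support of the symbol, present only when $|y|/2t$ lies in a fixed neighbourhood of $[\tfrac12,\tfrac32]$ (the term being $O(h^{\infty})$ otherwise). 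The two Airy factors, evaluated at arguments $\gtrsim-\omega_{k}$ with $0\le x\le a\le\gamma$, oscillate in $\eta$ at frequency $\lesssim\omega_{k}^{1/2}a/h^{2/3}\lesssim\gamma^{3/2}/h=\lambda_{\gamma}\le L=(ht)^{3/4}/h$; since $t\ge h^{1/3-2\epsilon}$ gives $(ht)^{3/4}\ll t$, i.e. $L\ll t/h$, the Airy factors (and the smooth $\omega_{k}$-exponential, which I keep inside $\Phi_{k}$) are slowly oscillating relative to the large parameter $t/h$. Stationary phase then applies with the Airy product as amplitude, successive corrections being smaller by factors $\lesssim h\lambda_{\gamma}^{2}/t\le(ht)^{1/2}$, and yields
\[
|G_{h,\gamma}(t,x,a,y)|\lesssim\frac{h^{1/3}}{h^{d}}\Bigl(\frac{h}{t}\Bigr)^{\frac{d-1}{2}}\sum_{k\sim\lambda_{\gamma}}\frac{q^{1/3}(\eta_{c})}{|L'(\omega_{k})|}\bigl|Ai\bigl(\tfrac{xq^{1/3}(\eta_{c})}{h^{2/3}}-\omega_{k}\bigr)\bigr|\,\bigl|Ai\bigl(\tfrac{aq^{1/3}(\eta_{c})}{h^{2/3}}-\omega_{k}\bigr)\bigr|+O(h^{\infty}),
\]
with $\eta_{c}=\eta_{c}(k)$.

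Summing over the dyadic $\gamma$'s recombines into a single sum over $1\lesssim k\lesssim L$. By Lemma \ref{lemL} and \eqref{eq:propL}, $L'(\omega_{k})\gtrsim\omega_{k}^{1/2}$ (recall $\omega_{k}\ge\omega_{1}$), while $q(\eta_{c})\sim1$, so $q^{1/3}(\eta_{c})/|L'(\omega_{k})|\lesssim\omega_{k}^{-1/2}$. The $k$-dependence of $\eta_{c}$ moves the Airy arguments $xq^{1/3}(\eta_{c})/h^{2/3}$ and $aq^{1/3}(\eta_{c})/h^{2/3}$ only within a window of size $O(h^{1/3}t)=o(1)$; using both estimates of Lemma \ref{lemsob} together with $h^{1/3}t\,\omega_{k}^{1/2}\ll1$, one checks this perturbation does not change the order of the bounds, so one may freeze the Airy arguments at $b_{1}:=xq^{1/3}(-y/2t)/h^{2/3}$ and $b_{2}:=aq^{1/3}(-y/2t)/h^{2/3}$. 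Cauchy--Schwarz in $k$ and Lemma \ref{lemsob} then give
\[
\sum_{1\lesssim k\lesssim L}\omega_{k}^{-1/2}|Ai(b_{1}-\omega_{k})|\,|Ai(b_{2}-\omega_{k})|\le\Bigl(\sum_{k\lesssim L}\omega_{k}^{-1/2}Ai^{2}(b_{1}-\omega_{k})\Bigr)^{1/2}\Bigl(\sum_{k\lesssim L}\omega_{k}^{-1/2}Ai^{2}(b_{2}-\omega_{k})\Bigr)^{1/2}\lesssim L^{1/3}=\frac{(ht)^{1/4}}{h^{1/3}},
\]
whence $\|\sum_{\sup(a,h^{2/3})\le\gamma\le(ht)^{1/2}}G_{h,\gamma}(t,\cdot)\|_{L^{\infty}(\Omega_{d})}\lesssim\frac{h^{1/3}}{h^{d}}(h/t)^{(d-1)/2}(ht)^{1/4}h^{-1/3}=\frac{1}{h^{d}}(h/t)^{(d-1)/2}(ht)^{1/4}$.

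The main obstacle is the stationary phase step: one has to justify that keeping the two Airy factors (not smooth near their turning points, rapidly oscillating away from them) inside the amplitude is legitimate, i.e. that their $\eta$-oscillation frequency $\lambda_{\gamma}$, together with that carried by the $\omega_{k}$-exponential, stays well below the leading large parameter $t/h$ --- which is exactly where $t\ge h^{1/3-2\epsilon}$ is needed. The remaining points (the collapse of the $(\gamma,k)$ double sum, the bound $1/L'(\omega_{k})\lesssim\omega_{k}^{-1/2}$, and the $o(1)$-window control of the $k$-dependence of $\eta_{c}$ in the Airy arguments) are routine bookkeeping.
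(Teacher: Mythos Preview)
Your proposal is correct and follows essentially the same route as the paper: stationary phase in $\eta$ with large parameter $t/h$, keeping the Airy factors in the symbol, followed by Cauchy--Schwarz in $k$ and Lemma~\ref{lemsob} with $L\sim\lambda_{(ht)^{1/2}}=(ht)^{3/4}/h$. Your quantitative justification that the Airy factors are admissible symbols (correction factor $h\lambda_\gamma^2/t\le (ht)^{1/2}$) is equivalent to the paper's condition \eqref{toprove}, and both the paper and you leave the treatment of the lower-order stationary-phase terms (which involve $Ai'$ and require the second estimate of Lemma~\ref{lemsob} together with the Airy equation) at the level of a sketch.
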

\begin{proof}
Let $z=y/t$ and let $\frac th$ be the large parameter in the integrals in the fourth line of \eqref{sumagam} whose phase function is, for each $\omega_k\sim \lambda_{\gamma}^{2/3}$, of the form 
$<z,\eta>+|\eta|^2+\omega_kh^{2/3}q^{2/3}(\eta)$. For each $\omega_k\lesssim \gamma_{\sup}/h^{2/3}=(ht)^{1/2}/h^{2/3}$, the corresponding critical point $\eta_c$ satisfies $z+2\eta_c+O(\omega_kh^{2/3})=0$ and using $\omega_k h^{2/3}\leq \varepsilon_0$, we obtain that the Hessian behaves like
 $2\mathbb{I}_{d-1}+O(\varepsilon_0)$. In order to apply stationary phase with symbol
$$
q^{1/3}(\eta)\psi(|\eta|)\psi_2\Bigl(\frac{\omega_k}{q^{1/3}(\eta)\lambda_{\gamma}^{2/3}}\Bigr)\Ai\Big(q^{1/3}(\eta)\lambda_{\gamma}^{2/3}\frac{x}{\gamma}-\omega_k\Big)\Ai\Big(q^{1/3}(\eta)\lambda_{\gamma}^{2/3}\frac{a}{\gamma}-\omega_k\Big)
$$
we check that there exists some $\nu>0$ such that $\forall j\geq 1$ and $\forall \alpha$ with $|\alpha|=j$, 
\begin{equation}\label{condnu}
\Big|\partial^{\alpha}_{\eta}\Big(\Ai\big(q^{1/3}(\eta)\lambda_{\gamma}^{2/3}\frac{x}{\gamma}-\omega_k\big)\Big)\Big|\leq C_{j}\Big(\frac th\Big)^{j(1-2\nu)/2}\,.
\end{equation}
In particular, this allows to deduce that, for $\eta$ on the support of $\psi$ we have
\begin{equation}\label{phstatverification}
  \partial^2_{ij}\Big(q^{\frac 13}(\eta)\psi_2\Bigl(\frac{\omega_k}{q^{\frac 13}(\eta)\lambda_{\gamma}^{\frac 23}}\Bigr)\Ai\big(q^{\frac 13}(\eta)\lambda_{\gamma}^{\frac 23}\frac{x}{\gamma}-\omega_k\big)\Ai\big(q^{\frac 13}(\eta)\lambda_{\gamma}^{\frac 23}\frac{a}{\gamma}-\omega_k\big)\Big)\lesssim \Bigl(\frac{t}{h}\Bigr)^{1-2\nu}
\end{equation} 
and assures that the stationary phase can be applied with the Airy factors as part of the symbol. 
As one has, for all $l\geq 0$, $\sup_{b\geq 0}\Big|b^l Ai^{(l)}(b-\omega_k)\Big|\leq C_l \omega_k^{3l/2}$,
it is sufficient to check that for $t\geq h^{1/3-2\epsilon}$ and $k\leq (ht)^{3/4}/h$ the following holds
\begin{equation}\label{toprove}
\omega_k^{3/2}\lesssim \Big(\frac th\Big)^{(1-2\nu)/2}.
\end{equation}
As $\omega_k\sim k^{2/3}\lesssim \lambda^{2/3}_{\gamma_{\sup}}\sim ((ht)^{3/4}/h)^{2/3}$ for $k\leq (ht)^{3/4}/h$, \eqref{toprove} holds if we prove $t^{1/2}( t/h )^{1/4}={(ht)^{3/4}}/{h}\lesssim (t/h)^{(1-2\nu)/2}$ which is obviously true as it reduces to $t\lesssim (t/h)^{1/2-2\nu}$ for some $\nu>0$ (recall that we consider here only values $t\lesssim 1$). The sum of the main contributions of the symbols obtained after applying stationary phase in $\eta$ equals
\begin{multline}\label{gchiapetittgrand} 
\Big|\sum_{k\lesssim (ht)^{3/4}/h}\omega_k^{-1/2}Ai\Big(x\frac{q^{1/3}(\eta_c(z,\omega_kh^{2/3}))}{h^{2/3}}-\omega_k\Big)Ai\Big(a\frac{q^{1/3}(\eta_c(z,\omega_kh^{2/3}))}{h^{2/3}}-\omega_k\Big)\Big|\\
\leq \Big|\sum_{k\lesssim (ht)^{3/4}/h}\omega_k^{-1/2}Ai^2\Big(x\frac{q^{1/3}(\eta_c(z,\omega_kh^{2/3}))}{h^{2/3}}-\omega_k\Big)\Big|^{1/2}\\
\times  \Big|\sum_{k\lesssim (ht)^{3/4}/h}\omega_k^{-1/2}Ai^2\Big(a\frac{q^{1/3}(\eta_c(z,\omega_kh^{2/3}))}{h^{2/3}}-\omega_k\Big)\Big|^{1/2}
\lesssim (\lambda_{\gamma_{\sup}})^{1/3},
\end{multline} 
where we applied Cauchy-Schwarz followed by \eqref{estairy2} from Lemma \ref{lemsob} with $L\sim  \lambda_{\gamma_{\sup}}=(ht)^{3/4}/h$. 
%We can indeed use \eqref{estairy2} as the critical points $\eta_c(\frac{y}{2t},\omega_kh^{2/3})$ satisfy $\eta_c(\frac{y}{t},\omega_kh^{2/3})=-\frac{y}{2t}+O(\omega_kh^{2/3})$ with $|-\frac{y}{t}|\in[\frac 12+O(\varepsilon_0),\frac 32+O(\varepsilon_0)]$ and $O(\omega_kh^{2/3})=O(\varepsilon_0)$ for all $\omega_k$ on the support of $\psi_2$ and using the asymptotic behavior of the Airy function $|\Ai(z)|\leq \frac C {(1+|z|)^{1/4}}$, the factors $Ai^2\Big(x\frac{q^{1/3}(\eta_c(z,\omega_kh^{2/3}))}{h^{2/3}}-\omega_k\Big)$ with $x\leq \gamma_{sup}$ or $Ai^2\Big(a\frac{q^{1/3}(\eta_c(z,\omega_kh^{2/3}))}{h^{2/3}}-\omega_k\Big)$ can be bounded as follows
%\begin{multline}
%\Big|Ai^2\Big(x\frac{q^{1/3}(\eta_c(z,\omega_kh^{2/3}))}{h^{2/3}}-\omega_k\Big)\Big|\sim \Big|Ai^2\Big(q^{1/3}(-\frac{y}{2t}+O(\omega_kh^{2/3}))\frac{x}{h^{2/3}}-\omega_k\Big)\Big|\\
%\lesssim \frac{1}{(1+|\omega_k-q^{1/3}(-\frac{y}{2t})(1+O(\omega_kh^{2/3}))\frac{x}{h^{2/3}}|)^{1/2}}\\
%\lesssim  \frac{1}{(1+|\omega_k(1+O(x))-q^{1/3}(-\frac{y}{2t})\frac {x}{h^{2/3}}|)^{1/2}},
%\end{multline}
%and therefore the proof of \eqref{estairy2} does indeed apply with $b=q^{1/3}(-\frac{y}{2t})\frac {x}{h^{2/3}}$ for $(t,x,y)$ fixed. 
However, this is not enough to conclude : we also need to prove that lower order terms in the symbol obtained after stationary phase do sum up and provide smaller contributions. This can be done using the second inequality in \eqref{estairy2} as well as the
equation satisfied by the Airy function.
%each contribution obtained by taking $2n$ derivatives of the symbol will be a product of either two Airy functions as in \eqref{gchiapetittgrandprim2} or a product of two derivatives of Airy functions as in \eqref{gchiapetittgrandprim} and we can apply Lemma \ref{lemsob} at each step to obtain a factor $a^n$ and conclude. 
\end{proof}
\subsubsection{Let $t\leq h^{1/3-2\epsilon}$, with (small) $\epsilon>0$} Then $\sup{(h^{2/3-\epsilon},(ht)^{1/2})}=h^{2/3-\epsilon}$ and we consider only $\gamma$ such that $\sup{(h^{2/3},a)}\lesssim \gamma\lesssim h^{2/3-\epsilon}$, as the sum over $\gamma>h^{2/3-\epsilon}>(ht)^{1/2}$ can be handled as in \eqref{sumgambig}. Then $\lambda_{\gamma_{\sup}}=(h^{2/3-\epsilon})^{3/2}/h=h^{-3\epsilon/2}$.
\begin{prop}
Let $0<\epsilon<\frac{2}{9(d+1)}(<\frac 16)$.
For $h^{1/3+\epsilon}\lesssim t\leq h^{1/3-2\epsilon}$ we have
\[
\|\sum_{\sup{(a,h^{2/3})}\lesssim\gamma\lesssim h^{2/3-\epsilon}}G_{h,\gamma}(t,\cdot)\|_{L^{\infty}(\Omega_d)}\\ \lesssim\frac{1}{h^d}\Bigl(\frac ht\Bigr)^{\frac {d-1}2} h^{\frac 13-\frac \epsilon2}.
\]
For  $0< t\lesssim h^{1/3+\epsilon}$ we have
$\|\sum_{\sup{(a,h^{2/3})}\lesssim\gamma\lesssim h^{2/3-\epsilon}}G_{h,\gamma}(t,\cdot)\|_{L^{\infty}(\Omega_d)} \lesssim\frac{1}{h^d}\Bigl(\frac ht\Bigr)^{\frac d2}$.
\end{prop}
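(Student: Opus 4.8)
The plan is to argue exactly along the lines of the previous proposition, using only the eigenmode parametrix \eqref{greenfctbis} (equivalently \eqref{sumagam}), since here $\lambda_\gamma\leq \lambda_{\gamma_{\sup}}=h^{-3\epsilon/2}$ is far too small to serve as a stationary phase parameter in the $\sigma,s$ variables. First I would note that summing the cut-offs $\psi_2(h^{2/3}\omega_k/(q^{1/3}(\eta)\gamma))$ over the dyadic values $\sup{(a,h^{2/3})}\lesssim\gamma\lesssim h^{2/3-\epsilon}$ collapses to a single cut-off imposing $\omega_k\lesssim \lambda_{\gamma_{\sup}}^{2/3}=h^{-\epsilon}$, i.e. $k\lesssim h^{-3\epsilon/2}$; thus no logarithmic loss is incurred and it suffices to estimate the single $k$-sum in \eqref{sumagam}, over $k\lesssim h^{-3\epsilon/2}$.

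For $h^{1/3+\epsilon}\lesssim t\leq h^{1/3-2\epsilon}$ the ratio $t/h\geq h^{\epsilon-2/3}\gg 1$ is the large parameter. With $z=y/t$, each $k$-integral in \eqref{sumagam} has phase $\langle z,\eta\rangle+|\eta|^2+\omega_k h^{2/3}q^{2/3}(\eta)$, whose Hessian is $2\mathbb{I}_{d-1}+O(\varepsilon_0)$, hence non-degenerate, and I would apply stationary phase in $\eta\in\mathbb{R}^{d-1}$ with the two Airy factors absorbed into the symbol. Admissibility requires, as in \eqref{condnu}--\eqref{phstatverification}, a bound $|\partial^\alpha_\eta Ai(q^{1/3}(\eta)\lambda_\gamma^{2/3}x/\gamma-\omega_k)|\lesssim (t/h)^{|\alpha|(1-2\nu)/2}$ for some $\nu>0$; using $|b^l Ai^{(l)}(b-\omega_k)|\lesssim \omega_k^{3l/2}$ together with $x/\gamma\lesssim 1$ and $\omega_k\lesssim h^{-\epsilon}$, this reduces to $h^{-3\epsilon/2}\lesssim (t/h)^{(1-2\nu)/2}$, which holds for $\epsilon$ small since $t/h\geq h^{\epsilon-2/3}$. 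The leading term of the $\eta$-stationary phase then contributes
\[
\frac{h^{1/3}}{h^d}\Big(\frac ht\Big)^{(d-1)/2}\Big|\sum_{k\lesssim \lambda_{\gamma_{\sup}}}\omega_k^{-1/2}Ai\big(xq^{1/3}(\eta_c)/h^{2/3}-\omega_k\big)Ai\big(aq^{1/3}(\eta_c)/h^{2/3}-\omega_k\big)\Big|,
\]
and Cauchy--Schwarz followed by the first inequality in \eqref{estairy2} with $L\sim\lambda_{\gamma_{\sup}}=h^{-3\epsilon/2}$ bounds the $k$-sum by $\lambda_{\gamma_{\sup}}^{1/3}=h^{-\epsilon/2}$, yielding the claimed $\frac{1}{h^d}(h/t)^{(d-1)/2}h^{1/3-\epsilon/2}$. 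The lower-order terms of the $\eta$-expansion are treated as in the preceding proposition, trading derivatives of the symbol for powers of $\omega_k$ via the Airy equation and the second inequality in \eqref{estairy2}; this is precisely where the quantitative smallness $\epsilon<\frac{2}{9(d+1)}$ is used, to keep each of the $O(d)$ terms of the expansion — each costing a controlled power of $\omega_k\lesssim h^{-\epsilon}$ — strictly below the main term.

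For $0<t\lesssim h^{1/3+\epsilon}$ I would split according to whether $t/h$ is still a useful large parameter. For the larger values, roughly $h^{1-c\epsilon}\lesssim t\lesssim h^{1/3+\epsilon}$ with $c$ close to $3$ (the precise threshold being chosen so that $h^{-3\epsilon/2}\lesssim(t/h)^{(1-2\nu)/2}$ still holds with room to spare), the $\eta$-stationary phase argument above applies verbatim and gives $\frac{1}{h^d}(h/t)^{(d-1)/2}h^{1/3-\epsilon/2}$; since $h^{1/3-\epsilon/2}\leq (h/t)^{1/2}$ is exactly equivalent to $t\lesssim h^{1/3+\epsilon}$, this is $\lesssim\frac{1}{h^d}(h/t)^{d/2}$, as claimed. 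For the remaining, smaller values of $t$, where $t/h$ is only moderately large, I would bound each of the $\lesssim h^{-3\epsilon/2}$ modes trivially, $|G_{h,\gamma}(t,\cdot)|\lesssim \frac{h^{1/3}}{h^d}\omega_k^{-1/2}\sup_b|Ai(b-\omega_k)|^2$ on a unit-measure $\eta$-region, and sum by Cauchy--Schwarz and \eqref{estairy2} to get $\lesssim\frac{1}{h^d}h^{1/3-\epsilon/2}$; the hypothesis $\epsilon<\frac{2}{9(d+1)}$ is what makes $h^{1/3-\epsilon/2}\leq (h/t)^{d/2}$ throughout this range, the worst case being its upper endpoint, which closes the estimate.

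The main obstacle throughout is the stationary phase in $\eta$ with Airy factors inside the symbol: one must verify the symbol-class estimates uniformly in $k$ and, more delicately, control the entire asymptotic expansion — not merely its leading term — after summation over $k$, which is exactly what forces the explicit threshold on $\epsilon$. The bookkeeping of the (at most logarithmically many) dyadic $\gamma$-pieces, and matching the two $t$-subregimes in the second part so that they overlap, are routine but must be done with enough slack in the exponents.
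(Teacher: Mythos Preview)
Your approach is essentially the same as the paper's: stationary phase in $\eta$ with the Airy factors in the symbol, then Cauchy--Schwarz and \eqref{estairy2}; for very small $t$, a crude bound on the $k$-sum without stationary phase. Two points are worth tightening. First, the precise threshold in the second regime is $t(h,\epsilon)=h^{1-3\epsilon-2\epsilon/d}$ (so $c=3+2/d$, not merely ``close to $3$''): this is exactly the value at which $(h/t(h,\epsilon))^{d/2}=h^{1/3-\epsilon/2}\cdot h^{1/3-3(d+1)\epsilon/2}$, and the condition $\epsilon<\frac{2}{9(d+1)}$ is equivalent to $1/3-3(d+1)\epsilon/2>0$, which is what makes the crude bound $h^{1/3-\epsilon/2}$ beat $(h/t)^{d/2}$ up to that threshold. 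Second, your attribution of the constraint $\epsilon<\frac{2}{9(d+1)}$ to controlling the lower-order terms of the $\eta$-expansion in the first regime is off: for $t\gtrsim h^{1/3+\epsilon}$ one has $t/h\gtrsim h^{\epsilon-2/3}$, and the symbol condition $h^{-3\epsilon/2}\lesssim(t/h)^{(1-2\nu)/2}$ holds for any $\epsilon<1/6$ with $\nu>0$ small. The sharp constraint on $\epsilon$ is needed only for the crude-bound subregime, to match the two estimates at the threshold $t(h,\epsilon)$.
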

\begin{proof}
%We first remark that $(h/t)^{1/2}\leq h^{1/3-\epsilon/2}$  if and only if $t\geq h^{1/3+\epsilon}$; therefore, for $h^{1/3+\epsilon}\lesssim t\leq h^{1/3-2\epsilon}$ a loss appears when compared to the flat case. 
Let $0<\epsilon<\frac{2}{9(d+1)}$ and set $t(h,\epsilon):=h^{1-3\epsilon-2\epsilon/d}$. The condition $0<\epsilon<\frac{2}{9(d+1)}$ implies $t(h,\epsilon)\ll h^{1/3+\epsilon}$ for all $d\geq 1$.
For $t(h,\epsilon)\lesssim t\lesssim h^{1/3-2\epsilon}$, the same proof as in the previous case applies. Indeed, to use stationary phase with the Airy factors in the symbol we need the condition \eqref{toprove} to be satisfied for all $k\lesssim \lambda_{\gamma_{\sup}}$, which translates into
\begin{equation}\label{toprove2}
h^{-3\epsilon/2}\lesssim \Big(\frac th\Big)^{1/2-\nu}\quad \text {for some } \nu>0.
\end{equation}
Let $\nu=\frac{2}{2+3d}$, then \eqref{toprove2} holds as it rewrites $t\gtrsim h^{1-\frac{3\epsilon}{1-\nu}}=t(h,\epsilon)$. 
%\begin{itemize}
%\item If $h^{1/3+\epsilon}\leq t\leq h^{1/3-2\epsilon}$, we obtain a loss as $h^{1/3-\epsilon/2}\geq \Big(\frac ht\Big)^{1/2}$. 
%\item If $t(h,\epsilon)\lesssim t\leq  h^{1/3+\epsilon}$ we bound $h^{1/3-\epsilon/2}$ by $(\frac ht)^{1/2}$.
%\end{itemize}
Let $t\lesssim t(h,\epsilon)$ and $L:=8h^{-3\epsilon/2}$, then the sum over $k$ in \eqref{sumagam} is taken for $k\leq L$. Using again \eqref{estairy2} yields
%\begin{multline}\label{sumagamsmall}
\[
  \Big|\sum_{\sup{(a,h^{2/3})}\lesssim\gamma\lesssim h^{2/3-\epsilon}}G_{h,\gamma}(t,\cdot)\Big|
  %\lesssim \frac{h^{\frac 13}}{h^d} \Big(\sum_{1\leq k\leq L}\omega_k^{-\frac 12}Ai^2(xq^{\frac 13}(\eta)/h^{\frac 23}-\omega_{k}) \Big)^{\frac 12}\\
%  {}\times\Big(\sum_{1\leq k\leq L}\omega_k^{-\frac 12}Ai^2(aq^{\frac 13}(\eta)/h^{\frac 23}-\omega_{k}) \Big)^{\frac 12}
\lesssim \frac{h^{\frac 13}}{h^d} L^{\frac 13}\,.
\]
%\end{multline}
If $t\lesssim t(h,\epsilon)$, then $\frac{1}{t(h,\epsilon)}\lesssim \frac 1t$ and as $\Big(\frac{h}{t(h,\epsilon)}\Big)^{d/2} h^{-3(d+1)\epsilon/2}=h^{-\epsilon/2}$ we find
\[
h^{1/3}L^{1/3}=2h^{1/3-\epsilon/2}= 2\Big(\frac{h}{t(h,\epsilon)}\Big)^{d/2} h^{1/3-3(d+1)\epsilon/2}\lesssim \Big(\frac ht\Big)^{d/2}h^{1/3-3(d+1)\epsilon/2}\leq \Big(\frac ht\Big)^{d/2},
\]
as the condition $\epsilon<\frac{2}{9(d+1)}$ implies $1/3-3(d+1)\epsilon/2>0$. 
\end{proof}

\section{Proofs of Propositions \ref{dispNgrand}, \ref{dispNpetitloin}, \ref{dispNpetitpres}, }\label{sectproofsprops}
Here we need to analyze in details the structure of higher order derivatives of the phase functions $\phi_{N,a}$. The proof of Proposition \ref{dispNgrand} follows closely the one of \cite[Prop.7]{ILP3} (in the case $x\leq a$) ; the proofs of Propositions \ref{dispNpetitloin} and \ref{dispNpetitpres} become much more delicate in the case of the Schrödinger flow, due to the presence of the critical point $\eta_c$ which is a function depending on $s,\sigma$. As these propositions are crucial in the proof of Theorem \ref{thmdispSchrodinger}, we provide a detailed proof.

Let $V_{N,h,a}$ be defined in \eqref{defVNha} and let $N<\lambda^{1/3}$. 
Using Remark \ref{rmkchi}, we assume (without changing the contribution of $V_{N,h,a}$ modulo $O(h^{\infty})$) that its symbol $\varkappa$ is supported on $|(\sigma,s)|\leq 2\sqrt{\alpha_c}$. 
Fix $T$, $N\in [\frac TM, MT]$ with $M>8$ large enough and let $X=\frac x a\leq 1$, $Y=\frac{y}{\sqrt{a}}$ with $\frac{Y}{2T}\in [\frac 14,2]$.

\subsection*{Proof of Proposition \ref{dispNgrand}} We start with the case where $\lambda^{1/3}\lesssim N$ and we follow closely the proof of \cite[Prop.7]{ILP3}. We will prove the following :
\begin{equation}
  \label{eq:90}
   \left| \int_{\R^{2}} e^{\frac ih \phi_{N,a}}  \varkappa(\sigma,s,t,x,y,h,a,1/N) \, ds d\sigma \right| \lesssim \frac{\lambda^{-2/3}}{1+\lambda^{1/3}|K_a^2(\frac{Y}{4N},\frac{T}{2N})-1|^{1/2}}\,.
\end{equation}
We rescale variables with $\sigma=\lambda^{-1/3}p$ and $s=\lambda^{-1/3}q$ and define
\begin{equation}
  \label{eq:91ff}
  A=\lambda^{2/3}\Big(K_a^2(\frac{Y}{4N},\frac{T}{2N})-X\Big) \quad \text{and}\quad   B=\lambda^{2/3}\Big(K_a^2(\frac{Y}{4N},\frac{T}{2N})-1\Big) \,,
\end{equation}
and we are reduced to proving that the following holds uniformly in $(A,B)$ :
\begin{equation}
  \label{eq:91}
   \left| \int_{\R^{2}} e^{iG_{N,a,\lambda}(p,q,t,x,y)}  \varkappa(\lambda^{-1/3} p,\lambda^{-1/3}q,t,x,y,h,a,1/N) \, dp dq \right| \lesssim \frac{1}{1+|B|^{1/2}}\,,
\end{equation}
where the rescaled phase is
\begin{equation}
  \label{eq:92}
G_{N,a,\lambda}(p,q,t,x,y):=\frac{1}{h} \Big(\phi_{N,a}(\lambda^{-1/3}p,\lambda^{-1/3}q, t,x,y)-\phi_{N,a}(0,0,t,x,y)\Big)\,.  
\end{equation}
Replacing $\gamma$ by $a$ in first order derivatives of $\phi_{N,a,\gamma}$ (\eqref{derphisig1} and \eqref{derphis1}) yields
\begin{gather*}
  \partial_pG_{N,a,\lambda}=\frac 1h\frac{\partial\sigma}{\partial p}\partial_{\sigma}(\phi_{N,a})|_{(\sigma,s)=(\lambda^{-1/3}p,\lambda^{-1/3}q)}%
  =q^{1/2}(\eta_c)(p^2-\lambda^{2/3}(\alpha_c-X))\,,
\\
\partial_qG_{N,a,\lambda}=\frac 1h \frac{\partial s}{\partial q}\partial_{s}(\phi_{N,a})|_{(\sigma,s)=(\lambda^{-1/3}p,\lambda^{-1/3}q)}%
=q^{1/2}(\eta_c)(q^2-\lambda^{2/3}(\alpha_c-1))\,.
\end{gather*}
From \eqref{eq:alpha_c21}, in our new variables, $\alpha_c$ has the following expansion
\begin{equation}\label{eq:alpha_c212}
\alpha_c|_{(\lambda^{-1/3}p,\lambda^{-1/3}q)}=
\Big(K_a(\frac{Y}{4N},\frac{T}{2N})-\lambda^{-1/3}\frac{p}{2N}(1-a\mathcal{E}_1)-\lambda^{-1/3}\frac{q}{2N}(1-a\mathcal{E}_2)\Big)^2,
\end{equation}
where $f_j$ are smooth functions of $(\sigma,s)=\lambda^{-1/3}(p,q)$ and of $\frac{T}{2N}$, $X$, $\frac{Y}{4N}$.
With these notations and with $K_a=K_a(\frac{Y}{4N},\frac{T}{2N})$, we re-write the first order derivatives of $G_{N,a,\lambda}$,
\[
\partial_{p}G_{N,a,\lambda}=q^{1/2}(\eta_c)\Big(p^2-A+\frac{\lambda^{1/3}}{N}K_a(p(1-a\mathcal{E}_1)+q(1-a\mathcal{E}_2))-\frac{1}{4N^2}(p(1-a\mathcal{E}_1)+q(1-a\mathcal{E}_2))^2\Big),
\]
\[
\partial_{q}G_{N,a,\lambda}=q^{1/2}(\eta_c)\Big(q^2-B+\frac{\lambda^{1/3}}{N}K_a(p(1-a\mathcal{E}_1)+q(1-a\mathcal{E}_2))-\frac{1}{4N^2}(p(1-a\mathcal{E}_1)+q(1-a\mathcal{E}_2))^2\Big),.
\]
As $\lambda^{1/3}\leq N$, if $A,B$ are bounded, then \eqref{eq:91} obviously holds for $|(p,q)|$ bounded and by integration by parts if $|(p,q)|$ is large. So we can assume that $|(A,B)|\geq r_{0}$ with $r_{0}\gg 1$. Set $(A,B)=r (\cos(\theta),\sin(\theta))$ and rescale again $(p,q)=r^{1/2} (\tilde p,\tilde q)$: we aim at
\begin{equation}
  \label{eq:93}
    \left| \int_{\R^{2}} e^{i r^{3/2} \tilde G_{N,a,\gamma}}  \varkappa(\lambda^{-1/3}r^{1/2} \tilde p,\lambda^{-1/3}r^{1/2}\tilde q, t,x,y,h,a,1/N) \, d\tilde p d\tilde q \right| \lesssim \frac{1}{r^{5/4}}\,,
\end{equation}
where $r$ is our large parameter, and $\tilde G_{N,a,\lambda}(\tilde p,\tilde q,t,x,y) =r^{-3/2}G_{N,a,\lambda}(r^{1/2}p,r^{1/2}q,t,x,y)$.
Let us compute, using the formulas of the first order derivatives of $G_{N,a,\lambda}$
\begin{gather}\label{derivtildep}
\frac{\partial_{\tilde p} \tilde G_{N,a,\lambda}}{q^{1/2}(\eta_c)}=\tilde p^{2}-\cos\theta+\frac{\lambda^{\frac 13} K_a}{N r^{\frac 12}}(\tilde p(1-a\mathcal{E}_1)+\tilde q(1-a\mathcal{E}_2))-\frac{(\tilde p(1-a\mathcal{E}_1)+\tilde q(1-a\mathcal{E}_2))^{2}}{4 N^{2}}, \\
  \nonumber
 \frac{\partial_{\tilde q} \tilde G_{N,a,\lambda}}{q^{1/2}(\eta_c)}=\tilde q^{2}-\sin\theta+\frac{\lambda^{\frac 13} K_a}{ N r^{\frac 12}}(\tilde p(1-a\mathcal{E}_1)+\tilde q(1-a\mathcal{E}_2))-\frac{(\tilde p(1-a\mathcal{E}_1)+\tilde q(1-a\mathcal{E}_2))^{2}}{4 N^{2}},
\end{gather}
where, abusing notations, $\mathcal{E}_j$ is now $\mathcal{E}_j(r^{1/2}\lambda^{-1/3}(\tilde q,\tilde p),\frac{T}{2N},\frac{Y}{4N})$. On the support of $\varkappa(\cdots)$ %
we have $|(\tilde p,\tilde q)|\lesssim \lambda^{1/3} r^{-1/2}\lesssim \lambda^{1/3} r_{0}^{-1/2}$: then, for $\lambda^{1/3}\lesssim N$, the last term in both derivatives is $O(r_{0}^{-1})$, while the next to last term is $r_{0}^{-1/2}O(\tilde p,\tilde q)$; indeed, using boundedness of $\mathcal{E}_{1,2}$ and $K_a$, we obtain $|\frac{\lambda^{1/3}}{N} K_a\frac{(\tilde p(1-a \mathcal{E}_1)+\tilde q (1-a \mathcal{E}_2))}{r^{1/2}}|\lesssim r_0^{-1/2} |\tilde p+\tilde q|$. Therefore, when $|(\tilde p,\tilde q)|>\tilde C$ with $\tilde C$ sufficiently large, the corresponding part of the integral is $O(r^{-\infty})$ by integration by parts. So we are left with restricting our integral to a compact region in $(\tilde p,\tilde q)$.

We remark that, from $X\leq 1$, we have $A\geq B$ (and $A=B$ if and only if $X=1$), e.g. $\cos \theta \geq \sin \theta$ and therefore $\theta\in (-\frac{3\pi}{4},\frac{\pi}{4})$. We proceed differently upon the size of $B=r\sin\theta$. If $\sin \theta<-C/r^{1/2}$ for some $C>0$ sufficiently large then $\partial_{\tilde q}\tilde G_{N,a,\lambda}>c/(2r^{1/2})$ for some $C>c>0$ and the phase is non stationary.
Indeed, in this case 
\[
\frac{\partial_{\tilde q}\tilde G_{N,a,\lambda}}{q^{1/2}(\eta_c)}\geq \tilde q^{2}+\frac{C}{2 r^{1/2}}+\frac{\lambda^{1/3} K_a}{N r^{1/2}}(\tilde p (1-a \mathcal{E}_1)+\tilde q (1-a \mathcal{E}_2))-\frac{(\tilde p (1-a \mathcal{E}_1)+\tilde q(1-a \mathcal{E}_2))^{2}}{4N^{2}}
\]
and using that $\tilde p, \tilde q$ are bounded, that on the support of $\varkappa$ we have $|r^{1/2}(\tilde p,\tilde q)|\lesssim \lambda^{1/3}$ and that $\frac 1N\lesssim \frac{1}{\lambda^{1/3}}\ll 1$, we then have, for some $C$ large enough
\[
\frac{\lambda^{1/3}}{N} (\tilde p+\tilde q)\Big[\frac{K_a}{r^{1/2}}-\frac{(\tilde p(1-a\mathcal{E}_1)+\tilde q(1-a\mathcal{E}_2))}{4N\lambda^{1/3}}\Big]\lesssim \frac{C}{4r^{1/2}}\,.
\]
We recall that on the support of $\psi_2(\alpha)$ we had $\alpha\in [\frac 12,\frac 32]$ and the critical point $\alpha_{c}$ is such that \eqref{eq:alphac} holds (with $\gamma$ replaced by $a$ in this case) hence 
$K_a=K_a(\frac{Y}{4N},\frac{T}{2N})$ introduced in \eqref{defKa}
stays close to $1$ as the main contribution of $\alpha_c$. It follows that $\partial_{\tilde q}\tilde G_{N,a,\lambda}>C/(2r^{1/2})$ and integrations by parts yield a bound $O(r^{-n})$ for all $n\geq 1$.

Next, let $\sin \theta >-C/r^{1/2}$ and assume $A>0$ (since otherwise the non-stationary phase applies), which in turn implies $A>r_{0}/2$. Indeed, $\cos \theta \geq \sin \theta>-C/r^{1/2}$ implies $\theta\in (-\frac{C}{\sqrt{r_0}},\frac{\pi}{4})$ and therefore in this regime $\cos\theta\geq \frac{\sqrt{2}}{2}$. Consider first the case $|\sin \theta |<C/r^{1/2}$. Non degenerate stationary phase always applies in $\tilde p$, at two (almost) opposite values of $\tilde p$, such that $|\tilde p_{\pm}|\sim |\pm\sqrt{\cos\theta}|\geq 1/4$, and the integral in \eqref{eq:93} rewrites
\begin{multline}
  \label{eq:95}
      r \int_{\R^{2}} e^{i r^{3/2} \tilde G_{N,a,\lambda}}  \varkappa(\lambda^{-1/3}r^{1/2} \tilde p,\lambda^{-1/3}r^{1/2}\tilde q, t,x,y,h,a,1/N) \, d\tilde p d\tilde q \\
      = \frac r {r^{3/4}} \left( \int_{\R} e^{i r^{3/2} \tilde G^+_{N,a,\lambda}}  \varkappa^{+}(\tilde q,t,x,y,h,a,1/N) \,  d\tilde q\right. 
+\left. \int_{\R} e^{i r^{3/2} \tilde G^-_{N,a,\lambda}}  \varkappa^{-}(\tilde q ,h,a,1/N) \, d\tilde q\right)\,.
\end{multline}
Indeed, the phase is stationary in $\tilde p$ when
\[
\tilde p^2=\cos\theta-\frac{\lambda^{1/3}K_a}{Nr^{1/2}}(\tilde p(1-a \mathcal{E}_1)+\tilde q(1-a \mathcal{E}_2))+\frac{(\tilde p(1-a \mathcal{E}_1)+\tilde q(1-a \mathcal{E}_2))^2}{4N^2},
\]
and from $\cos\theta\geq \frac{\sqrt{2}}{2}$ and $\frac{1}{r}\leq \frac{1}{r_0}\ll 1$, there are exactly two disjoint solutions to $\partial_{\tilde p}\tilde G_{N,a,\lambda}=0$, that we denote $\tilde p_{\pm}=\pm\sqrt{\cos\theta}+O(r^{-1/2})$. We compute, at critical points, %
\[
\partial^2_{\tilde p,\tilde p}\tilde G_{N,a,\lambda}|_{
p_{\pm}}=q^{1/2}(\eta_c)\Big(2\tilde p+\frac{\lambda^{1/3}K_a}{Nr^{1/2}}(1+O(a)\Big)+O(N^{-2})|_{\tilde p_{\pm}},
\]
where we used  $\tilde p,\tilde q$ bounded and $\partial_{\tilde p}\mathcal{E}_j=O(\frac{r^{1/2}\lambda^{-1/3}}{N})$ to deduce that all the terms except the first one are small. As $\lambda^{1/3}\lesssim N$, $r^{-1/2}\ll 1$, $K_a$ bounded, close to $1$, for $\tilde p\in\{\tilde p_{\pm}\}$ we get $\partial^2_{\tilde p,\tilde p}\tilde G_{N,a,\lambda}|_{\tilde p_{\pm}}\sim 2\tilde p_{\pm}+O(r^{-1/2})$,
and as $|\tilde p_{\pm}|\geq \frac 14-O(r^{-1/2})$, stationary phase applies. The critical values of the phase at $\tilde p_{\pm}$, denoted $\tilde G^{\pm}_{N,a,\lambda}$, are such that
\begin{multline}\label{derivPhi3q}
\partial_{\tilde q} \tilde G^{\pm}_{N,a,\lambda}(\tilde q,.):=\partial_{\tilde q}\tilde G_{N,a,\lambda}(\tilde q,\tilde p_{\pm},.)
=q^{1/2}(\eta_c) \Big(\tilde q^{2}-\sin\theta\\
+\frac{\lambda^{1/3} K_a(\tilde p(1-a\mathcal{E}_1)+\tilde q(1-a\mathcal{E}_2))}{ N r^{1/2}}-\frac{(\tilde p(1-a\mathcal{E}_1)+\tilde q(1-a\mathcal{E}_2))^{2}}{4 N^{2}}|_{\tilde p=\tilde p_{\pm}}\Big).
\end{multline}
As $|\sin \theta |<C/r^{1/2}$, the phases $\tilde G^{\pm}_{N,a,\lambda}$ may be stationary but degenerate; taking two derivatives in \eqref{derivPhi3q}, one easily checks that
$|\partial^{3}_{\tilde q} \tilde G^{\pm}_{N,a,\lambda}|\geq q^{1/2}(\eta_c)(2-O(r_{0}^{-1/2}))\,$. Hence we get, by Van der Corput Lemma
\begin{equation}
  \label{eq:97}
\left|  \int_{\R} e^{i r^{3/2}\tilde G^{\pm}_{N,a,\lambda}}  \varkappa^{\pm}(\tilde q ,t,x,y,h,a,1/N) \,  d\tilde q\right| \lesssim (r^{3/2})^{-1/3}\,.
\end{equation}
Using \eqref{eq:95} and \eqref{eq:97} eventually yields
\begin{equation}
  \label{eq:98}
  \left|   r \int_{\R^{2}} e^{i r^{3/2}\tilde G_{N,a,\lambda}}  \varkappa(\lambda^{-1/3}r^{1/2}\tilde p, \lambda^{-1/3}r^{1/2}\tilde q, t,x,y,h,a,1/N) \, d\tilde p d\tilde q\right|\lesssim r^{-1/4}.
\end{equation}
Notice moreover that $|B|=|r\sin\theta|\leq C r^{1/2}$, hence from $r^{2}=A^{2}+B^{2}$, we have $A\sim r$ (large) and $r^{-1/4}\lesssim 1/(1+|B|^{1/2})$: \eqref{eq:91} holds true and, replacing $B$ by $\lambda^{2/3}(K_a^2-1)$, it yields \eqref{eq:90}. Substitution with \eqref{eq:91ff} and using $a^2=(h\lambda)^{4/3}$, we obtain from \eqref{eq:90} 
\begin{equation}
|V_{N,h,a}(t,x,y)|\leq \frac{a^2}{h}\frac{1}{\sqrt{\lambda N}}\frac{\lambda^{-\frac 23}}{(1+\lambda^{\frac 13}|K_a^2-1|^{\frac 12})}
=\frac{2h^{\frac 13}}{2\sqrt{N/\lambda^{\frac 13}}+\lambda^{\frac 16}\sqrt{K_a+1}|4NK_a-4N|^{\frac 12}}\,.
\end{equation}
In the last case $\sin \theta>C/r^{1/2}$ ($A\geq B\geq Cr^{1/2}$), stationary phase holds in $(\tilde p,\tilde q)$: the determinant of the Hessian is at least $C\sqrt{\cos \theta}\sqrt{\sin \theta}$ and we get,  %
\[
\Big|\text{(LHS)\eqref{eq:93}}\Big|\lesssim  \frac 1 {(\sqrt{\cos \theta}\sqrt{\sin \theta})^{1/2} r^{3/2}}  \lesssim\frac 1 r \frac 1  {(r \sqrt{\cos \theta}\sqrt{\sin \theta})^{1/2}}\lesssim \frac 1 r \frac{1}{(AB)^{1/4}}
\]
so in this case our estimate is slightly better than \eqref{eq:90}, as we have
\begin{equation}
  \label{eq:100}
     \left| \int_{\R^{2}} e^{\frac ih \phi_{N,a}}  \varkappa(s,\sigma,t,x,y,h,a,1/N) \, ds d\sigma \right| \lesssim \frac 1 {\lambda^{2/3}|AB|^{1/4}}\leq \frac 1 {\lambda^{2/3}|B|^{1/2}}\,.
\end{equation}
This completes the proof of Proposition  \ref{dispNgrand} as it eventually yields
\begin{equation}
|V_{N,h,a}(t,x,y)|\lesssim \frac{(h\lambda)^{4/3}}{h}\frac{\lambda^{-1/2}}{N^{1/2}} \frac 1 {\lambda^{2/3}|B|^{1/2}}\sim h^{1/3}\frac{\lambda^{1/6}}{N^{1/2}}\frac{1}{\lambda^{1/3}|K_a^2-1|^{1/2}}\,.
\end{equation}
%
%\subsection*{Proof of Proposition \ref{dispNtransvgam}}
%When $x\leq 2a\leq \frac{\gamma}{4}$, both critical points with respect to $s,\sigma$ are non-degenerate. We immediately obtain the first line of \eqref{eq:1ffgam}.
%For $x\geq 2a$ we follow the same approach used for Proposition \ref{dispNgrand}. Let $X=\frac{x}{\gamma}$, $\lambda_{\gamma}=\frac{\gamma^{3/2}}{h}$, rescale $\sigma=\lambda_{\gamma}^{-1/3}p$ and $s=\lambda_{\gamma}^{-1/3}q$. With $T=\frac{t}{\sqrt{\gamma}}$, $Y=\frac{y}{\sqrt{\gamma}}$, set $A=\lambda^{2/3}_{\gamma}(K_{\gamma}^2(\frac{Y}{4N},\frac{T}{2N})-X)$, $B=\lambda^{2/3}_{\gamma}(K_{\gamma}^2(\frac{Y}{4N},\frac{T}{2N})-\frac{a}{\gamma})$. As $K_{\gamma}(\frac{Y}{4N},\frac{T}{2N})$ stays close to $1$ on the support of $\psi_2$ and $\frac{a}{\gamma}\leq \frac 18$, $B$ is always large, $B\sim \lambda_{\gamma}^{2/3}$. For $x\geq 2a$, $A$ can be small when $x$ is close to $\gamma$. As $B\simeq \lambda_{\gamma}^{2/3}$, the phase has two non-degenerate critical points $q_{\pm}$. The fact that at $q_{\pm}$ the critical points with respect to $p$ are degenerate of order at most $3$ follows as before, as $N\gtrsim \lambda_{\gamma}^{1/3}$ and $|(A,B)|\geq r_0\sim \lambda_{\gamma}^{2/3}$ and the third order derivative of the phase at $q_{\pm}$ can be bounded from below by $q^{1/2}(\eta_c)(2-O(r_0^{-1/2}))$.

\subsection*{Proof of Propositions \ref{dispNpetitloin} and \ref{dispNpetitpres}}
The main differences between the proof of Proposition \ref{dispNpetitloin} and that of \cite[Prop.5]{ILP3} occur from the additional critical point $\eta_c$, which is not considered in the case of the wave equation. Similarly, the proof of Proposition \ref{dispNpetitpres} follows the same path as \cite[Prop.6]{ILP3}, but one has to carefully deal with  contributions coming from the higher order derivatives of $\eta_c$. Let $1\leq N< \lambda^{1/3}$: we aim at proving
\begin{equation}\label{eq:90bis}
   \left| \int_{\R^{2}} e^{\frac ih \phi_{N,a}}  \varkappa(\sigma,s,t,x,y,h,a,1/N) \, ds d\sigma \right| \lesssim N^{1/4}\lambda^{-3/4}\,.
\end{equation}
As $N$ is bounded by $\lambda^{1/3}$, ignoring the last two terms in the first order derivatives of $\phi_{N,a}$, as we did in the previous case, is no longer possible. Set $\Lambda=\lambda/N^{3}$ to be the new large parameter. Rescale again variables $\sigma=p'/N$ and $s=q'/N$ and set now
\[
\Lambda G_{N,a}(p',q',t,x,y)=\frac 1h\Big( \phi_{N,a}(\sigma,s, t,x,y)-\phi_{N,a}(0,0,t,x,y)\Big).
\]
We are reduced to proving %
   $\left| \int_{\R^{2}} e^{i \Lambda G_{N,a}}  \varkappa(p'/N,q'/N,\cdots) \, dp' dq' \right| \lesssim \Lambda^{-3/4}$. Compute %
\begin{multline}\label{GNderiv}
\nabla_{(p',q')}G_{N,a}=\frac{N^3}{h}\Big(\frac{\partial \sigma}{\partial p'}\partial_{\sigma} \phi_{N,a},\frac{\partial s}{\partial q'}\partial_{s}\phi_{N,a}\Big)|_{(p'/N,q'/N)}\\
=q^{1/2}(\eta_c)\Big(p'^2+N^2(X-\alpha_c), q'^2+N^2(1-\alpha_c)\Big),
\end{multline}
where, using  \eqref{eq:alpha_c21}, $\alpha_c(\sigma,s,\cdot)|_{(\sigma=p'/N,s=q'/N)}
=\Big(K_a-\frac{p'}{2N^2}(1-af_1)-\frac{q'}{2N^2}(1-af_2)\Big)^2$.
 Recall that $K_a=\sqrt{\alpha^0_c}$ and stays close to $1$ on the support of the symbol.
We define %
$ A'=(K_a^{2}-X)N^{2}$ and $ B'=(K_a^{2}-1)N^{2}$. First order derivatives of $G_{N,a,\lambda}$ read 
\[
\partial_{p'}G_{N,a}=q^{1/2}(\eta_c)\Big(p'^2-A'+K_a(p'(1-a\mathcal{E}_1)+q'(1-a\mathcal{E}_2))-\frac{1}{4N^2}(p'(1-a\mathcal{E}_1)+q'(1-a\mathcal{E}_2))^2\Big),
\]
\[
\partial_{q'}G_{N,a}=q^{1/2}(\eta_c)\Big(q'^2-B'+K_a(p'(1-a\mathcal{E}_1)+q'(1-a\mathcal{E}_2))-\frac{1}{4N^2}(p'(1-a\mathcal{E}_1)+q'(1-a\mathcal{E}_2))^2\Big).
\]
Unlike the previous case, the two last terms are no longer disposable. We start with $|(A',B')|\geq r_{0}$ for some large, fixed $r_{0}$, in which case we can follow the same approach as in the previous case. Set again $A'=r\cos \theta$ and $B'=r\sin \theta$. If $|(p',q')|<r_{0}/2$, then the corresponding integral is non stationary and we get decay by integration by parts. We change variables $(p',q')=r^{1/2}(\tilde p',\tilde q')$ with $r_0\leq r\lesssim N^2$ and aim at proving the following
\begin{equation}
  \label{eq:101}
     \left| r  \int_{\R^{2}} e^{i r^{3/2} \Lambda \tilde G_{N,a}}  \varkappa(r^{1/2}\tilde p'/N,r^{1/2} \tilde q'/N,t,x,y,h,a,1/N) \, d\tilde p' d\tilde q' \right| \lesssim r^{-1/4 }\Lambda^{-5/6}\,,
\end{equation}
The new phase is $\tilde G_{N,a}(\tilde p',\tilde q',t,x,y)=r^{-3/2}G_{N,a}(r^{1/2}\tilde p',r^{1/2}\tilde q',t,x,y)$. 
We compute 
\begin{align*}
  \frac{\partial_{\tilde p'} \tilde G_{N,a}}{q^{1/2}(\eta_c)}&=\tilde p'^{2}-\cos\theta+ \frac{K_a}{r^{1/2}}(\tilde p'(1-a\mathcal{E}_1)+\tilde q'(1-a\mathcal{E}_2))-\frac{(\tilde p'(1-a\mathcal{E}_1)+\tilde q'(1-a\mathcal{E}_2))^{2}}{4 N^{2}}, \\
 \frac{\partial_{\tilde q'} \tilde G_{N,a}}{q^{1/2}(\eta_c)}&=\tilde q'^{2}-\sin\theta+ \frac{ K_a}{r^{1/2}}(\tilde q'(1-a\mathcal{E}_1)+\tilde q'(1-a\mathcal{E}_2))-\frac{(\tilde p'(1-a\mathcal{E}_1)+\tilde q'(1-a\mathcal{E}_2))^{2}}{4 N^{2}}.
 \end{align*}
To the extend it is possible to do so, we follow the previous case $\lambda^{1/3}\lesssim N$. From $X\leq 1$, $ A'\geq  B'$ implying $\cos\theta\geq \sin \theta$. If $|(\tilde p',\tilde q')|\geq \tilde C$ for some large $\tilde C\geq 1$, then $(\tilde p'_c, \tilde q'_c)$ are such that $\tilde p'^2_c\geq \tilde q'^2_c$ and if $\tilde C$ is sufficiently large non-stationary phase applies (pick any $\tilde C>4$.) Therefore we are reduced to bounded $|(\tilde p',\tilde q')|$. We sort out cases, depending upon $B'=r\sin\theta $ : if $\sin \theta<-\frac{C}{\sqrt{r}}$ for some sufficiently large constant $C>0$, then
\[
\frac{\partial_{\tilde q'} \tilde G_{N,a}}{q^{1/2}(\eta_c)} \geq \tilde q'^{2}+\frac{C}{r^{1/2}}+ \frac{K_a}{r^{1/2}}(\tilde p'(1-a\mathcal{E}_1)+\tilde q'(1-a\mathcal{E}_2))-\frac{(\tilde p'(1-a\mathcal{E}_1)+\tilde q'(1-a\mathcal{E}_2))^{2}}{4 N^{2}},
\]
and $\mathcal{E}_{1,2}$ are bounded, $N$ is sufficiently large in this case (indeed, recall that $r_0\leq r\lesssim N^2$ so that $\frac{1}{\sqrt{r}}\geq \frac 1N$);  then, non-stationary phase applies  as the sum of the last three terms in the previous inequality is greater than $C/(2r^{1/2})$ if $C$ is large enough. If $|\sin\theta|\leq \frac{C}{\sqrt{r}}$ then, again, $\theta\in (-\frac{C}{\sqrt{r_0}},\frac{\pi}{4})$ and $\cos\theta\geq \frac{\sqrt{2}}{2}$. We have $|B'| =|r\sin\theta|\leq C \sqrt{r}$; if $|B'|<C$, then $1+|B'|\lesssim r^{1/2}$, while $|A'|\sim r$. Stationary phase applies in $\tilde p'$ with non-degenerate critical points $\tilde p'_{\pm}$ and yields a factor $(r^{3/2}\Lambda)^{-1/2}$; the critical value of the phase function at these critical points, that we denote $\tilde G^{\pm}_{N,a}$, is always such that $|\partial^3_{\tilde q'}\tilde G^{\pm}_{N,a}|\geq q^{1/2}(\eta_c)(2-O(\frac{1}{r_0^{1/2}}))$ and the integral in $\tilde q'$ is bounded by $(r^{3/2}\Lambda)^{-1/3}$ by Van der Corput. We therefore obtain \eqref{eq:101} which yields, using that $|B'|=|N^2(K_a^2-1)|\leq r^{1/2}$,
\begin{multline*}
|V_{N,a,h}(t,x,y)|=\frac{h^{1/3}\lambda^{4/3}}{\sqrt{\lambda N}N^2}\Big |r \int_{\R^{2}} e^{i r^{3/2} \Lambda \tilde G_{N,a}}  \varkappa(r^{1/2}\tilde p'/N,r^{1/2} \tilde q'/N,t,x,y,h,a,1/N) \, d\tilde p' d\tilde q'\Big |\\
\lesssim \frac{h^{1/3}\lambda^{5/6}}{N^{5/2}}r^{-1/4}\Big(\frac{\lambda}{N^3}\Big)^{-5/6}\lesssim \frac{h^{1/3}}{(1+|B'|^{1/2})}\sim \frac{h^{1/3}}{(1+N|K_a(\frac{Y}{4N},\frac{T}{2N})-1|^{1/2})}.
\end{multline*}
If $\sin\theta > \frac{C}{\sqrt{r}}$, then $B'=r\sin\theta>C\sqrt{r}$ and therefore $N^2|K_a^2-1|>Cr^{1/2}$. We do stationary phase in both variables with large parameter $r^{3/2}\Lambda$ as the determinant of the Hessian at critical points is at least $C\sqrt{\cos\theta\sin \theta}$, and obtain, for left hand side term in \eqref{eq:101}, a bound
\[
\frac{c r}{(\sqrt{\sin\theta}\sqrt{\cos\theta})^{1/2}r^{3/2}\Lambda}=\frac{1}{\Lambda}\frac{1}{(A'B')^{1/4}}\leq \frac{1}{\Lambda}\frac{1}{B'^{1/2}}.
\]
We just proved that for $N<\lambda^{1/3}$ and not too small $N^2|K_a(\frac{Y}{4N},\frac{T}{2N})-1|$,
  \begin{equation}
    \label{eq:2ffbis}
       \left| V_{N,h,a}(t,x,y)\right| \lesssim  \frac{h^{1/3}}{\lambda^{1/6}\sqrt{N}|K_a(\frac{Y}{4N},\frac{T}{2N})-1|^{1/2}} \,.
  \end{equation}
  We now move to the most delicate case $|(A',B')|\leq r_{0}$. For $|(p',q')|$ large, the phase is non stationary and integrations by parts provide $O(\Lambda^{-\infty})$ decay. So we may replace $\varkappa$ by a cut-off, that we still call $\varkappa$, compactly supported in $|(p',q')|<R$. We proceed %
   by identifying one variable where usual stationary phase applies and then evaluating the remaining $1D$ oscillatory integral using Van der Corput (with different decay rates depending on the lower bounds on derivatives, of order at most $4$.) Using \eqref{GNderiv}, we compute derivatives of $G_{N,a}$
\begin{equation}\label{eq:H_Nderivp}
\partial_{p'}G_{N,a} =q^{1/2}(\eta_c)(p'^2+N^2(X-\alpha_c)),\quad \partial_{q'}G_{N,a} =q^{1/2}(\eta_c)(q'^2+N^2(1-\alpha_c)).
\end{equation}
The second order derivatives of $G_{N,a}$ 
follow from \eqref{secdersig}, \eqref{secders} and \eqref{secderssig} 
\begin{gather}
 \partial^2_{p'p'}G_{N,a}=q^{1/2}(\eta_c)(2p'-N^2\partial_{p'}\alpha_c)+\frac{\partial_{p'}\eta_c\nabla q(\eta_c)}{2q^{1/2}(\eta_c)}(p'^2+N^2(X-\alpha_c)),\\
  \partial^2_{q'q'}G_{N,a}=q^{1/2}(\eta_c)(2q'-N^2\partial_{q'}\alpha_c)+\frac{\partial_{q'}\eta_c\nabla q(\eta_c)}{2q^{1/2}(\eta_c)}(q'^2+N^2(1-\alpha_c)),\\
   \partial^2_{q'p'}G_{N,a}=q^{1/2}(\eta_c)(-N^2\partial_{q'}\alpha_c)+\frac{\partial_{q'}\eta_c\nabla q(\eta_c)}{2q^{1/2}(\eta_c)}(p'^2+N^2(X-\alpha_c))\\
   \nonumber
  =\partial^2_{p'q'}G_{N,a} =q^{1/2}(\eta_c)(-N^2\partial_{p'}\alpha_c)+\frac{\partial_{p'}\eta_c\nabla q(\eta_c)}{2q^{1/2}(\eta_c)}(q'^2+N^2(1-\alpha_c)).
\end{gather}
At critical points, where $\partial_{p'}G_{N,a}=\partial_{q'}G_{N,a}=0$, the determinant of the Hessian reads
\[
\det\text{Hess}_{(p',q')}G_{N,a}|_{\nabla_{(p',q')}G_{N,a}=0}=q(\eta_c)\Big(4p'q'-N^2(p'+q')\partial_{p'}\alpha_c\Big).
\]
If $|\det\text{Hess}_{(p',q')}G_{N,a}|>c>0$ for some small $c>0$ we can apply usual stationary phase in both variables $p',q'$. We expect the worst contributions to occur in a neighborhood of the critical points where $|\det\text{Hess}_{(p',q')}G_{N,a}|\leq c$ for some $c$ sufficiently small. We turn variables with $\xi_{1}=(p'+q')/2$ and $\xi_{2}=(p'-q')/2$. Then $p'=\xi_{1}+\xi_{2}$ and $q'=\xi_{1}-\xi_{2}$, and we also let $\mu:=A'+B'=N^2(2K_a^2-1-X)$,  $\nu:=A'-B'=N^2(1-X)$. The most degenerate situation will turn out to be $\nu=\mu=0$ and $\xi_{1}=0,\xi_{2}=0$. Let $g_{N,a}(\xi_1,\xi_2)=G_{N,a}(\xi_1+\xi_2,\xi_1-\xi_2)$.
\vskip1mm

\paragraph{\bf Case $c\lesssim |\xi_1|$}
For $\xi_1$ outside a small neighbourhood of $0$, non degenerate stationary phase applies in $\xi_2$ and the critical value $g_{N,a}(\xi_1,\xi_{2,c})$ may have degenerate critical points of order at most $2$.  
The phase $g_{N,a}$ is stationary in $\xi_2$ whenever $\partial_{p'}G_{N,a}=\partial_{q'}G_{N,a}$ and from Remark \ref{rmqcritssigderalphac}, we then have $\partial_{p'}\eta_c=\partial_{q'}\eta_c$ and $\partial_{p'}\alpha_c=\partial_{q'}\alpha_c$. We have 
\[
\partial^2_{\xi_2,\xi_2}g_{N,a}(\xi_1,\xi_2)=\Big(\partial^2_{p'p'}G_{N,a}-2\partial^2_{p'q'}G_{N,a}+\partial^2_{q'q'}G_{N,a}\Big)(p',q')|_{\xi_1,\xi_2}.
\]
Using the explicit form of the second order derivatives of $G_{N,a}$ given above, at $p'=\xi_1+\xi_2$, $q'=\xi_1-\xi_2$ such that $p'^2+N^2(X-\alpha_c)=q'^2+N^2(1-\alpha_c)$ and with $\partial_{p'}\eta_c=\partial_{q'}\eta_c$, we obtain 
\[
\partial^2_{\xi_2,\xi_2}g_{N,a}(\xi_1,\xi_2)|_{\partial_{\xi_2}g_{N,a}=0}=2q^{1/2}(\eta_c)(p'+q')=4q^{1/2}(\eta_c)\xi_1.
\]
As $q(\eta_c)=|\eta_c|q(\eta_c/|\eta_c|)\in [\frac 12m_0^2,\frac 32 M_0^2]$ with $m_0,M_0$ defined in \eqref{limitshalfq}, stationary phase applies in $\xi_2$. We denote $\xi_{2,c}$ the critical point, such that 
\[
\partial_{\xi_2}g_{N,a}(\xi_1,\xi_2)=\Big(\partial_{p'}G_{N,a}-\partial_{q'}G_{N,a}\Big)(p',q')|_{p'=\xi_1+\xi_2,q'=\xi_1-\xi_2}=0\,,
\]
which rewrites $(\xi_1+\xi_{2,c})^2+N^2(X-\alpha_c)=(\xi_1-\xi_{2,c})^2+N^2(1-\alpha_c)$, which, in turn, yields $4\xi_1\xi_{2,c}=N^2(1-X)=\nu$ and therefore $\xi_{2,c}=\frac{\nu}{4\xi_1}$. We will now compute higher order derivatives of the critical value of $g_{N,a}(\xi_1,\xi_{2,c})$ with respect to $\xi_1$.
\begin{lemma}\label{lemxi1grand}
For $|N|\geq 1$, the phase $g_{N,a}(\xi_1,\xi_{2,c})$ may have critical points degenerate of order at most $2$.
\end{lemma}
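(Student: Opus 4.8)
The strategy is to compute the low-order Taylor coefficients of the one-variable function $\xi_1\mapsto g_{N,a}(\xi_1,\xi_{2,c}(\xi_1))$ and show that the first four derivatives cannot all vanish simultaneously, so that Van der Corput with $k\le 3$ applies. First I would record that, since $\xi_{2,c}=\nu/(4\xi_1)$, on the reduced phase one has $\partial_{\xi_1}g_{N,a}(\xi_1,\xi_{2,c})=(\partial_{\xi_1}g_{N,a})(\xi_1,\xi_{2,c})$ by the envelope (stationarity) property, so I only need the $\xi_1$-derivatives of the full $g_{N,a}$ evaluated along $\xi_2=\xi_{2,c}$, plus the implicit dependence of $\alpha_c$ and $\eta_c$. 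Using \eqref{eq:H_Nderivp} rewritten in the $\xi$-variables, $\partial_{\xi_1}g_{N,a}=\partial_{p'}G_{N,a}+\partial_{q'}G_{N,a}=q^{1/2}(\eta_c)\bigl(p'^2+q'^2+N^2(X+1-2\alpha_c)\bigr)$ at $p'=\xi_1+\xi_{2,c}$, $q'=\xi_1-\xi_{2,c}$; substituting $p'^2+q'^2=2\xi_1^2+2\xi_{2,c}^2=2\xi_1^2+\nu^2/(8\xi_1^2)$ and $N^2(X+1-2\alpha_c)=-(\mu)-N^2\cdot 2(\alpha_c-K_a^2)$, together with the expansion $\alpha_c=(K_a-\tfrac{p'+q'}{2N^2}(1-a\mathcal E)+\dots)^2=(K_a-\tfrac{\xi_1}{N^2}(1-a\mathcal E))^2$ from \eqref{eq:alpha_c21}, gives $\partial_{\xi_1}g_{N,a}(\xi_1,\xi_{2,c})=q^{1/2}(\eta_c)\bigl(2\xi_1^2-\mu+\nu^2/(8\xi_1^2)+(2K_a/N^2)\xi_1\cdot q^{1/2}(\eta_c^0)+O(\xi_1^2/N^2)+a(\cdots)\bigr)$. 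This is, up to the smooth nonvanishing prefactor $q^{1/2}(\eta_c)$ and small perturbations controlled by $a\le\varepsilon_0$ and $N^{-2}$, the derivative of a phase of the swallowtail/$A_3$ type: the leading part is $2\xi_1^2 + \nu^2/(8\xi_1^2) - \mu$ plus a small linear term.

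\textbf{Key steps.} (1) Write $g_{N,a}(\xi_1,\xi_{2,c})=\int_0^{\xi_1}\partial_{\xi_1}g_{N,a}(s,\xi_{2,c}(s))\,ds+g_{N,a}(0,\cdot)$ — since $c\lesssim|\xi_1|$, the singularity of $\xi_{2,c}=\nu/(4\xi_1)$ at $\xi_1=0$ is harmless — and differentiate the explicit expression above, tracking that all $\xi_1$-derivatives of $\eta_c$, $\alpha_c$ contribute only perturbatively: $\partial_{\xi_1}\eta_c=O(\gamma^{3/2}/t)=O(\gamma/N)$ by Lemma \ref{lemalphacetac}, hence terms created by differentiating the $q^{1/2}(\eta_c)$-prefactor or by the chain rule on $\alpha_c$ are smaller by a factor $a$ or $N^{-2}$ than the ``main'' polynomial in $\xi_1$. (2) Compute $\partial^2_{\xi_1}$, $\partial^3_{\xi_1}$, $\partial^4_{\xi_1}$ of the model part $2\xi_1^2+\nu^2/(8\xi_1^2)-\mu$: one gets $4-\nu^2/(2\xi_1^4)\cdot(\text{const})$-type expressions for the second, and $\partial^4_{\xi_1}$ of $\nu^2/\xi_1^2$ is $\sim \nu^2/\xi_1^6\gtrsim 0$, while $\partial^4_{\xi_1}(2\xi_1^2)=0$; the key algebraic fact is that the Wronskian-type quantity prevents simultaneous vanishing of $\partial^2,\partial^3,\partial^4$. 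Concretely: if $\partial^3_{\xi_1}=0$ and $\partial^4_{\xi_1}=0$ for the part $\nu^2/(8\xi_1^2)$ then $\nu=0$, which forces $\xi_{2,c}=0$ and reduces the model phase to $2\xi_1^2-\mu$ whose second derivative is $\sim 8q^{1/2}(\eta_c)\neq 0$; and the perturbations of size $a+N^{-2}$ cannot destroy a lower bound of order $1$ on whichever of $\{\partial^2_{\xi_1},\partial^3_{\xi_1},\partial^4_{\xi_1}\}$ is bounded below. (3) Conclude: away from $\xi_1=0$ the reduced phase $g_{N,a}(\xi_1,\xi_{2,c})$ has critical points degenerate of order at most $2$ (i.e. $\partial^3_{\xi_1}$ bounded below wherever $\partial_{\xi_1}=\partial^2_{\xi_1}=0$), which is exactly the statement of the lemma; the bound \eqref{eq:2ff}-type estimate then follows from Van der Corput applied in $\xi_1$ after the stationary-phase reduction in $\xi_2$.

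\textbf{Main obstacle.} The delicate point, as the authors themselves flag, is that $\eta_c$ depends nontrivially on $(\sigma,s)$ and hence on $(\xi_1,\xi_2)$, so the reduced phase is not literally the normal form $2\xi_1^2+\nu^2/(8\xi_1^2)-\mu$ but that function plus corrections whose $\xi_1$-derivatives must be shown subordinate. The quantitative input making this work is Lemma \ref{lemalphacetac}: $\partial_\sigma\eta_c,\partial_s\eta_c=O(\gamma^{3/2}/t)$ with $\gamma^{3/2}/t\sim\gamma/N\lesssim 1/N$, together with $a\le\varepsilon_0\ll 1$; these give that every correction term in $\partial^j_{\xi_1}g_{N,a}$ carries at least one power of $a$ or $1/N^2$, so the homogeneous model part governs the leading behaviour. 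The bookkeeping of these corrections through four derivatives — keeping careful track that the chain-rule terms from $q^{1/2}(\eta_c)$ and the higher $\xi_1$-derivatives of $\alpha_c$ (which involve $\partial^2_{\xi_1}\eta_c$, etc., all again $O(\gamma/N)$ by differentiating the implicit equation \eqref{eqetacc}) stay in the error class — is the technically heaviest part, but it is routine once the size hierarchy $a+N^{-2}\ll 1$ is in place. I would present the computation schematically, isolating the model cubic-type phase and absorbing everything else into symbols, exactly as in \cite[Prop.~5,6]{ILP3}.
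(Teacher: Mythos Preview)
Your overall strategy---compute $\partial_{\xi_1}g_{N,a}(\xi_1,\xi_{2,c})$ explicitly and show the third derivative is bounded below whenever the first two vanish---is the same as the paper's. However there is a concrete algebraic error that creates a genuine gap at $|N|=1$.

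You expand $\alpha_c=(K_a-\xi_1 N^{-2}(1-a\mathcal E))^2$ and conclude the linear-in-$\xi_1$ contribution to $\partial_{\xi_1}g_{N,a}$ is $(2K_a/N^2)\xi_1\cdot q^{1/2}(\eta_c^0)$, hence ``small''. In fact $-2N^2(\alpha_c-K_a^2)=4K_a\xi_1(1-a\mathcal E)-2\xi_1^2 N^{-2}(1-a\mathcal E)^2$, so the linear term is $4K_a\xi_1$, of size $O(1)$ since $K_a\in[1/4,2]$. The paper's first derivative is
\[
\partial_{\xi_1}g_{N,a}(\xi_1,\xi_{2,c})=q^{1/2}(\eta_c)\Big(2\xi_1^2\big(1-\tfrac{1}{N^2}\big)+\tfrac{\nu^2}{8\xi_1^2}-\mu+4K_a\xi_1+O(a)\Big),
\]
and the third derivative at a degenerate critical point is $q^{1/2}(\eta_c)\big(4(1-N^{-2})+\tfrac{3\nu^2}{8\xi_1^4}+O(a)\big)$. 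For $|N|\ge 2$ this is $\ge 3+O(a)$ and your perturbation scheme works. But for $|N|=1$ the coefficient $1-N^{-2}$ vanishes \emph{exactly}: your hierarchy ``$a+N^{-2}\ll 1$'' collapses, the $4$ you expect from $2\xi_1^2$ is cancelled, and the third derivative reduces to $\tfrac{3\nu^2}{8\xi_1^4}+O(a)$, which can be arbitrarily small when $\nu$ is small.

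The paper rescues $|N|=1$ precisely through the $4K_a\xi_1$ term you dropped: if additionally $\partial^2_{\xi_1}g_{N,a}=0$ then (for $N=1$, $a=0$) one gets $\tfrac{\nu^2}{8\xi_1^3}=4K_a$, forcing $\nu\neq 0$ and hence $\partial^3_{\xi_1}g_{N,a}\sim \tfrac{3\nu^2}{8\xi_1^4}=\tfrac{12K_a}{\xi_1}\gtrsim c^{-1}>0$. So the missing ingredient is exactly the $O(1)$ linear term; once you correct the expansion of $\alpha_c$ and treat $|N|=1$ separately as above, your argument goes through. The detour through $\partial^4_{\xi_1}$ is unnecessary: for the correct model the third derivative is already bounded below (for $|N|\ge 2$ uniformly, for $|N|=1$ after using $\partial^2=0$).
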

\begin{proof}
Again, at $\xi_{2,c}$, Remark \eqref{rmqcritssigderalphac} implies $\partial_{p'}\eta_c=\partial_{q'}\eta_c$ and $\partial_{p'}\alpha_c=\partial_{q'}\alpha_c$. In turn, the functions ${\Theta}_{1,2}$ in Lemma \ref{lemalphacetac} coincide as well, hence the functions $\mathcal{E}_{1,2}$ defined in \eqref{f-1},\eqref{f-2} coincide also at $\xi_{2,c}$. We abuse notation  with $\mathcal{E}_{1,2}$ as functions of $(p'/N,q'/N)=(\xi_1+\xi_2)/N,(\xi_1-\xi_2)/N$. Set $\mathcal{E}:=\mathcal{E}_1|_{p'^2+N^2X=q'^2+N^2}=\mathcal{E}_2|_{p'^2+N^2X=q'^2+N^2}$ in \eqref{eq:alpha_c21}, then $\sqrt{\alpha_c}|_{\partial_{\xi_2}g_{N,a}=0}=K_a-\frac{\xi_1}{N^2}(1-a\mathcal{E})$ and therefore
\begin{align}
\nonumber
\partial_{\xi_1}(g_{N,a}(\xi_1,\xi_{2,c}))&=\partial_{\xi_1}g_{N,a}(\xi_1,\xi_{2,c})+\frac{\partial \xi_{2,c}}{\partial\xi_1}\partial_{\xi_2}g_{N,a}(\xi_1,\xi_2)|_{\xi_2=\xi_{2,c}}\\
\nonumber
&=\Big(\partial_{p'}G_{N,a}+\partial_{q'}G_{N,a}\Big)(p',q')|_{\xi_1,\xi_{2,c}}\\
\label{gfirstderiv}
&=q^{1/2}(\eta_c)\Big(2\xi_1^2(1-\frac{1}{N^2}(1-a\mathcal{E}))+2\frac{\nu^2}{16\xi_1^2}-\mu+4K_a\xi_1(1-a\mathcal{E})\Big).
\end{align}
Taking a derivative of \eqref{gfirstderiv} with respect to $\xi_1$ yields
\begin{gather*}
\partial^2_{\xi_1,\xi_1}(g_{N,a}(\xi_1,\xi_{2,c}))=q^{1/2}(\eta_c)\Big[4\xi_1\Big(1-\frac{1}{N^2}(1-a(\mathcal{E}+\frac 12\xi_1\partial_{\xi_1}\mathcal{E}))\Big)-\frac{\nu^2}{8\xi_1^3}\\+4K_a\Big(1-a(\mathcal{E}+\xi_1\partial_{\xi_1}\mathcal{E})\Big)\Big]\\
+\Big(\partial_{p'}(q^{1/2}(\eta_c))+\partial_{q'}(q^{1/2}(\eta_c))+\frac{\partial\xi_{2,c}}{\partial\xi_1}(\partial_{p'}(q^{1/2}(\eta_c))-\partial_{q'}(q^{1/2}(\eta_c))\Big)\frac{\partial_{\xi_1}g_{N,a}(\xi_1,\xi_{2,c})}{q^{1/2}(\eta_c)},\\
\end{gather*}
where the last line vanishes when $\partial_{\xi_1}g_{N,a}(\xi_1,\xi_{2,c})=0$. In the same way we compute
\[
\partial^3_{\xi_1,\xi_1,\xi_1}(g_{N,a}(\xi_1,\xi_{2,c}))|_{\partial_{\xi_1}(g_{N,a}(\xi_1,\xi_{2,c}))=\partial^2_{\xi_1,\xi_1}(g_{N,a}(\xi_1,\xi_{2,c}))=0}=q^{1/2}(\eta_c)\Big(4\Big(1-\frac{1}{N^2}\Big)+\frac{3\nu^2}{8\xi_1^4}+O(a)\Big)\,.
\]
Let first $|N|\geq 2$, then we immediately see that the third order derivative takes positive values and stays bounded from below by a fixed constant, $\partial^3_{\xi_1,\xi_1,\xi_1}(g_{N,a}(\xi_1,\xi_{2,c}))\geq 2$, and therefore the critical points may be degenerate (when $\partial^2_{\xi_1,\xi_1}(g_{N,a}(\xi_1,\xi_{2,c}))=0$) of order at most $2$. Let now $|N|=1$ when the coefficient of $2\xi_1^2$ in \eqref{gfirstderiv} is $O(a)$. Assume that for $c\lesssim |\xi_1|$ the first two derivative vanish, then $\frac{\nu^2}{8\xi_1^3}= 4K_a+O(a)$ and therefore the third derivative cannot vanish since its main contribution is $\frac{3\nu^2}{8\xi_1^4}$.
\end{proof}

\paragraph{\bf Case $|\xi_1|\lesssim c$, for small $0<c<1/2$} First, (usual) stationary phase applies in $\xi_1$:
\[
\partial_{\xi_1}g_{N,a}(\xi_1,\xi_{2})%
=q^{1/2}(\eta_c)\Big((\xi_1+\xi_{2})^2+N^2(X-\alpha_c)+(\xi_1-\xi_{2})^2+N^2(1-\alpha_c)\Big)\,,
\]
and using \eqref{eq:alpha_c21}, we write again, with $K_a=K_a(\frac{Y}{4N},\frac{T}{2N})=\frac{T}{2N}q^{1/2}(\eta^0_c)$,
\[
\sqrt{\alpha_c}=K_a-\frac{(\sigma+s)}{2N}+\frac{T}{2N}(q^{1/2}(\eta_c)-q^{1/2}(\eta^0_c)),
\]
where in the new variables $\sigma+s=2\xi_1/N$. Using \eqref{etacvsseta0c}, we have $(q^{1/2}(\eta_c)-q^{1/2}(\eta^0_c))=\frac{a}{NT}O(\xi_1,\xi_2)$ and with $|\xi_1|\leq c<\frac 12$ small, $a\leq\varepsilon_0$ and $\alpha_c\in [\frac 12,\frac 32]$, from $K_a=\sqrt{\alpha_c}+O(c/N^2)$ we have $K_a\in [1/4,2]$ for all $N\geq 1$. The derivative of $g_{N,a}(\xi_1,\xi_2)$ becomes
\begin{align}\label{part1derivg}
\partial_{\xi_1}g_{N,a}(\xi_1,\xi_{2}) & =q^{1/2}(\eta_c)\Big\{2\xi_1^2+2\xi_{2}^2-\mu-2N^2\Big[\Big(K_a-\frac{\xi_1}{N^2}+\frac{a}{N^2}O(\xi_1,\xi_2)\Big)^2-K^2_a\Big]\Big\}\\
 & =q^{1/2}(\eta_c)\Big(2\xi_1^2(1-\frac{1}{N^2})+2\xi_{2}^2-\mu+4K_a\xi_1+aO(\xi_1,\xi_2)\Big).
\end{align}
At the critical point, the second derivative with respect to $\xi_1$ is 
\begin{equation}
\partial^2_{\xi_1,\xi_1}g_{N,a}(\xi_1,\xi_2)|_{\partial_{\xi_1}g_{N,a}(\xi_1,\xi_2)=0}=q^{1/2}(\eta_c)\Big(4\xi_1(1-\frac{1}{N^2})+4K_a+O(a)\Big),
\end{equation}
and as $K_a\in [\frac 14,2]$, the leading order term is $4q^{1/2}(\eta_c)K_a$. Stationary phase applies for any $|N|\geq 1$ and provides a factor $\Lambda^{-1/2}$. 
We are left with the integral with respect to $\xi_2$.  We first compute the critical point $\xi_{1,c}$, solution to $\partial_{\xi_1}g_{N,a}(\xi_1,\xi_{2})=0$, as a function of $\xi_2$: 
\begin{equation}\label{xi1c}
2\xi_{1,c}^{2}+2\xi_2^2=\mu+2N^2\Big[K_a^2-\Big(K_a^2-\frac{\xi_1}{N^2}+\frac{T}{2N}(q^{1/2}(\eta_c)-q^{1/2}(\eta^0_c))\Big)^2|_{\xi_1,
\xi_2}\Big],
\end{equation}
where, using \eqref{etacvsseta0c}, $\frac{T}{2N}(q^{1/2}(\eta_c)-q^{1/2}(\eta^0_c))=O(\frac{a}{N^2})$. From $|\xi_{1,c}|\leq c$,  $|\mu/2-\xi_2^2|\lesssim c$, as, if $|\mu/2-\xi_2^2|> 4c$, the equation \eqref{xi1c} has no real solution $\xi_{1,c}$ such that $|\xi_{1,c}\leq c$.
\begin{lemma}
For all $|N|\geq 1$ and $|\mu/2-\xi_2^2|\leq 4c$, \eqref{xi1c} has one real valued solution, 
\begin{equation}\label{eq:formxi1cxi2}
\xi_{1,c}=(\mu/2-\xi^2_2)\Xi_0
+a\Big((\mu/2-\xi_2^2)\Xi_1+\xi_2^2\Xi_2+\xi_2 \frac{\nu}{N^2}\Xi_3\Big),
\end{equation}
where $K_{a=0}=\frac{|Y|}{4N}q^{1/2}(-Y/|Y|)$ and $\Xi_0=\Xi_0(\mu/2-\xi_2^2, K_{a=0},1/N^2)$ is defined as
\begin{equation}\label{defXi0}
\Xi_0(\mu/2-\xi_2^2, K_{a=0},1/N^2)=\Bigl(K_{a=0}+\sqrt{K_{a=0}^2+(\mu/2-\xi_2^2)(1-1/N^2)}\Bigr)^{-1}
\end{equation}
and where $\Xi_{1,2,3}$ are a smooth functions of $(\xi_2,\mu/2-\xi_2^2,\nu/N^2,K_a,1/N,a)$ such that $|\partial^k_{\xi_2}\Xi_j|\leq C_k$, for all $k\geq 0$, where $C_k$ are positive constants.
\end{lemma}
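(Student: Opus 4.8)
The plan is to treat \eqref{xi1c} as an implicit equation for $\xi_{1,c}$ in which $a$ is a small parameter and the leading ($a=0$) part is an honest quadratic. First I would record, from Lemma \ref{lemalphacetac} specialised to the tangential case $\gamma=a$, that $\eta_c$ --- hence $\sqrt{\alpha_c}$ through \eqref{eq:alpha_c21} --- is a smooth function of the three quantities $\sigma+s$, $(\sigma-s)^2$, $(\sigma-s)\tfrac{x-a}{a}$, together with $\tfrac{y}{2t}$, $\tfrac{t}{2N\sqrt a}$ and $a$. In the variables $p'=\xi_1+\xi_2$, $q'=\xi_1-\xi_2$, with $\sigma=p'/N$, $s=q'/N$, these read $\sigma+s=\tfrac{2\xi_1}{N}$, $(\sigma-s)^2=\tfrac{4\xi_2^2}{N^2}$, and $(\sigma-s)\tfrac{x-a}{a}=\tfrac{2\xi_2}{N}(X-1)=-\tfrac{2\xi_2\nu}{N^3}$, using $X-1=-\nu/N^2$. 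Consequently $\sqrt{\alpha_c}-K_{a=0}$ can be written as $-\tfrac{\xi_1}{N^2}(1-a\widetilde{\mathcal E})+a\big(r_0+\xi_2^2 r_2+\xi_2\tfrac{\nu}{N^2} r_3\big)$, where $\widetilde{\mathcal E},r_0,r_2,r_3$ are smooth and uniformly bounded; here $r_0$ absorbs the smooth difference $K_a-K_{a=0}=a\,r_0$ (which exists by \eqref{defKa}, $K_a$ being smooth in $a$), and $r_2,r_3$ collect the $\xi_2$-dependent corrections, possibly up to bounded factors of $1/N$.

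Next I would substitute this into the vanishing of \eqref{part1derivg}, i.e.\ into $2\xi_{1,c}^2+2\xi_2^2-\mu-2N^2(K_a^2-\alpha_c)=0$ after dividing by $q^{1/2}(\eta_c)\neq 0$ (recall $q(\eta_c)\in[\tfrac12 m_0^2,\tfrac32 M_0^2]$ by \eqref{limitshalfq}), writing $K_a^2-\alpha_c=(K_a-\sqrt{\alpha_c})(K_a+\sqrt{\alpha_c})$. This yields an equation $\mathcal F(\xi_{1,c};\xi_2,\mu,\nu,K_a,1/N,a)=0$ with $\mathcal F=\xi_{1,c}^2(1-1/N^2)+2K_{a=0}\xi_{1,c}-(\mu/2-\xi_2^2)+a\,\mathcal R$, where $\mathcal R$ is smooth in all its arguments and, with all derivatives, bounded on the relevant ranges. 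At $a=0$ this is the quadratic $\xi_{1,c}^2(1-1/N^2)+2K_{a=0}\xi_{1,c}-(\mu/2-\xi_2^2)=0$; rationalising its small root gives exactly $\xi_{1,c}|_{a=0}=(\mu/2-\xi_2^2)\,\Xi_0$ with $\Xi_0$ as in \eqref{defXi0}. The other root has size $\sim 2K_{a=0}/(1-1/N^2)\gg c$ and is discarded (this is why the restriction $|\xi_{1,c}|\le c$ selects the branch uniquely); for $N=1$ the formula degenerates correctly to $\xi_{1,c}|_{a=0}=(\mu/2-\xi_2^2)/(2K_{a=0})$, since $\Xi_0=(2K_{a=0})^{-1}$ there.

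For $a>0$ I would invoke the implicit function theorem: $\partial_{\xi_{1,c}}\mathcal F=2\xi_{1,c}(1-1/N^2)+2K_{a=0}+a\,\partial_{\xi_{1,c}}\mathcal R$, and since $K_{a=0}\in[\tfrac14,2]$ (as on the support of $\psi_2$ one has $\alpha_c\in[\tfrac12,\tfrac32]$ and $K_a=\sqrt{\alpha_c}+O(c/N^2)$), $|\xi_{1,c}|\le c<\tfrac12$, and $a\le\varepsilon_0$ is small, this derivative is bounded away from $0$ uniformly in $N\ge 1$. Hence, on $|\mu/2-\xi_2^2|\le 4c$, there is a unique real solution $\xi_{1,c}$ with $|\xi_{1,c}|\le c$, and it is smooth in $(\xi_2,\mu/2-\xi_2^2,\nu/N^2,K_a,1/N,a)$. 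Subtracting the $a=0$ equation and using the lower bound on $\partial_{\xi_{1,c}}\mathcal F$ gives $\xi_{1,c}-(\mu/2-\xi_2^2)\Xi_0=O(a)$; to reach \eqref{eq:formxi1cxi2} I would Taylor-expand this $O(a)$ remainder, which by the first paragraph is a smooth function of the three ``small'' building blocks $\mu/2-\xi_2^2$, $\xi_2^2$, $\xi_2\tfrac{\nu}{N^2}$ (and the remaining parameters), around the origin in those three variables, integrating the remainders to produce $\Xi_1,\Xi_2,\Xi_3$. The key structural point --- that there is no bare linear-in-$\xi_2$ term --- is precisely that $\xi_2$ enters $\sqrt{\alpha_c}$ only through $\xi_2^2$ and through $(\sigma-s)\tfrac{x-a}{a}$, i.e.\ always paired with $\nu/N^2$. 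The bounds $|\partial^k_{\xi_2}\Xi_j|\le C_k$ then follow from smoothness together with compactness of the ranges of $K_a$, of $\tfrac{t}{2N\sqrt a}\in[1/M,M]$, of $1/N\in(0,1]$, of $a\in(0,\varepsilon_0]$, and of $\xi_2$ on the support of the cut-off $\varkappa$.

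The main obstacle is the bookkeeping in this last step: organising the $a$-small remainder into exactly the three pieces $(\mu/2-\xi_2^2)\Xi_1$, $\xi_2^2\Xi_2$, $\xi_2\tfrac{\nu}{N^2}\Xi_3$, keeping track of how the $\xi_2$- and $\xi_1$-dependence of $\eta_c$ propagates through the substitution, and verifying uniformity down to $N=1$, where the coefficient $1-1/N^2$ of $\xi_{1,c}^2$ degenerates to $0$ (the linear term $2K_{a=0}\xi_{1,c}$ still dominates, so both the branch selection and the implicit function theorem argument go through unchanged).
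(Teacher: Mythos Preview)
Your proposal is correct and follows essentially the same approach as the paper: solve the $a=0$ quadratic explicitly to produce $\Xi_0$, then use the structural fact from Lemma~\ref{lemalphacetac} that $\eta_c$ depends only on $\xi_1/N$, $\xi_2^2/N^2$, $\xi_2\nu/N^3$ to decompose the $O(a)$ remainder into the three pieces $\Xi_1,\Xi_2,\Xi_3$. You are somewhat more explicit than the paper in invoking the implicit function theorem and in handling the $N=1$ degeneration, whereas the paper simply substitutes the ansatz $\xi_{1,c}=(\mu/2-\xi_2^2)\Xi_0+a\Xi$ directly into \eqref{xi1c} and reads off the structure of $\Xi$.
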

\begin{proof}
For $a=0$, \eqref{xi1c} has an unique, explicit solution $\xi_{1,c}|_{a=0}$,
\[
\xi_{1,c}|_{a=0}=(\mu/2-\xi_2^2)\Bigr({K_{a=0}+\sqrt{K_{a=0}^2+(\mu/2-\xi_2^2)(1-1/N^2)}}\Bigl)^{-1}\,,
\]
that we rename $(\mu/2-\xi^2_2)\Xi_0$ with $\Xi_0$ defined in \eqref{defXi0}. Let now $a\neq 0$. Using Lemma \ref{lemalphacetac} with $s+\sigma=(p'+q')/N=2\xi_1/N$, $\sigma-s=(p'-q')/N=2\xi_2/N$, $(a-x)/a=\nu/N^2$, the critical point $\eta_c$ is a function of 
$\xi_1/N$, $\xi_2^2/N^2$ and $\xi_2\nu/N^3$.
Write $\xi_{1,c}$ as $\xi_{1,c}=(\mu/2-\xi^2_2)\Xi_0+a\Xi$ for some unknown function $\Xi$; introducing this in \eqref{xi1c} allows to obtain $\Xi$ as a sum of smooth functions with factors $\mu/2-\xi_2^2$, $\xi_2^2$ and $\xi_2 \nu/N^2$ as follows $ \Xi=(\mu/2-\xi_2^2)\Xi_1+\xi_2^2\Xi_2+\xi_2\frac{\nu}{N^2}\Xi_3$,
where $\Xi_j$ are smooth functions of $\mu/2-\xi_2^2,\xi_2^2/N^2, \xi_2\nu/N^3$.  
\end{proof}
Let $\tilde g_{N,a}(\xi_2):=g_{N,a}(\xi_{1,c},\xi_2)$ : the first derivative of $\tilde g_{N,a}$ with respect to $\xi_2$ vanishes when $(\partial_{p'}G_{N,a}-\partial_{q'}G_{N,a})(p',q')|_{(\xi_{1,c},\xi_2)}=0$ which is equivalent to $4\xi_{1,c}\xi_2=\nu$. We compute, using $\partial_{\xi_2}\tilde g_{N,a}=\nu-4\xi_{1,c}\xi_2$ and $\xi_{1,c}$ given in \eqref{eq:formxi1cxi2}, $\partial^2_{\xi_2\xi_2}\tilde g_{N,a} =-4(\xi_2\partial_{\xi_2}\xi_{1,c}+\xi_{1,c})$. Then, critical points $\xi_{2}$ are degenerate if
\begin{multline}\label{seccritxi2}
(\mu/2-\xi_2^2)\Xi_0+a\Big((\mu/2-\xi_2^2)\Xi_1+\xi_2^2\Xi_2+\xi_2 \frac{\nu}{N^2}\Xi_3\Big)
=2\xi_2^2\Xi_0 \Big(1-(\mu/2-\xi_2^2)\tilde \Xi_0\Big)\\
+a\Big(2\xi_2^2(\Xi_1-\Xi_2-\frac 12 \xi_2\partial_{\xi_2}\Xi_2-\frac{\nu}{N^2}\partial_{\xi_2}\Xi_3)-\xi_2(\mu/2-\xi_2^2)\partial_{\xi_2}\Xi_1-\xi_2\frac{\nu}{N^2}\Xi_3\Big),
\end{multline}
where the term past equality in the first line of \eqref{seccritxi2} is $\xi_2\partial_{\xi_2}\Xi_0$. We have thus set
\[
\tilde \Xi_0(\mu/2-\xi_2^2,K_a,1/N^2):=\frac{(1-1/N^2)\Xi_0(\mu/2-\xi_2^2,K_a,1/N^2)}{2\sqrt{K_{a}^2+(\mu/2-\xi_2^2)(1-1/N^2)}}.
\]
Consider $a=0$ in \eqref{seccritxi2} for a moment, then critical points are degenerate if 
\begin{equation}
  \label{eq:4}
  \mu/2-\xi_2^2=2\xi_2^2\Big(1-(\mu/2-\xi_2^2)\tilde \Xi_0(\mu/2-\xi_2^2,K_0,1/N^2)\Big).
\end{equation}
Recall that $K_a\in [1/4,2]$ and that $|\mu/2-\xi_2^2|\leq 4c$ with $c$ small enough. Rewrite \eqref{eq:4}
\[
(\mu/2-\xi_2^2)\Big(2+\frac{1}{1-(\mu/2-\xi_2^2)\tilde\Xi_0}\Big)=\mu
\]
which may have solutions only if $\mu$ is also small enough, $|\mu|\leq 10c$. Let $z=\mu/2-\xi_2^2$; for $|z|\leq 4c$ and $|\mu|\leq 10c$ with $c$ small enough, we may now seek the solution to \eqref{eq:4}   as $z=\mu Z_0(\mu,K_0,1/N^2)$ and obtain $Z_0(\mu,K_0,1/N^2)$ explicitly, with $Z_0(0,K_0,1/N^2)=\frac 13$. Solutions to \eqref{seccritxi2} for $a=0$ are therefore functions of $\sqrt{\mu}$ which both vanish at $\mu=0$. They write $\xi_{2,\pm}|_{a=0}=\pm\frac{\sqrt{\mu}}{\sqrt{6}}\Big(1+\mu\zeta(\mu,K_0,1/N^2)\Big)$, for some smooth function $\zeta$.

Let now $a\neq 0$: solutions $\xi_2$ to \eqref{seccritxi2} are functions of $\sqrt{\mu}, \nu/N^2, a$ that coincide at $\mu=\nu=0$ (they both vanish.) Actually, as $\Xi_1$ is a function of $\mu/2-\xi_2^2,\xi_2^2, \xi_2 \nu/N^2$, $\xi_2\partial_{\xi_2}\Xi_1$ is also a function of $\mu/2-\xi_2^2,\xi_2^2,\xi_2\nu/N^2$ and we write %
\begin{multline}\label{eqpointdeg}
\mu/2-\xi_2^2=2\xi_2^2(1-(\mu/2-\xi_2^2)\tilde\Xi_0(\mu/2-\xi_2^2,K_a,1/N^2))\\
+a\Big(\xi_2^2F_1(\xi_2^2,\xi_2 \nu/N^2,\mu)+\xi_2 \frac{\nu}{N^2}F_2(\xi_2^2,\xi_2\nu/N^2,\mu)\Big),
\end{multline}
for some smooth functions $F_{1,2}$. %
Notice that, as $|\mu/2-\xi_2^2|\leq 4c$ and $a$ is small, \eqref{eqpointdeg} may have real solutions $\xi_2$ only for $|\xi_2^2|\leq 4c$.
For such small $\xi_2$, equation \eqref{eqpointdeg} has at most two distinct solutions (that coincide at $\mu=\nu=0$) which read
\begin{equation}\label{solxi2pm}
\xi_{2,\pm}=\pm\frac{\sqrt{\mu}}{\sqrt{6}}(1+\mu\zeta(\mu,K_a,1/N^2))+a <(\sqrt{\mu},\frac{\nu}{N^2}),(\zeta_{1,\pm},\zeta_{2,\pm})>(\sqrt{\mu},\frac{\nu}{N^2},K_a,a),
\end{equation}
for some smooth functions $\zeta,\zeta_{j,\pm}$.
We compute the third derivative of $\tilde g_{N,a}$ at $\xi_{2,\pm}$ defined in \eqref{solxi2pm} whenever the second derivative vanishes. Using \eqref{seccritxi2} yields
\begin{multline}\label{thirdderivxi2}
\partial^3_{\xi_2,\xi_2,\xi_2}\tilde g_{N,a}(\xi_{1,c},\xi_2)|_{\xi_2=\xi_{2,\pm}}=-4(2\partial_{\xi_2}\xi_{1,c}+\xi_2\partial^2_{\xi_2,\xi_2}\xi_{1,c})|_{\xi_{2,\pm}}\\
=16\xi_{2}\Xi_0\Big(1-(\mu/2-\xi_2^2)\tilde\Xi_0(\mu/2-\xi_2^2,K_a,1/N^2)\Big)\\
+8a\Big(2\xi_2(\Xi_1-\Xi_2-\frac 12 \xi_2\partial_{\xi_2}\Xi_2-\frac{\nu}{N^2}\partial_{\xi_2}\Xi_3)-(\mu/2-\xi_2^2)\partial_{\xi_2}\Xi_1-\frac{\nu}{N^2}\Xi_3\Big)|_{\xi_{2,\pm}}\\
+8\xi_{2}\Xi_0(1+O(\mu/2-\xi_2^2)+O(a))|_{\xi_{2,\pm}},
\end{multline}
where the last line in \eqref{thirdderivxi2} is $-4\xi_{2,\pm}\partial^2_{\xi_2,\xi_2}\xi_{1,c}$: we do not expand this formula as $\xi_{2,\pm}$ is sufficiently small for what we need. The second and third lines of \eqref{thirdderivxi2} come from the formula for $-8\partial_{\xi_2}\xi_{1,c}$, already obtained in \eqref{seccritxi2} (where $\partial_{\xi_2}\xi_{1,c}$ comes with a factor $\xi_2$.) As the third derivative of $\tilde g_{N,a}$ is evaluated at $\xi_{2,\pm}$ we can replace \eqref{eqpointdeg} in \eqref{thirdderivxi2} and obtain
 \[
 \partial^3_{\xi_2,\xi_2,\xi_2}\tilde g_{N,a}(\xi_{1,c},\xi_2)|_{\xi_2=\xi_{2,\pm}}=\frac{12\xi_{2,\pm}}{K_a}(1+O(\xi_{2,\pm}^2)+O(a))+O(a\nu/N^2).
 \]
It follows that at $\mu=\nu=0$ the order of degeneracy is higher as $\xi_{2,\pm}|_{\mu=\nu=0}=0$ and $\partial^3_{\xi_2,\xi_2,\xi_3}\tilde g_{N,a}|_{\xi_{2,\pm},\mu=\nu=0}=0$. We now write 
\begin{multline*}
\tilde g_{N,a}(\xi_2)=\tilde g_{N,a}(\xi_{2,\pm})+(\xi_2-\xi_{2,\pm})\partial_{\xi_2}\tilde g_{N,a}(\xi_{2,\pm})+\frac{(\xi_2-\xi_{2,\pm})^3}{6}\partial^3_{\xi_2,\xi_2,\xi_2}\tilde g_{N,a}(\xi_{2,\pm})+O((\xi_2-\xi_{2,\pm})^4),
\end{multline*}
where $\partial^{4}_{\xi_{2}^{4}}\tilde g_{N,a}$  does not cancel at $\xi_{2,\pm}$ as it stays close to $12/K_a\in [6,48]$. We are to have $\partial_{\xi_2}\tilde g_{N,a}(\xi_{2,\pm})=0$, from which $\nu=4\xi_{1,c}|_{\xi_{2,\pm}}\xi_{2,\pm}$, which writes
\begin{multline}
\nu=4\Big(\pm \frac{\sqrt{\mu}}{\sqrt{6}}(1+\mu\zeta(\mu))+a( \sqrt{\mu}\zeta_{1,\pm}+\frac{\nu}{N^2}\zeta_{2,\pm})\Big)\\
\times \Big((\mu/2-\xi_{2,\pm}^2)\Xi_0+a((\mu/2-\xi_{2,\pm}^2)\Xi_1+\xi_{2,\pm}^2\Xi_2+\xi_{2,\pm}\frac{\nu}{N^2}\Xi_3)\Big)
\end{multline}
and replacing \eqref{solxi2pm} in \eqref{eq:formxi1cxi2} yields $\nu=\pm \frac{\sqrt{2} \mu^{3/2}}{3\sqrt{3}K_a}(1+O(a))$, which is at leading order the equation of a cusp.
At degenerate critical points $\xi_{2,\pm}$ where $\nu=\pm \frac{\sqrt{2} \mu^{3/2}}{3\sqrt{3}K_a}(1+O(a))$, the phase integral behaves like
$
I=\int_{\xi_{2}} \rho(\xi_{2}) e^{\mp i \Lambda \frac{\sqrt{2}\sqrt{\mu}} {K_a\sqrt 3} (\xi_{2}-\xi_{2,\pm})^{3}}\,d\xi_{2}\,,
$
and we may conclude in a small neighborhood of the set $\{\xi_2^2+|\mu|+|\nu|^{2/3}\lesssim c\}$ (as outside this set, the non-stationary phase applies) by using Van der Corput lemma on the remaining oscillatory integral in $\xi_2$ with phase $\tilde g_{N,a}(\xi_2)$. In fact, on this set, $\partial^4_{\xi_2}\tilde g_{N,a}$ is bounded from below, which yields an upper bound $\Lambda^{-1/4}$, uniformly in all parameters. When $\mu\neq 0$, the third order derivative of the phase is bounded from below by $\frac{|\xi_2|}{K_a}$ : either $|\mu/6-\xi_2^2|\leq |\mu|/12$ and then $|\partial^3_{\xi_2}\tilde g_{N,a}|$ is bounded from below by $|\mu|^{1/2}/(12K_a)$ or
$|\mu/6-\xi_2^2|\geq |\mu|/12$ in which case $|\partial^2_{\xi_2}\tilde g_{N,a}|$ is bounded from below by $|\mu|/(12K_a)$. Hence, using that $K_a\in [1/4,2]$, we find $|\partial^3_{\xi_2}\tilde g_{N,a}|+|\partial^3_{\xi_2}\tilde g_{N,a}|\gtrsim \sqrt{|\mu|}$ (recall that here $\mu$ is small so $\sqrt{|\mu|}\geq |\mu|$) which yields an upper bound $(\sqrt{|\mu|}\Lambda)^{-1/3}$. Eventually we obtain
$ |I|\lesssim \inf\Big\{\frac{1}{\Lambda^{1/4}},\frac{1}{|\mu|^{1/6} \Lambda ^{1/3}}\Big\}$.
From $\mu=A'+B'$ and $\nu=A'-B'\sim \pm |\mu|^{3/2}$ and $|\mu|^{3/2}\ll \mu$ for $\mu<1$, we deduce that $A'\sim B'$ and therefore $\mu \sim 2 B'$, which is our desired bound \eqref{eq:2hh} after unraveling all notations, as the non degenerate stationary phase in $\xi_{1}$ provided a factor $\Lambda^{-1/2}$. 

\section{The defocusing cubic nonlinear Schr\"odinger equation}
\label{sec:defoc-cubic-nonl}
We now turn to nonlinear applications and consider \eqref{schrod} with $\kappa=1$. One could state local in time results for the focusing case $k=-1$ and extend them to global in time provided we require a standard smallness condition on the mass of the data (identifying the threshold is, however, a delicate issue.) Our underlying manifold $\Omega$ is compact and such that, in local charts intersecting its boundary, the Laplace-Beltrami operator is, at first order, our model operator. One may easily construct such 3D manifolds: we simply illustrate what can be done in 2D where one can better visualize the corresponding manifold as embedded in $\mathbb{R}^{3}$. If one periodizes the $y$ variable in $\Omega_{2}$, we may see it as the surface of an upper cylinder $x>0$ of radius one in $\R^{3}$, where $y$ is really an angle in the 2D plane $\{x=0\}$. This surface may be truncated at $x=1$ and we may extend it smoothly with a (compact) cap to get a Riemannian manifold, say with the induced euclidean metric within a subset of the cap, the metric from our model in the bottom part of the upper cylinder and a smooth transition in between. One could alternatively connect two copies of our truncated upper cylinder, or connect one with another one where the operator is chosen to be $\partial_{x}^{2}+(1+x)^{-1}\partial_{y}^{2}$ (so as to have a convex boundary on one side and a concave one on the other). Those last two examples may also be seen as subdomains of an 2D torus (either slice circularly in the middle or slice horizontally). For such a suitable manifold $\Omega$, with $d\geq 3$, we may gather available results and state homogeneous Strichartz estimates. In the next theorem, Sobolev spaces should be understood as defined through the spectral resolution of the Dirichlet Laplacian and they are known to match the ones defined by classical interpolation.
\begin{thm}\label{thmStriSob}
Let $d\geq 2$, $(q,r)$ such that $\frac  1 q\leq \Big(\frac d 2 -\frac 14\Big)\Big(\frac 1 2 -\frac 1 r\Big)$, $s=\frac d2-\frac 2 q-\frac dr$; there exist $C(d)>0$ $T>0$ such that, for $v$ solution to \eqref{schrod} with data $v_0\in H^{s+1/q}(\Omega)$,
  \begin{equation}
    \label{eq:Hs}
    \|v\|_{L^q([0,T],L^r(\Omega))}\leq C(d) T^{1/q} \|v_0\|_{H^{s+1/q}(\Omega)}\,.
  \end{equation}
\end{thm}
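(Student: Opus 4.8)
The plan is to deduce \eqref{eq:Hs} from the semi-classical Strichartz estimate of Theorem~\ref{thmStrichartz} through a dyadic frequency decomposition, a space--time localisation reducing matters to the Friedlander model near $\partial\Omega$, and a summation over the $O(h^{-1})$ time intervals needed to fill $[0,T]$, along the lines of the boundaryless compact case of \cite{bgt04}. It suffices to treat $\kappa=0$, the estimate then being applied perturbatively for the cubic nonlinearity. First I would freeze the frequency: writing $v_0=\sum_{\lambda\in 2^{\mathbb N}}P_\lambda v_0$ for a Littlewood--Paley partition adapted to the Dirichlet Laplacian, and using that the hypothesis $\frac1q\le(\frac d2-\frac14)(\frac12-\frac1r)$ forces $q\ge2$ and $r\ge2$, the square-function inequality together with Minkowski in $L^{q/2}_tL^{r/2}_x$ yields $\|v\|_{L^q([0,T])L^r}\lesssim\big(\sum_\lambda\|P_\lambda v\|_{L^q([0,T])L^r}^2\big)^{1/2}$ for $r<\infty$, the case $r=\infty$ being a routine variant (e.g.\ reducing to a slightly smaller finite exponent). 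It is then enough to prove, for $h:=1/\lambda$ and data $v_0=P_\lambda v_0$, the bound $\|e^{-it\Delta_g}v_0\|_{L^q([0,T])L^r(\Omega)}\lesssim T^{1/q}h^{-(s+1/q)}\|v_0\|_{L^2(\Omega)}$, and then to resum the squares.

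Next I would transfer to the model. Fix a finite atlas of $\Omega$ with a subordinate partition of unity such that, in each chart meeting $\partial\Omega$, the metric Laplacian equals, at first order, the model operator \eqref{eq:LapM}. Split $[0,T]$ into $N_h\sim T/(hT_0)$ intervals $I_j$ of length $\sim hT_0$. Since the group velocity of the frequency-$\lambda$ flow is $O(\lambda)$, on each $I_j$ the spatial support of $e^{-i(t-t_j)\Delta_g}$ applied to a datum localised in one chart moves by at most $O(T_0)$, hence stays in that chart once $T_0$ is small; the semi-classical finite speed of propagation then shows that, modulo $O(h^{\infty})$, on $I_j$ the true flow coincides with the corresponding model flow. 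In a boundary chart this is precisely the content of Theorem~\ref{thmStrichartz}, rescaled from the semi-classical interval $[-T_0,T_0]$ to $I_j$ (respectively of its extension from Remark~\ref{rmqcasgeneral}, which rests on the gallery-mode parametrix of \cite{ILLP}); in an interior chart one invokes the classical loss-free local Euclidean Strichartz estimate. Together with conservation of the $L^2$ norm along the flow, this gives $\|e^{-it\Delta_g}v_0\|_{L^q(I_j)L^r(\Omega)}\lesssim h^{-s}\|e^{-it_j\Delta_g}v_0\|_{L^2}=h^{-s}\|v_0\|_{L^2}$.

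Finally I would sum over $j$. As the $I_j$ partition $[0,T]$, $\|e^{-it\Delta_g}v_0\|_{L^q([0,T])L^r}^q=\sum_{j=1}^{N_h}\|e^{-it\Delta_g}v_0\|_{L^q(I_j)L^r}^q\lesssim N_h\,h^{-sq}\|v_0\|_{L^2}^q$, so that $\|e^{-it\Delta_g}v_0\|_{L^q([0,T])L^r}\lesssim (T/T_0)^{1/q}h^{-(s+1/q)}\|v_0\|_{L^2}$; the additional factor $h^{-1/q}=\lambda^{1/q}$, i.e.\ the loss of $1/q$ derivatives with respect to the scale-invariant semi-classical bound, is produced exactly by this summation over the $\sim h^{-1}$ intervals, just as for boundaryless compact manifolds in \cite{bgt04}. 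Reinserting into the frequency decomposition gives $\|v\|_{L^q([0,T])L^r}\lesssim T^{1/q}\big(\sum_\lambda\lambda^{2(s+1/q)}\|P_\lambda v_0\|_{L^2}^2\big)^{1/2}=T^{1/q}\|v_0\|_{H^{s+1/q}}$, which is \eqref{eq:Hs}.

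The step I expect to be the main obstacle is the transfer to the model. One must make rigorous the semi-classical finite-speed-of-propagation statement and the claim that, on an $O(h)$-sized time interval near the boundary, the genuine flow is well approximated (modulo $O(h^\infty)$ and after localisation) by the Friedlander-model flow for which Theorem~\ref{thmStrichartz} is available; in the general strictly convex situation this is essentially the parametrix construction of \cite{ILLP}, while for the explicit model manifolds described above (truncated cylinders glued to caps, subdomains of a torus) the argument is more elementary. A secondary technical point is the endpoint handling at $q=2$ for $d\ge3$, where the Keel--Tao endpoint estimate \cite{KeTa98} already incorporated in Theorem~\ref{thmStrichartz} must be combined with the interval summation above.
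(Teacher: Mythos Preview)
Your proposal is correct and follows essentially the same route as the paper: localise spatially via a finite atlas, apply Theorem~\ref{thmStrichartz} in boundary charts (and the classical boundaryless estimates of \cite{bgt04} in interior charts) on semiclassical time intervals, then sum the $O(h^{-1})$ intervals to produce the additional $1/q$ loss. The one point the paper makes explicit that you only allude to is that the source terms generated by the spatial cut-offs are handled by energy estimates in the style of \cite{stta02}, with detailed implementations deferred to \cite{bss12} and \cite{IPihp}; this is precisely the ``transfer to the model'' step you flag as the main obstacle.
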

Both Lebesgue and Sobolev spaces may be localized in coordinate charts. As such, the proof of Theorem \ref{thmStriSob} follows by standard arguments, localizing the linear solution to each patch, and using either \cite{bgt04} (on patches that do not intersect the boundary), Theorem \ref{thmStrichartz} (on patches that do intersect the convex boundary) or \cite{doi08} (if one considers a concave boundary on one end). Because one works on semiclassical times, source terms that are produced by cut-offs are dealt with through energy estimates, a well-established procedure that goes back to \cite{stta02}. We refer to \cite{bss12} or \cite{IPihp} for detailed implementations of the above strategy. Notice that \eqref{eq:Hs} subsumes the underlying semiclassical estimate: assume $v_{0}$ is spectrally localized at frequency $h^{-1}$, and $T\lesssim h$, then $\|v_{0}\|_{H^{s+1/q}(\Omega)}\sim h^{-1/q} \|v_{0}\|_{H^{s}(\Omega)}$. We also need a suitable semiclassical version of the double endpoint inhomogeneous Strichartz estimate which follows from similar arguments: let
\begin{equation}\label{schrodinhomo}
-i\partial_t u+\Delta_g u=f, \quad u_{|t=0}=0,\quad  u_{|\mathbb{R}\times\partial\Omega}=0\,,
\end{equation}
with $f$ supported in a time interval $I$ such that $|I|\lesssim h$, $I\subset \mathbb{R}_{+}$. Then we have
\begin{prop}\label{inStri}
  Let $d\geq 3$, $r$ such that $\frac  1 2 = \Big(\frac d 2 -\frac 14\Big)\Big(\frac 1 2 -\frac 1 r\Big)$, $\sigma=d/r-d/(2d/(d-2))$, $r'=r/(r-1)$, there exists $C(d)>0$ such that, for $u$ solution to \eqref{schrodinhomo},
  \begin{equation}
    \label{eq:5}
     \|\psi(h^{2}\Delta_{g}) u\|_{L^2(I,L^r(\Omega))}\leq C(d) h^{-2\sigma} \|\psi(h^{2}\Delta_{g}) f\|_{L^{2}(I,L^{r'}(\Omega))}\,.
  \end{equation}
\end{prop}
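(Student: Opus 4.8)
The plan is to derive the double-endpoint inhomogeneous estimate from the dispersive bound of Theorem~\ref{thmdispSchrodinger} (equivalently, from the resulting homogeneous Strichartz estimate Theorem~\ref{thmStrichartz}) by the Keel--Tao machinery, adapted to the semiclassical, boundary setting. First I would reduce to the model operator: localizing $f$ in a coordinate chart and writing $\psi(h^2\Delta_g)$ as a sum of microlocal pieces, the pieces supported away from the convex boundary are handled by the Euclidean/boundaryless theory of \cite{KeTa98} and \cite{bgt04}, so the only genuinely new contribution comes from a patch where $\Delta_g$ is, to leading order, $\Delta_F$; the error terms produced by the cut-offs are $O(h)$-source terms over a time window of length $|I|\lesssim h$, hence are absorbed by energy estimates exactly as in \cite{stta02,bss12}. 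After this reduction the operator under study is $U(t)=\psi(h^2\Delta_F)e^{it\Delta_F}$, and we must bound $\|\int_{s<t}U(t)U(s)^* f(s)\,ds\|_{L^2_tL^r_x}$ by $h^{-2\sigma}\|f\|_{L^2_tL^{r'}_x}$ with the admissibility $\tfrac12=(\tfrac d2-\tfrac14)(\tfrac12-\tfrac1r)$.

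Next I would set up the two ingredients of the Keel--Tao bilinear argument. The energy ingredient is the $L^2\to L^2$ boundedness of $U(t)U(s)^*$, which is immediate from unitarity of the Schr\"odinger flow and boundedness of the spectral cutoff $\psi(h^2\Delta_F)$ on $L^2(\Omega_d)$. The dispersive ingredient is, by Theorem~\ref{thmdispSchrodinger} (after rescaling from $h$-time to unit time and using the frequency localization $\psi(hD_t)$ absorbed by $\psi(h^2\Delta_g)$ together with $\psi(hD_y)$), the bound
\begin{equation}
\|U(t)U(s)^*\|_{L^1(\Omega_d)\to L^\infty(\Omega_d)}\lesssim \frac{1}{h^{d}}\,\Big(\frac{h}{|t-s|}\Big)^{\frac{d-1}{2}+\frac14},\qquad h<|t-s|\lesssim 1,
\end{equation}
i.e. an effective dispersion exponent $\beta=\tfrac{d-1}{2}+\tfrac14=\tfrac d2-\tfrac14$. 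Interpolating this against the $L^2\to L^2$ bound gives $\|U(t)U(s)^*\|_{L^{r'}\to L^r}\lesssim h^{-d(1-2/r)}(h/|t-s|)^{\beta(1-2/r)}$, and with the chosen $r$ the time exponent is $\beta(1-2/r)=1$, which is precisely the borderline (``double endpoint'') decay rate $|t-s|^{-1}$. The power of $h$ collected along the way is $h^{-d(1-2/r)+\beta(1-2/r)}=h^{-(d-\beta)(1-2/r)}$; a short computation identifies $(d-\beta)(1-2/r)$ with $2\sigma$ for $\sigma=d/r-d/(2\cdot 2d/(d-2))$ as in the statement, which is how the claimed loss $h^{-2\sigma}$ arises.

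The remaining, and main, step is to run the Keel--Tao endpoint argument for this exact $|t-s|^{-1}$ decay. I would follow \cite{KeTa98} verbatim: dyadically decompose in $|t-s|$, bound the off-diagonal bilinear form $T_{jk}(f,g)=\iint_{|t-s|\sim 2^\ell} \langle U(s)^*f(s),U(t)^*g(t)\rangle$ using the $L^{r'}\to L^r$ dispersive estimate on each block, sum the geometric-type series in $\ell$, and use the Christ--Kiselev lemma to pass from the bilinear/untruncated estimate to the retarded (time-ordered) operator $\int_{s<t}$. The hypothesis $d\ge 3$ is needed exactly here, both because $r<\infty$ (so the endpoint is a genuine Lebesgue space, not $L^\infty$) and because the Christ--Kiselev lemma requires the exponents to be off-diagonal in an appropriate sense; the frequency localization and the finite time interval $|I|\lesssim h$ make all the sums finite, so no delicate summation issue arises beyond what is already in \cite{KeTa98}. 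The one genuine subtlety—the main obstacle—is bookkeeping the powers of $h$ consistently through the rescaling, the interpolation, and the dyadic sum, and checking that the loss aggregates to exactly $h^{-2\sigma}$ rather than something worse; this is a routine but error-prone computation, and I would carry it out by fixing the normalization $\|U(t)\|_{L^2\to L^2}\lesssim 1$ and $\|U(t)U(s)^*\|_{L^1\to L^\infty}\lesssim h^{-d}(h/|t-s|)^{\beta}$ once and for all and then only tracking exponents. We refer to \cite{IPihp,bss12} for the implementation of the localization-and-energy-estimate reduction in a closely related context.
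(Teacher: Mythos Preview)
Your plan is the paper's plan: the paper gives no detailed proof, saying only that Proposition~\ref{inStri} ``follows from similar arguments'' to Theorem~\ref{thmStriSob} (localize to coordinate patches, handle interior patches by \cite{bgt04}, boundary patches by the Keel--Tao endpoint machinery fed with the dispersive bound of Theorem~\ref{thmdispSchrodinger}, and absorb cut-off errors by energy estimates as in \cite{stta02,bss12,IPihp}), and then remarks that the loss $h^{-2\sigma}$ is exactly the Bernstein cost of passing from $L^{2d/(d-2)}$ to $L^r$ for a frequency-localized function. Your reduction and your two Keel--Tao ingredients are precisely this.

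Two points to fix. First, the Christ--Kiselev lemma does \emph{not} apply at the double endpoint $q=q'=2$; you must rely entirely on the Keel--Tao bilinear/atomic decomposition you already invoke, so drop the Christ--Kiselev reference. Second, and this is exactly the ``error-prone'' step you flagged, your claimed identity $(d-\beta)(1-2/r)=2\sigma$ is false: for $d=3$ it gives $7/5$, not $2\sigma=2/5$. The slip is a time-scale mismatch: your dispersive bound $(h/|t-s|)^\beta$ is written in \emph{semiclassical} time $|t-s|\in(h,1)$, while the Proposition is in physical time with $|I|\lesssim h$, where the dispersive bound reads $h^{-d}(h^{2}/|t-s|)^\beta$ for $|t-s|\in(h^{2},h)$. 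Interpolating this against the energy bound yields the $L^{r'}\to L^r$ coefficient $h^{-(d-2\beta)(1-2/r)}|t-s|^{-1}$, and since $1-2/r=1/\beta$ and $d-2\beta=1/2$ one gets $(d-2\beta)(1-2/r)=d/\beta-2=2\sigma$, as required. Equivalently, bypass the dispersion and read off directly from Theorem~\ref{thmStrichartz} at $q=2$ that the homogeneous loss is $h^{-s}$ with $s=d/2-1-d/r=\sigma$, so $TT^*$ carries loss $h^{-2\sigma}$, and the retarded version then follows from the Keel--Tao dyadic argument with no further $h$-dependence.
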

This should be understood as a weaker version of Lemma 3.4 in \cite{bgt04}: the loss of regularity $\sigma$ corresponds to a Sobolev embedding from $L^{2d/(d-2)}$ (the endpoint Strichartz exponent on a boundaryless manifold) to $L^{r}$ for a spectrally localized function. The value of $r$ will be irrelevant in the forthcoming argument: any $r<+\infty$ would do as long as $\sigma$ is chosen accordingly (preserving scale invariance).
 \begin{rmq}
With $g=dx^2+(1+x)^{-1}dy^2$, the Laplace-Beltrami operator writes
$\triangle_{g}=(1+x)^{1/2}\partial_{x}(1+x)^{-1/2}\partial_{x}+(1+x)\Delta_{y}$. In our model, we use instead $\Delta_{F}=\partial_{x}^{2}+(1+x)\Delta^{2}_{y}$, as $\Delta_{F}$ allows for explicit computations. The difference $\Delta_{g}-\Delta_{F}= -(2(1+x))^{-1}\partial_{x}$ is a first order differential operator: as such, on a semiclassical time scale in a neighborhood of the boundary, it may be treated as a lower order perturbative term; proving semiclassical Strichartz estimates for $\Delta_{F}$ implies the same set of estimates for $\Delta_{g}$.
\end{rmq}
We are interested in $d=3$: for $q=2$ we have $r=10$, $s+1/q=7/10<1$. The crucial point is that $r<+\infty$: in \cite{bss12}, for $q=2$, $r=+\infty$, $s+1/q=1$ and one ends up with two successive logarithmic losses that force to consider only lower order nonlinearities. In \cite{FP}, Strichartz estimates are bypassed and replaced by bilinear estimates: logarithmic losses turn out to be more manageable in that setting and one can implement a suitable version of the Brezis-Gallouet argument, obtaining global existence of solutions to the cubic nonlinear Schr\"odinger equation but only for $H^{s}(\Omega)$ data, with $s>1$.
\subsection{Global well-posedness in the energy space}
\label{sec:glob-wellp-ness}
Recall we are interested in $-i\partial_{t} v + \Delta v = |v|^{2}v$ on $\Omega$, with $v_{0}\in H^{1}_{0}(\Omega)$. Standard compactness methods provide a weak solution $v\in L^{\infty}_{t}(H^{1}_{0}(\Omega))$, and, with $E(v)=\| \nabla v\|^{2}_{2}/2+\|v\|^{4}_{4}/4$, we have $E(v)\leq E(v_{0})$. Moreover, using Duhamel's formula, one may prove that $v\in C_{t}(L^{2}(\Omega))$. Thus we aim at proving that $v$ is unique and that time continuity holds in $H^{1}_{0}(\Omega)$. To this end,  \cite{bgt04} implements in a clever way an argument of Yudovitch, using the inhomogeneous endpoint Strichartz estimate at the semiclassical level. We follow them closely, having only to check that the specific value $r=6$ that they start with is irrelevant to the crux of the matter. Assume we are on $(0,T)$, with $T<1$ and $\Delta_{j}$ is the usual spectral localization operator associated to a Littlewood-Paley decomposition: $h=2^{-j}$, $\Delta_{j}=\psi(h^{2}\Delta_{g})$, $\sum_{j\geq -1}\Delta_{j}=Id$ where $\Delta_{-1}=\phi(\Delta_{g})$ with $\phi$ supported in $B(0,2)$. After time localization on intervals of size $h$, applying their inhomogeneous endpoint Strichartz estimate and summing over $O(h^{-1})$ time intervals lead \cite{bgt04} (on a boundary less manifold) to
$$
\|\Delta_{j} v \|_{L^{2}_{T} L^{6}} \lesssim \|\Delta_{j} v(0)\|_{L^{2}}+\|\Delta_{j} v(T)\|_{L^{2}}+2^{-j/2} \|\Delta_{j} v \|_{L^{2}_{T} H^{1}} + \|\Delta_{j} (|v|^{2}v) \|_{L^{2}_{T} L^{6/5}}\,.
$$
Reproducing their argument but using Proposition \ref{inStri} with the $(2,10)$ Strichartz pair leads to
\begin{equation}
  \label{tameH1}
   2^{-j/5}\|\Delta_{j} v \|_{L^{2}_{T} L^{10}} \lesssim \|\Delta_{j} v(0)\|_{L^{2}}+\|\Delta_{j} v(T)\|_{L^{2}}+2^{-j/2} \|\Delta_{j} v \|_{L^{2}_{T} H^{1}} + 2^{j/5} \|\Delta_{j} (|v|^{2}v ) \|_{L^{2}_{T} L^{10/9}}\,,
\end{equation}
(observe that, as noted earlier, this is essentially the same estimate in terms of scaling.) We now use Sobolev embedding, $H^{1}\hookrightarrow L^{5}$ and product laws to bound the last term,
 $$
2^{j/5} \|\Delta_{j} (|v|^{2} v)\|_{L^{2}_{T} L^{10/9}}\lesssim  \sqrt T 2^{-4j/5} (\sup_{t} \|v\|_{H^{1}})^{3}\,.
$$
We use Sobolev but on the left hand side, with a large $p$ (recall $T\leq 1$)
$$
2^{-3j(1/10-1/p)-j/5}\|\Delta_{j} v \|_{L^{2}_{T} L^{p}} \lesssim 2^{-j}\sup_{t} \|v\|_{H^{1}}+2^{-j/2} \|\Delta_{j} v \|_{L^{2}_{T} H^{1}} + 2^{-4j/5}  (\sup_{t} \|v\|_{H^{1}})^{3}\,,
$$
to get
\begin{equation}
  \label{eq:derdesder}
  \|\Delta_{j} v \|_{L^{2}_{T} L^{p}} \lesssim 2^{-3j/p}\| \Delta_{j} v \|_{L^{2}_{T} H^{1}} + 2^{-3j/p-2j/5}  \sup_{t} (\|v\|_{H^{1}}+\|v\|^{3}_{H^{1}})\,.
\end{equation}
Summing over $j$, applying Cauchy-Schwartz in $j$, we finally get the same bound as in \cite{bgt04}
$$
\| v \|_{L^{2}_{T} L^{p}} \lesssim  \sqrt{p T}  +1\,.
$$
From there we may proceed similarly and conclude to uniqueness of weak solutions by estimating the difference between two solutions in $L^{2}$, the above estimate and an elementary differential inequality. 

Once we have uniqueness, the inequality for $E(v)$ is immediately upgraded to conservation, and continuity in $H^{1}$ follows as the potential part is itself continuous by interpolation between $L^{2}$ and $L^{6}$.

This achieves the proof of Theorem \ref{gwp} in the defocusing case. The focusing case may be handled in a similar way, up to a smallness condition if one wants a global result to preserve coercivity of $E(v)$ and we therefore skip it.

For preservation of higher Sobolev regularity, one may proceed exactly as in \cite{bgt04}, using the Brezis-Gallouet argument. One should notice, however, that the resulting bound on Sobolev norms is a double exponential (to be compared to the triple exponential from \cite{FP}). 
\subsection{Exponential growth for higher Sobolev norms $H^{m}$, $m>1$}
The double exponential growth for higher Sobolev norms was reduced to a single exponential, for solutions on a generic compact, boundaryless manifold, in \cite{PTV}, using modified energy methods. We now check that the elegant treatment of modified energies in \cite{PTV} is not spoiled if we replace the endpoint Strichartz estimate from \cite{bgt04} by our endpoint estimate. Bounds on the $H^{2k}$ norms follow from computing the time derivative of
\begin{equation*}
  {\mathcal E}_{2k}(v)=
\|\partial_t^k v\|_{L^2(M^d)}^2-\frac{1}2 \int_{M^d} |\partial^{k-1}_{t} \nabla_g (|v|^2)|_g^2 - \int_{M^d}|\partial^{k-1}_{t} (|v|^{2} v)|^2 
\end{equation*}
where all norms over $M$ are understood with respect to the metric volume form. One gets 
\begin{multline}\label{energy3}
\frac d{dt} {\mathcal E}_{2k}(v)=2  \int_{M^d} \partial^{k}_{t} (|v|^{2})\partial^{k-1}_{t}
(|\nabla_g v|_g^2)
+\Re \sum_{j=0}^{k-2}
c_j   \int_{M^d} \partial^{k}_{t} (|v|^{2}) \partial_t^j (\Delta_g v) \partial_t^{k-1-j} \bar v 
\\
{}+ \Re \sum_{j=0}^{k-1} c_j  \int_{M^d}  
\partial^{j}_{t} (|v|^{2})  \partial_t^{k-j} v \partial_{t}^{k-1 }(|v|^{2}\bar v) 
 +\Im
\sum_{j=1}^{k-1} 
 c_j \int_{M^d} \partial_t^j (|v|^{2}) \partial_t^{k-j} v \partial^{k}_{t}\bar v \,,
\end{multline}
where $c_{j}$ are harmless numerical constants. We may perform the exact same computation on our manifold $\Omega$: there are no boundary terms due to the Dirichlet boundary condition.

We will address the $H^{2}$ norm: higher order norms can be dealt with similarly (for the top order term, the remaining terms being lower order). The modified energy bound in \cite{PTV} proceeds with
\begin{equation}
  \label{modE2}
\frac{d}{dt} \mathcal{E}_{ 2}(v) \lesssim \int_{M} |\nabla ^{2}v| |\nabla v|^{2} |v |\lesssim \|v\|_{H^{2}}\|\nabla v\|^{2}_{L^{6}}\|v\|_{L^{6}}\,,
\end{equation}
using the equation to eliminate all time derivatives and ignoring lower order terms; one then uses a suitable version of Proposition \ref{inStri}, involving the endpoint Strichartz estimate, for the gradient term, which is therefore bounded by (the square of) an $H^{3/2}$ with an (important) additional power of time; by interpolation one recovers another $H^{2}$ norm and this leads to exponential growth for the $H^{2}$ norm by standard arguments.

 We will modify how we distribute norms on the integral over $M$ in \eqref{modE2}: one should heuristically think we aim at placing each $\nabla v$ in $L^{4}_{t,x}$ (hence controlled by $H^{3/2}$) and $v\in L^{\infty}_{t,x}$. We however need to slightly perturb this choice in order to avoid a log loss: with a large $p$, we have, by our Strichartz estimate \eqref{eq:derdesder} (at the scaling level of $H^{1}$ data) and Sobolev embedding (starting from $L^{6}_{x}$)
$$
\Delta_{j} v \in 2^{3j/p} L^{2}_{t} L^{p}\,,\,\,\,\text{and}\,\,\,\Delta_{j} v \in 2^{-3(1/6-1/p)j} L^{\infty}_{t} L^{p}\,.
$$
hence we get, balancing regularities with $(1-e)\frac 3 p=e (\frac 1 2 - \frac 3 p)$ ($e=6/p$), $v \in L_{t}^{\frac 2 {1-6/p}} L^{p}$ in term of the energy $E(v)$.

Then, we will get $|\nabla v|^{2 }|v|\in  L^{1}_{t} L^{2}$, with $v\in L_{t}^{\frac 2 {1-6/p}} L^{p}$, provided one may estimate both factors $\nabla v \in L^{r}_{t} L^{q}$, where $(r,q)$ is close to $(4,4)$ and such that
$$
1=\frac 1 2 (1-\frac 6 p)+\frac 2 r\,\,\,\text{ and }\,\,\, \frac 1 2 = \frac 1 p +\frac 2 q\,\,\,( \,\Rightarrow \frac 1r+\frac 3 q=1 \,)\,.
$$
% The $v$ term is controlled by the energy and both gradient terms are controlled by the $H^{3/2}$ norm, using Proposition \ref{inStri} in a suitable way; one can therefore proceed as in \cite{PTV} to conclude and reduce the double exponential bound from the Brezis-Gallou\"et argument to a single exponential.
We now prove such an estimate on $\nabla v$: start over with \eqref{tameH1} but shift regularity on $v$ from $H^{1}$ to $H^{3/2}$:
\begin{equation}
  \label{tameH32}
   2^{-j/5+j/2}\|\Delta_{j} v \|_{L^{2}_{T} L^{10}} \lesssim 2^{j/2}(\|\Delta_{j} v(0)\|_{L^{2}}+\|\Delta_{j} v(T)\|_{L^{2}})+2^{-j/2} \|\nabla \Delta_{j} v \|_{L^{2}_{T} H^{1/2}} + 2^{j/5+j/2} \|\Delta_{j} (|v|^{2}v ) \|_{L^{2}_{T} L^{10/9}}\,,
\end{equation}
multiply by $2^{j/2}$ and estimate the nonlinear term placing one factor in $H^{3/2}$ and others in $H^{1}\hookrightarrow L^{5}$,
\begin{equation}
  2^{-j/5}\|\nabla \Delta_{j} v \|_{L^{2}_{T} L^{10}}  \lesssim  \|\nabla \Delta_{j} v(0)\|_{L^{2}}+\|\nabla \Delta_{j}v(T)\|_{L^{2}} +\|\nabla \Delta_{j}v \|_{L^{2}_{T} H^{1/2}} + 2^{- 3j/10}\sqrt T\sup_{t} (\|v\|_{H^{3/2}}\|v\|_{H^{1}}^{2})
\end{equation}
from which we easily obtain
\begin{equation}
  \label{eq:6}
   \sup_{j} (2^{-j/5}\|\nabla \Delta_{j} v \|_{L^{2}_{T} L^{10}}  )  \lesssim  E(v)^{1/2}+\sqrt T (1+ E(v)) \sup_{t} \|v\|_{H^{3/2}}\,.
  \end{equation}
Then, pick $Q(>q)$ such that $1/q=(2/r) (1/Q)+(1-2/r) (1/2)$ and use Sobolev embedding to go from $L^{10}$ to $L^{Q}$
%   \lesssim & \|\nabla  v(0)\|_{L^{2}}+\|\nabla v(T)\|_{L^{2}} +\|v \|_{L^{2}_{T} H^{3/2}} + \sqrt T \sup_{t} (\|v\|_{H^{3/2}}\|v\|_{H^{1}}^{2})\\
\begin{equation}
  \label{eq:6fin}
   \sup_{j} (2^{-j/5-3j(1/10-1/Q)}\|\nabla \Delta_{j} v \|_{L^{2}_{T} L^{Q}}  )  \lesssim  E(v)^{1/2}+\sqrt T (1+ E(v)) \sup_{t} \|v\|_{H^{3/2}}\,.
\end{equation}
On the other hand, from $\nabla v\in L^{\infty}_{T}H^{1/2}$ we have  $\sup_{j} (2^{j/2}\|\nabla \Delta_{j} v\|_{L^{\infty}_{T}L^{2}}) \lesssim \sup_{0\leq t\leq T} \|v\|_{H^{3/2}}$: chosing $\theta=2/r>1/2$, the regularities cancel as scaling dictates: from $3/q+1/r=1$ we check that indeed
$$
(\frac 1 2 -\frac 3{Q})\frac 2 r  =\frac 1 2 (1-\frac 2 r) \,.
$$
We therefore get the desired $L^{r}L^{q}$ estimate for $\nabla v$:
$$
\|\nabla v \|_{L^{r}_{T} L^{q}} \lesssim (E(v)^{1/2}+\sqrt T (1+E(v))\sup_{0\leq t\leq T}\|v \|_{H^{3/2}} )^{\theta} (\sup_{0\leq t\leq T} \|v\|_{H^{3/2}}^{1-\theta})\,.
$$
Then, there exists $C(E)$ (which may change from line to line) such that
$$
\|\nabla v \|_{L^{r}_{T} L^{q}} \lesssim C(E(v)) \big(\sup_{0\leq t\leq T}\|v\|_{H^{3/2}}^{1-\theta}+ T^{1/4} \sup_{0\leq t\leq T}\|v \|_{H^{3/2}}\big)\,.
$$
Gathering all our estimates, we get that, for $0<T<1$
\begin{align}
  \| v(\cdot,T)\|^{2}_{H^{2}}-\|v(\cdot,0)\|^{2}_{H^{2}} & \leq \int_{0}^{T} \int_{M} |\nabla^{2}v||\partial v|^{2}| | v |\\
     & \lesssim \|v\|_{L^{\infty}_{T}H^{2}}\|v |\nabla v|^{2}\|_{L^{1}_{T}L^{2}}\\
   & \lesssim \|v\|_{L^{\infty}_{T}H^{2}}\|\nabla v\|^{2}_{L^{r}_{T}L^{q}}\|v\|_{L^{2/(1-6/p)}_{T}L^{p}}\\
 &    \lesssim C(E) \big(\sup_{0\leq t\leq T}\|v\|_{H^{2}}^{2-\theta}+ T^{1/2} \sup_{0\leq t \leq T}\|v \|^{2}_{H^{2}}\big)
\end{align}
from which exponential growth follows as in \cite{PTV}. This concludes the proof of Theorem \ref{growthHs}.

\end{document}